\theoremstyle{plain}
\newtheorem{thm}{Theorem}[section]
\newtheorem{lem}[thm]{Lemma}
\newtheorem{cor}[thm]{Corollary}
\newtheorem{prop}[thm]{Proposition}
\theoremstyle{definition}
\newtheorem{defn}[thm]{Definition}
\newtheorem{rem}[thm]{Remark}
\title[Maximal regularity for elliptic systems I]
{Weighted maximal regularity estimates and solvability of non-smooth elliptic systems I}
\author{Pascal Auscher} \author{Andreas Axelsson}
\address{Pascal Auscher, Univ. Paris-Sud, laboratoire de Math\'ematiques, UMR 8628, Orsay F-91405; CNRS, Orsay, F-91405; Centre for Mathematics and its Applications, Australian National University, Canberra ACT 0200, Australia}
\email{pascal.auscher@math.u-psud.fr}
\address{Andreas Axelsson, Matematiska institutionen, Link\"opings universitet, 581 83 Link\"oping, Sweden}
\email{andreas.axelsson@liu.se}
\mathchardef\semic="303B
\newcommand{\R}{{\mathbf R}}
\newcommand{\C}{{\mathbf C}}
\newcommand{\mH}{{\mathcal H}}
\newcommand{\mD}{{\mathcal D}}
\newcommand{\mK}{{\mathcal K}}
\newcommand{\mX}{{\mathcal X}}
\newcommand{\mY}{{\mathcal Y}}
\newcommand{\mL}{{\mathcal L}}
\DeclareMathOperator{\re}{Re}
\newcommand{\sett}[2]{ \{ #1 \, \semic \, #2 \} }
\newcommand{\supp}{\text{{\rm supp}}\,}
\newcommand{\dist}{\text{{\rm dist}}\,}
\newcommand{\nul}{\textsf{N}}
\newcommand{\ran}{\textsf{R}}
\newcommand{\dom}{\textsf{D}}
\newcommand{\clos}[1]{\overline{#1}}
\newcommand{\conj}[1]{\overline{#1}}
\newcommand{\sgn}{\text{{\rm sgn}}}
\newcommand{\barint}{\mbox{$ave \int$}}
\newcommand{\divv}{{\text{{\rm div}}}}
\newcommand{\curl}{{\text{{\rm curl}}}}
\newcommand{\tdd}[2]{\tfrac{\partial #1}{\partial #2}}
\newcommand{\ta}{{\scriptscriptstyle \parallel}}
\newcommand{\no}{{\scriptscriptstyle\perp}}
\newcommand{\pd}{\partial}
\newcommand{\oA}{{\overline A}}
\newcommand{\uA}{{\underline A}}
\newcommand{\loc}{\text{{\rm loc}}}
\newcommand{\tN}{\widetilde N_*}
\newcommand{\hE}{\widehat E}
\newcommand{\tE}{\widetilde E}
\newcommand{\tS}{\widetilde S}
\newcommand{\E}{{\mathcal E}}
\newcommand{\bphi}{\varphi}
\newcommand{\area}{\mathcal A}
\newcommand{\essinf}{\mathop{\rm ess{\,}inf}}
\newcommand{\essup}{\mathop{\rm ess{\,}sup}}
\def\barint_#1{\mathchoice
            {\mathop{\vrule width 6pt
height 3 pt depth -2.5pt
                    \kern -8.8pt
\intop}\nolimits_{#1}}%
            {\mathop{\vrule width 5pt height
3 pt depth -2.6pt
                    \kern -6.5pt
\intop}\nolimits_{#1}}%
            {\mathop{\vrule width 5pt height
3 pt depth -2.6pt
                    \kern -6pt
\intop}\nolimits_{#1}}%
            {\mathop{\vrule width 5pt height
3 pt depth -2.6pt
          \kern -6pt \intop}\nolimits_{#1}}}
\definecolor{gr}{rgb}   {0.,   0.8,   0. } 
\definecolor{bl}{rgb}   {0.,   0.5,   1. } 
\definecolor{mg}{rgb}   {0.7,  0.,    0.7}
\begin{document}

\begin{abstract}
  We develop new solvability methods for divergence form second order, real and complex, elliptic systems
  above Lipschitz graphs, with $L_2$ boundary data.
    The coefficients $A$ may depend on all variables, but are assumed to be close to coefficients $A_0$ that are independent of the coordinate transversal to the boundary, in the Carleson sense $\|A-A_0\|_C$ defined by Dahlberg.
  We obtain a number of {\em a priori} estimates and boundary behaviour results under finiteness of $\|A-A_0\|_C$.
  Our methods yield full characterization of weak solutions, whose gradients have $L_2$ estimates of a non-tangential maximal function or of the square function, via an integral representation acting on the conormal gradient, with a singular operator-valued kernel.
   Also, the non-tangential maximal function of a weak solution is controlled in $L_2$ by the square function of its 
  gradient. This estimate is new for systems in such generality, and even for real non-symmetric equations in dimension $3$
  or higher. The existence of a proof {\em a priori} to well-posedness, is also a new fact.

  As corollaries, we obtain well-posedness of the Dirichlet, Neumann and Dirichlet regularity problems under 
  smallness of $\|A-A_0\|_C$ and well-posedness for $A_0$, improving earlier results for real symmetric equations.
  Our methods build on an algebraic reduction to a first order system first made for coefficients $A_0$ by the two authors 
  and A. McIntosh in order to use functional calculus related to the Kato conjecture solution, 
  and the main analytic tool for coefficients $A$ is an operational calculus to prove weighted maximal regularity estimates.
\end{abstract}

\maketitle

\subjclass{MSC classes: 35J55, 35J25, 42B25, 47N20}

\keywords{Keywords: elliptic systems, maximal regularity, Dirichlet and Neumann problems, square function, non-tangential maximal function, Carleson measure, functional and operational calculus}

\section{Introduction}

In this article, we present and develop new representations and new solvability methods for 
boundary value problems (BVPs) for
divergence form second order, real and complex, elliptic systems. We look here at BVPs in domains Lipschitz 
diffeomorphic to the upper half space $\R^{1+n}_+ := \sett{(t,x)\in\R\times \R^n}{t>0}$, $n\ge 1$.
The same problems on bounded domains Lipschitz diffeomorphic to the unit ball, contain
noticeable differences, such as use of Fredholm theory, which we address in a forthcoming paper \cite{AA2}.
Here, we focus on the fundamental scale-invariant estimates.

The system of  equations  is 
\begin{equation}  \label{eq:divform}
 Lu^\alpha(t,x)=  \sum_{i,j=0}^n\sum_{\beta= 1}^m \pd_i\Big( A_{i,j}^{\alpha, \beta}(t,x) \pd_j u^{\beta}(t,x)\Big) =0,\qquad \alpha=1,\ldots, m
\end{equation}
in $\R^{1+n}_+$,
where $\pd_0= \tdd{}{t}$ and $\pd_i= \tdd{}{x_i}$, $1\le i\le n$.
We assume
\begin{equation}   \label{eq:boundedmatrix}
  A=(A_{i,j}^{\alpha,\beta}(t, x))_{i,j=0,\ldots,n}^{\alpha,\beta= 1,\ldots,m}\in L_\infty(\R^{1+n};\mL(\C^{(1+n)m})),
\end{equation}
and that $A$ is strictly accretive on $\mH$, meaning that
there exists $\kappa>0$ such that
\begin{equation}   \label{eq:accrassumption}
  \sum_{i,j=0}^n\sum_{\alpha,\beta=1}^m \int_{\R^n} \re (A_{i,j}^{\alpha,\beta}(t,x)f_j^\beta(x) \conj{f_i^\alpha(x)}) dx\ge \kappa 
   \sum_{i=0}^n\sum_{\alpha=1}^m \int_{\R^n} |f_i^\alpha(x)|^2dx,
\end{equation}
for all $f\in\mH$ and a.e. $t>0$.
The definition of $\mH$, a subspace of $L_2(\R^n;\C^{(1+n)m})$, will be given in Section \ref{sec:statement}.

We seek to prove well-posedness for (\ref{eq:divform}), \textit{i.e.}\,  unique solvability
in appropriate spaces given Dirichlet data $u|_{t=0}$, Neumann data 
$\pd_{\nu_A}u|_{t=0}$ or Dirichlet regularity data $\nabla_x u|_{t=0}$, assumed to satisfy an $L_2$ condition.
Note that the continuity estimate required for well-posedness in the sense of Hadamard is not included in our
notion of well-posedness, but will be shown to hold.
For the Neumann and Dirichlet regularity problems, we will work in the class of weak solutions whose gradient 
$\nabla_{t,x}u$ has $L_2$ modified non-tangential maximal function $\tN(\nabla_{t,x} u)$ in $L_2$.
(See Definition~\ref{defn:NTandC}.)
Under our assumptions, we shall describe the limiting behaviour of $\nabla_{t,x}u$ at $t=0$ and $\infty$
and obtain a perturbation result for well-posedness in this class.
For the Dirichlet problem,  it is more natural given our method to work in the class of weak solutions with 
square function estimate $\iint_{\R^{1+n}_+} |\nabla_{t,x}u|^2 tdtdx<\infty$.
Under our assumptions, we shall  describe the limiting behaviour of $u$ at $t=0$ and $\infty$ and 
prove a rigidity theorem that shows new \textit{a priori} non-tangential maximal estimates and $L_2$ estimates, and obtain a perturbation result for well-posedness.

Let us begin by pointing out that the coefficients depend on $t$, which makes these problems not always solvable
in such generality. 
In Caffarelli, Fabes and Kenig~\cite{CaFK}, the necessity of a square Dini condition is pointed out.
There has been a wealth of results for real symmetric equations (\textit{i.e.}\,  $m=1$ and $A_{ij}= A_{ji}\in \R$,
$\mH= L_2(\R^n; \C^{1+n})$).
In Fabes, Jerison and Kenig~\cite{FJK}, the $L_2$ Dirichlet problem is solved under the square Dini condition
and continuity.
Dahlberg removed continuity and proved in \cite{D2} that if the discrepancy $A_1-A_2$ of two matrices $A_1$, $A_2$ satisfies a small 
Carleson condition, \textit{i.e.}\,  if $\|A_1-A_2\|_C$ from Definition~\ref{defn:NTandC} is small enough, 
then $L_{p_1}$-solvability of the Dirichlet problem with coefficients $A_1$ implies
$L_{p_2}$-solvability of the Dirichlet problem with coefficients $A_2$ with $p_2=p_1$.
The smallness condition was removed in Fefferman, Kenig and Pipher~\cite{FKP}, but then the value of 
$p_2$ becomes unspecified.
R. Fefferman obtained in \cite{F} the same conclusions as Dahlberg with $p_2=p_1$, under large perturbation 
conditions of different nature.
See also Lim~\cite{Lim}.
Kenig and Pipher~\cite{KP} proved that the $L_p$-Neumann and regularity problems
are uniquely solvable if the discrepancy $A(t,x)-A(0,x)$ satisfies Dahlberg's small Carleson condition,
depending on $p\in(1,2+\epsilon)$.
Moreover, in \cite{KP2} they proved small perturbation results for the Neumann and regularity problems
analogous the result \cite{D2} for the Dirichlet problem, as well as large perturbation results
for the regularity problem analogous to \cite{FKP} for the Dirichlet problem.

Some related results of  Kenig and Pipher \cite{KP3} (going back to questions of Dahlberg \cite{D2}),  Dindos, Petermichl and Pipher~\cite{DPP} and Dindos and Rule~\cite{DR}
are obtained under smallness of a Carleson condition on $t\nabla_{t,x} A(t,x)$. See also Rios' work \cite{R}. Such an hypothesis does not compare to the one on $A(t,x)-A(0,x)$.

We note that these results are obtained for $L_p$ data, for appropriate $p$'s, including $p=2$.
This is using all the available technology for {\em real scalar equations}, starting from the
maximum principle, hence $L$-harmonic measure, and Green's functions.
Moreover, as far as solvability is concerned, the main thrust of these works is to get $p=2$
with non-tangential maximal estimates, using for this the classical variational solutions,
or those obtained via the maximum principle.

Of course, $t$-dependent coefficients incorporate the $t$-independent ones. 
We refer to the book by Kenig~\cite{kenig} and references therein, and to 
Alfonseca, Auscher, Axelsson, Hofmann and Kim~\cite{AAAHK} for more recent results on $L_{\infty}$ perturbation of real symmetric (or complex and constant) equations.
See also below.  

We mention a series of works for two dimensional equations on the upper half-plane with $t$-independent coefficients.  Auscher and Tchamitchian~\cite{AT} study complex coefficients equations with diagonal $A$ (which we call here block form) and describe Dirichlet, regularity and Neumann problems for $L_{p}$ data for $p>1$ and even for  data of Hardy type for $p\le 1$. 
This is a precursor of the work for systems here, as it built upon new proofs relying on Calder\'on-Zygmund operators (which are no longer available here) of the one dimensional Kato conjecture proved earlier by Coifman, McIntosh and Meyer~\cite{CMcM} and its extension  by Kenig and Meyer~\cite{KM}. For real equations of non block forms, Kenig, Koch, Pipher and Toro~\cite{KKPT} show that the Dirichlet problem is well-posed for large enough $p$ (and obtain counterexamples for any given specific $p$) by showing that $L$-harmonic measure is absolutely continuous with respect to Lebesgue measure on
the boundary\footnote{A recent preprint by  Dindos, Kenig and Pipher~\cite{DKP} posted during the revision of this article   shows this is related to well-posedness with $\text{BMO}$ data.}. Kenig and Rule~\cite{KR} then obtained well-posedness for the Neumann and regularity problems with $p-1>0$ small enough (and obtained counterexamples for any specific $p>1$). 
The recent thesis of Barton \cite{B} 
deals with complex, $t$-independent  $L_{\infty}$ perturbations of the situation
in \cite{KKPT,KR}, and she obtains well-posedness of the Neumann and regularity problems in $L_{p}$ for $p-1>0$ small and even at $p=1$ with data in the classical Hardy space.

As the reader has observed, we consider complex systems and we wish to obtain $L_2$ solvability under conditions as general as possible (we mention that $L_{p}$ solvability with our methods when $p\ne 2$ is under  study at this time). 
For this, we need other tools than those mentioned above.
In fact, the tools we develop and that we describe next would not have been conceivable prior to 
the full solution in all dimensions of the Kato problem and its extensions.
In Auscher, Axelsson and McIntosh~\cite{AAM}, a new method was presented for solving BVPs with $t$-independent 
coefficients, following an earlier setup designed in Auscher, Axelsson and Hofmann~\cite{AAH}. 
The main discovery in \cite{AAM} is that the
equation (\ref{eq:divform}) becomes particularly simple when solving for the conormal gradient
defined by \begin{equation}\label{eq:conormalgrad}
 f= \nabla_A u:= \begin{bmatrix} \pd_{\nu_A}u \\ \nabla_x u \end{bmatrix},
\end{equation}
where $ \pd_{\nu_A}u$ denotes the (inward for convenience) conormal derivative (see Section~\ref{sec:difftoint}),
instead of the potential $u$ itself. It is a set of generalized Cauchy--Riemann equations expressed as an
autonomous first order system
\begin{equation}   \label{eq:autonomous}
  \pd_t f + DB f=0,
\end{equation}
where $D$ is a self-adjoint (but not positive) 
first order differential operator with constant coefficients that is elliptic in some sense and $B$ 
is multiplication with a bounded matrix $B(x)$, which is strictly accretive on the space $\mH$ in 
(\ref{eq:accrassumption}) and related to 
$A(x)=A(t,x), t>0$, by an explicit algebraic formula.
The operator $DB$ is a bisectorial operator and can be shown to have an $L_2$-bounded holomorphic
functional calculus for any ($t$-independent) matrix $A$ satisfying (\ref{eq:boundedmatrix})
and (\ref{eq:accrassumption}).
This fact was proved earlier by Axelsson, Keith and McIntosh~\cite[Theorem 3.1]{AKMc} elaborating
on the technology for the solution of the Kato problem by Auscher, Hofmann, Lacey, McIntosh and
Tchamitchian~\cite{AHLMcT}; a more direct proof is proposed in Auscher, Axelsson and McIntosh~\cite{elAAM}.
As explained there, the main difficulty is the non-injectivity of $D$.
The upshot is the possibility of solving (\ref{eq:autonomous}) by a semi-group formula 
$f= e^{-t|DB|}f_0$ with $f_0$ in a suitable trace space, and such $f$ has non-tangential and square 
function estimates.
The BVP can then be solved in an appropriate class if and only if the map from the trace functions to boundary data is invertible.
This is the scheme for the Neumann and regularity problems, for which the boundary data is simply 
the normal or tangential part of $\nabla_A u$. 
For the Dirichlet problem, it turns out that a ``dual'' scheme involving the operator $BD$ can be used 
similarly.
The one-to-one correspondence between trace functions $f_0$ and boundary data may fail, see 
Axelsson~\cite{AxNon}, and it is here that restrictions on $A$ appear.
It is known to hold if $A$ is (complex) self-adjoint or block form (\textit{i.e.}\,  no cross derivatives $\pd_0A\pd_i$
or $\pd_i A \pd_0$, $i\ge 1$, in (\ref{eq:divform})), or constant.
Another consequence of this method, and this is why considering complex coefficients is useful, is that
the set of $t$-independent $A$'s for which solvability holds is open in $L_\infty$. See \cite{AAM}.

Our work for $t$-dependent coefficients takes the algebraic reduction to (\ref{eq:autonomous})
as a starting point, the conormal gradient becoming the central object.  We shall state the main results in Section~\ref{sec:statement} and 
explain the strategy in Section~\ref{sec:roadmap}. It involves in particular study of a highly singular integral operator $S_A$, with an operator-valued kernel.  On a technical level, proper definition and handling of this operator is most efficiently done using
operational calculus rather than the usual maximal regularity treatment originally due to de Simon~\cite{dSim} (see Kunstmann and Weis~\cite[Chapter 1]{KW} for an overview) and this avoids having to assume qualitatively that $A$ is smooth in the calculations.
We use the terminology operational calculus, following the thesis \cite{Alb} of Albrecht, 
for the extension of 
functional calculus when not only scalar holomorphic functions are applied to the underlying
operator (in our case $DB_0$ with $B_{0}(x)=B(0,x)$), but more general operator-valued holomorphic functions.
The Hilbert space theory we use here to prove boundedness on appropriate functional spaces in Section \ref{sec:saest}, surveyed in Section~\ref{sec:abstropcalc},
is a special case of the general theory developed in
Albrecht, Franks and McIntosh~\cite[Section 4]{AFMc}, 
Lancien, Lancien and LeMerdy~\cite{LLleM}, and Lancien and LeMerdy~\cite{LleM}.
For further details and references, we refer to Kunstmann and Weis~\cite[Chapter 12]{KW}.

The Carleson control on the discrepancy $A(t,x)-A(0,x)$ from \cite{D, FKP, KP, KP2} appears in a very natural way in the estimates of $S_A$, and well-posedness of the three BVPs with coefficients $A(t,x)$ will follow under smallness of this control and well-posedness for coefficients $A(0,x)$. We mention that the Dirichlet problem could be obtained by an abstract duality procedure from a regularity problem, in agreement with the results of \cite{KP, KP2} for real symmetric equations. See also Kilty and Shen~\cite{Shen1}, and Shen~\cite{Shen2}. We will formalize this  abstract procedure  in our subsequent work \cite{AA2}.
We remark however that although the hypotheses are the same for each BVP, the perturbation results can be proved independently of one another. For example, one does not need knowledge on well-posedness of regularity for $A$ or of Dirichlet for $A^*$ to obtain well-posedness of Neumann for $A$, in contrast with the results in \cite{KP2}. 

We do not know how to prove well-posedness under the finiteness of $\|A(t,x)-A(0,x)\|_C$ only. (In light of \cite{FKP, KP2}, 
this would first require to extend our methods to solvability for $L_p$ data.)
However, thanks to our representations, we do obtain under this hypothesis a number of {\em a priori} estimates and boundary behaviour on solutions to the equation
(\ref{eq:divform}) without any assumption on well-posedness.  For example, and we concentrate on this to finish this introduction, we show that if $\|A(t,x)-A(0,x)\|_C<\infty$, all weak solutions to (\ref{eq:divform}) with coefficients $A$ such that $\iint_{\R^{1+n}_+} |\nabla_{t,x}u|^2 tdtdx<\infty$
 satisfy, up to a constant,  sup-$L_{2}$ estimates as well as non-tangential maximal functions estimates. More precisely, we obtain
$$
 \max(\sup_{t>0}\|u_t\|_2, \|\tN(u)\|_2) \lesssim \|\nabla_{t,x}u\|_{L_2(tdt; L_2)}
$$
provided  the right hand side is finite and the solution $u$ vanishes at infinity in some sense (see Section \ref{sec:statement} for precise meaning).
Note in particular that this applies when $A$ is $t$-independent and in that case, this is implicit from \cite[Corollary 4.2]{AAM}
when restricted to the class of solutions considered there. 
Domination of the non-tangential maximal function $\|\tN(u)\|_2$ by the square function
$\|\nabla_{t,x}u\|_{L_2(tdt; L_2)}\approx \|S(u)\|_2$, $S(u)(x)= (\int_{|y-x|<t} |\nabla_{t,y} u|^2 dtdy/t^{n-1} )^{1/2}$,
is reminiscent of the result of Dahlberg, Jerison and Kenig~\cite{DJK}, and also of 
Dahlberg, Kenig, Pipher and Verchota~\cite{DKPV}.
But there is a difference. In \cite{DJK} comparability of $\tN(u)$ and $S(u)$ is obtained
for solutions of the equation (\ref{eq:divform}) under (\ref{eq:boundedmatrix}) and (\ref{eq:accrassumption}),
$A$ real and $m=1$,
in all $L_q(\R^n; d\mu)$ spaces, $0<q<\infty$, with $\mu$ a doubling $A_\infty$ weight 
with respect to $L$-harmonic measure.
If the Dirichlet problem in the class $\|\tN(u)\|_p<\infty$ is proved to be 
solvable for one $1<p<\infty$, then Lebesgue measure is $A_\infty$ of $L$-harmonic measure, hence
$\|\tN(u)\|_q\approx \|S(u)\|_q$.
This fact follows in particular from combining \cite{JK1} and \cite{FJK} under $\|A(t,x)-A(0,x)\|_C<\infty$
and $A$ real symmetric. 
In \cite{DKPV}, comparability $\|\tN(u)\|_q\approx \|S(u)\|_q$, $0<q<\infty$, is obtained for
real symmetric constant elliptic (in the sense of Legendre--Hadamard) 
second order systems (and also higher order but the formulation 
becomes different) on  bounded Lipschitz domains owing to the fact that $L_2$ solvability 
of the Dirichlet problem was known (see the introduction of \cite{DKPV}).
This comparability also follows for real non-symmetric scalar equations in two dimensions combining 
the results of \cite{KKPT} and again \cite{DJK}.
Here, although we obtain only one part of the comparison, it is essential to note that this
is an {\em a priori} estimate valid independently of well-posedness. The existence of an {\em a priori} proof is new even for real symmetric scalar equations under the Carleson control (for example for all the $t$-independent ones),
and is permitted by the solution of the Kato square root problem and its extensions.

{\bf Acknowledgments.}

The first author thanks the Mathematical Sciences Institute at the Australian National University for
hospitality where part of this work was done.
The second author was supported by a travel grant from the Swedish Research Council to 
participate in the El Escorial conference 2008, where this work got started.
Both authors also thank the LAMFA in Amiens for hospitality.

Thanks to Chema Martell for a careful reading of a preliminary version of this manuscript, and
to Steve Hofmann for discussions on the comparability between non-tangential maximal function
and square function estimates. 
We are also indebted to the referees for many valuable suggestions, including the writing 
of the informal presentation in Section 3.

%
%
%
%
%
\section{Statement of results}   \label{sec:statement}

In this section we state our results concerning solvability of boundary value problems
on the half space $\R^{1+n}$, and show how they extend to domains $\Omega$ which are 
Lipschitz diffeomorphic to $\R^{1+n}_+$. 

Let us first fix notation.
We write $\{e_0,e_1,\ldots,e_n\}$ for the standard basis for $\R^{1+n}$ with 
$e_0$ ``upward'' pointing into $\R^{1+n}_+$, and write $t=x_0$ for the vertical
coordinate. For the vertical derivative, we write $\partial_0 = \partial_t$.
For an $m$-tuple of vectors $v=(v_i^\alpha)_{0\le i\le n}^{1\le \alpha\le m}$, 
we write $v_\no$ and $v_\ta$ for the
normal and tangential parts of $v$, 
\textit{i.e.}\, ~$(v_\perp)_0^\alpha= v_0^\alpha$ and $(v_\perp)_i^\alpha=0$ when $1\le i\le n$,
whereas $(v_\ta)_i^\alpha= v_i^\alpha$ when $1\le i\le n$ and $(v_\ta)_0^\alpha=0$.
We write $f_t(x):= f(t,x)$ for functions in $\R^{1+n}_+$.
As compared to \cite{AAM}, we here use subscript $0$ to denote restriction to the boundary $\R^n$
at $t=0$, rather than the normal component of $f$. 
We also prefer to use small letters $f,g,\ldots$ to denote functions in $\R^{1+n}_+$, since
this is where we work most of the time, not on the boundary as in \cite{AAM}.

For tuples of functions and vector fields, gradient and divergence act as $(\nabla_{t,x}u)_i^\alpha= \pd_i u^\alpha$ and 
$(\divv_{t,x}f)^\alpha= \sum_{i=0}^n \pd_i f^\alpha_i$, with corresponding
tangential versions $\nabla_x u= (\nabla_{t,x}u)_\ta$
and $(\divv_x f)^\alpha= \sum_{i=1}^n \pd_i f^\alpha_i$.
With $\curl_{t,x} f=0$ we understand that 
$\pd_jf_i^\alpha= \pd_i f_j^\alpha$, for all $i$, $j=0,\ldots,n$.
Similarly, write $\curl_{x} f_\ta=0$ if 
$\pd_jf_i^\alpha= \pd_i f_j^\alpha$, for all $i$, $j=1,\ldots,n$.

We assume that $A$ is {\em strictly accretive on} $\mH$, \textit{i.e.}\,  
$$
 \int_{\R^n} \re(A(t,x) f(x),f(x)) dx \ge \kappa \int_{\R^n} |f(x)|^2 dx,
$$
holds for some constant $\kappa>0$, uniformly for $t>0$ and all $f$ belonging to the closed subspace
\begin{equation}     \label{eq:Hspace}
  \mH:= \nul(\curl_x)=\sett{g\in L_2(\R^n;\C^{(1+n)m})}{\curl_x(g_\ta)=0}.
\end{equation}
For scalar equations, \textit{i.e.}\,  $m=1$, (\ref{eq:accrassumption}) amounts to the pointwise condition
$$
  \re(A(t,x)\zeta,\zeta) \ge \kappa |\zeta|^2, \qquad \text{for all } \zeta\in \C^{1+n}, \text{a.e. } (t,x)\in \R^{1+n}_+.
$$ 
For systems, (\ref{eq:accrassumption}) is stronger than a strict G\aa rding inequality 
on $\R^{1+n}_+$ (\textit{i.e.}\,  integration would be on $\R^{1+n}_+$ and $f$ such that $\curl_{t,x}f=0$);
still (\ref{eq:accrassumption}) is natural given the type of perturbation we consider here.

The boundary value problems we consider are to find $u\in \mD'(\R_+^{1+n};\C^m)$ solving the
divergence form second order elliptic system
$$
   \divv_{t,x} A \nabla_{t,x} u=0 \text{ in } \R^{1+n}_+
$$
in distributional sense, with appropriate interior estimates of $\nabla_{t,x} u$ and
satisfying one of the following three natural boundary conditions.
\begin{itemize}
\item The Dirichlet condition $u= \bphi$ on $\R^n$, 
given $\bphi\in L_2(\R^n;\C^m)$.
\item The Dirichlet regularity condition $\nabla_x u= \bphi$ on $\R^n$, given
$\bphi\in L_2(\R^n;\C^{nm})$ satisfying $\curl_x \bphi=0$.
\item The Neumann condition $(e_0, A\nabla_{t,x} u)= \bphi$ on $\R^n$, given
$\bphi\in L_2(\R^n;\C^{m})$.
\end{itemize}
Note that since we shall impose distributional $\nabla_{t,x} u \in L_2^\loc$, $u$ 
can be identified with a function in  $W^{1,\loc}_2$, \textit{i.e.} with a weak solution.

\begin{defn}    \label{defn:NTandC}
The {\em modified non-tangential maximal function} of a function $f$ in $\R^{1+n}_+$ is
$$
  \tN(f)(x):= \sup_{t>0}  t^{-(1+n)/2} \|f\|_{L_2(W(t,x))}, \qquad x\in \R^n,
$$
where $W(t,x):= (c_0^{-1}t,c_0t)\times B(x;c_1t)$, for some fixed constants $c_0>1$, $c_1>0$.
The {\em modified Carleson norm} of a function $g$ in $\R^{1+n}_+$ is
$$
  \|g\|_C := \left( \sup_Q \frac 1{|Q|} \iint_{(0, l(Q))\times Q} \sup_{W(t,x)}|g|^2 \frac {dtdx}{t} \right)^{1/2},
$$
where the supremum is taken over all cubes $Q$ in $\R^n$, with $l(Q)$ denoting their side lengths.
\end{defn}

Note that different choices for $c_0, c_1$ will give different, but equivalent norms $\|\tN(f)\|_2$,
as well as equivalent norms $\|g\|_C$.
Furthermore, this maximal function is really non-tangential since $\tN(f)$ and the closely 
related maximal function
$\sup_{|y-x|<t}  t^{-(1+n)/2} \|f\|_{L_2(W(t,y))}$ have equivalent $L_2$ norms.
The latter was introduced in \cite{KP}.
The modified Carleson norm originates from Dahlberg~\cite{D2}.

We will use the modified Carleson norm to measure the size of perturbations 
of $t$-independent coefficients $A_0$. 
(In fact we shall use a possibly weaker quantity $\|\cdot\|_*$; see Section~\ref{sec:functionspace}.) 
Intuitively, $\|A-A_0\|_C < \infty$ means that in a certain sense $A(t, x)=A_0(x)$ 
at $t = 0$, but also that $A(t,x)$ is close to $A_0(x)$
at all scales since we are dealing with a scale invariant norm. 
Also, given $A$, such a $A_0$ is unique and has controlled bounds thanks to
the following lemma.  We remark here that  the modified Carleson norm there 
can be weakened to the usual one. See Section~\ref{sec:functionspace} for proofs.

\begin{lem}   \label{lem:carlesontrace}
  Fix $A: \R^{1+n}_+\to \mL(\C^{(1+n)m})$ with $\|A\|_\infty<\infty$ and strictly accretive on $\mH$, with
  constant of accretivity $\kappa>0$.
  Assume that $A_0$ are $t$-independent measurable coefficients such that $\|A-A_0\|_C<\infty$.
  Then $A_0$ is uniquely determined by $A$, \textit{i.e.} if 
   $A'_0$ are $t$-independent coefficients such that $\|A-A'_0\|_C<\infty$,
  then $A'_0=A_0$ almost everywhere.
  Furthermore $A_0$ is bounded and strictly accretive on $\mH$, with
$$
  \kappa\le \kappa_0\le \|A_0\|_\infty\le \|A\|_\infty,
$$
where $\kappa_0$ denotes the constant of accretivity of $A_0$.
\end{lem}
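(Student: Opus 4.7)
The plan is to handle the three assertions separately, each using a different feature of the Carleson control.

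For \emph{uniqueness}, if $A_0$ and $A_0'$ both serve, then $g := A_0 - A_0'$ is $t$-independent and has $\|g\|_C < \infty$ by the triangle inequality. Since $g$ does not depend on $t$, $\sup_{W(s,y)} |g|$ equals $\sup_{B(y, c_1 s)} |g|$ and does not decay as $s \to 0$; if $g$ were nonzero on a set of positive measure then for any cube $Q$ meeting this set we could bound $\sup_{W(s,y)}|g|^2 \ge \delta^2$ from below on a set of $y$'s of positive measure and all small $s$, so $\int_0^{l(Q)} \frac{ds}{s}$ would diverge, contradicting $\|g\|_C < \infty$. Hence $g = 0$ a.e.

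For the bound $\|A_0\|_\infty \le \|A\|_\infty$, fix $\varepsilon > 0$ and let $E_\varepsilon := \{y : |A_0(y)| > \|A\|_\infty + \varepsilon\}$. The hypothesis $\|A - A_0\|_C < \infty$ together with Fubini implies that for a.e.\ $y$ the function $s \mapsto \sup_{W(s,y)}|A - A_0|^2$ lies in $L_1((0, l(Q));\,ds/s)$, so there is a sequence $s_k = s_k(y) \to 0$ along which this essential supremum is less than $\varepsilon^2$. Combined with $|A| \le \|A\|_\infty$ a.e., this forces $|A_0(z)| \le \|A\|_\infty + \varepsilon$ for a.e.\ $z \in B(y, c_1 s_k)$, so $|E_\varepsilon \cap B(y, c_1 s_k)| = 0$. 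Lebesgue's density theorem then precludes a.e.\ $y \in E_\varepsilon$ from being a density point, forcing $|E_\varepsilon| = 0$; sending $\varepsilon \to 0$ closes the estimate.

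For \emph{accretivity}, I would select a sequence $t_k \to 0$ satisfying simultaneously (a) the accretivity condition $\int_{\R^n} \re(A(t_k, x) f, f)\, dx \ge \kappa \|f\|_2^2$ for every $f$ in a chosen countable dense subclass of $\mH$, and (b) $\|A(t_k, \cdot) - A_0\|_{L_2(Q)} \to 0$ for each fixed cube $Q$. Condition (a) fails only on a null set of $t$; condition (b) is produced by the same Fubini-plus-$\liminf$ argument as above, using that $(t_k, x) \in W(t_k, x)$ to dominate $|A(t_k, x) - A_0(x)|^2$ by $\sup_{W(t_k, x)}|A - A_0|^2$, together with a diagonal choice across a growing sequence of cubes. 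For $f \in \mH$ bounded and compactly supported in some $Q$, Cauchy--Schwarz then gives
\[
\Big|\int_{\R^n} \re\bigl((A(t_k, x) - A_0(x)) f, f\bigr)\, dx\Big| \le \|f\|_\infty^2\, |Q|^{1/2}\, \|A(t_k, \cdot) - A_0\|_{L_2(Q)} \longrightarrow 0,
\]
so passing to the limit yields $\int_{\R^n} \re(A_0 f, f)\, dx \ge \kappa \|f\|_2^2$. Density of such $f$ in $\mH$ (approximate $f_\ta = \nabla_x u \in L_2$ by $\nabla_x u_n$ with $u_n \in C_c^\infty$ and truncate $f_\no$ by smooth cutoff) extends the bound to all of $\mH$, giving $\kappa_0 \ge \kappa$; the companion $\kappa_0 \le \|A_0\|_\infty$ is immediate from Cauchy--Schwarz.

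The most delicate point I expect is the simultaneous selection of a single sequence $t_k$ realizing both (a) and (b): one must exploit the $\sup_{W(s,y)}$ feature of the modified Carleson norm (or, with the usual Carleson norm, an additional Fubini step in $t$ combined with the a.e.\ bound $|A| \le \|A\|_\infty$) in order to obtain $L_2^\loc$ convergence of $A(t_k, \cdot)$ to $A_0$ along $t$-values lying in the full-measure set where accretivity holds. Once this sequence is in hand, the rest is a routine density/continuity exercise.
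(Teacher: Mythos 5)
Your argument is correct, but it diverges from the paper's proof in a couple of noteworthy ways, and you make one step look harder than it is.

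The paper observes at the outset that all three conclusions follow from finiteness of the \emph{standard} Carleson norm $\|A-A_0\|_c$ (no Whitney supremum), which is weaker than the modified norm $\|\cdot\|_C$ in the hypothesis. You work throughout with the modified norm, which is legitimate but gives a slightly weaker result. For uniqueness, the paper simply notes that $|Q|^{-1}\int_Q|A_0'-A_0|^2\,dx$ is a $t$-independent constant $c$ and $\int_0^{l(Q)} c\,dt/t<\infty$ forces $c=0$; your density-point argument arrives at the same place but is more work. For $\|A_0\|_\infty\le\|A\|_\infty$, you use a Lebesgue density theorem argument on the bad set $E_\varepsilon$; the paper instead picks a single nearly-extremal cube $Q$ for $A_0$, uses $\essinf_t |Q|^{-1}\int_Q|A(t,\cdot)-A_0|^2\,dx=0$, and chooses a good $t$ which is also a Lebesgue point of the relevant $t$-averages. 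Both work, but the paper's route does not need the pointwise Whitney sup or the density theorem. For accretivity, both you and the paper arrive at the Cauchy--Schwarz bound and then use the smallness of the $L_2(Q)$ discrepancy along a good sequence of $t$'s.

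Regarding what you flag as ``the most delicate point'' (finding a single sequence $t_k$ realizing both the a.e.\ accretivity and the $L_2^\loc$ convergence): this is less delicate than you make it sound, and the diagonalization over cubes is unnecessary. One only needs to treat a fixed $f\in\mH\cap C_0^\infty$ with $\supp f\subset Q$ at a time. The integrability of $t\mapsto |Q|^{-1}\int_Q|A(t,\cdot)-A_0|^2\,dx$ against $dt/t$ on $(0,l(Q))$ forces its essential infimum to be $0$, so for every $\delta>0$ the set $\{t<l(Q): |Q|^{-1}\int_Q|A(t,\cdot)-A_0|^2\,dx<\delta\}$ has \emph{positive} measure; it therefore meets the full-measure set of $t$'s where (\ref{eq:accrassumption}) holds and (if you insist on passing through the Whitney sup) where the pointwise domination $|A(t,x)-A_0(x)|\le\sup_{W(t,x)}|A-A_0|$ holds for a.e.\ $x$. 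No simultaneous choice across all cubes is needed, and no $\liminf$ diagonal is needed either.
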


For the Neumann and Dirichlet regularity problems, our result is the following.

\begin{thm}   \label{thm:NeuLip}
   Consider coefficients $A\in L_\infty(\R^{1+n}_+; \mL(\C^{(1+n)m}))$ which are strictly accretive on $\mH$.
\begin{itemize}
\item[{\rm (i)}]
{\rm {\em A priori} estimates:}
 Assume that $u\in W^{1,\loc}_2(\R^{1+n}_{+}, \C^m)$ has gradient $\nabla_{t,x} u$ with 
 estimate $\|\tN(\nabla_{t,x}u)\|_2 <\infty$, and where $u$ satisfies
 (\ref{eq:divform}) in $\R^{1+n}_+$ distributional sense.
 If there exists $t$-independent measurable coefficients $A_0$ such that
 $\| A-A_0 \|_C <\infty$, then $\nabla_{t,x}u$ has limits 
$$
  \lim_{t\to 0} t^{-1} \int_t^{2t} \| \nabla_{s,x} u_s - g_0 \|_2^2 ds =0=
   \lim_{t\to \infty} t^{-1} \int_t^{2t} \| \nabla_{s,x} u_s \|_2^2 ds,
$$
for some function $g_0 \in L_2(\R^n ;\C^{(1+n)m})$, with estimate 
$\|g_0\|_2\lesssim\|\tN(\nabla_{t,x}u)\|_2$.

\item[{\rm (ii)}]
{\rm Well-posedness:}
  By the Neumann problem with coefficients $A$ (or $A_0$) being well-posed, we mean that
  given $\bphi\in L_2(\R^n;\C^m)$, there is a function $u\in W^{1,\loc}_2(\R^{1+n}_{+}, \C^m)$,
  unique modulo constants, 
  solving  (\ref{eq:divform}), with coefficients $A$ (or $A_0$), and having estimates as in (i)
  and trace $g_0= \lim_{t\to 0}\nabla_{t,x} u$ such that $(A_0 g_0)_\no= \bphi$.
  
  The following perturbation result holds.
  If the Neumann problem for $A_0$ is well-posed,
  then there exists $\epsilon>0$ such that if $\| A-A_0 \|_C <\epsilon$, then the Neumann
  problem is well-posed for $A$. 
  
The corresponding result holds when the Neumann problem is replaced by the regularity problem
and the boundary condition $(A_0 g_0)_\no= \bphi$ is replaced by
 $(g_0)_\ta= \bphi\in L_2(\R^n; \C^{nm})$, where $\bphi$ satisfies $\curl_x \bphi=0$.
Moreover, for both BVPs the solutions $u$ have estimates
 $$
   \|\tN(\nabla_{t,x} u)\|_2 \approx \|g_0\|_2 \approx \|\bphi\|_2.
 $$

\item[{\rm (iii)}]
{\rm Further regularity:}
  Assume that $A_0$ is as in (i), with $\| A-A_0 \|_C$ sufficiently small
  and consider solutions $u$ as in (i).
  
  If $A$ satisfy the $t$-regularity condition $\|t\pd_t A\|_C<\infty$,
 then
 $$
   \int_0^\infty \| \pd_t \nabla_{t,x} u \|_2^2 tdt\lesssim \sup_{t>0}\|\nabla_{t,x} u_t\|_2^2
   \approx \|\tN(\nabla_{t,x}u)\|^2_2,
 $$
 $t\mapsto \nabla_{t,x}u_t\in L_2$ is continuous and $\lim_{t\to 0} \|\nabla_{t,x}u_t -g_0\|_2 =0=\lim_{t\to\infty}\|\nabla_{t,x}u_t\|_2$.
The converse estimate $\|\tN(\nabla_{t,x}u)\|^2_2\lesssim\int_0^\infty \| \pd_t \nabla_{t,x} u \|_2^2 tdt$ 
holds provided $\|t\pd_t A\|_C$ is sufficiently small.

If $\max(\|t\pd_i A\|_C, \|t\pd_t A\|_C)<\infty$ holds for some $i=1,\ldots,n$, then 
$$
\int_0^\infty \| \pd_i \nabla_{t,x} u \|_2^2 tdt\lesssim \|\tN(\nabla_{t,x}u)\|^2_2.
$$
The estimate $ \|\tN(\nabla_{t,x}u)\|^2_2\lesssim \sum_{i=1}^n \int_0^\infty \| \pd_i \nabla_{t,x} u \|_2^2 tdt$
holds provided $\|t\nabla_{t,x}A\|_C$ is sufficiently small.
\end{itemize}
\end{thm}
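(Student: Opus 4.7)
The plan is to work with the conormal gradient $f = \nabla_A u$, which as recalled in the introduction transforms (\ref{eq:divform}) into the autonomous first-order system $\pd_t f + DB f = 0$. Splitting $B = B_0 + (B-B_0)$ where $B_0$ corresponds to the $t$-independent coefficients $A_0$ provided by Lemma~\ref{lem:carlesontrace}, I would rewrite the equation as
$$
  \pd_t f + DB_0 f = D(B_0 - B) f =: DE_t f,
$$
so that $E_t$ encodes the perturbation and $\|A-A_0\|_C<\infty$ translates to a Carleson-type control on $E_t$. Since $DB_0$ is bisectorial with a bounded $H^\infty$ functional calculus (by the Axelsson--Keith--McIntosh result cited in the introduction), the spectral projections onto the two half-planes and the corresponding semigroup $e^{-t|DB_0|}$ are available. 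The first step is then to represent any solution $f$ of the perturbed equation by a Duhamel-type formula of the form $f_t = e^{-t|DB_0|} h_0 + (S_A f)_t$, where $S_A$ is a singular integral operator with operator-valued kernel, built by operational calculus applied to $DB_0$ and the multiplication by $E_t$.

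For part (i), once boundedness of $S_A$ in the $\tN$-norm and its continuity at $t=0$ and decay at $t=\infty$ are established, the existence of the $L_2$-averaged limits falls out of the analogous properties of $e^{-t|DB_0|}h_0$ (known from \cite{AAM}); the trace $g_0$ is then identified as the boundary value of the representation formula, and $\|g_0\|_2\lesssim \|\tN(\nabla_{t,x}u)\|_2$ follows from the boundedness of $S_A$ and the standard trace inequality for solutions of $\pd_t f + DB_0 f = 0$.

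For part (ii), well-posedness of the Neumann (respectively regularity) problem for $A_0$ translates exactly to the invertibility, on the positive spectral subspace of $DB_0$, of the map $h_0 \mapsto (A_0 h_0)_\no$ (respectively $h_0\mapsto (h_0)_\ta$). For the perturbed problem the corresponding map acquires an additive correction coming from evaluating $S_A f$ at $t=0$, and the operator norm of this correction is controlled by $\|A-A_0\|_C$. Standard openness of invertibility in the bounded operators yields a unique solution for each boundary datum once $\|A-A_0\|_C<\epsilon$; uniqueness modulo constants for Neumann reduces likewise to the $A_0$ case, and the equivalences $\|\tN(\nabla_{t,x}u)\|_2 \approx \|g_0\|_2 \approx \|\bphi\|_2$ come from the two-sided bounds on this perturbed map. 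For part (iii), the estimate $\int_0^\infty \|\pd_t\nabla_{t,x}u\|_2^2\, tdt \lesssim \sup_t\|\nabla_{t,x}u_t\|_2^2$ is obtained by differentiating the representation in $t$ and invoking weighted maximal regularity for $DB_0$, namely $\int_0^\infty \|\pd_t e^{-t|DB_0|}h_0\|_2^2\, tdt \lesssim \|h_0\|_2^2$; the Carleson hypothesis $\|t\pd_t A\|_C<\infty$ is precisely what is needed to handle the extra contribution from differentiating $E_t$ inside $S_A$, and when $\|t\pd_t A\|_C$ is small the error term can be absorbed to yield the converse bound. Tangential derivatives $\pd_i$, $i\ge 1$, are handled by the same scheme: they commute with $D$ but not with $B_t$, and the resulting commutator with the semigroup is controlled by $\|t\pd_i A\|_C$ through operational calculus.

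The main obstacle is the rigorous definition and boundedness of the singular integral operator $S_A$. Its kernel is operator-valued and the naive maximal regularity approach of de Simon would require qualitative smoothness of $A$ that is unavailable. The substitute is the operational calculus of Albrecht, and Lancien--Lancien--LeMerdy, referenced in the introduction and developed in Sections~\ref{sec:saest} and \ref{sec:abstropcalc} of the paper: it allows one to make sense of, and bound on $L_2(\R^{1+n}_+)$ and on $\tN$- and square-function norms, the composition of holomorphic functions of $DB_0$ with multiplication by $E_t$, using only the Carleson control on $E_t$. All three parts of the theorem hinge on this operational-calculus estimate for $S_A$.
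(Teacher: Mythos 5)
Your proposal captures the paper's overall strategy faithfully: reduction to the first-order system $\pd_t f + DB f = 0$ for the conormal gradient, the Duhamel-type integral equation $f_t = e^{-t|DB_0|}h^+ + (S_A f)_t$ with $h^+$ in the positive spectral subspace, definition and boundedness of the operator-valued singular integral $S_A$ by operational calculus in the spirit of Albrecht and Lancien--Lancien--LeMerdy, and the perturbation of the well-posedness map controlled by the Carleson norm of $A - A_0$. This is exactly the architecture of Theorems~\ref{thm:inteqforNeu}, \ref{cor:inteqforNeu}, Corollary~\ref{cor:neuregmain} and Theorem~\ref{thm:neuQE}.

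Two points are worth flagging. First, in part (ii) the well-posedness map is not quite $h_0 \mapsto (A_0 h_0)_\no$ acting on the positive spectral subspace: the spectral subspace $E_0^+\mH$ lives on the conormal-gradient side, where the Neumann map for $A_0$ is simply $h^+ \mapsto (h^+)_\no$, and for $A$ it becomes $\Gamma_A\colon h^+ \mapsto (h^+ + h^-)_\no$ with $h^- = \int_0^\infty \Lambda e^{-s\Lambda}\hE_0^-\E_s f_s\,ds$ the Carleson-controlled correction. The identity $(A_0 g_0)_\no = (f_0)_\no$ reconciles your description with the paper's, but phrasing it as you did mixes the two sides of the $A \leftrightarrow \hat A$ algebraic transformation and obscures where the ``positive spectral subspace'' actually lives. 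Second, and more substantively, for the tangential derivatives $\pd_i$ in part (iii) your plan (``the same scheme'', commuting $\pd_i$ through the semigroup and controlling commutators by $\|t\pd_i A\|_C$) is \emph{not} what the paper does, and would face a real obstacle: $\pd_i$ does not commute with $DB_0$ or with $e^{-t|DB_0|}$ since $B_0 = B_0(x)$, so you would need to control $[\pd_i, e^{-t\Lambda}]$ and $[\pd_i, S_A]$ separately, and the former involves $\pd_i A_0$ for which no Carleson control is assumed. The paper instead avoids all commutators with the semigroup by reading the ODE $\pd_t f + DBf = 0$ directly and exploiting the elliptic estimate $\|Dh\|_2 \approx \sum_{i=1}^n \|\pd_i h\|_2$ on $\dom(D)\cap\mH$, which gives $\|\pd_t f\|_\mY \approx \sum_i \|(P_\mH B)\pd_i f + P_\mH(\pd_i B)f\|_\mY$, and hence bounds $\|\pd_i f\|_\mY$ in terms of the already-established $\|\pd_t f\|_\mY$ and $\|t\pd_i B\|_*$. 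You should adopt this route; as written, your plan for the tangential derivatives is incomplete.
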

Under the hypothesis $\|A-A_0\|_C<\infty$, 
the implicit constant in (i) depends on $n$, $m$, $\|A\|_\infty$, $\kappa$.
In (ii) and (iii), under the smallness hypothesis on $\| A-A_0 \|_C$, 
which depends on $n$, $m$, $\|A\|_\infty$, $\kappa$ 
(and in (ii) also on the ``well-posedness constant'' for $A_0$),
the implicit constants depend on $n$, $m$, $\|A\|_\infty$, $\kappa$
(and in (iii) also on the regularity assumptions on $A$).
However, some inequalities are true without smallness and/or well-posedness as the reader can check on reading details in Section \ref{sec:neu}.
Note that in (ii), the uniqueness holds in the class defined by $\|\tN(\nabla_{t,x}u)\|_2<\infty$.

For the Dirichlet problem, our main result is the following, 
including a rigidity result of solutions with square function estimates.

\begin{thm}   \label{thm:DirLip}
   Consider coefficients $A\in L_\infty(\R^{1+n}_+; \mL(\C^{(1+n)m}))$ which are strictly accretive on $\mH$.
\begin{itemize}
\item[{\rm (i)}]
  {\rm {\em A priori} representation and estimates:} 
  Assume that $u\in W^{1,\loc}_2(\R^{1+n}_{+}, \C^m)$ has gradient bounds
  $\int_0^\infty \|\nabla_{t,x}u_t\|_2^2 tdt<\infty$  and satisfies
 (\ref{eq:divform}) in $\R^{1+n}_+$ distributional sense.
 If there exists $t$-independent measurable coefficients $A_0$ 
 such that $\| A-A_0 \|_C <\infty$, then $u= \hat u+c$ almost everywhere,
 for a unique $\hat u\in C(\R_+; L_2(\R^n;\C^m))$ and constant $c\in \C^m$.
 Identifying the functions $u$ and $\hat u+c$, it has 
 $L_2$ limits 
$$
  \lim_{t\to 0} \| u_t -\hat u_0-c \|_2 =0=
   \lim_{t\to \infty} \| u_t -c\|_2,
$$
for some $\hat u_0\in L_2(\R^n;\C^m)$, and we have estimates
$$
\max(\|\tN(\hat u)\|^2_2, \sup_{t>0}\|\hat u_t \|^2_2)\lesssim \int_0^\infty \|\nabla_{t,x}u\|_2^2 tdt.
$$

\item[{\rm (ii)}]
  {\rm Well-posedness:}
  By the Dirichlet problem with coefficients $A$ (or $A_0$) being well-posed, we mean that 
  given $\bphi\in L_2(\R^n;\C^m)$, there is a unique $u\in W^{1,\loc}_2(\R^{1+n}_{+}, \C^m)$
  having gradient bounds and solving (\ref{eq:divform}), with coefficients $A$ (or $A_0$), 
  as in (i), and trace $u_0= \bphi$.
  
  The following perturbation result holds.
  If the Dirichlet problem for $A_0$ is well-posed,
  then there exists $\epsilon>0$ such that if $\| A-A_0 \|_C <\epsilon$, then the Dirichlet
  problem is well-posed for $A$. 
  Moreover, these solutions $u$ have estimates
 $$
   \|\tN(u)\|_2^2 \approx \sup_{t>0}\|u_t \|_2^2 \approx
   \int_0^\infty \|\nabla_{t,x} u\|_2^2 tdt \approx \|\bphi\|_2^2.
 $$
\end{itemize}
\end{thm}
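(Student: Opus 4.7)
The plan is to use the algebraic reduction to the first order system $\partial_t f+DBf=0$ with $f=\nabla_A u$ that the paper inherits from \cite{AAM}, together with the operational calculus bounds for the singular integral $S_A$ established in Section~\ref{sec:saest}. Split $B=B_0+(B-B_0)$, where $B_0$ corresponds to the $t$-independent $A_0$ provided by Lemma~\ref{lem:carlesontrace}; the hypothesis $\|A-A_0\|_C<\infty$ translates into a Carleson control on $B-B_0$. The operator $DB_0$ has a bounded holomorphic functional calculus on $\overline{\ran\, D}$, and the semigroup $e^{-t|DB_0|}$ will be the unperturbed building block. For the Dirichlet problem it is natural to use the dual scheme involving $B_0D$, acting on the potential $u$ rather than on $\nabla_A u$.

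For part~(i), starting from a weak solution $u$ with $\int_0^\infty\|\nabla_{t,x}u_t\|_2^2\,tdt<\infty$, I would first show that $f=\nabla_A u\in L_2(\R^{1+n}_+,tdtdx)$ satisfies $\partial_t f+DBf=0$ distributionally, and then apply the representation formula produced by the operational calculus of Section~\ref{sec:saest} to express $f$ as $e^{-t|DB_0|^+}h+S_A(B-B_0)f$ on $\overline{\ran\, D}$ for a unique $h$ and with the standard sign convention. From $\int\|\nabla_{t,x}u\|_2^2 tdt<\infty$ and the $S_A$-bounds, $h$ lies in the Hardy space associated to $DB_0$, and the resulting formula for the tangential part of $f$, integrated in $x$, produces a distinguished continuous representative $\hat u\in C(\R_+;L_2)$ with prescribed $L_2$ limits at $0$ and $\infty$; the remaining freedom is exactly the constant $c\in\C^m$. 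The decay $\|u_t-c\|_2\to 0$ as $t\to\infty$ comes from the decay built into the Hardy subspace, while the NTM bound $\|\tN(\hat u)\|_2\lesssim\|\nabla_{t,x}u\|_{L_2(tdt;L_2)}$ follows by combining the Littlewood--Paley square function estimates for $DB_0$ with the weighted maximal regularity bound for $S_A$.

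For part~(ii), when $A=A_0$ the well-posedness assumption is, by the preceding representation, exactly the invertibility of the boundary map from the $DB_0$-Hardy data $h$ to the Dirichlet trace in $L_2(\R^n;\C^m)$. For $A$ close to $A_0$ in $\|\cdot\|_C$, the representation $f=e^{-t|DB_0|^+}h+S_A(B-B_0)f$ is solved by a Neumann series: the operator $S_A$ composed with multiplication by $B-B_0$ has norm controlled by $C\|A-A_0\|_C$ on the relevant solution space thanks to Section~\ref{sec:saest}, so choosing $\epsilon$ small makes it a strict contraction and the boundary map for $A$ is invertible as a small perturbation of the one for $A_0$. Uniqueness is automatic from part~(i): any competitor with the same Dirichlet trace $\bphi$ differs by a solution with zero trace and square-function bound, whose representation forces $h=0$ and $c=0$. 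The estimates $\|\tN(u)\|_2^2\approx\sup_t\|u_t\|_2^2\approx\int_0^\infty\|\nabla_{t,x}u\|_2^2\,tdt\approx\|\bphi\|_2^2$ then follow by chaining the a priori inequality from (i), the lower bound from the invertibility of the boundary map, and the obvious $\sup_t\|\hat u_t\|_2\le\|\tN(\hat u)\|_2$.

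The main obstacle I expect is the a priori NTM estimate $\|\tN(\hat u)\|_2\lesssim\|\nabla_{t,x}u\|_{L_2(tdt;L_2)}$ in part~(i): this is the new inequality highlighted in the introduction, and it is delicate because $u$ is reconstructed from its gradient, the operator $D$ is not injective, and the coefficients depend on $t$, so one cannot simply invoke functional calculus of $DB_0$. Handling this requires the weighted maximal regularity bounds on $S_A$ from Section~\ref{sec:saest}, together with careful control of the interplay between the Hardy projection associated to $DB_0$ and the Carleson perturbation $B-B_0$; this is where the operational (rather than merely functional) calculus framework is essential.
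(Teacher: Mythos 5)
Your high-level strategy matches the paper's: reduce to the first-order system $\partial_t f+DBf=0$, use the $B_0D$ scheme for the Dirichlet problem, employ the operational-calculus bounds on $S_A$ from Section~\ref{sec:saest}, and solve the perturbed equation by a Neumann series when $\|A-A_0\|_C$ is small. Part~(ii) as you sketch it is essentially the paper's Corollary~\ref{cor:dirmain}. However, part~(i) has two genuine gaps.

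First, for $f\in\mY$ the free term in the integral equation is \emph{not} an arbitrary Hardy-space element $e^{-t|DB_0|}h$: the weighted square-function estimate $\int_0^\infty\|e^{-t|DB_0|}h\|_2^2\,tdt\approx\|D^{-1}h\|_2^2$ forces $h$ to lie in $\ran(D)$, i.e.\ $h=D\tilde h^+$ with $\tilde h^+\in\tE_0^+L_2$, so the representation proved in Theorem~\ref{thm:inteqforDir} is $f_t=De^{-t\tilde\Lambda}\tilde h^+ + S_A f_t$ with $\tilde\Lambda=|B_0D|$. Saying merely that ``$h$ lies in the Hardy space associated to $DB_0$'' misses this crucial constraint and is insufficient to place the free evolution in $\mY$.

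Second, the step ``the resulting formula for the tangential part of $f$, integrated in $x$, produces a distinguished continuous representative $\hat u\in C(\R_+;L_2)$'' is not an argument and does not work as stated: integrating $f_\ta=\nabla_x u$ in $x$ neither produces an $L_2(\R^n)$-valued function nor yields continuity in $t$ or the limits at $0$ and $\infty$. The paper's key device is to factor $D$ out of the integral equation and construct the vector-valued potential $v_t:=e^{-t\tilde\Lambda}\tilde h^+ + \tS_A f_t$, where $\tS_A$ is the integral operator (\ref{def:tildeSA}) satisfying $D\tS_A=S_A$ and, crucially, $\tS_A:\mY\to C_b(\overline{\R_+};L_2)$ is bounded (Proposition~\ref{prop:endpointdir}). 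One then checks, via the ODE (\ref{eq:diffeqv}) for $v$, that $\partial_t v_\no=\partial_t u$ and hence $u=c-v_\no$ almost everywhere (Theorem~\ref{thm:Ysols}). This is exactly what produces the continuous representative $\hat u$, the constant $c$, and the $\sup_{t>0}\|\hat u_t\|_2$ bound; your sketch has no substitute for it. You correctly flag the NTM estimate $\|\tN(\hat u)\|_2\lesssim\|\nabla_{t,x}u\|_\mY$ as the hard part, but ``combining Littlewood--Paley and weighted maximal regularity'' is far too thin: the paper's proof (Theorem~\ref{thm:dirNT} and Lemma~\ref{lem:dirNT}) requires passing to an $L_p$-averaged non-tangential maximal function with $p<2$, $L_q$ off-diagonal estimates for the resolvents of $B_0D$ via \v{S}ne\u{\i}berg's lemma, and the Coifman--Meyer--Stein tent-space duality, none of which appear in your proposal.
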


Say that  a function $w\in L_{2}^{\loc}(\R_{+}, L_{2})$ vanishes at $\infty$ in $L_{2}$ sense if $\lim_{t\to \infty}\|w_{t}\|_{2}=0$. For the solutions $u$ as in (i), we see that the following three statements are equivalent:   $u$ vanishes at $\infty$ in $L_{2}$ sense,  $u\in C(\R_{+},L_{2}(\R^n;\C^m))$,  $u_{0}\in L_2(\R^n;\C^m)$. So
 in (ii), the boundary condition $u_0\in L_2(\R^n;\C^m)$ forces $c=0$ and $u=\hat u$.
 
Under the hypothesis $\|A-A_0\|_C<\infty$, 
the implicit constants in (i) depend on $n$, $m$, $\|A\|_\infty$, $\kappa$.
In (ii), under the smallness hypothesis on $\| A-A_0 \|_C$, 
which depends on $n$, $m$, $\|A\|_\infty$, $\kappa$ and the ``well-posedness constant'' for $A_0$,
the implicit constants depend on $n$, $m$, $\|A\|_\infty$, $\kappa$.
However, some inequalities are true without smallness and/or well-posedness as the reader can check on reading details in Section \ref{sec:dirichlet}.
Note that in (ii), uniqueness holds in the class defined by $\int_0^\infty\|\nabla_{t,x}u\|_2^2 tdt<\infty$.

As mentioned briefly in the introduction, 
the hypothesis on well-posedness with $t$-independent
coefficients $A_0$ is satisfied, for all three BVPs, for 
Hermitean coefficients, \textit{i.e.}\,  $A_0(x)^*= A_0(x)$, 
for block form coefficients, \textit{i.e.}\,  $(A_0)_{\no\ta}=0= (A_0)_{\ta\no}$, and for 
constant coefficients, \textit{i.e.}\,  $A_0(x)=A_0$, as well as for
sufficiently small $t$-independent $L_\infty(\R^n; \mL(\C^{(1+n)m}))$ perturbations thereof.
This was proved in \cite[Theorem 2.2]{AAM}.
That the notions of well-posedness of these BVPs used in \cite{AAM} coincide with the ones here, for 
$t$-independent coefficients,
follows from Corollaries~\ref{eq:WPcompatNeu} and \ref{eq:WPcompatDir}.

Note that we do not assume pointwise bounds on the solutions, hence we use $\tN$ instead 
of the usual non-tangential maximal function.

When $m=1$ and $A$, $A_0$ are real symmetric (and $\R^{1+n}_+$ 
replaced by the unit ball),
Theorem~\ref{thm:NeuLip}(ii) is in \cite{KP}, and Theorem~\ref{thm:DirLip}(ii) is in \cite{D}
(and \cite{DJK} for the square function estimate).
The rest of Theorems~\ref{thm:NeuLip} and \ref{thm:DirLip} is mostly new.
In Section~\ref{sec:roadmap}, a more detailed road map to the proofs is given.

\begin{proof}[Proof of Theorems~\ref{thm:NeuLip} and \ref{thm:DirLip}]
  For the Neumann and regularity problems in $\R^{1+n}_+$, part (i) follows from Theorems~\ref{thm:inteqforNeu} and \ref{cor:inteqforNeu},
  part (ii) follows from Corollary~\ref{cor:neuregmain}, and part (iii) is proved in Theorem~\ref{thm:neuQE}.
  
 For the Dirichlet problem in $\R^{1+n}_+$, part (i) follows from Theorems~\ref{thm:inteqforDir}
 and \ref{thm:Ysols},
 and part (ii)
  follows from Corollary~\ref{cor:dirmain}, except for the estimate of the non-tangential maximal function, 
  which is proved in Theorem~\ref{thm:dirNT}.
\end{proof}

We end this section with a remark on the Lipschitz invariance of the above results.
Let $\Omega\subset\R^{1+n}$ be a domain which is Lipschitz diffeomorphic to $\R^{1+n}_+$,
and let $\rho: \R^{1+n}_+\to \Omega$ be the Lipschitz diffeomorphism. Denote the boundary
by $\Sigma:= \partial \Omega$ and the restricted boundary Lipschitz diffeomorphism
by $\rho_0: \R^n\to \Sigma$.

Given a function $\tilde u: \Omega\to \C^m$, we pull it back to 
$u:= \tilde u\circ \rho: \R^{1+n}_+\to \C^m$.
By the chain rule, we have $\nabla_{t,x} u= \rho^* (\nabla_{t,x} \tilde u)$, where
the pullback of an $m$-tuple of vector fields $f$, is defined as
$\rho^*(f)(x)^\alpha:= \underline{\rho}^t (x) f^\alpha(\rho(x))$, with $\underline{\rho}^t$
denoting the transpose of Jacobian matrix $\underline{\rho}$.
If $\tilde u$ satisfies $\divv_{t,x} \tilde A \nabla_{t,x} \tilde u=0$ in $\Omega$, 
with coefficients $\tilde A\in L_\infty(\Omega;\mL(\C^{(1+n)m}))$, then $u$ will satisfy 
$\divv_{t,x} A \nabla_{t,x} u=0$ in $\R^{1+n}_+$, where 
$A\in L_\infty(\R^{1+n}_+;\mL(\C^{(1+n)m}))$ is defined as
\begin{equation}   \label{eq:pullbackA}
  A({\bf x}):= |J(\rho)({\bf x})| (\underline{\rho}({\bf x}))^{-1} \tilde A(\rho({\bf x})) (\underline{\rho}^t({\bf x}) )^{-1}, 
  \qquad {\bf x}\in \R^{1+n}_+.
\end{equation}
Here $J(\rho)$ is the Jacobian determinant of $\rho$.

The boundary conditions on $\tilde u$ on $\Sigma$ translate in the following way to 
boundary conditions on $u$ on $\R^n$.
\begin{itemize}
\item The Dirichlet condition $\tilde u= \tilde \bphi$ on $\Sigma$
is equivalent to the Dirichlet condition $u= \bphi$ on $\R^n$, 
where $\bphi := \tilde \bphi\circ \rho_0 \in L_2(\R^n;\C^m)$.
\item The Dirichlet regularity condition $\nabla_\Sigma \tilde u= \tilde \bphi$ on $\Sigma$ ($\nabla_\Sigma$
denoting the tangential gradient on $\Sigma$),
is equivalent to $\nabla_x u= \bphi$ on $\R^n$, where 
$\bphi := \rho_0^*(\tilde \bphi) \in L_2(\R^n;\C^{nm})$.
\item The Neumann condition $(\nu, \tilde A\nabla_\Omega \tilde u)= \tilde \bphi$ on $\Sigma$
(contrary to tradition, $\nu$ being the inward unit normal vector field on $\Sigma$)
is equivalent to $(e_0, A\nabla_{t,x} u)= \bphi$ on $\R^n$, where 
$\bphi := |J(\rho_0)| \tilde \bphi\circ \rho_0 \in L_2(\R^n;\C^{m})$.
\end{itemize}

In this way the Dirichlet/regularity/Neumann problem with coefficients $\tilde A$ in 
the Lipschitz domain $\Omega$ is equivalent to the Dirichlet/regularity/Neumann problem 
with coefficients $A$ in the half space $\R^{1+n}_+$,
and it is straightforward to extend Theorems~\ref{thm:NeuLip} and \ref{thm:DirLip} to
the Lipschitz domain $\Omega$.

%
%
%
%
%
\section{Road map to the proofs}   \label{sec:roadmap}

For the reader's convenience, we give in this section an informal explanation of the main ideas behind the methods and
the proofs of Theorems~\ref{thm:NeuLip} and \ref{thm:DirLip}. In particular, the precise definitions of classes of solutions will be given later. 
Our basic idea for constructing solutions $u$ to the divergence form
equation (\ref{eq:divform}) in $\R^{1+n}_+$ is to consider it as a first order system with the 
gradient $\nabla_{t,x} u$ as the unknown function.
In fact, solving for the $t$-derivatives in the equation, the divergence form equation for $u$ becomes
a vector-valued ODE 
$$
  \pd_t (\nabla_{t,x} u) + T_A (\nabla_{t,x} u) =0,
$$
where $T_A$ is an operator only involving the first order derivatives along $\R^n$ and multiplication by entries of $A(t,x)$.
The divergence form equation was first studied through this ODE in \cite{AAH}.
However, it turns out that if one instead of $\nabla_{t,x} u$ takes the conormal gradient $\nabla_A u$ defined by \eqref{eq:conormalgrad} as the unknown,
then the corresponding operator $T_A$ has a simpler structure; the ODE reads
\begin{equation}   \label{eq:ODEroadmap}
  \pd_t  f+ D B f =0,\qquad \text{with } f:= \nabla_A u,
\end{equation}
where 
$
  D:= 
    \begin{bmatrix} 0 & \divv_x  \\ 
     -\nabla_x & 0 \end{bmatrix}
$
and $B$ 
is a second strictly accretive coefficient matrix determined by $A$.
This was the key discovery in \cite{AAM} when coefficients $A$ do not depend on $t$. 
This carries over to $t$-dependent coefficients and as this result is central to us, we give full proof
of it in Proposition~\ref{prop:divformasODE}.

\subsection{The Neumann and regularity problems}   \label{sec:roadNeu}

The first order approach is most natural for solving the Neumann and regularity BVPs,
since these boundary conditions are conditions on the conormal gradient $f$, not on the potential $u$.
Indeed, the Neumann BVP means that the normal part $(f_0)_\no= \pd_{\nu_A} u|_{\R^n}$ is given at the boundary $t=0$,
whereas the regularity condition is that the tangential part $(f_0)_\ta= \nabla_x u|_{\R^n}$ is given.
Note that both BVPs mean that ``one half'' of the function $f_{0}$ is prescribed.
This is natural for a first order elliptic equation.

On the other hand, the set of all traces $f_0= f|_{\R^n}$ of solutions $f$ to the equation 
$\pd_t f+ DB f=0$ in $\R^{1+n}_+$, with appropriate estimates, is a subspace 
of $L_2=L_2(\R^n; \C^{(1+n)m})$ which we denote $E_A^+ \mH$.
Here the reader should have the classical situation in mind, where (\ref{eq:ODEroadmap}) 
is the 
Cauchy--Riemann equations and $E_A^+\mH$ is the upper Hardy subspace of $L_2$.
Just like in this classical situation, it turns out that for $t$-independent coefficients
and small perturbations thereof, $E_A^+\mH$ is a closed proper subspace,
being ``one half'' of $L_2$, and there is a bounded Hardy type projection $E_A^+$ onto
$E_A^+\mH$. 
Moreover, there is a Cauchy type reproducing formula for the solution $f$ to the ODE, given 
$f_0\in E_A^+\mH$. Hence there is a one-to-one correspondence between solutions $f$ and their traces
$f_0\in E_A^+\mH$.

Proving these facts for small perturbations of $t$-independent coefficients is the main work in this paper. 
For $t$-independent coefficients, this result is in \cite{AAM}.
Before explaining the proofs in more detail,
assume for a moment the stated properties of $E_A^+$ and $E_A^+\mH$, in order to explain
the implications for BVPs.
The unique solvability of the Neumann BVP means that for each boundary data
$\bphi\in L_2(\R^n;\C^{m})$ there is a unique $f\in E_A^+\mH$ such that $f_\no=\bphi$.
Hence well-posedness of the Neumann BVP is equivalent to 
$$
   E_A^+ \mH\to L_2(\R^n;\C^m): f\mapsto f_\no
$$
being an isomorphism. Similarly well-posedness of the regularity BVP is equivalent to 
$$
   E_A^+ \mH\to \sett{g\in L_2(\R^n;\C^{nm})}{\curl_x g=0}: f\mapsto f_\ta
$$
being an isomorphism.
Even for $t$-independent coefficients, these maps are not always invertible.
For counter examples   based on  \cite{KR} in this  context,  see \cite{AxNon}. However, we mentioned  three important classes of $t$-independent coefficients
where techniques are available to prove invertibility.

Concerning this approach to solving BVPs, it should be noted that the problem is divided
into two parts: (i) understanding representation formulas and the trace space $E_A^+\mH$ for 
solutions to the differential equation,
and (ii) understanding the relation between the full trace space $E_A^+ \mH$ and the boundary condition
(Neumann or regularity).
Carefully note  that (i) only  involves the differential equation and not the boundary conditions.
This is of great importance, since it means that it suffices to study the ODE (\ref{eq:ODEroadmap}), 
and explains why the trace spaces $E_A^+\mH$ may be 
well behaved even when the BVPs fail to be well-posed.
The main harmonic analysis work goes into proving that the projections $E_A^+$ are bounded
for all complex $t$-independent coefficients, and small $t$-dependent perturbations thereof. 
From this it is deduced that the projections $E_A^+$ depend continuously on $A$
in a certain Carleson sense, and 
as a consequence in Corollary~\ref{cor:neuregmain} 
that well-posedness of the Neumann and regularity problems is stable 
under small perturbations of the coefficients.

We next explain our methods for solving the ODE, \textit{i.e.}\,  solving problem (i). 
For this we study (\ref{eq:ODEroadmap}), where we first consider $t$-independent
coefficients $B= B_0$, and we write $E_A^+= E_0^+$.
In this case, we view $DB_0$ as an unbounded operator in $L_2(\R^n;\C^{(1+n)m})$,
and at a first glance the solution to (\ref{eq:ODEroadmap}) with initial/boundary data $f_0$ seems
to be $f_t= e^{-tDB_0} f_0$. 
However, the problem is that $DB_0$ is not a sectorial operator, but instead bisectorial, \textit{i.e.}\,  its spectrum 
is contained in a double sector around the real axis. 
This indefiniteness means that the operators $e^{-tDB_0}$ are not well defined on 
$L_2(\R^n;\C^{(1+n)m})$ for any $t\ne 0$.
Another technical problem is that $DB_0$ has an infinite
dimensional null space.
The fact is that there are topological splittings 
$$
  L_2= \mH\oplus \nul(DB_0)= \Big( E_0^+ \mH \oplus E_0^-\mH \Big) \oplus \nul(DB_0),
$$
where $\mH=\clos{\ran(DB_0)}= \clos{\ran(D)}$ is the closure of the range.
The splitting of $\mH$ into the spectral subspace $E_0^+\mH$ for the sector in the right half plane
and the spectral subspace $E_0^-\mH$ for the sector in the left half plane is a deep result, and builds on
the Kato square root problem as discussed in the introduction.
This proof also shows that $DB_0$ has square function estimates, 
which in particular shows that $-DB_0$ generates a bounded holomorphic semigroup in $E_0^+\mH$,
and that $DB_0$ generates a bounded holomorphic semigroup in $E_0^-\mH$.

Given any $f_0\in E_0^+\mH$, differentiation as seen in \cite{AAM} shows that the generalized 
Cauchy reproducing formula  
$$
f=C_{0}^+ f_{0},
$$
with $C_{0}^+f_{0}(t,x)= (e^{-t DB_0}E_0^+ f_0)(x)$, 
yields a solution to (\ref{eq:ODEroadmap}) with trace $f_0$. 
Conversely, given a solution $f$ to (\ref{eq:ODEroadmap}), it is a fact that $f_t$ belongs
to the range $\mH$ for  any conormal gradient $f$ and $t>0$.
(Note that it follows from (\ref{eq:ODEroadmap}) that $\pd_t f_t \in \mH$.)
We apply the projections $E_0^\pm$ and suitable exponentials to the equation, giving
$$
  \begin{cases}
     \pd_s(e^{-(t-s) DB_0} E_0^+ f_s) =0, & \qquad  s\in (0,t), \\
     \pd_s(e^{(s-t) DB_0} E_0^- f_s) =0, & \qquad s\in (t,\infty).
  \end{cases}
$$
Integration with limits $\lim_{s\to\infty} f_s=0$ and $\lim_{s\to 0} f_s= f_0$ indicates that 
the trace $f_0$ belongs to the subspace $E_0^+\mH$ and that the generalized 
Cauchy reproducing formula 
$
f=C_{0}^+ f_{0}$ holds. This converse and existence of the limits are   shown in Corollary \ref{eq:WPcompatNeu}. 
For further details concerning the relation between $C_0^+$ and the classical Cauchy integral,
see \cite[Ex. 1.2]{AAH} and \cite[Thm. 2.1]{AxNon}.
Note that $C_0^+ f_0=0$ for all $f_0\in E_0^- \mH$.

For  $t$-dependent perturbations $B$ of a given $t$-independent coefficient matrix
$B_0$, we write the ODE as $\pd_t f + DB_0 f_t= D\E_t f_t$, with $\E:= B_0-B$.
The above argument now gives 
$$
  \begin{cases}
     \pd_s(e^{-(t-s) DB_0} E_0^+ f_s) = e^{-(t-s)DB_0}E_0^+ D\E_s f_s, &  \qquad s\in (0,t), \\
     \pd_s(e^{(s-t) DB_0} E_0^- f_s) = e^{(s-t)DB_0}E_0^- D\E_s f_s, & \qquad s\in (t,\infty),
  \end{cases}
$$
and integration and subtraction of the equations give the integral equation
\begin{equation}   \label{eq:inteqroadmap}
  f= C_{0}^+ h^+ + S_Af, 
\end{equation}
for some $h^+ \in E_{0}^+\mH$ 
(which will be shown to be $h^+= E_0^+ f_0$)
and where $S_A$ is the integral operator given by
\begin{equation}   \label{eq:firstformalSAdefn}
    S_A f_t :=  \int_0^t e^{-(t-s)DB_0} E_0^+ D \E_s f_s ds - \int_t^\infty e^{(s-t)DB_0} E_0^- D\E_s f_s ds,  \qquad t>0.
\end{equation}
To construct solutions to (\ref{eq:inteqroadmap}), it is therefore natural to think of the Picard fixed point theorem. For this, we need an appropriate function space of functions in $\R^{1+n}_{+}$ that contains the free evolution 
$g=C_{0}^+ h^+$ and on which $S_A$ is bounded.  
By \cite{AAM}, the non-tangential maximal function in Definition \ref{defn:NTandC} of $g$ belongs to $L_{2}(\R^n)$. Thus the space $\mX$ defined by $\|\tN(f)\|_{2}<\infty$ is a natural candidate (see Section~\ref{sec:functionspace}).

We turn to a closer look at $S_{A}$,
and that allows us to justify the equivalence between (\ref{eq:inteqroadmap}) and the ODE.
This operator involves forward and backward maximal regularity operators. Usual treatment of maximal regularity is not sufficient for our needs.
A convenient  way  to study $S_A$, in the spirit of functional calculus, is as follows.
Define, for $\lambda\in\C$ with $\re\lambda>0$, the bounded integral operator 
$F(\lambda): L_2(\R_+;\C)\to L_2(\R_+;\C): u=(u_{s})_{s>0} \mapsto F(\lambda)u$ by 
$$
  F(\lambda)u_t:= \int_0^t \lambda e^{-(t-s)\lambda} u_s ds.
$$
By letting $F(\lambda)$ act pointwise in $x\in\R^n$, it defines a bounded operator on 
$L_2(\R_+; \mH)$.
In this space we also have the operator 
$$
 |DB_0|f:= 
  \begin{cases}
     DB_0f, & f\in E_0^+\mH, \\
     -DB_0 f, & f\in E_0^-\mH,
  \end{cases}
$$
by letting it act pointwise in $t\in\R_+$.
The operator $|DB_0|$ is a sectorial operator in $\mH$, hence in $L_2(\R_+; \mH)$,
and on the sector containing its spectrum, $\lambda\mapsto F(\lambda)$
defines an operator-valued holomorphic function.
Similar to Dunford functional calculus, we can apply $(F(\lambda))_\lambda$ to $|DB_0|$,
since $|DB_0|$ commutes with each $F(\lambda)$, and we obtain an operator
$F(|DB_0|)$ on $L_2(\R_+;\mH)$.
An advantage of this method, referred to as {\em operational calculus}, is that boundedness
of the singular integral $f=(f_{s})_{s>0} \mapsto   F(|DB_0|)f$ defined by for all $t>0$,
$$
F(|DB_0|)f_t:= \int_0^t |DB_0|e^{-(t-s)|DB_0|}f_s ds, 
$$
can be easily derived from the square function estimates for $DB_0$, in exactly the same
way as the boundedness of $E_0^\pm$, $e^{-t|DB_0|}$, or more general operators in the 
functional calculus of $DB_0$ is proved.

The upshot of all this is a representation 
\begin{equation}    \label{eq:inteqopcalcroadmap}
   S_A= F(|DB_0|) \hE_0^+\E + F^*(|DB_0|) \hE_0^- \E,
\end{equation}
where $\hE_0^\pm$ are bounded operators such that $E_0^\pm D= (DB_0)\hE_0^\pm$
and $F^*(\lambda)u_{t}:= \int_t^\infty \lambda e^{-(s-t)\lambda} u_s ds$.
Since all operators on the right hand side in (\ref{eq:inteqopcalcroadmap}) 
are bounded on $L_2(\R_+;\mH)$, so is $S_A$.
More generally, this boundedness proof goes through on weighted spaces
$L_2(\R_+, t^\alpha dt;\mH)$ for $|\alpha|<1$. Details are in Section~\ref{sec:holoopcalc}.
However, for $\alpha=-1$, which is the natural scaling for  BVPs with $L_2$ data, boundedness
of $F^*(|DB_0|)$ fails.  Recall nonetheless that the free evolution belongs to $\mX$, 
which contains $L_2(\R_+,  dt/t;L_2)$ as a subspace.
(As compared to the space $\mX$,  $L_2(\R_+, dt/t ;L_2)$ consists of functions that must 
vanish in a certain sense at $t=0$.)  
Thus we can replace $L_2(\R_+, dt/t ;L_2)$ by the larger space $\mX$. To ensure boundedness of $S_A$ on  $\mX$, we still need additional control on $\E$ such as finiteness of $\|\E\|_{C}$ of Definition \ref{defn:NTandC} (in fact a possibly smaller quantity suffices) and our fundamental estimate is
\begin{equation}   
  \|\tN(S_A f)\|_{2} \lesssim \| A-A_0 \|_C \|\tN(f)\|_2,  \label{eq:Carlesoncontrol} 
  \end{equation}
obtained  from the chain
$$
  \mX\stackrel\E\longrightarrow L_2(\R_+, dt/t ;L_2) \stackrel{\hE_0^-}\longrightarrow
  L_2(\R_+, dt/t ;\mH) \stackrel{F^*(|DB_0|)}\longrightarrow \mX.
$$
See Lemma~\ref{lem:Carleson} for this modified Carleson
embedding theorem in the first arrow and 
Theorem~\ref{thm:modelendpointbdd} for remaining details. This allows us to prove that 
the trace of $f$ at $t=0$ exists in a certain sense and one sees that $E_{0}^+f_{0}=h^+$.  Details of this representation of solutions $f$ are in Theorem~\ref{thm:inteqforNeu}.

Now,  smallness of this Carleson norm  implies smallness of $\|S_A\|_{\mX\to \mX}$,
in which case  (\ref{eq:inteqroadmap}) rewrites as a Cauchy reproducing formula
$$
  f= (I-S_A)^{-1} C_{0}^+  h^+,
$$
by inverting $I-S_A$ in $\mX$.
Conversely, for any $h^+\in E_{0}^+\mH$, the Picard iteration scheme applies to produce a solution
$f= (I-S_A)^{-1} C_{0}^+ h^+$ to (\ref{eq:inteqroadmap}) whose trace $f_{0}$ is determined by a  linear relation  $E_A^+ h^+= f_0$.

This provides us with the Hardy type projections $E_A^+$ needed to solve the Neumann and 
regularity problems as described above.
Details are in Corollary~\ref{cor:neuregmain}.
Additional \textit{a priori} square function estimates on solutions, under further regularity assumption on the coefficients, can be shown. They are proved in Section~\ref{sec:sqfcnests}.

\subsection{The Dirichlet problem} \label{sec:roadDir}

For the Dirichlet problem with $L_2$ boundary data, it is not obvious that the above first
order approach applies. Nevertheless it is possible to adapt the arguments and we describe 
this now. 
Instead of (\ref{eq:ODEroadmap}) for the conormal gradient $f$, we want to work with the
potential $u$ at the $L_2$ level.
The  heuristic is that $u$ solves the divergence form equation if and only if there
is a vector-valued potential $v$ such that
\begin{equation}  \label{eq:BDeqroadmap}
   \pd_t v + BDv=0
\end{equation}
and $u= -v_\no$.
(The minus sign is just for convenience.)
On one hand, we have seen that $u$ is a divergence form equation
if and only if $(\pd_t +DB)(\nabla_A u)=0$.
On the other hand, applying $D$ to the ODE (\ref{eq:BDeqroadmap})
gives $(\pd_t +DB)(Dv)=0$.
Thus the claim amounts to rewrite the conormal gradient $f= \nabla_A u$ as
$f=Dv$. (Note that the latter equation is pointwise in $t$.)
Evaluating the tangential part $f_\ta$ shows the relation $u= -v_\no$.
Once again the reader should compare this to the classical situation of 
Cauchy--Riemann's equations,
in which case $u$ would be a harmonic function, and $v$ the analytic function having $u$ and
its harmonic conjugate function as imaginary and real parts. 

We mention that the $v$ we construct via the conormal gradient again does not quite satisfy \eqref{eq:BDeqroadmap} because of the null space of $D$, but that is not a problem as we only need its normal part which does not depend on this null space.  Having this heuristic in mind we proceed as before in two steps: (i) understanding representation of solutions, and (ii)  understanding its trace and the relation to solvability.  We mostly concentrate on (i) as (ii) will follow right away.

For $t$-independent coefficients $B=B_0$, we note that $B_0D$ is another bisectorial
operator, just like $DB_0$. It is not injective (the null space is $\mH^\perp$) and
the spectral projections $\tE_0^\pm$ for the sectors in the right and left half planes
split the range $B_0\mH$ into two closed subspaces.
Similar to the argument for $DB_0$, we have that solutions obeying a certain square function estimate to 
the ODE (\ref{eq:BDeqroadmap}) all are of the form 
$v= \widetilde C_{0}^+ v_{0} + c$ for a unique $v_{0}\in \tE_0^+ L_2$ and some $c\in \C^{(1+n)m}$, 
where $\widetilde C_{0}^+$ is defined by $ (\widetilde C_{0}^+ v_0)(t,x)=(e^{-t B_0D}\tE_0^+ v_0)(x)$.
Thus, for $t$-independent coefficients, we have the representation
$$
  u= c  -(\widetilde C_{0}^+ v_{0})_\no, \qquad v_0\in \tE_0^+ L_2, c\in \C^m,
$$
for solutions $u$ to \eqref{eq:divform} obeying a square function estimate. This  is in Corollary \ref{eq:WPcompatDir} and improves \cite{AAM} where this was shown for a smaller class of solutions.  Note that the conormal gradient of $u$ can be calculated as $f=D\widetilde C_{0}^+v_{0}$.

For  $t$-dependent perturbations $B$ of a given $t$-independent coefficient matrix
$B_0$, suppose we are given a solution $u$. Since we  do not know $v$ yet, we go via $f= \nabla_{A}u$ (which in the end will be $Dv$) to solve (\ref{eq:BDeqroadmap}).
Since $f$ satisfies $(\pd_t +DB)f=0$,
as in Section~\ref{sec:roadNeu}, we look for  a functional setting in which we obtain an equation of the form
$
f_t = e^{-tDB_{0}} h^+ + S_A f_t
$
for some $h^+$  in a positive spectral space
so as to again apply the Picard fixed point theorem to construct solutions. The main difference here is in the free evolution term $g_{t}= e^{-tDB_{0}} h^+$. Indeed, $f_{0}$, which we should relate to $h^+$, may only be defined in a space of Sobolev type with regularity index -1 (and in fact, we do not care about $f_{0}$). 

Square function estimates   (see \cite{AAM}) lead us to the solution. Indeed, we have 
$\int_0^\infty \|e^{-tDB_{0}} h^+\|_2^2 t dt\approx \|D^{-1} h^+\|_2^2$ provided  $D^{-1} h^+\in  L_2$.  Here $D^{-1}$ is defined as a closed operator as we assumed $h^+$ in a positive spectral space.  Hence  the natural (considering the method) space of conormal gradients  for Dirichlet problems is $\mY:= L_2(\R_+, t dt ;L_2)$. 
Indeed, imposing the free evolution $g$ to be in   $ \mY$ allows us to  obtain  $g=D\widetilde C_{0}^+\tilde h^+$ for  $\tilde h^+\in \tE_0^+ L_2$ determined by $\tilde h^+=D^{-1}h^+$. (Note  that $D$ and $B_{0}$ are interchanged.) This is consistent with the $t$-independent case.

The next step  is thus to bound $S_A$ in $\mY$. We use again the operational calculus representation \eqref{eq:inteqopcalcroadmap}. In the space $\mY$,   it is now the operator $F(|DB_0|)$  that fails to be bounded,
but again the additional Carleson control ensures boundedness of $\|S_A\|_{\mY\to\mY}$  and our second fundamental estimate reads
 \begin{equation}   
   \| S_A f \|_{L_2(tdt; L_2)}  \lesssim  \| A-A_0 \|_C \| f \|_{L_2(tdt;L_2)}. \label{eq:Carlesoncontrol2}
\end{equation}
Details are in Proposition~\ref{prop:TAboundsonXY}. This allows us to justify the formal manipulations and to obtain an \textit{a priori} representation of conormal gradients $f=\nabla_{A}u$ in $\mY$ of  solutions $u$  by
\begin{equation}   \label{eq:inteqroadmapDir}   
f  =  D\widetilde C_{0}^+ \tilde h^+ + S_{A}f , \qquad \tilde h^+\in \tE_0^+ L_2.
 \end{equation}

We next want to exhibit the announced vector-valued potential $v$ which must satisfy $f=Dv$. Remark that $D$ being non-injective, there is some freedom in the choice but we basically want to factor out $D$ in \eqref{eq:inteqroadmapDir}.
This is granted for the free evolution and for $S_{A}$ we obtain starting from 
\eqref{eq:firstformalSAdefn}
that $S_A f= D\tS_A f$, where 
\begin{equation}    \label{eq:tildeSA}
  \tS_A f_t:=  \int_0^t e^{-(t-s)B_0D} \tE_0^+ \E_s f_s ds - \int_t^\infty e^{(s-t)B_0D} \tE_0^- \E_s f_s ds.
\end{equation}
All this follows from the intertwining $b(DB_0)D= Db(B_0D)$ of the functional
calculi of $DB_0$ and $B_0D$.
Thus we can set
$$
   v  := \widetilde C_{0}^+ \tilde h^+ + \tS_A f, 
   $$
   and then show  that there exists a constant $c\in \C^m$ such that
   $$
   u =c  -(v)_\no,
$$
for solutions $u$ to \eqref{eq:divform} obeying an initial square function estimate with $f$ being the conormal gradient of $u$.
Again, this is not the only possible choice for $v$ but any other choice  has  identical normal part  and this is what we need
to recover $u$. Moreover,  this choice has good estimates.
Details of this representation of solutions $u$ to the divergence form equation
are in Theorems~\ref{thm:inteqforDir} and \ref{thm:Ysols}. We note that this proves that all solutions
with gradient satisfying a square function estimates are, up to constants, continuous in $t$ with values in $L_{2}(\R^n, \C^m)$.
This representation also enables us to show existence of the trace $v_0=\lim_{t\to 0} v_t$ in 
$L_2(\R^n;\C^{(1+n)m})$ and $\tilde h^+$ is determined by $\tilde h^+= \tE_{0}^+ v_{0}$.  Most importantly, the representation allows us to prove non-tangential maximal function estimates in Section~\ref{sec:maxest}. We remark they are \textit{a priori} estimates. 

Upon smallness of the Carleson control, these representations above rewrite 
\begin{align*}
   f & = (I-S_{A})^{-1} D\widetilde C_{0}^+ \tilde h^+,  \qquad \tilde h^+\in \tE_0^+ L_2, \\
   v & = \widetilde C_{0}^+ \tilde h^+ + \tS_A f, \\
   u & =c  -(v)_\no,
\end{align*}
and, conversely, this can serve (via Picard's iteration scheme to obtain the first equation)  as an ansatz to produce a solution $u$ starting from a given $\tilde h^+ \in \tE_0^+ L_2$ and constant $c=0$, and $v_{0} \in L_{2}$ is determined by a linear relation $v_{0}=\tE_{A}^+ \tilde h^+$.  With this ansatz, 
 well-posedness of the Dirichlet problem is seen to be equivalent to 
$$
  \tE_0^+ L_2\to L_2(\R^n;\C^m): \tilde h^+\mapsto u_0= -(v_0)_\no
$$
being an isomorphism.
This allows us to prove stability of well-posedness of the Dirichlet 
problem under small perturbations in Corollary~\ref{cor:dirmain}.

%
%
%
%
%
\section{Integration of the differential equation}    \label{sec:difftoint}

Following \cite{AAM}, we construct solutions $u$ to the divergence form system (\ref{eq:divform}), 
by replacing $u$ by its gradient $g$ as the unknown function.
Consequently (\ref{eq:divform}) for $u$ is replaced by (\ref{eq:firstorderdiv}) below for $g$.
Proposition~\ref{prop:divformasODE} reformulates this first order system (\ref{eq:firstorderdiv})
further, by solving for the $t$-derivatives, as the vector-valued ODE (\ref{eq:firstorderODE}) for the conormal gradient 
$$
  f=\nabla_A u= [ \pd_{\nu_A}u, \nabla_x u ]^t, \qquad \text{where }
  [ \alpha, v]^t := \begin{bmatrix} \alpha \\ v \end{bmatrix}
$$ 
for $\alpha\in\C^m$ and $v\in \C^{nm}$,
and
$\pd_{\nu_A}u:= (A\nabla_{t,x}u)_\perp$ denotes the (inward!) conormal derivative of $u$.

According to the decomposition of $m$-tuples into normal and tangential parts as introduced in
Section~\ref{sec:statement}, we split the matrix as
$$
  A(t,x)= \begin{bmatrix} A_{\no\no}(t,x) & A_{\no\ta}(t,x) \\ A_{\ta\no}(t,x) & A_{\ta\ta}(t,x) \end{bmatrix}.
$$
Note that with our assumption that $A$ be strictly accretive on $\mH$ for a.e. $t>0$, 
the matrix $A_{\no\no}$ is invertible.

\begin{prop}  \label{prop:divformasODE}
  The pointwise transformation 
$$
   A\mapsto \hat A:=    \begin{bmatrix} A_{\no\no}^{-1} & -A_{\no\no}^{-1} A_{\no\ta}  \\ 
     A_{\ta\no}A_{\no\no}^{-1} & A_{\ta\ta}-A_{\ta\no}A_{\no\no}^{-1}A_{\no\ta} \end{bmatrix}
$$
is a self-inverse bijective transformation of the set of bounded matrices which are strictly accretive on $\mH$.

For a pair of coefficient matrices $A= \hat B$ and $B= \hat A$, 
the pointwise map $g\mapsto f=[ (Ag)_\no, g_\ta ]^t$ gives 
a one-one correspondence, with inverse $g= [ (B f)_\no, f_\ta ]^t$,
between solutions $g\in L_2^\loc(\R_+;L_2(\R^n;\C^{(1+n)m}))$ to the equations
\begin{equation}  \label{eq:firstorderdiv}
     \begin{cases}
     \divv_{t,x} (Ag)=0, \\
     \curl_{t,x} g=0
     \end{cases}
\end{equation}
and solutions $f\in L_2^\loc(\R_+;\mH)$ to the generalized Cauchy--Riemann equations
\begin{equation}  \label{eq:firstorderODE}
  \pd_t f+ DB f=0,
\end{equation}
where the derivatives are taken in $\R^{1+n}_+$ distributional sense, and
$
  D:= 
    \begin{bmatrix} 0 & \divv_x  \\ 
     -\nabla_x & 0 \end{bmatrix}.
$
\end{prop}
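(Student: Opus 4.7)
The plan is to verify Part 1 (the map $A \mapsto \hat A$) and Part 2 (the equivalence of systems) separately, both by direct computation exploiting the block structure of $A$ with respect to the normal/tangential decomposition.

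For Part 1, first note that strict accretivity of $A$ on $\mH$ forces the block $A_{\no\no}$ to be pointwise invertible with $\|A_{\no\no}^{-1}\|\le\kappa^{-1}$, since for any $\alpha\in\C^m$ the constant field $[\alpha,0]^t$ lies pointwise in $\mH$, so $\hat A$ is well defined. The cleanest way to read the formula for $\hat A$ is as a \emph{swap} of normal input and output: if $A$ sends $(v_\no,v_\ta)\mapsto(w_\no,w_\ta)$, then $\hat A$ is the linear map sending $(w_\no,v_\ta)\mapsto(v_\no,w_\ta)$, as one checks by solving for $v_\no$ in $w_\no=A_{\no\no}v_\no+A_{\no\ta}v_\ta$ and substituting into $w_\ta=A_{\ta\no}v_\no+A_{\ta\ta}v_\ta$. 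From this picture the self-inverse property $\hat{\hat A}=A$ is manifest, and $\|\hat A\|_\infty\lesssim\|A\|_\infty+\kappa^{-1}$ follows from the explicit block formula. For strict accretivity of $\hat A$ on $\mH$, given $\tilde v=[w_\no,v_\ta]^t\in\mH$ (so $v_\ta$ is curl-free in $x$), set $v:=[v_\no,v_\ta]^t$ with $v_\no:=A_{\no\no}^{-1}(w_\no-A_{\no\ta}v_\ta)$. Then $v\in\mH$ (same tangential part) and $Av$ has normal part $w_\no$, and a short calculation using $\re(a\overline{b})=\re(\overline{a}b)$ on the diagonal contributions gives
\begin{equation*}
  \re\dual{\hat A\tilde v}{\tilde v}=\re\dual{Av}{v}
\end{equation*}
pointwise in $(t,x)$. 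Combining accretivity of $A$ on $\mH$ with the pointwise bound $|\tilde v|^2\le(1+\|A\|_\infty^2)|v|^2$ and integrating over $\R^n$ then yields accretivity of $\hat A$ on $\mH$ with constant $\kappa/(1+\|A\|_\infty^2)$.

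For Part 2, the central pointwise identity is: with $B=\hat A$ and $f:=[(Ag)_\no,g_\ta]^t$, one has $Bf=[g_\no,(Ag)_\ta]^t$, which is immediate from the block formula for $\hat A$. Therefore
\begin{equation*}
  \pd_t f+DBf=\begin{bmatrix}\pd_t(Ag)_\no+\divv_x(Ag)_\ta\\ \pd_t g_\ta-\nabla_x g_\no\end{bmatrix},
\end{equation*}
whose top entry is exactly $\divv_{t,x}(Ag)$, while the bottom entry $\pd_t g_\ta-\nabla_x g_\no=0$ captures those components of $\curl_{t,x}g=0$ that involve the $t$-derivative. The remaining components of $\curl_{t,x}g=0$, namely the tangential curl of $g_\ta$, are equivalent to $f\in\mH$ since $f_\ta=g_\ta$. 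Hence in distributional sense, $g$ solves \eqref{eq:firstorderdiv} if and only if $f$ solves \eqref{eq:firstorderODE} with $f_t\in\mH$ for a.e.\ $t$. Applying the same reasoning with the roles of $A$ and $B$ exchanged (legitimate by $\hat{\hat A}=A$ from Part 1) shows that $g\mapsto f$ and $f\mapsto g=[(Bf)_\no,f_\ta]^t$ are pointwise mutual inverses, yielding the stated one-one correspondence.

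The main obstacle is Part 1, specifically establishing strict accretivity of $\hat A$ on $\mH$ with a constant depending quantitatively only on $\|A\|_\infty$ and $\kappa$; the swap picture together with the pointwise identity $\re\dual{\hat A\tilde v}{\tilde v}=\re\dual{Av}{v}$ accomplish this in one stroke. Once this is in place, Part 2 is a routine distributional computation, since multiplication by $A\in L_\infty$ preserves $L_2^{\loc}$ and all derivatives act componentwise in $(t,x)$.
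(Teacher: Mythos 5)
Your proof is correct and follows essentially the same approach as the paper: the ``swap'' picture for $\hat A$ is precisely the paper's factorization $\hat A=\uA\oA^{-1}$ with $\oA=\begin{bmatrix}A_{\no\no}&A_{\no\ta}\\0&I\end{bmatrix}$ and $\uA=\begin{bmatrix}I&0\\A_{\ta\no}&A_{\ta\ta}\end{bmatrix}$, and your key identity $\re\dual{\hat A\tilde v}{\tilde v}=\re\dual{Av}{v}$ is exactly the paper's $\re(Bf,f)=\re(Ag,g)$ under $f=\oA g$. Part~2 is likewise the same block computation, splitting $\curl_{t,x}g=0$ into its $t$-components (absorbed into the ODE) and the purely tangential component (encoded as $f_t\in\mH$).
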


This was proved in \cite[Section 3]{AAM} for $t$-independent coefficients.
The proof goes through without changes for $t$-dependent coefficients, 
but for completeness we give the proof of this important result.
Note that $\clos{\ran(D)}=\mH$.

\begin{proof} We first look at the correspondence $A\mapsto \hat A=B$. Fix an arbitrary $t>0$ and we write $A$ for $A(t,\cdot)$. 
  From the accretivity of $A$ on $\mH$, it follows that the component $A_{\no\no}$ is pointwise strictly accretive,
  hence invertible, and therefore so is 
  $\oA:= \begin{bmatrix} A_{\no\no} & A_{\no\ta} \\ 0 & I \end{bmatrix}$. Thus, multiplication by $\oA$ is an isomorphism on $\mH$ and,  letting $\uA:= \begin{bmatrix} I & 0 \\ A_{\ta\no} & A_{\ta\ta} \end{bmatrix}$, 
   $B=  \uA\oA^{-1}$ is bounded if $A$
  is so.
  We calculate, for any fixed $g\in \mH$ and $f=\oA g$,
\begin{multline*}
  \re(Bf,f)= \re(\hat A\oA g,\oA g)= 
  \re\left( \begin{bmatrix} I & 0 \\ A_{\ta\no} & A_{\ta\ta} \end{bmatrix}  \begin{bmatrix} g_\no \\ g_\ta \end{bmatrix},
  \begin{bmatrix} A_{\no\no} & A_{\no\ta} \\ 0 & I \end{bmatrix} \begin{bmatrix} g_\no \\ g_\ta \end{bmatrix} \right) \\
  = 
    \re\left( \begin{bmatrix} A_{\no\no} & A_{\no\ta} \\ A_{\ta\no} & A_{\ta\ta} \end{bmatrix}  \begin{bmatrix} g_\no \\ g_\ta \end{bmatrix}, \begin{bmatrix} g_\no \\ g_\ta \end{bmatrix} \right) = \re(Ag,g). 
\end{multline*}
  This shows that $B=\hat A$ is strictly accretive if $A$ is so.
  That $\hat{\hat A}=A$ is straightforward to verify, and this shows that $A$ and $\hat A$ are in one-to-one correspondence.

  Next consider a pair of functions $g$ and $f$ in $L_2^\loc(\R_+;L_2(\R^n;\C^{(1+n)m}))$ such that $f=\oA g$.
  Equations (\ref{eq:firstorderdiv}) for $g$ are equivalent to
\begin{equation}
\begin{cases}
  \pd_t (Ag)_\no + \divv_x( A_{\ta\no} g_\no + A_{\ta\ta}g_\ta) =0, \\
  \pd_t g_\ta - \nabla_x g_\no =0, \\
  \curl_x g_\ta =0.
\end{cases}
\end{equation}
  The last equation is equivalent to $f_t\in \mH$.
  Moreover, using that $(Ag)_\no= f_\no$, $g_\ta= f_\ta$ and
  $g_\no= (Bf)_\no= A_{\no\no}^{-1}(f_\no- A_{\no\ta}f_\ta)$, 
  the first two equations are seen to be equivalent to the equation $\pd_t f+ DB f=0$.
  This proves the proposition.
\end{proof}

\begin{rem}
In terms of the second order equation where $g$ is a gradient and $f$ is the corresponding conormal gradient, the identity $\re(Bf,f)= \re (Ag,g)$ rewrites $ \re(B\nabla_{A} u,\nabla_{A} u)= \re (A\nabla_{t,x} u ,\nabla_{t,x} u)$ for any appropriate $u$ (not necessarily a solution).
\end{rem}

We now want to construct solutions to \eqref{eq:firstorderODE}. 
Let us first recall the situation when $B(t,x)= B_0(x)$ does not depend on the $t$-variable.
In this case, we view $B_0$ as a multiplication operator in the boundary function space
$L_2(\R^n; \C^{(1+n)m})$.
Define closed and open sectors and double sectors in the complex plane by
\begin{alignat*}{2}
    S_{\omega+} &:= \sett{\lambda\in\C}{|\arg \lambda|\le\omega}\cup\{0\}, 
    & \qquad
    S_{\omega} &:= S_{\omega+}\cup(-S_{\omega+}), \\    
    S_{\nu+}^o &:= \sett{\lambda\in\C}{ \lambda\ne 0, \, |\arg \lambda|<\nu},  
    & \qquad  
    S_{\nu}^o &:= S_{\nu+}^o\cup(- S_{\nu+}^o),
\end{alignat*}
and define the {\em angle of accretivity} of $B_0$ to be
$$
   \omega:= \sup_{f\not = 0,f\in\mH} |\arg(B_0f,f)|  <\pi/2.
$$
The method for constructing solutions to the elliptic divergence form system, developed 
in \cite{AAH, AAM}, uses holomorphic functional calculus of the {\em infinitesimal generator}
$DB_0$ appearing in the ODE (\ref{eq:firstorderODE}), and the following was proved.
\begin{itemize}
\item[{\rm (i)}]
The operator $DB_0$ is a closed and densely defined $\omega$-bisectorial operator, 
\textit{i.e.}\, ~$\sigma(DB_0)\subset S_\omega$, where $\omega$ is the angle of accretivity of $B_0$.
Moreover, there are resolvent bounds 
$\|(\lambda- DB_0)^{-1}\| \lesssim 1/ \dist(\lambda, S_\omega)$ when $\lambda\notin S_\omega$.
\item[{\rm (ii)}]
The function space splits topologically as
$$
   L_2(\R^n; \C^{(1+n)m}) = \mH \oplus \nul(DB_0),
$$
and the restriction of $DB_0$ to $\mH=\clos{\ran(D)}$ 
is a closed, densely defined and injective operator with dense range in $\mH$, with same estimates on spectrum and resolvents as in (i).
\item[{\rm (iii)}]
The operator $DB_0$ has a bounded holomorphic functional calculus in $\mH$, 
\textit{i.e.}\, ~for each bounded holomorphic function $b(\lambda)$ on a double sector 
$S_\nu^o$, $\omega<\nu<\pi/2$, the operator $b(DB_0)$ in $\mH$ is bounded with estimates
$$
   \|b(DB_0)\|_{\mH \rightarrow \mH} \lesssim \| b\|_{L_\infty(S_\nu^o)}.
$$
\end{itemize}
For background material on sectorial operators (which is straightforward to adapt to 
bi-sectorial operators) and their holomorphic functional calculi, see \cite{ADMc}.
The construction of the operators $b(DB_0)$ is explained in detail in Section~\ref{sec:abstropcalc},
in the more general case of operational calculus.
The two most important functions $b(\lambda)$ here are the following.
\begin{itemize}
\item
The characteristic functions
$\chi^+(\lambda)$ and $\chi^-(\lambda)$ for the right and left half planes,
which give the generalized {\em Hardy projections} $E_0^\pm:=\chi^\pm(DB_0)$.
\item
The exponential functions $e^{-t|\lambda|}$, $t>0$, which give the operators $e^{-t|DB_0|}$.
Here $|\lambda|:= \lambda\sgn(\lambda)$ and $\sgn(\lambda):= \chi^+(\lambda)-\chi^-(\lambda)$.
\end{itemize}
A key result that we make use of frequently, is that the boundedness of the projections $E_0^\pm$
shows that there is a topological splitting
\begin{equation}     \label{eq:hardysplit}
   \mH= E_0^+ \mH \oplus E_0^-\mH
\end{equation}
of $\mH=\clos{\ran(D)}= \clos{\ran(DB_0)}$ into complementary closed subspaces $E_0^\pm\mH:= \ran(E_0^\pm)$.

We also recall the definition of the generalized Cauchy extension $C_0^+$ from 
Section~\ref{sec:roadmap}.

\begin{prop}\label{prop:Cauchyextension}
The generalized Cauchy extension
$$
f_t=(C_{0}^+f_{0})(t,\cdot):= e^{-t|DB_0|}E_0^+f_0
$$ 
of $f_{0}\in E_0^+ \mH$ gives a solution
  to $\pd_t f+ DB_0 f=0$, in the strong sense $f\in C^1(\R_+; L_2)\cap C^0(\R_+; \dom(DB_0))$,
  with $L_2$ bounds $\sup_{t>0}\|f_t\|_2\approx \|f_0\|_2$ and $L_2$
  limits $\lim_{t\to 0} f_t =f_0$ and $\lim_{t\to\infty} f_t =0$.
\end{prop}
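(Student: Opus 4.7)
The plan is to reduce the statement to standard facts about bounded holomorphic semigroups generated by sectorial operators of angle strictly less than $\pi/2$, taking advantage of the bounded holomorphic functional calculus of $DB_0$ on $\mH$ established in~(i)--(iii). The key remark is that on the subspace $E_0^+\mH$ the operator $|DB_0|$ coincides with $DB_0$, since $\sgn(\lambda)=+1$ on the right sector and the functional calculus is multiplicative. Thus, once we know that $-|DB_0|$ generates a bounded analytic semigroup on $\mH$, every property to be verified can be read off from this semigroup restricted to $E_0^+\mH$, where it is precisely $t\mapsto e^{-tDB_0}$.

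First, I would record that $|DB_0|=DB_0\,\sgn(DB_0)$ is a sectorial operator on $\mH$ of angle $\omega<\pi/2$: its spectrum sits in $S_{\omega+}$ and the resolvent bound follows from the sectoriality and boundedness of $\sgn(DB_0)$ via the functional calculus of $DB_0$. Because $\omega<\pi/2$, an application of the holomorphic functional calculus to the family $\{\lambda\mapsto e^{-t\lambda}\}_{t>0}$ yields a uniformly bounded holomorphic semigroup $e^{-t|DB_0|}$ on $\mH$, with the usual smoothing and differentiation properties: for every $t>0$, $e^{-t|DB_0|}\mH\subset\dom(|DB_0|)=\dom(DB_0)$, the map $t\mapsto e^{-t|DB_0|}g$ lies in $C^1(\R_+;\mH)$ for every $g\in\mH$, and its derivative is $-|DB_0|e^{-t|DB_0|}g$. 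Taking $g=E_0^+ f_0=f_0\in E_0^+\mH$ and using that $|DB_0|$ agrees with $DB_0$ on $E_0^+\mH$ (which follows because $E_0^+\mH$ is invariant under the functional calculus and $\lambda\sgn(\lambda)=\lambda$ on the right sector) yields $\partial_t f_t=-DB_0 f_t$, hence $f\in C^1(\R_+;L_2)\cap C^0(\R_+;\dom(DB_0))$ and $\partial_t f+DB_0 f=0$ in the strong sense.

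For the norm equivalence $\sup_{t>0}\|f_t\|_2\approx\|f_0\|_2$, the upper bound is the uniform boundedness of the semigroup together with the boundedness of $E_0^+$; the lower bound is recovered by letting $t\to0$, once strong continuity at $0$ is established. For the two limits, I would argue via the usual dense-subspace plus uniform-bound scheme for bounded holomorphic semigroups. Concretely, for $h\in\dom(|DB_0|)$ one has $\|e^{-t|DB_0|}h-h\|_2\le\int_0^t\||DB_0|e^{-s|DB_0|}h\|_2\,ds\to0$ as $t\to0$, so strong continuity at $0$ holds on a dense subspace of $\mH$, hence everywhere by the uniform bound; applied to $h=f_0$ this gives $\lim_{t\to0}f_t=f_0$. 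For $t\to\infty$, since $DB_0$ has dense range in $\mH$ by~(ii), and $E_0^+$ commutes with the functional calculus, the subspace $DB_0(\dom(DB_0)\cap E_0^+\mH)$ is dense in $E_0^+\mH$; on an element $h=DB_0 k=|DB_0|k$ of this subspace, $e^{-t|DB_0|}h=\partial_t(-e^{-t|DB_0|}k)$ and a direct functional-calculus estimate gives $\|e^{-t|DB_0|}h\|_2\lesssim\|k\|_2/t\to0$, so the same density-plus-uniform-bound argument yields $\lim_{t\to\infty}f_t=0$.

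The step I expect to be the main obstacle is formally justifying the strong continuity of $t\mapsto e^{-t|DB_0|}$ at $t=0$ on all of $\mH$, rather than only on $\clos{\ran(|DB_0|)}$: in general for non-injective sectorial operators the semigroup is only strongly continuous on the closure of the range, so one must invoke the splitting~\eqref{eq:hardysplit}, the identification of $E_0^+\mH$ as a subspace of $\mH=\clos{\ran(DB_0)}$, and the injectivity of the restriction of $DB_0$ to $\mH$ from~(ii), to conclude that $|DB_0|$ restricted to $E_0^+\mH$ has dense range in $E_0^+\mH$. Everything else is a mechanical application of the functional calculus bounds in~(iii).
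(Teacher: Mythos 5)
Your proof plan is correct. The paper in fact gives no proof of Proposition~\ref{prop:Cauchyextension}: it is stated as a standard consequence of the facts (i)--(iii) recalled just before it (bisectoriality of $DB_0$, the topological splitting $L_2=\mH\oplus\nul(DB_0)$ with $DB_0|_\mH$ injective of dense range, and bounded $H^\infty$-calculus), and the intent is that the reader carry out exactly the semigroup mechanics you spell out. Your key observations --- that $|DB_0|$ is $\omega$-sectorial with $\omega<\pi/2$ and coincides with $DB_0$ on $E_0^+\mH$, that $-|DB_0|$ therefore generates a bounded holomorphic semigroup, and that the two limits follow from the density-plus-uniform-bound scheme (strong continuity at $0$ since $|DB_0|$ restricted to $\mH$ has dense range and $E_0^+$ commutes; decay at $\infty$ via the bound $\|t\lambda e^{-t\lambda}\|_{L_\infty(S^o_{\nu+})}\lesssim 1$) --- are precisely what is needed, and your closing remark on why strong continuity at $0$ holds on all of $E_0^+\mH$ correctly identifies the one point one must not skip. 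The only cosmetic remark: $e^{-t\lambda}\notin\Psi(S^o_{\nu+})$ (no decay at the origin), so the uniform boundedness of $e^{-t|DB_0|}$ is an appeal to the $H_\infty$ extension of the Dunford calculus rather than to the integral formula directly; you already invoke "bounded holomorphic functional calculus," so this is not a gap.
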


Now consider more general $t$-dependent coefficients $B(t,x)$.
Fix some $t$-independent coefficients $B_0$, strictly accretive on $\mH$.
(This $B_0$ should be thought of as the boundary trace of $B$, acting in $\R^{1+n}_+$ independently of $t$.)
To construct solutions to the ODE, we rewrite it as
\begin{equation}   \label{eq:pertODE}
  \pd_t f + D B_0 f = D \E f,\qquad \text{where } \E_t:= B_0-B_t.
\end{equation}
However, while $\pd_t f + D B_0 f =0$ can be interpreted in the strong sense,
(\ref{eq:pertODE}) will be understood in the sense of distributions.
The following proposition rewrites this equation in integral form.
It uses operators $\hE_0^\pm$, defined as 
\begin{equation}   \label{eq:hatEdefn}
\hE_0^\pm := E_0^\pm B_0^{-1} P_{B_0\mH},
\end{equation}
where $P_{B_0\mH}$ denotes the projection onto $B_0\mH$ in the topological splitting $L_2= B_0 \mH\oplus \mH^\perp$
and $B_0^{-1}$ is the inverse of $B_0: \mH\to B_0\mH$.
Beware that $B_0^{-1}$ is not necessarily a multiplication operator and is only defined on 
the subspace $B_0\mH$. Note also that unlike $E_0^\pm$, $\hE_0^\pm$ are not projections.

\begin{prop}   \label{prop:preinteqs}
  If $f\in L_2^\loc(\R_+; \mH)$ satisfies $\pd_t f+ DB f=0$ in $\R^{1+n}_+$ distributional sense,
  then 
\begin{align*}
   -\int_0^t \eta_+'(s) e^{-(t-s)|DB_0|} E_0^+ f_s ds = \int_0^t \eta_+(s) DB_0 e^{-(t-s)|DB_0|} \hE_0^+ \E_s f_s ds, \\
   -\int_t^\infty \eta_-'(s) e^{-(s-t)|DB_0|} E_0^- f_s ds = \int_t^\infty \eta_-(s) DB_0 e^{-(s-t)|DB_0|} \hE_0^- \E_s f_s ds, 
\end{align*}
for all $t>0$ and smooth bump functions $\eta_\pm(s)\ge 0$, where $\eta_+$ is compactly supported in $(0,t)$,
and $\eta_-$ is compactly supported in $(t,\infty)$.
\end{prop}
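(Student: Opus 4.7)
The plan is to derive both identities by a distributional integration by parts in $s$, combined with the algebraic identity
\[ E_0^\pm D = DB_0\, \hE_0^\pm \]
(well-defined as bounded operators on $\dom(D)$ whose right-hand side extends boundedly to $L_2$ once post-composed with $e^{-r|DB_0|}$ for $r>0$). To establish this identity, I use that $D$ is self-adjoint with $\clos{\ran(D)} = \mH$, so that $\mH^\perp = \nul(D)$; hence for the topological splitting $L_2 = B_0\mH\oplus \mH^\perp$ and $g\in\dom(D)$, $Dg = DP_{B_0\mH}g$ since $P_{\mH^\perp}g\in\mH^\perp = \nul(D)$. For $h\in B_0\mH\cap\dom(D)$ one has $B_0^{-1}h\in\dom(DB_0)$ with $DB_0(B_0^{-1}h) = Dh$, and combined with the intertwining $E_0^\pm DB_0 = DB_0 E_0^\pm$ from the functional calculus and the definition $\hE_0^\pm := E_0^\pm B_0^{-1}P_{B_0\mH}$, one obtains $E_0^\pm Dg = E_0^\pm DB_0 B_0^{-1}P_{B_0\mH}g = DB_0\hE_0^\pm g$.

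For the first identity, I would consider the auxiliary function $F_+(s) := \eta_+(s)\, e^{-(t-s)|DB_0|}E_0^+ f_s$, which has compact support in $(0,t)$. Rewriting the equation as $\pd_s f_s = -DB_0 f_s + D\E_s f_s$ and formally applying Leibniz, using $|DB_0| = DB_0$ on $E_0^+\mH$ and the intertwining $E_0^+ DB_0 = DB_0 E_0^+$, the $|DB_0|$-term produced by differentiating the semigroup exactly cancels the $-E_0^+ DB_0 f_s = -DB_0 E_0^+ f_s$ contribution, leaving
\[
\pd_s F_+(s) = \eta_+'(s)\, e^{-(t-s)|DB_0|}E_0^+ f_s + \eta_+(s)\, e^{-(t-s)|DB_0|}E_0^+ D\E_s f_s.
\]
Applying $E_0^+ D = DB_0\hE_0^+$ and commuting $DB_0$ with the semigroup rewrites the last term as $\eta_+(s)\, DB_0\, e^{-(t-s)|DB_0|}\hE_0^+\E_s f_s$. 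Since $\eta_+$ is compactly supported in $(0,t)$, integrating $\pd_s F_+$ over $(0,t)$ gives $0$, which is the first identity. The second identity is proved identically with $F_-(s) := \eta_-(s)\, e^{-(s-t)|DB_0|}E_0^- f_s = \eta_-(s)\, e^{(s-t)DB_0}E_0^- f_s$ integrated over $(t,\infty)$; on $E_0^-\mH$ one has $|DB_0| = -DB_0$, and the same cancellation of two $DB_0$-terms occurs, after which $E_0^- D = DB_0\hE_0^-$ and the semigroup-$DB_0$ commutation produce the stated expression.

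The main obstacle is \emph{rigor}: $f$ has only distributional $\pd_s$ regularity, so the Leibniz computation above is formal. I would justify it by testing the weak form of the equation, $\int f_s \cdot \pd_s \Phi_s\, dxds = \int Bf_s\cdot D\Phi_s\, dxds$, against test functions of the form $\Phi(s,x) = \eta_+(s)\, T_+(s)^*\phi(x)$, where $T_+(s) := e^{-(t-s)|DB_0|}E_0^+$ and $\phi$ ranges over a dense subset of $\mH$. The uniform operator bounds from the bounded functional calculus of $DB_0$ on $\supp\eta_+$, together with a mollification of $T_+(s)^*\phi$ in the $s$-variable, make this an admissible test function in the limit; absolute convergence of all resulting Bochner integrals on $\supp\eta_+$ (where $DB_0 e^{-r|DB_0|}\hE_0^\pm$ is bounded) then converts the formal identity into the stated vector identity in $\mH$. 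An entirely analogous test against $\Phi(s,x) = \eta_-(s) T_-(s)^*\phi(x)$ on $(t,\infty)$ handles the second identity.
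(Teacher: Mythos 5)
Your proposal is essentially the same argument as the paper's: both test the distributional equation against $\phi_s=\eta_\pm(s)\big(e^{-|t-s|\,|DB_0|}E_0^\pm\big)^*\phi_0$, use the duality relation $B_0^*D\big(e^{-r|DB_0|}E_0^+\big)^*=\big(DB_0e^{-r|DB_0|}E_0^+\big)^*$ to produce the cancellation you exhibit heuristically via Leibniz, and finish with the identity $E_0^\pm D=DB_0\hE_0^\pm$, which you derive correctly from $\hE_0^\pm:=E_0^\pm B_0^{-1}P_{B_0\mH}$ and $\nul(D)=\mH^\perp$. One small slip: you say the test function is made admissible by ``a mollification of $T_+(s)^*\phi$ in the $s$-variable,'' but $s$-regularity is automatic on $\supp\eta_+$ (the semigroup is analytic there); the obstruction is that $T_+(s)^*\phi$ is neither compactly supported nor smooth in $x$, so the approximation has to be a cutoff $\eta(x/R)$ together with a convolution $\eta_\epsilon*$ in the $x$-variable, as in the paper's construction of $\phi^{R,\epsilon}_s$.
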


\begin{proof}
In Section~\ref{sec:roadNeu}, we showed formally how to integrate the differential equation and 
arrived at (\ref{eq:inteqroadmap}).
To make this rigorously, we proceed as follows.
By assumption
\begin{equation}    \label{eq:tstfcnforinteq}
  \int_0^\infty \Big( (-\pd_s \phi_s, f_s)+ (D\phi_s, B_0 f_s)\Big) ds = \int_0^\infty(D\phi_s, \E_s f_s)ds,
\end{equation}
for all $\phi\in C^\infty_0(\R^{1+n}_+; \C^{(1+n)m})$.
To prove the identity on $(0,t)$, let $\phi_0\in \mH$ be any boundary function and define
$\phi_s:= \eta_+(s) (e^{-(t-s)|DB_0|}E_0^+)^* \phi_0\in C_0^\infty(\R_+; \dom(D))$. 
To show that we can use this as test function, take $\eta\in C_0^\infty(\R^n)$ with $\eta=1$ 
in a neighbourhood of $x=0$ and $\int_{\R^n}\eta=1$
and write $\eta_\epsilon:= \epsilon^{-n} \eta(x/\epsilon)$ and
$$
   \phi^{R,\epsilon}_s(x):= \eta_+(s)\eta(x/R)
   \left( \eta_\epsilon *  ((e^{-|(t-s)DB_0|}E_0^\pm)^* \phi_0) \right)(x).
$$
It is straightforward to verify that $\pd_s \phi^{R,\epsilon}\to \pd_s \phi$
and $D \phi^{R,\epsilon}\to D \phi$ in $L_2(\supp\eta_+ \times \R^n;\C^{(1+n)m})$ when 
$R\to\infty$, $\epsilon\to 0$.

From (\ref{eq:tstfcnforinteq}) we obtain
\begin{multline*}
   \int_0^t (-\eta_+'(s) (e^{-(t-s)|DB_0|}E_0^+)^* \phi_0 - \eta_+(s) (DB_0 e^{-(t-s)|DB_0|}E_0^+)^* \phi_0, f_s )ds \\
  + \int_0^t ( \eta_+(s) D(e^{-(t-s)|DB_0|}E_0^+)^* \phi_0, B_0 f_s )ds \\
  = \int_0^t  ( \eta_+(s) D(e^{-(t-s)|DB_0|}E_0^+)^* \phi_0, \E_s f_s ) ds.
\end{multline*}
Since $B_0^* D(e^{-(t-s)|DB_0|}E_0^+)^* = (e^{-(t-s)|DB_0|}E_0^+DB_0)^*= (DB_0e^{-(t-s)|DB_0|}E_0^+)^*$,
the last two terms on the left hand side cancel.
Using that
$E_0^+ D= E_0^+ D P_{B_0\mH}= E_0^+ (DB_0)B_0^{-1} P_{B_0\mH}= DB_0 \hE_0^+$ 
on the right hand side, we have proved that 
$$
  -\left( \phi_0, \int_0^t \eta'_+(s) e^{-(t-s)|DB_0|} E_0^+ f_s ds\right)=
  \left( \phi_0, \int_0^t \eta_+(s) e^{-(t-s)|DB_0|} E_0^+ D \E_s f_s ds \right).
$$
Since this holds for all $\phi_0$, the $(0,t)$ integral formula follows.
The proof for the $(t,\infty)$ integral formula is similar.
\end{proof}

Our goal is to take limits to arrive at an integrated equation. 
Formally, if we let $\eta_\pm$ approximate the characteristic functions for $(0,t)$ and $(t,\infty)$ respectively,
we obtain in the limit from Proposition~\ref{prop:preinteqs} that
\begin{align*}
   E_0^+ f_t- e^{-t |DB_0|} E_0^+ f_0 = \int_0^t DB_0 e^{-(t-s)|DB_0|} \hE_0^+ \E_s f_s ds, \\
   0- E_0^- f_t = \int_t^\infty DB_0 e^{-(s-t)|DB_0|} \hE_0^- \E_s f_s ds, 
\end{align*}
if $\lim_{t\to 0} f_t =f_0$ and $\lim_{t\to\infty} f_t=0$ in appropriate sense (and yet to be proved).
Subtraction yields $f= C_{0}^+ f_0 + S_A f$, which we wish to solve as
\begin{equation}  \label{eq:tdepCauchy}
  f = (I- S_A)^{-1} C_0^+ f_0,
\end{equation}
where the integral operator $S_A$ is
\begin{equation}    \label{eq:TAdefn}
  S_A f_t =  \int_0^t DB_0 e^{-(t-s)|DB_0|} \hE_0^+ \E_s f_s ds - \int_t^\infty DB_0 e^{-(s-t)|DB_0|} \hE_0^- \E_s f_s ds.
\end{equation}
and  $C_{0}^+$ is the generalized Cauchy integral defined via the semigroup $e^{-t|DB_0|}$.  

The equation (\ref{eq:tdepCauchy}) can also be viewed as a generalized Cauchy integral formula, for
$t$-dependent coefficients $A$,
and we shall see that, given any $f_0\in L_2(\R^n; \C^{(1+n)m})$, 
it constructs a solution $f_t$ to the elliptic equation.
However, for this one  and also justification of the limiting arguments, one needs a  suitable functional setting we now introduce.

%
%
%
%
%
\section{Natural function spaces}    \label{sec:functionspace}

It is well known that solutions $g$ to (\ref{eq:firstorderdiv}) with $L_2$ boundary data typically
satisfy certain square function estimates, as well as non-tangential maximal function estimates.
In this section, we study the basic properties of some natural function spaces related to BVPs
with $L_2$ boundary data.

\begin{defn}    \label{defn:XY}
   In $\R^{1+n}_+$, define the Banach/Hilbert spaces 
\begin{align*}
  \mX & := \sett{ f: \R^{1+n}_+\to \C^{(1+n)m} }{ \tN(f)\in L_2(\R^n) }, \\
  \mY &:= \sett{ f: \R^{1+n}_+\to \C^{(1+n)m} }{\int_0^\infty \| f_t \|^2_{2} tdt < \infty}, 
\end{align*} 
with the obvious norms. Here $\tN$ denotes the modified non-tangential maximal function
from Definition~\ref{defn:NTandC}.
By $\mY^*= L_2(\R^{1+n}_+, dt/t;\C^{(1+n)m})$ we denote the dual space of 
$\mY$, relative to $L_2(\R^{1+n}_+;\C^{(1+n)m})$.
\end{defn}

In Sections~\ref{sec:neu} and \ref{sec:dirichlet} we demonstrate that 
the maximal function space $\mX$ is the natural space to solve
the Neumann and regularity problems in, whereas $\mY$ is natural for the Dirichlet problem.
Natural is meant with respect to the method. 
That the spaces $\mY$ and $\mX$ are relevant for $L_2$ BVPs 
with $t$-independent coefficients is clear from the following theorem. 
For proofs, we refer to \cite[Proposition 2.3]{AAM} and \cite[Proposition 2.56]{AAH}.

\begin{thm}    \label{thm:QEandNT}
  Let $f_0$ belong to the spectral subspace $E_0^+ \mH$. Then its generalized Cauchy extension $f=C_{0}^+f_{0}$ as in Proposition \ref{prop:Cauchyextension}
    has estimates
$$
 \| \pd_t f \|_{\mY}\approx \|f\|_\mX\approx \|f_0\|_2.
$$
\end{thm}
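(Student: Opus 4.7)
\medskip

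\noindent\textbf{Proof plan.} My plan is to prove the two outer equivalences separately, both going through the common bound $\|f_0\|_2$; the middle equivalence $\|\pd_t f\|_\mY \approx \|f\|_\mX$ then follows by transitivity. The main tool throughout is the bounded $H^\infty$ functional calculus of $DB_0$ on $\mH$ quoted as item (iii) in Section~\ref{sec:difftoint}; the only step where this is not quite enough is the non-tangential maximal estimate, which in addition requires $L_2$ off-diagonal decay for the semigroup.

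For the square function side, I would observe that on $E_0^+ \mH$ one has $DB_0 = |DB_0|$, so
\[
 \pd_t f_t = -|DB_0|\, e^{-t|DB_0|} f_0 = t^{-1}\psi(t|DB_0|) f_0, \qquad \psi(z):= -z e^{-z}.
\]
Hence
\[
 \|\pd_t f\|_\mY^2 = \int_0^\infty \|\psi(t|DB_0|) f_0\|_2^2 \frac{dt}{t}.
\]
Since $\psi$ is a non-degenerate auxiliary function on the sector $S_\omega^o$ containing the spectrum of $|DB_0|$, the quadratic estimate equivalent to the bounded holomorphic functional calculus of $DB_0$ (on $\mH$, hence on the complementary subspace $E_0^+\mH$) gives the two-sided bound $\|\pd_t f\|_\mY \approx \|f_0\|_2$.

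For $\|f\|_\mX \approx \|f_0\|_2$, the easy lower bound $\|f_0\|_2 \lesssim \|\tN(f)\|_2$ follows from Fubini applied to $\tN$: for every fixed $t>0$,
\[
 \|\tN(f)\|_2^2 \geq \int_{\R^n} t^{-(1+n)} \iint_{W(t,x)} |f(s,y)|^2 \, ds\,dy\,dx
  \;\gtrsim\; t^{-1} \int_{c_0^{-1}t}^{c_0 t} \|f_s\|_2^2 \, ds,
\]
and then letting $t \to 0$ and using the $L_2$ convergence $f_t \to f_0$ from Proposition~\ref{prop:Cauchyextension}. The upper bound $\|\tN(f)\|_2 \lesssim \|f_0\|_2$ is the crux. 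Here I would use the Schur/Cotlar-type strategy familiar from \cite{AAM}: by representing $e^{-t|DB_0|}$ as a contour integral of the resolvent of $DB_0$ along $\pd S_\nu^o$ with $\omega<\nu<\pi/2$ and combining with the resolvent bounds of item (i), derive $L_2$ off-diagonal estimates
\[
 \|\mathbf 1_E\, e^{-t|DB_0|} \mathbf 1_F\|_{L_2\to L_2} \lesssim \big(1+\operatorname{dist}(E,F)/t\big)^{-N}
\]
for all $N\ge 1$ and all measurable $E,F\subset \R^n$. Averaging the $L_2$ mass of $f$ over each Whitney cylinder $W(t,x)$, decomposing $f_0$ into its part over the ``enlargement'' of the ball $B(x,c_1 t)$ and its complement, and summing the resulting estimates in $x$ would yield $\|\tN(f)\|_2 \lesssim \|f_0\|_2$.

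The hardest step is the last one: the $H^\infty$ calculus by itself is not enough for pointwise-in-$x$ control of the semigroup, so one must invoke off-diagonal decay and Carleson/Whitney type averaging. Once the three norms $\|\pd_t f\|_\mY$, $\|f\|_\mX$, and $\|f_0\|_2$ are pairwise comparable to $\|f_0\|_2$, the chain of equivalences claimed in the theorem is immediate.
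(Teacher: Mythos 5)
Your argument splits naturally into three claims, and two of them are fine. The square--function equivalence $\|\pd_t f\|_\mY\approx\|f_0\|_2$ is correctly reduced to the quadratic estimate $\int_0^\infty\|\psi(t|DB_0|)f_0\|^2\,dt/t\approx\|f_0\|^2$ with $\psi(z)=-ze^{-z}$; this is exactly what the bounded $H_\infty$ calculus of $DB_0$ on $\mH$ (equivalently of the sectorial $|DB_0|$) buys you, and $f_0\in E_0^+\mH\subset\mH$ so nothing is lost restricting to the spectral subspace. Likewise, your lower bound $\|f_0\|_2\lesssim\|\tN(f)\|_2$ is a direct consequence of the first inequality in Lemma~\ref{lem:XlocL2} combined with the trace limit from Proposition~\ref{prop:Cauchyextension}.

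The gap is in the upper bound $\|\tN(f)\|_2\lesssim\|f_0\|_2$, and it is concrete. First, the contour integral you propose does not converge: near $\lambda=0$ the symbol $e^{-t|\lambda|}$ tends to $1$ while $\|(\lambda-DB_0)^{-1}\|\gtrsim|\lambda|^{-1}$, so $\int_{\pd S_\nu^o}e^{-t|\lambda|}(\lambda-DB_0)^{-1}\,d\lambda$ is not absolutely convergent; the semigroup is not an element of the Dunford ($\Psi$-class) calculus. Second, and relatedly, the claimed estimate
$\|\mathbf 1_E e^{-t|DB_0|}\mathbf 1_F\|_{L_2\to L_2}\lesssim(1+\dist(E,F)/t)^{-N}$
for \emph{all} $N\ge 1$ is false: the model case $D=-i\nabla$, $B_0=I$ gives the Poisson semigroup $e^{-t\sqrt{-\Delta}}$, whose kernel decays only like $t(t^2+|x|^2)^{-(n+1)/2}$, and for half-spaces the $L_2$ off-diagonal decay is merely polynomial of a fixed low order. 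Arbitrary-order off-diagonal bounds hold for resolvents and $\Psi$-class functions of $DB_0$ (see Lemma~\ref{lem:lqoffdiag} and \cite{elAAM}), but not for $e^{-t|DB_0|}$ itself, precisely because its symbol does not vanish at $\lambda=0$.

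The missing idea, which the paper itself uses in the closely analogous proof of Lemma~\ref{lem:dirNT}, is to write
$e^{-t|DB_0|} = (I+itDB_0)^{-1} + \psi_t(DB_0)$, $\psi_t(\lambda):=e^{-t|\lambda|}-(1+it\lambda)^{-1}$,
so that $\psi_t\in\Psi(S_\nu^o)$ (it vanishes to first order at $0$ and decays at $\infty$). The $\psi_t(DB_0)f_0$ piece lands in $\mY^*\subset\mX$ by the quadratic estimate (Lemma~\ref{lem:XlocL2}), while the resolvent piece \emph{does} have off-diagonal decay of arbitrary order and is handled by the Whitney--annulus decomposition and the Hardy--Littlewood maximal function. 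This is how $\|\tN(f)\|_2\lesssim\|f_0\|_2$ is actually proved in \cite[Proposition~2.56]{AAH}, the reference the paper cites for this theorem. Without this regularization step your argument does not close.
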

 We will show in Corollary~\ref{eq:WPcompatNeu} that any distributional solution
 $f\in\mX$ to $\pd_t f+ DB_0 f=0$ is  the generalized Cauchy extension of  some $f_0\in E_0^+ \mH$. 

Clearly $\mY\subset L_2^\loc(\R_+; L_2)$.
The following lemma shows that $\mX$ is locally $L_2$ inside $\R^{1+n}_+$ as well,
and is quite close to $\mY^*$.

\begin{lem}      \label{lem:XlocL2}
  There are estimates
$$
  \sup_{t>0} \frac 1t\int_t^{2t} \| f_s \|_2^2 ds \lesssim \| \tN(f) \|^2_2 \lesssim \int_0^\infty \| f_s \|^2_2 \frac {ds}s.
$$
In particular $\mY^*\subset \mX$.
\end{lem}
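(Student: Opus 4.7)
The plan is to establish both inequalities by a single application of Fubini's theorem in each case, exploiting that on each Whitney region $W(t,x)$ the vertical coordinate $s$ is comparable to $t$. Throughout I may assume WLOG that $c_{0}>2$, since the norm $\|\tN(f)\|_{2}$ does not depend on the specific $c_0,c_1$ up to equivalence.

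For the left-hand inequality, I would fix $t>0$ and simply lower-bound $\tN(f)(x)^{2}$ pointwise by the Whitney $L_2$ average over $W(t,x)$ at that single scale:
\begin{equation*}
\int_{\R^n} \tN(f)(x)^{2}\, dx \;\geq\; t^{-(1+n)} \int_{\R^n} \int_{W(t,x)} |f(s,y)|^{2}\, ds\, dy\, dx.
\end{equation*}
Tonelli then swaps the $x$- and $(s,y)$-integrals, and for fixed $(s,y)$ the set $\{x:|x-y|<c_{1}s\}$ has measure $\approx s^{n}\approx t^{n}$ since $c_{0}^{-1}t<s<c_{0}t$. This yields $\gtrsim t^{-1}\int_{c_{0}^{-1}t}^{c_{0}t}\|f_{s}\|_{2}^{2}\, ds \geq t^{-1}\int_{t}^{2t}\|f_{s}\|_{2}^{2}\, ds$ once $c_{0}>2$, and taking the supremum over $t$ finishes this direction.

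For the right-hand inequality, the key observation is that on $W(t,x)$ we have $s\approx t$, so the $\sup$ defining $\tN$ can be absorbed into an integral over the cone $\Gamma(x):=\{(s,y)\in\R^{1+n}_{+}:|x-y|<c_{0}c_{1}s\}$. Concretely, for any $t>0$,
\begin{equation*}
t^{-(1+n)} \int_{W(t,x)} |f(s,y)|^{2}\, ds\, dy \;\lesssim\; \int_{W(t,x)} |f(s,y)|^{2}\, \frac{ds\, dy}{s^{n+1}} \;\leq\; \int_{\Gamma(x)} |f(s,y)|^{2}\, \frac{ds\, dy}{s^{n+1}},
\end{equation*}
because $W(t,x)\subset\Gamma(x)$ for every $t$. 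Taking $\sup_{t}$ gives $\tN(f)(x)^{2}\lesssim \int_{\Gamma(x)}|f(s,y)|^{2}s^{-(n+1)}ds\,dy$; integrating in $x$ and applying Tonelli (with $|\{x:|x-y|<c_{0}c_{1}s\}|\approx s^{n}$) yields exactly $\int_{0}^{\infty}\|f_{s}\|_{2}^{2}\,ds/s$.

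The inclusion $\mY^{\ast}\subset\mX$ is then immediate: if $f\in\mY^{\ast}=L_{2}(\R^{1+n}_{+},dt/t;\C^{(1+n)m})$, then the right-hand side of the displayed inequality is finite and equals $\|f\|_{\mY^{\ast}}^{2}$, so $\|\tN(f)\|_{2}\lesssim\|f\|_{\mY^{\ast}}$. I anticipate no real obstacle here beyond the careful bookkeeping of the constants $c_{0},c_{1}$; the only mildly subtle point is the domination of the supremum by the cone integral in the second step, which relies essentially on the uniform comparability $s\approx t$ enforced by the bounded aperture in the $t$-direction in the definition of $W(t,x)$.
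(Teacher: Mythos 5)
Your proof is correct. For the upper bound it is the same as the paper's: dominate the supremum over Whitney regions by the integral over the full cone $\Gamma(x)=\{(s,y):|x-y|<c_0c_1 s\}$, using $W(t,x)\subset\Gamma(x)$ and $t^{-(1+n)}\lesssim s^{-(1+n)}$ on $W(t,x)$, then apply Tonelli. For the lower bound you take a slightly different route: you integrate the pointwise estimate $\tN(f)(x)^2\ge t^{-(1+n)}\iint_{W(t,x)}|f|^2\,ds\,dy$ over $x\in\R^n$ and swap integrals directly by Tonelli, whereas the paper decomposes $\R^n$ into disjoint cubes $Q_k$ of diameter $c_1 t$ and sums the estimate $t^{-1}\int_t^{c_0 t}\int_{Q_k}|f|^2 \lesssim |Q_k|\inf_{Q_k}\tN(f)^2 \le \int_{Q_k}\tN(f)^2$. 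The cube decomposition is a discretized version of your Fubini swap; the two arguments are essentially equivalent and give the same constants, with yours being marginally more compact. One minor slip: the set of $x$ for which $(s,y)\in W(t,x)$ is $\{x:|x-y|<c_1 t\}$, not $\{x:|x-y|<c_1 s\}$, but since $s\approx t$ on the Whitney region both sets have measure $\approx t^n$, so the conclusion is unaffected.
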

\begin{proof}
  The second inequality follows by integrating the pointwise estimate
$$
  \tN(f)(x)^2 \approx \sup_{t>0} \iint_{W(t,x)} |f(s,y)|^2\frac{dsdy}{s^{1+n}}  
  \le  \iint_{|y-x|< c_0c_1 s} |f(s,y)|^2\frac{dsdy}{s^{1+n}}.
$$
For the lower bound on $\|\tN(f)\|_2$, it suffices to estimate $t^{-1}\int_t^{c_0 t} \|f_s\|_2^2 ds$, uniformly for $t>0$.
To this end, split $\R^n=\bigcup_k Q_k$, where $Q_k$ all are disjoint cubes with diagonal lengths $c_1 t$.
Then 
$$
  t^{-1} \int_t^{c_0 t} \int_{Q_k} |f(s,y)|^2 dsdy\lesssim |Q_k| \inf_{x\in Q_k}|\tN(f)(x)|^2\lesssim \int_{Q_k}|\tN(f)(x)|^2 dx.
$$
Summation over $k$ gives the stated estimate.
\end{proof}

The space $\mY^*$ is a subspace of $\mX$ of functions with zero trace at the boundary $\R^n$,
in the square $L_2$-Dini sense $\lim_{t\to 0} t^{-1} \int_t^{2t} \|f_s\|^2_2 ds=0$. A fundamental quantity is the norm  of multiplication operators mapping $\mX$ into $\mY^*$.

\begin{defn} \label{defn:starnorm}  For functions $\E: \R^{1+n}_+\to \mL(\C^{(1+n)m})$, we denote $$\|\E\|_* :=  \|\E\|_{\mX\to \mY^*}= \sup_{\|f\|_\mX=1}\|\E f\|_{\mY^*}$$  the norm of pointwise multiplication by $\E$. 
\end{defn}

The following lemma gives a sufficient Carleson condition for a multiplication operator to map into this subspace.

\begin{lem}   \label{lem:Carleson}
  For functions $\E: \R^{1+n}_+\to \mL(\C^{(1+n)m})$, we have estimates
$$
  \|\E\|_\infty \lesssim \|\E\|_* \lesssim \|\E\|_C, 
$$
where  $ \|\E\|_C$ denotes the modified Carleson norm from Definition~\ref{defn:NTandC}.
\end{lem}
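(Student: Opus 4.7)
\emph{Plan.} The lemma has two inequalities, proved by different methods. The lower bound $\|\E\|_\infty \lesssim \|\E\|_*$ is a Lebesgue-differentiation argument with localized test functions. The upper bound $\|\E\|_* \lesssim \|\E\|_C$ is a Carleson embedding of tent type, combined with a Whitney-averaging trick to convert the $\mX$-norm into something compatible with the modified Carleson measure.

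For the lower bound, I would fix a Lebesgue point $(t_0, x_0) \in \R^{1+n}_+$ of $|\E|^2$, a unit vector $v\in \C^{(1+n)m}$, and for small $\lambda \ll t_0$ take the test function $f_\lambda(s,y) := v\,\chi_{(t_0-\lambda, t_0+\lambda)}(s)\,\chi_{B(x_0, \lambda)}(y)$. Unpacking Definition~\ref{defn:NTandC}, for $x$ in a ball of radius $\approx c_1 t_0$ around $x_0$ and $t\approx t_0$ chosen so that $W(t,x)\supset \operatorname{supp}(f_\lambda)$, one has $\|f_\lambda\|_{L_2(W(t,x))}^2 \approx \lambda^{1+n}$, hence $\tN(f_\lambda)(x)^2 \approx \lambda^{1+n}/t_0^{1+n}$ on a set of measure $\approx t_0^n$ and $\|f_\lambda\|_\mX^2 \approx \lambda^{1+n}/t_0$. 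Meanwhile $\|\E f_\lambda\|_{\mY^*}^2 = \int_{(t_0-\lambda, t_0+\lambda)\times B(x_0,\lambda)}|\E|^2\,dsdy/s \approx |\E(t_0,x_0)|^2 \lambda^{1+n}/t_0$ as $\lambda\to 0$ by Lebesgue differentiation. The ratio yields $|\E(t_0,x_0)|^2 \lesssim \|\E\|_*^2$ at almost every point, whence $\|\E\|_\infty \lesssim \|\E\|_*$.

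For the upper bound, the starting point is a Whitney-averaging identity, which follows by Fubini from $|W(t,x)|\approx t^{1+n}$ and $|\{(t,x): W(t,x)\ni (s,y)\}|\approx s^{1+n}$:
$$
\int_0^\infty\!\!\int_{\R^n} h(s,y)\,\frac{dsdy}{s}\ \approx\ \iint_{\R^{1+n}_+}\!\left(\barint_{W(t,x)} h\right)\frac{dtdx}{t}, \qquad h\ge 0.
$$
Applied to $h = |\E f|^2$ and using $\barint_{W(t,x)}|\E f|^2 \le \E^*(t,x)^2 F(t,x)^2$, where $\E^*(t,x) := \sup_{W(t,x)}|\E|$ and $F(t,x)^2 := \barint_{W(t,x)}|f|^2$, the task reduces to bounding $\iint \E^*(t,x)^2 F(t,x)^2\, dtdx/t$ by $\|\E\|_C^2 \|f\|_\mX^2$. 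The measure $d\mu := \E^*(t,x)^2\, dtdx/t$ is Carleson with $\|\mu\|_C = \|\E\|_C^2$ directly from Definition~\ref{defn:NTandC}. The geometric key is that $F(t,x) \lesssim \tN(f)(y)$ uniformly for $y\in B(x, ct)$: indeed $W(t,x)$ sits inside a slightly fattened $W^*(t,y)$, and changing Whitney parameters yields an equivalent maximal function norm. Consequently the super-level sets satisfy $\{F>\lambda\} \subset \hat E_\lambda$, the tent over the open set $E_\lambda := \{\tN(f)>\lambda\}$, and a Whitney decomposition of $E_\lambda$ into cubes combined with the Carleson-on-cubes bound produces $\mu(\hat E_\lambda)\lesssim \|\mu\|_C |E_\lambda|$. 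The layer-cake formula then closes the argument:
$$
\iint F^2\, d\mu = \int_0^\infty 2\lambda\,\mu(\{F>\lambda\})\, d\lambda \lesssim \|\mu\|_C \int_0^\infty 2\lambda |E_\lambda|\, d\lambda = \|\mu\|_C\,\|\tN(f)\|_2^2.
$$

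The main obstacle I anticipate is careful bookkeeping of the Whitney parameters $c_0,c_1$ so that the various Whitney-box comparisons ($W(t,x) \subset W^*(t,y)$ for $y$ close to $x$, equivalence of $\tN$ norms for different parameters, and the tent decomposition of $E_\lambda$) are quantitatively consistent. Once those constants are fixed, both halves of the lemma are standard real-variable machinery, and no $t$-dependent structure of $\E$ beyond $\|\E\|_C<\infty$ enters the proof.
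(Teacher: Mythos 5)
Your proof is correct, and on the harder (upper) bound it follows exactly the paper's route: the Whitney-averaging identity you state is the paper's Fubini manipulation of $\|\E f\|_{\mY^*}^2$, the bound $\barint_{W(t,x)}|\E f|^2 \le (\sup_{W(t,x)}|\E|)^2\,\barint_{W(t,x)}|f|^2$ is the paper's third line, and your tent/layer-cake argument with $\mu(\hat E_\lambda)\lesssim\|\mu\|_C|E_\lambda|$ is simply the standard proof of the Carleson embedding theorem, which the paper invokes in one line as ``Carleson's theorem.'' Where you genuinely diverge is the lower bound $\|\E\|_\infty\lesssim\|\E\|_*$: you localize at a Lebesgue point $(t_0,x_0)$ with shrinking cylinders $f_\lambda$ and let $\lambda\to0$, whereas the paper fixes a slab $(t,2t)\times\R^n$, observes from Lemma~\ref{lem:XlocL2} that for $f$ supported there both $\|f\|_\mX$ and $\|\E f\|_{\mY^*}$ are comparable to $t^{-1/2}$ times the plain $L_2$ norms, so the restricted multiplier norm is exactly $\essup_{t<s<2t}\|\E_s\|_\infty$, and then takes a supremum over $t$. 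Your Lebesgue-point route works but carries a bit more overhead (you need $|\E v|^2\in L_1^{\loc}$ to invoke differentiation, which does follow from $\|\E\|_*<\infty$ by testing against $v\chi_{(t,2t)\times Q}$, and you should run the argument over a countable dense set of unit vectors $v$ to pass from $|\E(t_0,x_0)v|$ to the operator norm $|\E(t_0,x_0)|$); the paper's slab argument gets the same conclusion with no differentiation and no reduction to a dense set of vectors.
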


\begin{proof}
  For the first estimate, fix $t$ and consider only $f$ supported on $(t,2t)$ in the definition of $\|\E\|_{\mX\to \mY^*}$.
  Lemma~\ref{lem:XlocL2} shows that
$$
   \sup \|\E f\|_{\mY^*}/ \|f\|_\mX \approx \sup (t^{-1/2} \|\E f\|_2) / ( t^{-1/2}\|f\|_2)= \essup_{t<s<2t}\|\E_s\|_\infty,
$$
where the first two suprema are over all $0\ne f\in L_2((t,2t)\times \R^n;\C^{(1+n)m})$.
Taking supremum over $t$ shows the estimate $\|\E\|_\infty \lesssim \|\E\|_*$.

For the second estimate, we calculate
\begin{multline*}
   \|\E f\|^2_{\mY^*}\approx \iint_{\R^{1+n}_+} \left( \frac 1{t^{1+n}}\iint_{W(t,x)} dsdy\right) |\E(t,x)f(t,x)|^2\frac {dtdx}t \\
    \approx \iint_{\R^{1+n}_+} \left( \frac 1{s^{1+n}}\iint_{W(s,y)} |\E(t,x)f(t,x)|^2\frac {dtdx}t \right) dsdy \\
    \lesssim \iint_{\R^{1+n}_+} \left( \frac 1s \sup_{W(s,y)} |\E|^2  \right) \left( \frac 1{s^{1+n}}\iint_{W(s,y)} |f(t,x)|^2 dtdx \right) dsdy 
    \lesssim \|\E\|^2_C \|f\|^2_\mX,
\end{multline*}
where the final estimate is by Carleson's theorem.
\end{proof}

We have not been able to prove that the $\|\cdot\|_*$ norm is equivalent to the modified Carleson norm, that is to prove the appropriate lower bound. It is however easy to see that the $\|\cdot\|_*$ norm dominates
the standard Carleson norm $\|\cdot\|_c$. 
Indeed, choosing $f$ as the characteristic function for the Carleson box $(0,l(Q))\times Q$ (times 
a unit vector field) in 
the estimate $\|\E f\|_{\mY^*}\le \|\E\|_* \|f\|_\mX$, shows that
$$
  \|\E\|_c:= \sup_Q \frac 1{|Q|} \iint_{(0, l(Q))\times Q} |\E(t,x)|^2 \frac {dtdx}{t} \lesssim \|\E\|_*^2.
$$
Furthermore, it is straightforward to see that the modified Carleson norm is dominated by the corresponding modified square Dini norm
$$
  \|\E\|_C^2\lesssim \int_0^\infty \sup_{c_0^{-1}t <s< c_0t} \|\E_s\|_\infty^2 \frac {dt}t.
$$

\begin{proof}[Proof of Lemma~\ref{lem:carlesontrace}]
We shall prove the lemma assuming only  finiteness of the standard 
Carleson norm (hence the lemma also holds for the star norm). Thus we assume
$\|A-A_0\|_c<\infty$ and do the rest of the proof replacing $\|\cdot \|_C$ by $\|\cdot \|_c$. 

To prove uniqueness, we use 
$\|A_0'-A_0\|_c\le \|A-A'_0\|_c+ \|A-A_0\|_c<\infty$ to obtain
$$
 \int_0^{l(Q)}  \left( \frac 1{|Q|}\int_Q |A'_0(x)-A_0(x)|^2 dx \right) \frac {dt}t <\infty
$$
for all cubes $Q\subset \R^n$,
which only is possible if $A'_0=A_0$ almost everywhere, by $t$-independence.

Similarly for $A-A_0$, we have
$\int_0^{l(Q)}( |Q|^{-1}\int_Q |A(t,x)-A_0(x)|^2 dx) dt/t <\infty$
for any cube $Q$, and it follows that we have essential infimum 
$$
  \essinf_{0<t<l(Q)} \frac 1{|Q|}\int_Q |A(t,x)-A_0(x)|^2 dx =0.
$$
To prove $\|A_0\|_\infty\le \|A\|_\infty$, assume $\epsilon>0$ and pick
$Q$ such that 
$(|Q|^{-1}\int_Q |A_0|^2 dx)^{1/2}\ge \|A_0\|_\infty-\epsilon$.
Then choose $t\in (0,l(Q))$
such that $|Q|^{-1}\int_Q |A(t,x)-A_0(x)|^2 dx\le \epsilon^2$.
We assume that $t$ is a Lebesgue point of 
$t\mapsto |Q|^{-1}\int_Q |A(t,x)-A_0(x)|^2 dx$ and of 
$t\mapsto |Q|^{-1}\int_Q |A(t,x)|^2 dx$.
This yields
$\|A_0\|_\infty-\epsilon \le (|Q|^{-1}\int_Q |A_0|^2 dx)^{1/2}\le \|A\|_\infty + \epsilon$.
Letting $\epsilon\to 0$ proves the claim.

To prove $\kappa_0\ge \kappa$, assume $\epsilon>0$ and pick $f\in\mH\cap C^\infty_0(\R^n;\C^{(1+n)m})$.
Pick $Q$ such that $\supp f\subset Q$. Then
\begin{multline*}
  \re (A_0f,f) = \re(A_t f,f)+ \re((A_0-A_t)f,f)\ge \\
  \kappa \|f\|_2^2-|Q| \|f\|_\infty^2 
  \left(|Q|^{-1}\int_Q |A(t,x)-A_0(x)|^2 dx\right)^{1/2},
\end{multline*}
for all $t\in (0,l(Q))$. Taking the essential supremum over such $t$ gives
$\re (A_0f,f)\ge \kappa \|f\|_2^2$.
Since $\mH\cap C^\infty_0(\R^n;\C^{(1+n)m})$ is dense in $\mH$,
taking infimum over $f$, this proves the claim.
\end{proof}

%
%
%
%
%
\section{Holomorphic operational calculus}     \label{sec:holoopcalc}

Throughout this section $\Lambda$ denotes a closed, densely defined 
$\omega$-sectorial operator in an arbitrary Hilbert space $\mH$, \textit{i.e.}\,  $\sigma(\Lambda)\subset S_{\omega+}$,
and we assume resolvent bounds $\| (\lambda- \Lambda)^{-1} \|_{\mH\to \mH} \lesssim 1/\dist(\lambda, S_{\omega+})$.
For simplicity, we assume throughout that $\Lambda$ is injective, and therefore has dense range.
In our applications $\Lambda$ will be $|DB_0|$, and $\mH$ will be the Hilbert space from (\ref{eq:Hspace}).  See Section~\ref{sec:saest}.

The goal in this section is to develop the theory needed to make rigorous the limiting argument following 
Proposition~\ref{prop:preinteqs}. 
To this end, we study uniform boundedness and convergence of model operators
\begin{align}
  S^+_\epsilon f_t := \int_0^t \eta_\epsilon^+(t,s) \Lambda e^{-(t-s)\Lambda} f_s ds, \label{eq:Tplus} \\
  S^-_\epsilon f_t := \int_t^\infty \eta_\epsilon^-(t,s) \Lambda e^{-(s-t)\Lambda} f_s ds,  \label{eq:Tminus}
\end{align}
acting on functions $f_t(x)= f(t,x)$ in a Hilbert space $L_2(\R_+,d\mu(t); \mH)$.
For uniform boundedness issues, it suffices that
the bump functions $\eta_\epsilon^+(t,s)$ and $\eta_\epsilon^-(t,s)$ are uniformly bounded and compactly supported 
within $\sett{(s,t)}{0<s<t}$ and $\sett{(s,t)}{0<t<s}$ respectively.
For convergence issues and to link to the ODE, they should approximate the characteristic functions
of the above sets.
A convenient choice which we shall use systematically is the following.
Define $\eta^0(t)$ to be the piecewise linear continuous function with support
$[1,\infty)$, which equals $1$ on $(2,\infty)$ and is linear on $(1,2)$.
Then let $\eta_\epsilon(t):= \eta^0(t/\epsilon)(1- \eta^0(2\epsilon t))$ and
$$
  \eta_\epsilon^\pm(t,s):= \eta^0(\pm (t-s)/\epsilon) \eta_\epsilon(t)\eta_\epsilon(s).
$$

We study the operators $S^\pm_\epsilon$ from the point of view of operational calculus. This means for example that
we view $S^+_\epsilon= F(\Lambda)$ as obtained from the underlying operator $\Lambda$ 
(acting horizontally, \textit{i.e.}\,  in the variable $x$) by applying the operator-valued function $\lambda \mapsto F(\lambda)$, where
$$
  (F(\lambda)f)_t:=  \int_0^t \eta_\epsilon^+(t,s) \lambda e^{-(t-s)\lambda} f_s ds,
$$
which depends holomorphically on $\lambda$ in a sector $S_{\nu+}^o$ containing the spectrum of $\Lambda$.
Note that each of these vertically acting, \textit{i.e.}\,  acting in the $t$-variable, operators $F(\lambda)$ commute with $\Lambda$.

\subsection{Operational calculus in Hilbert space}   \label{sec:abstropcalc}

Consider $\Lambda$ as above. Let  $\mK:=L_2(\R_+, d\mu(t);\mH)$
for some Borel measure $\mu$. 
We extend the resolvents $(\lambda-\Lambda)^{-1}\in\mL(\mH)$, $\lambda\notin S_{\omega+}$, to
bounded operators on $\mK$
(and we use the same notation, letting
$((\lambda-\Lambda)^{-1}f)_t := (\lambda-\Lambda)^{-1}(f_t)$ for all $f\in \mK$ and a.e. $t>0$).
These extensions of the resolvents to $\mK$ clearly inherit the bounds from $\mH$.
We may think of them as being the resolvents of an $\omega$-sectorial operator
$\Lambda= \Lambda_\mK$, although this extended unbounded operator $\Lambda_\mK$
is not needed below.

Define the commutant of $\Lambda$ to be
$$
  \Lambda'  := \sett{ T\in \mL(\mK) }{ (\lambda- \Lambda)^{-1} T= T(\lambda- \Lambda)^{-1} \text{ for } \lambda\notin S_{\omega+}}.
$$
Fix $\omega<\nu< \pi/2$, and consider classes of operator-valued holomorphic functions
\begin{align*}
  H(S_{\nu+}^o; \Lambda' ) &:= \{ \text{holomorphic } F: S^o_{\nu+}\to \Lambda'  \}, \\
  \Psi(S^o_{\nu+}; \Lambda' ) &:= \sett{F\in H(S^o_{\nu+}; \Lambda' ) }{\|F(\lambda)\| \lesssim \min(|\lambda|^a, |\lambda|^{-a}),
  \text{some }  a>0}, \\
  H_\infty(S^o_{\nu+}; \Lambda' ) &:= \sett{F\in H(S^o_{\nu+}; \Lambda' )}{\sup_{\lambda\in S^o_{\nu+}}\|F(\lambda)\|<\infty  }.
\end{align*}
Through Dunford calculus, we define for $F\in \Psi(S^o_{\nu+}; \Lambda' )$ the operator
\begin{equation}   \label{eq:Dunford}
  F(\Lambda):= \frac 1{2\pi i}\int_{\gamma} F(\lambda) (\lambda- \Lambda)^{-1} d\lambda,
\end{equation}
where $\gamma$ is the unbounded contour $\sett{r e^{\pm i\theta}}{r>0}$, $\omega<\theta<\nu$,
parametrized counter clockwise around $S_{\omega+}$.
This yields a bounded operator $F(\Lambda)$, since the bounds on $F$ and the resolvents guarantee
that the integral converges absolutely.

\begin{rem}
Functional calculus of the operator $\Lambda$ is a special case of this operational calculus (\ref{eq:Dunford}).
Applying a scalar holomorphic function $f(\lambda)$ to $\Lambda$ with functional calculus is the same
as applying the operator-valued holomorphic function $F(\lambda)= f(\lambda)I$ to $\Lambda$ with operational
calculus.
For the functional calculus, we write $\Psi(S^o_{\nu+})$ and $H_\infty(S^o_{\nu+})$ for the corresponding 
classes of scalar symbol functions.

We also remark that a more general functional and operational calculi for bisectorial operators like $DB_0$
are developed entirely similar to those of sectorial operators $\Lambda$, 
replacing the sector $S_{\omega+}$ by the bisector $S_\omega$.
\end{rem}
The following three propositions contain all the theory of operational calculus that we need.
To be self-contained and illustrate their simplicity, we give full proofs, although the
propositions are proved in exactly the same way as for functional calculus,
and can be found in \cite{ADMc}.

\begin{prop}    \label{prop:opcalchomom}
  If $F, G\in \Psi(S^o_{\nu+}; \Lambda' )$, then
$$
  F(\Lambda) G(\Lambda)= (FG)(\Lambda).
$$
\end{prop}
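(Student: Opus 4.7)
The plan is to mimic the standard multiplicativity proof for scalar Dunford functional calculus of sectorial operators, the only new ingredient being that $F(\lambda), G(\mu)$ are now operators on $\mK$; everything works because we assumed $F(\lambda), G(\mu) \in \Lambda'$.

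First I would pick two angles $\omega < \theta_1 < \theta_2 < \nu$ and let $\gamma_j$ be the contour from \eqref{eq:Dunford} at angle $\theta_j$, so that $\gamma_1$ is ``enclosed'' by $\gamma_2$. Then write
\[
  F(\Lambda) G(\Lambda) = \frac{1}{(2\pi i)^2} \int_{\gamma_1}\!\int_{\gamma_2} F(\lambda)(\lambda-\Lambda)^{-1} G(\mu)(\mu-\Lambda)^{-1}\, d\mu\, d\lambda.
\]
Because $G(\mu)\in\Lambda'$, it commutes with $(\lambda-\Lambda)^{-1}$, so I can move it past the resolvent and apply the resolvent identity
\[
  (\lambda-\Lambda)^{-1}(\mu-\Lambda)^{-1} = \frac{1}{\mu-\lambda}\bigl[(\lambda-\Lambda)^{-1}-(\mu-\Lambda)^{-1}\bigr].
\]
The $\Psi$-class decay $\|F(\lambda)\|,\|G(\mu)\| \lesssim \min(|\cdot|^a,|\cdot|^{-a})$ together with the uniform resolvent bounds yields absolute convergence in the double integral, so Fubini applies and I can split into two terms
\[
  \frac{1}{2\pi i}\int_{\gamma_1} F(\lambda)(\lambda-\Lambda)^{-1} \Bigl(\frac{1}{2\pi i}\int_{\gamma_2}\frac{G(\mu)}{\mu-\lambda}d\mu\Bigr) d\lambda - \frac{1}{2\pi i}\int_{\gamma_2} G(\mu)(\mu-\Lambda)^{-1} \Bigl(\frac{1}{2\pi i}\int_{\gamma_1}\frac{F(\lambda)}{\mu-\lambda}d\lambda\Bigr) d\mu.
\]

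The two inner integrals are evaluated by Cauchy's theorem on the unbounded contours, which is justified by closing with large arcs and using the decay of $F$ and $G$ near $0$ and $\infty$. For $\lambda\in\gamma_1$ (which lies strictly inside the open sector $S_{\theta_2+}^o$ bounded by $\gamma_2$, with winding number $+1$), the inner integral over $\gamma_2$ equals $G(\lambda)$. For $\mu\in\gamma_2$ (which lies outside the region bounded by $\gamma_1$, winding number $0$), the inner integral over $\gamma_1$ equals $0$. Combining gives
\[
  F(\Lambda)G(\Lambda) = \frac{1}{2\pi i}\int_{\gamma_1} F(\lambda)G(\lambda)(\lambda-\Lambda)^{-1}\, d\lambda = (FG)(\Lambda),
\]
where the last equality uses that $FG\in\Psi(S_{\nu+}^o;\Lambda')$ (pointwise product of operators in $\Lambda'$ is in $\Lambda'$, and the $\Psi$-decay is preserved).

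The only real care is in the geometric/topological check that $\gamma_1$ sits ``inside'' $\gamma_2$ with the correct orientation, and in verifying that the Cauchy evaluations on the unbounded contours are legitimate; the $\Psi$-class decay estimates make both closures routine. No tool beyond the resolvent identity, Fubini, and Cauchy's theorem is needed.
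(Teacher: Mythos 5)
Your proposal is correct and follows essentially the same route as the paper's own proof: nested contours $\gamma_1$, $\gamma_2$ at angles $\theta_1<\theta_2$, commuting $G(\mu)$ past the resolvent (using $G(\mu)\in\Lambda'$), the resolvent identity, and the two Cauchy evaluations $\frac{1}{2\pi i}\int_{\gamma_2}\frac{G(\mu)}{\mu-\lambda}d\mu=G(\lambda)$ and $\int_{\gamma_1}\frac{F(\lambda)}{\mu-\lambda}d\lambda=0$. Your extra remarks on Fubini and the contour closures are just the standard justifications the paper leaves implicit.
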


Note that we need not assume that $F(\lambda)$ and $G(\mu)$ commute for any $\lambda$, $\mu\in S^o_{\nu+}$.

\begin{proof}
We use contours $\gamma_1$ and $\gamma_2$, with angles $\omega<\theta_1<\theta_2<\pi/2$,
so that $\gamma_2$ encircles $\gamma_1$.
Cauchy's theorem now yields
\begin{multline*}
  (2\pi i)^2 F(\Lambda) G(\Lambda)= \left( \int_{\gamma_1} \frac{F(\lambda)}{\lambda-\Lambda} d\lambda \right)  \left( \int_{\gamma_2} \frac{G(\mu)}{\mu-\Lambda} d\mu \right) \\
   =\int_{\gamma_1} \int_{\gamma_2} F(\lambda)G(\mu) \frac 1{\mu-\lambda} \left( \frac 1{\lambda-\Lambda}- \frac 1{\mu-\Lambda} \right) d\lambda d\mu \\
   =\int_{\gamma_1} \frac{F(\lambda)}{\lambda-\Lambda}\left( \int_{\gamma_2} \frac{G(\mu)}{\mu-\lambda} d\mu \right) d\lambda
   -  \int_{\gamma_2} \left( \int_{\gamma_1} \frac{F(\lambda)}{\mu-\lambda} d\lambda\right)   \frac{G(\mu)}{\mu-\Lambda} d\mu \\
   = \int_{\gamma_1} \frac{F(\lambda)}{\lambda-\Lambda} 2\pi i G(\lambda) d\lambda-  0= (2\pi i)^2 (FG)(\Lambda),
\end{multline*}
using the resolvent equation.
\end{proof}

\begin{prop}  \label{prop:opcalcboundedness}
  Assume that $\Lambda$ satisfies square function estimates, \textit{i.e.}\,  assume that 
$$
  \int_0^\infty \| \psi(t\Lambda) u \|_\mH^2 \frac{dt}t \approx \| u \|_\mH^2, \qquad\text{for all } u\in\mH
$$
and some fixed $\psi\in\Psi(S^o_{\nu+})$.
Then there exists $C<\infty$ such that
$$
   \| F(\Lambda) \| \le C \sup_{\lambda\in S^o_{\nu+}} \| F(\lambda) \|,\qquad \text{for all } F\in \Psi(S^o_{\nu+}; \Lambda' ).
$$
\end{prop}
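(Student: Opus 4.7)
My plan is to adapt McIntosh's classical argument for the bounded $H^\infty$ functional calculus to the operator-valued setting of Proposition~\ref{prop:opcalcboundedness}, using the hypothesized square function estimate twice and thereby avoiding any separate estimate for $\Lambda^*$. After normalizing, I would choose a scalar $\psi \in \Psi(S^o_{\nu+})$ with
$$
\int_0^\infty \psi(t\lambda)^2\,\frac{dt}{t} = 1 \qquad (\lambda\in S^o_{\nu+}),
$$
which holds up to a constant by the scaling substitution $s = t|\lambda|$, and deduce from it the Calder\'on reproducing identity $u = \int_0^\infty \psi(t\Lambda)^2 u\,\frac{dt}{t}$ on $\mH$ via Dunford calculus applied to truncated integrals together with the square function equivalence; this then passes to $\mK$ by Fubini. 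Applying the lower square function estimate to $F(\Lambda) u \in \mK$ and using that the scalar $\psi$ commutes with $F(\Lambda)$ (via Proposition~\ref{prop:opcalchomom}, which gives $\psi(s\Lambda)F(\Lambda) = (F\psi_s)(\Lambda)$ with $\psi_s(\lambda):=\psi(s\lambda)$) reduces the claim to
$$
\int_0^\infty \|(F\psi_s)(\Lambda)u\|_\mK^2\,\frac{ds}{s} \lesssim \|F\|_\infty^2\, \|u\|_\mK^2,
$$
where $\|F\|_\infty:=\sup_{\lambda\in S^o_{\nu+}}\|F(\lambda)\|$.

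For this I would insert the reproducing identity for $u$ into the integrand and apply Proposition~\ref{prop:opcalchomom} once more to obtain
$$
(F\psi_s)(\Lambda) u = \int_0^\infty (F\psi_s\psi_t)(\Lambda)\,\psi(t\Lambda) u\,\frac{dt}{t}.
$$
The operator $(F\psi_s\psi_t)(\Lambda)$ is then estimated directly from the Dunford integral (\ref{eq:Dunford}): using the resolvent bound $\|(\lambda-\Lambda)^{-1}\|_\mK \lesssim |\lambda|^{-1}$ on the contour $\gamma$, the uniform bound $\|F(\lambda)\| \le \|F\|_\infty$, and the $\Psi$-decay $|\psi(\mu)|\lesssim \min(|\mu|^c,|\mu|^{-c})$, a rescaling argument along $\gamma$ yields
$$
\|(F\psi_s\psi_t)(\Lambda)\|_\mK \lesssim \|F\|_\infty\,k(s,t), \qquad k(s,t):=\min\bigl((s/t)^b,(t/s)^b\bigr),
$$
for some $b>0$ (depending on $c$ and the opening angle of $\gamma$). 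Since $\int_0^\infty k(s,t)\,\frac{dt}{t} = \int_0^\infty k(s,t)\,\frac{ds}{s} = 2/b$ is finite, Schur's test applied to $k$ as an integral kernel on $L_2(\R_+,\frac{dt}{t})$, combined with the upper square function estimate for $\Lambda$, then gives the required bound and closes the argument.

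The main obstacle is the bookkeeping required to transfer everything cleanly from $\mH$ to $\mK = L_2(\R_+, d\mu(t);\mH)$: the resolvents, the square function equivalence, and the strong-sense convergence of the reproducing integral all pass from $\mH$ to $\mK$ by a pointwise-in-$t$ application of Fubini, since $\Lambda$ acts only horizontally while $t$ is a passive parameter. A secondary technical point is justifying the iterated homomorphism identities $\psi(s\Lambda)F(\Lambda) = (F\psi_s)(\Lambda)$ and its further iterate, both of which fall under Proposition~\ref{prop:opcalchomom} since $F\psi_s,\,F\psi_s\psi_t \in \Psi(S^o_{\nu+};\Lambda')$ whenever $F \in \Psi(S^o_{\nu+};\Lambda')$ and $\psi \in \Psi(S^o_{\nu+})$.
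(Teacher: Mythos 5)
Your proposal is correct and follows essentially the same approach as the paper's proof: apply the two-sided square function estimate to $F(\Lambda)u$, insert the Calder\'on reproducing identity built from $\psi$, estimate the resulting operator-valued "kernel" $(F\psi_s\psi_t)(\Lambda)$ directly from the Dunford integral, and close with a Schur-type estimate plus the upper square function bound. The only cosmetic difference is that your kernel bound $\min((s/t)^b,(t/s)^b)$ silently absorbs the logarithmic factor that naturally appears (the paper records $\min(x^a,x^{-a})(1+|\log x|)$, which you recover by taking $b<c$), and you push the homomorphism identity $\psi(s\Lambda)F(\Lambda)=(F\psi_s)(\Lambda)$ through more explicitly, but the argument is the same.
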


We remark that if square function estimates for $\Lambda$ hold with one such $\psi$, then they hold for any non-zero 
$\psi\in\Psi(S^o_{\nu+})$.

\begin{proof}
Note that the square function estimates extend to $u\in \mK$, with $\|\cdot\|_\mK$ instead of $\|\cdot\|_\mH$.
We drop $\mK$ in $\|\cdot\|_\mK$.
Using the resolution of identity $\int_0^\infty \psi^2(s\Lambda)u ds/s = cu$, where $0<c<\infty$ is a constant, and
the square function estimates, we calculate
\begin{multline*}
  \|F(\Lambda)u \|^2 \approx \int_0^\infty \| \psi(t\Lambda) F(\Lambda) u \|^2 \frac{dt}t \\
  \approx \int_0^\infty \left\| \int_0^\infty (\psi(t\Lambda) F(\Lambda) \psi(s\Lambda)) (\psi(s\Lambda)u) \frac{ds}s \right\|^2 \frac{dt}t  \\
   \lesssim \sup_{S^o_{\nu+}} \|F(\lambda)\|^2 
   \int_0^\infty \left( \int_0^\infty \eta(t/s)\frac{ds}s \right) \left( \int_0^\infty \eta(t/s) \|\psi(s\Lambda)u\|^2 \frac{ds}s \right) \frac{dt}t \\
   \lesssim \sup_{S^o_{\nu+}} \|F(\lambda)\|^2 \int_0^\infty \|\psi(s\Lambda)u\|^2 \frac{ds}s 
    \lesssim \sup_{S^o_{\nu+}} \|F(\lambda)\|^2 \|u\|^2.
\end{multline*}
We have used the estimate 
$$
\| \psi(t\Lambda) F(\Lambda) \psi(s\Lambda) \|\lesssim \int_\gamma \|F(\lambda)\| |\psi(t\lambda)\psi(s\lambda) \lambda^{-1}d\lambda|\lesssim 
 \sup_{\lambda\in S^o_{\nu+}} \|F(\lambda)\| \eta(t/s),
$$
where $\eta(x):= \min \{x^a, x^{-a}\} ( 1+ |\log x |)$ for some $a>0$.

\end{proof}

\begin{prop}    \label{prop:opcalcconv}
  Assume that $\Lambda$ satisfies square function estimates as in Proposition~\ref{prop:opcalcboundedness}.
  Let $F_n\in\Psi(S^o_{\nu+};\Lambda' )$, $n=1,2,\ldots$, satisfy $\sup_{n,\lambda}\|F_n(\lambda)\|<\infty$, 
  and let $F\in H_\infty(S^o_{\nu+};\Lambda' )$.
  Assume that for each fixed $v\in\mK$ and $\lambda\in S^o_{\nu+}$, we have strong convergence
  $\lim_{n\to\infty}\|F_n(\lambda)v- F(\lambda)v\|=0$.
  Then the operators $F_n(\Lambda)$ converge strongly to a bounded operator $F(\Lambda)$,
  \textit{i.e.}\, 
$$
  F_n(\Lambda)u\rightarrow F(\Lambda)u,\qquad \text{for all } u\in \mK,\text{ as } n\rightarrow \infty.
$$
\end{prop}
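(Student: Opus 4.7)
The plan is a standard McIntosh approximation argument: combine uniform boundedness of the $F_n(\Lambda)$ from Proposition~\ref{prop:opcalcboundedness} with pointwise convergence of their Dunford integrals on a dense subspace. Setting $M:=\sup_{n,\lambda}\|F_n(\lambda)\|<\infty$, Proposition~\ref{prop:opcalcboundedness} gives $\|F_n(\Lambda)\|\le CM$ uniformly in $n$, so a routine $3\epsilon$--argument reduces the claim to producing a dense subspace $\mD\subset\mK$ on which $\{F_n(\Lambda)u\}_n$ is strongly Cauchy. The pointwise limit on $\mD$ then extends uniquely by density and uniform boundedness to a bounded operator on $\mK$, which we call $F(\Lambda)$, with strong convergence $F_n(\Lambda)u\to F(\Lambda)u$ for every $u\in\mK$.

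For the dense subspace I take $\mD:=\psi(\Lambda)\mK$ with the scalar regulariser $\psi(\lambda):=\lambda/(1+\lambda)^2\in\Psi(S^o_{\nu+})$. Density of $\mD$ is the usual duality argument: any $w\in\mK$ with $w\perp\mD$ satisfies $(I+\Lambda^*)^{-2}\Lambda^* w=0$, whence $\Lambda^* w=0$ and therefore $w=0$, using that $\Lambda$ on $\mK$ inherits injectivity and dense range from its action on $\mH$. Since $\psi$ is scalar and each $F_n(\lambda)$, $F(\lambda)$ lies in the commutant $\Lambda'$, the products $F_n\psi$ and $F_m\psi$ belong to $\Psi(S^o_{\nu+};\Lambda')$. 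For $u=\psi(\Lambda)v\in\mD$, Proposition~\ref{prop:opcalchomom} gives $F_n(\Lambda)u=(F_n\psi)(\Lambda)v$, with Dunford representation
$$
   F_n(\Lambda)u-F_m(\Lambda)u=\frac 1{2\pi i}\int_\gamma (F_n(\lambda)-F_m(\lambda))\,\psi(\lambda)(\lambda-\Lambda)^{-1}v\,d\lambda.
$$
Applying the hypothesis to the fixed vector $\psi(\lambda)(\lambda-\Lambda)^{-1}v\in\mK$ gives pointwise (in $\lambda$) norm convergence of the integrand to $0$, while the norm is dominated by $2M|\psi(\lambda)|/\dist(\lambda,S_{\omega+})\cdot\|v\|$, which is integrable along $\gamma$ because $|\psi(\lambda)|/|\lambda|\approx (1+|\lambda|)^{-2}$. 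Bochner dominated convergence then yields the Cauchy property on $\mD$.

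The main technical point is the choice of the scalar regulariser $\psi$: because $F$ only belongs to $H_\infty(S^o_{\nu+};\Lambda')$ rather than $\Psi(S^o_{\nu+};\Lambda')$, the bare Dunford integral for $F(\Lambda)$ fails to converge absolutely, and the factor $\psi$ is what restores integrability both at $0$ and at $\infty$ on the contour. Everything else---uniform boundedness, density of $\mD$, and the $3\epsilon$ extension---is formal and mirrors the classical argument that square function estimates yield a bounded $H_\infty$--functional calculus, as in \cite{ADMc}.
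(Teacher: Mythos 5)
Your proof is correct and follows essentially the same route as the paper: reduce to $u=\psi(\Lambda)v$ by uniform boundedness and density of $\ran(\psi(\Lambda))$, then apply dominated convergence to the Dunford integral of $(F_n-F_m)\psi$. The only cosmetic difference is that the paper commutes $F_n(\lambda)-F_m(\lambda)$ past $\psi(\lambda)(\lambda-\Lambda)^{-1}$ so the strong-convergence hypothesis is applied to the fixed vector $v$ rather than to the $\lambda$-dependent vector $\psi(\lambda)(\lambda-\Lambda)^{-1}v$; both versions are valid.
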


\begin{proof}
  Since $\sup_n\|F_n(\Lambda)\|<\infty$ by Proposition~\ref{prop:opcalcboundedness},
  it suffices to consider $u= \psi(\Lambda)v$ for some fixed $\psi\in \Psi(S^o_{\nu+})\setminus\{0\}$, since $\ran(\psi(\Lambda))$ is dense in $\mK$.
  From (\ref {eq:Dunford}), we get
$$
  \| F_n(\Lambda)u- F_m(\Lambda)u \| \lesssim \int_\gamma \| (F_n(\lambda)-F_m(\lambda)) v \| \, |\psi(\lambda)\lambda^{-1} d\lambda|, 
$$
where  $\| (F_n(\lambda)-F_m(\lambda)) v \|\lesssim \|v\|$ and $|\psi(\lambda)|/|\lambda|$ is integrable.
The dominated convergence theorem applies and proves the proposition.
\end{proof}

Propositions~\ref{prop:opcalchomom}, \ref{prop:opcalcboundedness} and 
\ref{prop:opcalcconv} show that we have a continuous Banach algebra homomorphism
$$
  H_\infty(S^o_{\nu+};\Lambda' )\rightarrow \mL(\mK): F\mapsto F(\Lambda),
$$
provided that $\Lambda$ satisfies square function estimates as in Proposition~\ref{prop:opcalcboundedness}.
This is the operational calculus that we need.
Note that with $F(\Lambda)$ defined in this way for all $F\in H_\infty(S^o_{\nu+};\Lambda' )$,
Proposition~\ref{prop:opcalcconv} continues to hold for any $F_n\in H_\infty(S^o_{\nu+};\Lambda' )$.

\subsection{Maximal regularity estimates}   

  Here,
  we apply the operational calculus from Section~\ref{sec:abstropcalc} to prove weighted bounds on the operators 
   $S_\epsilon^\pm$ from (\ref{eq:Tplus}) and (\ref{eq:Tminus}).

\begin{thm}    \label{thm:weakmaxreg}
   The operators $S^+_\epsilon$ are uniformly bounded and converge
   strongly as $\epsilon\to 0$ on the weighted space 
   $L_2(t^{\alpha}dt; \mH)$ if $\alpha<1$.
    The operators $S^-_\epsilon$ are uniformly bounded and converge 
   strongly as $\epsilon\to 0$ on the weighted space $L_2(t^{\alpha}dt; \mH)$ if $\alpha>-1$.
\end{thm}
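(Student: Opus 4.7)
The plan is to realize $S_\epsilon^\pm$ as $F_\epsilon^\pm(\Lambda)$ via the operational calculus of Section~\ref{sec:abstropcalc}, where the holomorphic operator-valued symbols are the vertically acting integral operators on $\mK := L_2(\R_+, t^\alpha dt; \mH)$ given by
\[
  (F_\epsilon^+(\lambda) u)_t := \int_0^t \eta_\epsilon^+(t,s)\, \lambda e^{-(t-s)\lambda} u_s\, ds, \qquad
  (F_\epsilon^-(\lambda) u)_t := \int_t^\infty \eta_\epsilon^-(t,s)\, \lambda e^{-(s-t)\lambda} u_s\, ds.
\]
Each $F_\epsilon^\pm(\lambda)$ acts only in the $t$-variable, hence commutes with every resolvent of $\Lambda$ and lies in $\Lambda'$; holomorphy in $\lambda \in S_{\nu+}^o$ is direct, and the compact support of $\eta_\epsilon^\pm$ away from $\{s=t\}$, $\{t=0\}$, and $\{t=\infty\}$ yields $\|F_\epsilon^\pm(\lambda)\| \lesssim \min(|\lambda|, e^{-c\epsilon|\lambda|})$, so $F_\epsilon^\pm \in \Psi(S_{\nu+}^o; \Lambda')$. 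A Fubini computation on the Dunford integral \eqref{eq:Dunford} identifies $F_\epsilon^\pm(\Lambda)$ with $S_\epsilon^\pm$.

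The key estimate is the bound $\sup_{\lambda \in S_{\nu+}^o}\|F_\epsilon^\pm(\lambda)\|_{\mK \to \mK} \lesssim 1$, uniform also in $\epsilon > 0$. Since $|\eta_\epsilon^\pm| \le 1$, it suffices to control the untruncated scalar operators $T_\lambda^\pm$ on $L_2(\R_+, t^\alpha dt)$. The dilations $(D_\sigma u)_t := u_{\sigma t}$ satisfy $T_\lambda^\pm = D_{\re\lambda}\, T_{\lambda/\re\lambda}^\pm\, D_{\re\lambda}^{-1}$ and are similarities of $L_2(t^\alpha dt)$, which reduces the question to $\lambda$ in the compact set $\{\re\lambda = 1,\ |\im \lambda| \le \tan\nu\}$. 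Transforming via the isometry $v_t := t^{\alpha/2} u_t$ onto $L_2(\R_+, dt)$, the kernel of the $+$-operator becomes $K(t,s) = (t/s)^{\alpha/2}\, \lambda e^{-(t-s)\lambda} \chi_{\{0<s<t\}}$, and Schur's test with weight $w(t) = t^{-\alpha/2}$ reduces to the two estimates
\[
  t^{\alpha/2} \int_0^t e^{-(t-s)}\, s^{-\alpha}\, ds \;\lesssim\; t^{-\alpha/2} \qquad\text{and}\qquad s^{-\alpha/2} \int_s^\infty e^{-(t-s)}\, dt \;=\; s^{-\alpha/2}.
\]
The first holds precisely because $s^{-\alpha}$ is locally integrable at $s = 0$, which requires exactly $\alpha < 1$; the second is trivial. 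The mirror argument with weight $w(t) = t^{\alpha/2}$ gives the bound for $T_\lambda^-$ under the sharp hypothesis $\alpha > -1$.

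Having established $\sup_{\epsilon,\lambda}\|F_\epsilon^\pm(\lambda)\|_{\mK \to \mK} < \infty$, Proposition~\ref{prop:opcalcboundedness} promotes this to the uniform operator norm bound on $S_\epsilon^\pm = F_\epsilon^\pm(\Lambda)$. For the strong convergence as $\epsilon \to 0$, the bump $\eta_\epsilon^\pm(t,s)$ tends pointwise a.e.\ to the characteristic functions of $\{0 < s < t\}$ and $\{0 < t < s\}$ respectively, so dominated convergence in $\mK$ gives $F_\epsilon^\pm(\lambda) v \to F^\pm(\lambda) v$ for each $v \in \mK$ and $\lambda \in S_{\nu+}^o$, where $F^\pm(\lambda)$ are the untruncated operators; these still lie in $H_\infty(S_{\nu+}^o; \Lambda')$ by the same Schur test applied without the cutoffs. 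Proposition~\ref{prop:opcalcconv} then delivers the strong convergence $S_\epsilon^\pm u \to F^\pm(\Lambda) u$ on $\mK$. The main technical obstacle is the Schur estimate near $t = 0$: the local integrability of $s^{-\alpha}$ at $s = 0$ fails exactly at the endpoint $\alpha = 1$, matching the sharpness of the hypothesis and reflecting the well-known endpoint failure of maximal regularity; a subsidiary point is that the operational calculus tacitly uses the square function estimates for $\Lambda$ underlying Section~\ref{sec:abstropcalc}, which for $\Lambda = |DB_0|$ in the paper's applications are provided by the solution of the Kato problem.
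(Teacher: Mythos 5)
Your proof is correct and follows essentially the same route as the paper: realize $S^\pm_\epsilon$ as $F^\pm_\epsilon(\Lambda)$ in the operational calculus, verify $F^\pm_\epsilon\in\Psi(S^o_{\nu+};\Lambda')$ and the integral representation, obtain the uniform bound from Schur's test (with the sharp constraint coming from $\int_0^y e^x x^{-\alpha}dx\lesssim e^y y^{-\alpha}$ iff $\alpha<1$), and conclude strong convergence via Proposition~\ref{prop:opcalcconv}. The only organizational differences are cosmetic: the paper disposes of $S^-_\epsilon$ by duality (it is the adjoint of $S^+_\epsilon$ on $L_2(t^{-\alpha}dt;\mH)$ with $\Lambda$ replaced by $\Lambda^*$) rather than treating both signs symmetrically; the paper applies the weighted Schur lemma~\ref{lem:Schur} directly with $\beta_1=-\alpha$, $\beta_2=0$ instead of your dilation-plus-isometry reduction; and the paper verifies strong convergence of the symbols by norm convergence on the dense subspaces $L_2((\delta,\delta^{-1}),t^\alpha dt;\mH)$ rather than by dominated convergence — all equivalent in substance.
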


Note that the case $\alpha=0$ is the usual maximal regularity result in $L_2(dt;\mH)$.
The methods here provide a proof of it.

To establish boundedness of the integral operators $F(\lambda)$, we rely on the following
version of Schur's lemma.
The proof is straightforward using Cauchy--Schwarz' inequality.
\begin{lem}   \label{lem:Schur}
  Consider the integral operator $f_t\mapsto \int_0^\infty k(t,s) f_s ds$, with $\C$-valued kernel $k(t,s)$. 
  If the kernel has the bounds
$$
  \sup_t \frac 1{t^{\beta_2-\alpha}}\int_0^\infty |k(t,s)| s^{\beta_1} ds = C_1<\infty,\qquad 
  \sup_s \frac 1{s^{\beta_1+\alpha}}\int_0^\infty |k(t,s)| t^{\beta_2} dt = C_2<\infty,
$$
for some $\beta_1,\beta_2\in \R$,
then the integral operator is bounded on $L_2(t^\alpha dt;\mH)$ 
with norm at most $\sqrt{C_1C_2}$.
\end{lem}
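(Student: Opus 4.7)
The plan is to apply the Cauchy--Schwarz inequality in a standard Schur-lemma fashion, using the exponents $\beta_1$ and $\beta_2$ to split the kernel $|k(t,s)|$ into two factors whose integrals against the appropriate weights are controlled by the hypotheses.

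First I would pass to scalar estimates by using the triangle/Bochner inequality
\[
   \left\| \int_0^\infty k(t,s) f_s \, ds \right\|_\mathcal{H} \le \int_0^\infty |k(t,s)|\, \|f_s\|_\mathcal{H}\, ds,
\]
so that it suffices to prove the bound for the scalar integral operator acting on $s\mapsto\|f_s\|_\mathcal{H}$. Next, I would write the kernel as
\[
   |k(t,s)| \;=\; \bigl(|k(t,s)|^{1/2}\, s^{\beta_1/2}\bigr)\cdot \bigl(|k(t,s)|^{1/2}\, s^{-\beta_1/2}\bigr),
\]
and apply Cauchy--Schwarz in $s$ to obtain the pointwise (in $t$) bound
\[
   \left\| \int_0^\infty k(t,s) f_s\, ds \right\|_\mathcal{H}^2
   \;\le\; \Bigl(\int_0^\infty |k(t,s)|\,s^{\beta_1}\,ds\Bigr)\Bigl(\int_0^\infty |k(t,s)|\,s^{-\beta_1}\,\|f_s\|_\mathcal{H}^2\,ds\Bigr).
\]
By the first hypothesis, the first factor is at most $C_1 t^{\beta_2-\alpha}$.

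Then I would multiply by $t^\alpha$, integrate in $t$, and swap the order of integration using Fubini's theorem:
\[
   \int_0^\infty \left\|\int_0^\infty k(t,s) f_s\, ds\right\|_\mathcal{H}^2 t^\alpha\, dt
   \;\le\; C_1 \int_0^\infty s^{-\beta_1}\,\|f_s\|_\mathcal{H}^2 \Bigl(\int_0^\infty |k(t,s)|\,t^{\beta_2}\, dt\Bigr) ds.
\]
The inner integral is controlled by the second hypothesis by $C_2 s^{\beta_1+\alpha}$, and substituting this yields the bound $C_1 C_2 \int_0^\infty \|f_s\|_\mathcal{H}^2 s^\alpha\, ds$. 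Taking square roots gives operator norm at most $\sqrt{C_1 C_2}$ on $L_2(t^\alpha dt;\mathcal{H})$.

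There is no real obstacle here; the only subtlety is choosing the exponent split $s^{\beta_1/2}\cdot s^{-\beta_1/2}$ so that Cauchy--Schwarz converts the two hypotheses into exactly the weights $t^\alpha$ on the left and $s^\alpha$ on the right. The parameter $\alpha$ is absorbed through the $t^{\beta_2-\alpha}$ factor from the first hypothesis, which is why the $\alpha$-shift appears asymmetrically between the two kernel bounds.
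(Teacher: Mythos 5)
Your proof is correct and is exactly the standard weighted Schur test argument that the paper has in mind (the paper itself only remarks that "the proof is straightforward using Cauchy–Schwarz' inequality"). The kernel split $|k|=\bigl(|k|^{1/2}s^{\beta_1/2}\bigr)\bigl(|k|^{1/2}s^{-\beta_1/2}\bigr)$, followed by Cauchy–Schwarz and Fubini, is precisely the intended route.
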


The second result that we need shows that when the integral operators $F(\lambda)$ define
a holomorphic function in $\Psi(S^o_{\nu+};\mL(\mK))$, then the resulting operator $F(\Lambda)$
can be represented as an integral operator with operator-valued kernel.

\begin{lem}    \label{lem:psiasintop}
  Consider a family of integral operators $F(\lambda)f_t= \int_0^\infty k_\lambda(t,s) f_s ds$ such that 
  the $\C$-valued kernels have the bounds
$$
  \sup_t\frac 1{t^{\beta_2-\alpha}}\int_0^\infty |k_\lambda(t,s)| s^{\beta_1} ds \le \eta(\lambda) ,\qquad 
   \sup_s \frac 1{s^{\beta_1+\alpha}}\int_0^\infty |k_\lambda(t,s)| t^{\beta_2} dt   \le \eta(\lambda).
$$
If $\sup_{\lambda\in S^o_{\nu+}} \eta(\lambda)<\infty$,
if $\lambda\mapsto k_\lambda(t,s)$ is holomorphic in $S^o_{\nu+}$ for a.e. $(t,s)$, 
and if $\iint_K |\pd_\lambda k_\lambda(t,s)|dtds$
is locally bounded in $\lambda$ for each compact set $K$, then 
$F\in H_\infty(S^o_{\nu+};\mL(L_2(t^\alpha dt;\mH)))$.

If furthermore 
$\eta(\lambda)\lesssim \min(|\lambda|^a, |\lambda|^{-a})$ for $\lambda\in S^o_{\nu+}$ and some $a>0$, then 
$F\in \Psi(S^o_{\nu+};\mL(L_2(t^\alpha dt;\mH)))$, and
$$
  F(\Lambda) f_t = \int_0^\infty k_\Lambda(t,s) f_s ds,\qquad\text{for all } f\in L_2(t^\alpha dt;\mH) \text{ and a.e. } t,
$$
where the operator-valued kernels $k_\Lambda(t,s)$ are defined through (\ref{eq:Dunford}) for a.e. $(t,s)$.
\end{lem}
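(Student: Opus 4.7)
The plan is to establish four items in sequence: boundedness of $F(\lambda)$ on $\mK := L_2(\R_+, t^\alpha dt; \mH)$, its commutation with the resolvents of $\Lambda$, holomorphy of $\lambda \mapsto F(\lambda)$ into $\mL(\mK)$, and finally the operator-valued kernel representation of $F(\Lambda)$.

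First, for each $\lambda \in S^o_{\nu+}$, I would apply Lemma~\ref{lem:Schur} directly to the scalar integral operator with kernel $k_\lambda(t,s)$, extended pointwise in $t$ to $\mH$-valued functions: the hypothesised Schur estimates have both constants bounded by $\eta(\lambda)$, so
$$\|F(\lambda)\|_{\mL(\mK)} \le \eta(\lambda).$$
In particular $\sup_{\lambda \in S^o_{\nu+}}\|F(\lambda)\| \le \sup \eta < \infty$, and under the stronger $\Psi$-hypothesis one inherits the decay $\|F(\lambda)\| \lesssim \min(|\lambda|^a, |\lambda|^{-a})$. The commutation $F(\lambda) \in \Lambda'$ is immediate: $F(\lambda)$ acts only in the $t$-variable, as a scalar integral operator tensored with $I_\mH$, while $(\mu - \Lambda)^{-1}$ acts pointwise in $t$ on the $\mH$-variable, so the two commute on simple tensors and extend to $\mK$ by density.

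Next, I would prove that $\lambda \mapsto F(\lambda) \in \mL(\mK)$ is holomorphic. It suffices to check weak holomorphy, since in a Banach space this is equivalent to norm-holomorphy. For $f, g \in \mK$ compactly supported in $t$ on a compact set $K \subset (0,\infty)$, Fubini yields
$$\langle F(\lambda) f, g \rangle_\mK = \iint_{K\times K} k_\lambda(t,s) \langle f_s, g_t \rangle_\mH \, t^\alpha \, ds \, dt.$$
Pointwise holomorphy of $k_\lambda(t,s)$ together with the locally uniform bound on $\iint_K |\partial_\lambda k_\lambda(t,s)| dt ds$ allows differentiation under the integral via dominated convergence, giving holomorphy of this pairing on $S^o_{\nu+}$. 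Density of compactly-supported vectors in $\mK$, combined with the uniform bound already established, extends the pairing to be holomorphic for all $f, g \in \mK$, whence $F \in H_\infty(S^o_{\nu+}; \Lambda')$. Under the additional $\Psi$-bound on $\eta$, the $\min(|\lambda|^a, |\lambda|^{-a})$ decay of $\|F(\lambda)\|$ upgrades this to $F \in \Psi(S^o_{\nu+}; \Lambda')$.

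Finally, for the kernel representation under the $\Psi$-hypothesis, insert the Dunford formula and use that $F(\lambda)$ is itself an integral operator:
$$(F(\Lambda) f)_t = \frac{1}{2\pi i}\int_\gamma \bigl(F(\lambda) (\lambda - \Lambda)^{-1} f\bigr)_t \, d\lambda = \frac{1}{2\pi i}\int_\gamma \int_0^\infty k_\lambda(t,s) \bigl((\lambda - \Lambda)^{-1} f\bigr)_s \, ds \, d\lambda.$$
The Schur bound $\eta(\lambda) \lesssim \min(|\lambda|^a, |\lambda|^{-a})$ on $\gamma$, combined with the resolvent bound of $\Lambda$, makes the double integral absolutely convergent in $\mH$, so Fubini swaps the order and produces
$$(F(\Lambda) f)_t = \int_0^\infty k_\Lambda(t,s) f_s \, ds, \qquad k_\Lambda(t,s) := \frac{1}{2\pi i}\int_\gamma k_\lambda(t,s) (\lambda - \Lambda)^{-1} d\lambda.$$
The main obstacle is ensuring that $k_\Lambda(t,s)$ is well-defined pointwise for a.e. $(t,s)$ as an element of $\mL(\mH)$: the integrated Schur bounds do not directly yield pointwise $\Psi$-decay of the scalar function $\lambda \mapsto k_\lambda(t,s)$. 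The way around this is a weighted Fubini argument showing that $\int_\gamma |k_\lambda(t,s)|\, |\lambda|^{-1}\, |d\lambda|$ is finite after integration against the weights $s^{\beta_1} t^{\alpha - \beta_2}$, hence finite for a.e. $(t,s)$; this is exactly what is needed to define $k_\Lambda(t,s)$ through the contour integral for almost every pair $(t,s)$.
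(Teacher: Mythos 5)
Your overall strategy mirrors the paper's: Schur's lemma for the $\mL(\mK)$-bounds, weak holomorphy plus dominated convergence via the $\partial_\lambda k_\lambda$ hypothesis, and a Fubini swap to obtain the kernel representation. Those three pieces are handled correctly, including the explicit observation that $F(\lambda)$ commutes with the resolvents because it acts only in the $t$-variable while the resolvents act pointwise in $t$ on the $\mH$-variable.

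The one place your argument is off is the final weighted Fubini. As stated, you propose integrating $\int_\gamma |k_\lambda(t,s)|\,|\lambda|^{-1}\,|d\lambda|$ against $s^{\beta_1}t^{\alpha-\beta_2}\,ds\,dt$ over all of $(0,\infty)^2$; but after using the first Schur bound the $s$-integral gives $\eta(\lambda)\,t^{\beta_2-\alpha}$, which then makes the $t$-integral $\int_0^\infty 1\,dt=\infty$. So the triple integral diverges as written and does not by itself give finiteness for a.e.\ $(t,s)$. The fix is mild — restrict $t$ to a compact interval — but a cleaner route, and the one the paper takes, is to go directly to what Fubini actually requires for the representation: show
\[
\int_\gamma \int_0^\infty |k_\lambda(t,s)|\,\|f_s\|_\mH\,ds\,\frac{|d\lambda|}{|\lambda|}<\infty \qquad\text{for a.e.\ } t.
\]
This follows because Schur gives
$\bigl\|\,\int_0^\infty |k_\lambda(t,s)|\,\|f_s\|\,ds\,\bigr\|_{L_2(t^\alpha dt)}\le\eta(\lambda)\|f\|$, and then Minkowski's integral inequality together with $\int_\gamma \eta(\lambda)|d\lambda|/|\lambda|<\infty$ (from the $\Psi$-decay) yields an $L_2(t^\alpha dt)$-bound on $t\mapsto\int_\gamma(\cdot)\,|d\lambda|/|\lambda|$, hence a.e.\ finiteness. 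That single estimate both defines $k_\Lambda(t,s)$ for a.e.\ $(t,s)$ (by a further Fubini in $s$) and justifies the order change, so there is no need to treat those two as separate obstacles as your write-up does.
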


\begin{proof}
  Schur's lemma~\ref{lem:Schur} provides the bounds on $F(\lambda)$.
  To show that the operator-valued function $F$ is holomorphic, by local boundedness it suffices 
  to show that the scalar function
$$
  \lambda\mapsto \iint (h_t, k_\lambda(t,s) f_s) dsdt
$$
is holomorphic, for all bounded and compactly supported $f,h$.
The hypothesis on $\pd_\lambda k_\lambda(t,s)$ guarantees this.

To prove the representation formula for $F(\Lambda)$, it suffices to show that
for each $f\in L_2(t^\alpha dt; \mH)$, $v\in\mH$, and a.e. $t$, changing order 
of integration is possible in 
$$
  \iint (v, k_\lambda(t,s) (\lambda-\Lambda)^{-1} f_s) ds d\lambda.
$$
Thus, by Fubini, one needs to show
$$
  \iint |k_\lambda(t,s)| \|f_s\| ds \frac{|d\lambda|}{|\lambda|}<\infty,\qquad \text{for a.e. } t.
$$
The bounds on $k_\lambda(t,s)$ in the hypothesis guarantee this.
\end{proof}

\begin{proof}[Proof of Theorem~\ref{thm:weakmaxreg}]
  Since $S^+_\epsilon$ in $L_2(t^\alpha dt;\mH)$ and $S^-_\epsilon$ in $L_2(t^{-\alpha} dt;\mH)$,
  with $\Lambda$ replaced by $\Lambda^*$, are adjoint operators, it suffices to consider $S^+_\epsilon$.
  Let 
$$
  F_\epsilon(\lambda)f_t := \int_0^t \eta_\epsilon^+(t,s) \lambda e^{-(t-s)\lambda} f_s ds.
$$
  Uniform boundedness of the integral operators $F_\epsilon(\lambda)$ follows from
  Lemma~\ref{lem:Schur} with $\beta_1= -\alpha$, $\beta_2=0$, 
  using the estimate $\int_0^y e^x x^{-\alpha} dx\lesssim e^y y^{-\alpha}$, which holds
  if and only if $\alpha\in (-\infty,1)$.
  Indeed, since $\lambda\in S^o_{\nu+}$ with $\nu <\pi/2$, we have $\lambda_1:= \re\lambda\approx |\lambda|$ and
$$
  \int_0^t |\lambda e^{-\lambda(t-s)}|s^{-\alpha} ds\approx \int_0^t \lambda_1 e^{-\lambda_1(t-s)} s^{-\alpha}ds
  = \lambda_1^{\alpha} e^{-\lambda_1 t} \int_0^{\lambda_1 t} e^x x^{-\alpha} dx \lesssim t^{-\alpha},
$$
Similarly, $\int_s^\infty |\lambda e^{-\lambda(t-s)}| dt\lesssim e^{\lambda_1 s}\int_{\lambda_1 s}^\infty e^{-x}dx=1$.

  Again using Lemma~\ref{lem:Schur}, we note for fixed $\epsilon>0$ the crude estimate 
  $\|F_\epsilon(\lambda)\|\lesssim |\lambda|e^{-\epsilon\re\lambda}$, and with 
  Lemma~\ref{lem:psiasintop} we verify that $F_\epsilon\in \Psi(S^o_{\nu+};\mL(L_2(t^\alpha dt;\mH)))$, and
$$
  F_\epsilon(\Lambda) f_t = \int_0^t \eta_\epsilon^+(t,s) \Lambda e^{-(t-s)\Lambda}  f_s ds= S^+_\epsilon f_t,
  \qquad\text{for a.e. } t.
$$

  To prove strong convergence, by Proposition~\ref{prop:opcalcconv} it suffices to show 
  strong convergence of $F_\epsilon(\lambda)$ as $\epsilon\to 0$, for fixed $\lambda\in S^o_{\nu+}$.
  By uniform boundedness of $F_\epsilon(\lambda)$, it suffices to show
  that $F_\epsilon(\lambda)f$ converges in $L_2(t^\alpha dt;\mH)$ as $\epsilon\to 0$ for each
  $f$ in the dense set $\bigcup_{\delta>0} L_2((\delta,\delta^{-1}),t^\alpha dt;\mH)$.
  This will follow from norm convergence of $F_\epsilon(\lambda)$
  in $\mL(L_2((\delta,\delta^{-1}),t^\alpha dt;\mH), L_2(t^\alpha dt;\mH) )$ for each
  fixed $\delta>0$. 
  To see this, we use Lemma~\ref{lem:Schur} with $\beta_1= -\alpha$ and $\beta_2=0$.
  As above $C_1$ is uniformly bounded.   
  One verifies decay to $0$
  as $\epsilon\to 0$ of
$$
 \sup_{s\in (\delta,\delta^{-1})} \int_{(2\epsilon)^{-1}}^\infty \lambda_1 e^{-(t-s)\lambda_1} dt \qquad\text{and}\qquad
 \sup_{s\in (\delta,\delta^{-1})} \int_{s}^{s+2\epsilon} \lambda_1 e^{-(t-s)\lambda_1} dt.
$$
  This shows that $C_2\to 0$ as $\epsilon\to 0$, which proves the strong convergence and the theorem.
\end{proof}

\subsection{Endpoint cases}     \label{sec:endpoint}

The operators $S^-_\epsilon$ are not uniformly bounded on $L_2(t^\alpha dt; \mH)$
when $\alpha\le -1$, and therefore no limit operator $S^-$ exists in these spaces.
Indeed, if $\eta(t)$ is a smooth approximation to the Dirac delta at $t=1$ and $f\in \mH$,
then $S^-_\epsilon (\eta f)_t$ is independent of $\epsilon$ for $\epsilon<t/2$,
with non-zero value $\approx\Lambda e^{-\Lambda} f\in \mH$ for $t\approx 0$.
Thus  $\sup_{\epsilon>0}\int_0^\infty\|S^-_\epsilon(\eta f)_t\|_\mH^2 t^\alpha dt=\infty$ if $\alpha\le -1$.
By duality
$S^+_\epsilon$ cannot be uniformly bounded when $\alpha\ge 1$.

In this section we study the endpoint cases $\alpha=\pm 1$.
It is convenient here (and as we apply these abstract results in the subsequent paper \cite{AA2}) 
 to introduce  the abstract spaces  $Y:= L_2(tdt;\mH)$ and $Y^*:= L_2(dt/t;\mH)$.  Note that they differ  from $\mY$, $\mY^*$ by the target space $\mH$ being here an arbitrary Hilbert space. 
To obtain a uniform boundedness result for $S^-_\epsilon$,
assume there exists an auxiliary Banach space $X$ with continuous embeddings
\begin{equation}    \label{eq:abstrXembedd}
  Y^* \subset X\subset L_2^\loc(dt;\mH),
\end{equation}
\textit{i.e.}\,  $\int_a^b \|f_t\|_\mH^2 dt\lesssim \|f\|^2_X\lesssim\int_0^\infty \|f_t\|_\mH^2 dt/t$ hold
for each fixed $0<a<b<\infty$, and such that the map $u\mapsto (e^{-t\Lambda}u)_{t>0}$
is bounded $\mH\to X$, \textit{i.e.}\, 
\begin{equation}    \label{eq:semigroupXabstr}
  \|e^{-t\Lambda} u\|_X \lesssim \|u\|_\mH, \qquad\text{for all } u\in \mH.
\end{equation}

\begin{thm}    \label{thm:modelendpointbdd}
   Consider the model operators $S^+_\epsilon$ and $S^-_\epsilon$ from (\ref{eq:Tplus}-\ref{eq:Tminus})
   and $Y$, $Y^*$ and $X$ as above.
      
   The operators $S^+_\epsilon$ are uniformly bounded on $Y^*$ and 
    converge strongly to a limit operator $S^+\in \mL(Y^*,Y^*)$
    as $\epsilon\to 0$.
   
   The operators $S^-_\epsilon$ are uniformly bounded $Y^* \to X$, and there is a limit
   operator $S^-\in \mL(Y^*, X)$ such that 
   $\lim_{\epsilon\to 0} \|S^-_\epsilon f- S^- f\|_{L_2(a,b;\mH)}= 0$ 
   for any fixed $0<a<b<\infty$ and $f\in Y^*$.
 \end{thm}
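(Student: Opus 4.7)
The plan is to treat the two statements separately. The claim for $S^+_\epsilon$ on $Y^* = L_2(\R_+, dt/t; \mH)$ is immediate from Theorem~\ref{thm:weakmaxreg} at weight $\alpha = -1$ (since $-1 < 1$): that result provides both uniform boundedness and strong convergence to a limit $S^+ \in \mL(Y^*, Y^*)$.

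The endpoint bound $S^-_\epsilon : Y^* \to X$ is the heart of the theorem, and both hypotheses on $X$ — the embedding $Y^* \subset X$ and the semigroup estimate $\|e^{-t\Lambda} u\|_X \lesssim \|u\|_\mH$ — must be used, since $\alpha = -1$ violates the hypothesis $\alpha > -1$ needed in Theorem~\ref{thm:weakmaxreg} for $S^-_\epsilon$. My plan is to split the integral defining $S^-_\epsilon f_t$ at $s = 2t$:
\[
  S^-_\epsilon f_t = \int_t^{2t} \eta_\epsilon^-(t,s)\, \Lambda e^{-(s-t)\Lambda} f_s \, ds + \int_{2t}^\infty \eta_\epsilon^-(t,s)\, \Lambda e^{-(s-t)\Lambda} f_s \, ds.
\]
On the ``far'' piece $s \ge 2t$, the semigroup identity $e^{-(s-t)\Lambda} = e^{-t\Lambda} e^{-(s-2t)\Lambda}$ extracts an $e^{-t\Lambda}$ prefactor; after further rearrangement, this piece should be expressible (up to a controllable residue) in the form $e^{-t\Lambda} u_\epsilon(f)$ with $u_\epsilon(f) \in \mH$ and $\|u_\epsilon(f)\|_\mH \lesssim \|f\|_{Y^*}$, and then the semigroup hypothesis delivers the required $X$-bound. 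The key analytic estimate underpinning this is
\[
  \Big\| \int_0^\infty \phi(s)\, \Lambda e^{-s\Lambda} f_s \, ds \Big\|_\mH \lesssim \|f\|_{Y^*}, \qquad \phi(s) \lesssim 1/s,
\]
proved by testing against $v \in \mH$, applying Cauchy--Schwarz, and invoking the square function estimates $\int_0^\infty \|s \Lambda^* e^{-s\Lambda^*} v\|_\mH^2\, ds/s \approx \|v\|_\mH^2$ for $\Lambda^*$ supplied by Section~\ref{sec:abstropcalc}. The ``near'' piece $s \in (t,2t)$ and the residue from the far-piece factorization are handled by Schur-type bounds refined by the same square function structure, together with the $L_2(dt;\mH)$ boundedness of $S^-_\epsilon$ (Theorem~\ref{thm:weakmaxreg} at $\alpha = 0$), and should map $Y^* \to Y^* \subset X$ uniformly in $\epsilon$. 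The main obstacle is that $\eta_\epsilon^-(t,s)$ couples $t$ and $s$, so that after the factorization the integrand still carries $t$-dependence through both $\eta_\epsilon^-$ and the lower integration limit; the algebraic bookkeeping required to separate the $\mH$-valued trace $u_\epsilon(f)$ from the $Y^*$-residue in a way uniform in $\epsilon$ is the delicate step.

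For the limit operator $S^- \in \mL(Y^*, X)$ and the stated $L_2((a,b); \mH)$-convergence, the plan is to combine the uniform $Y^* \to X$ bound just obtained with the strong $L_2(dt; \mH)$-convergence of $S^-_\epsilon$ from Theorem~\ref{thm:weakmaxreg} at $\alpha = 0$. Truncating $f \in Y^*$ to $f \cdot \mathbf{1}_{[1/n, n]}$ produces an element of $L_2(dt;\mH) \cap Y^*$ on which the $L_2$-convergence is automatic; the tail $f \cdot \mathbf{1}_{(0,1/n)\cup(n,\infty)}$ is small in $Y^*$, hence controlled in $L_2((a,b); \mH)$ uniformly in $\epsilon$ by the embedding $X \subset L_2^\loc(dt;\mH)$ and the uniform $Y^* \to X$ estimate. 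A standard Cauchy argument then yields the limit $S^- f$ in $L_2((a,b);\mH)$, and the limit operator inherits the bound $\mL(Y^*,X)$ from the uniform boundedness.
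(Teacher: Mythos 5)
Your treatment of $S^+_\epsilon$ via Theorem~\ref{thm:weakmaxreg} at $\alpha=-1$ is exactly the paper's argument. The overall strategy for $S^-_\epsilon$ — separate off a principal piece of the form $e^{-t\Lambda}u$ with $u\in\mH$ estimated by the square-function bound you display, feed it through the hypothesis $\|e^{-t\Lambda}u\|_X\lesssim\|u\|_\mH$, and show the remainder is Schur-bounded $Y^*\to Y^*$ — is also the right architecture, and your strong-convergence argument (truncate in $t$, use $\alpha=0$ convergence on the core, control the $Y^*$-small tails by the uniform $Y^*\to X$ bound and the embedding $X\subset L_2^{\loc}$) would indeed close once the uniform bound is in hand.

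The gap is precisely at what you yourself flag as the ``delicate step.'' Splitting at $s=2t$ and factoring $e^{-(s-t)\Lambda}=e^{-t\Lambda}\,e^{-(s-2t)\Lambda}$ produces a prefactor $e^{-t\Lambda}$ applied to $\int_{2t}^\infty\eta_\epsilon^-(t,s)\,\Lambda e^{-(s-2t)\Lambda}f_s\,ds$, which remains a nontrivial function of $t$ through the lower limit, through the shift $s-2t$, and through $\eta_\epsilon^-$. No change of variables turns this into a $t$-independent element of $\mH$ plus a $Y^*$-small error, since for $f\in Y^*$ one has no modulus-of-continuity control on $s\mapsto f_s$; so the semigroup hypothesis cannot be invoked on this piece as stated, and ``controllable residue'' is an assertion, not a consequence. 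Since you also know the full $S^-_\epsilon$ is unbounded on $Y^*$ while the near piece $\int_t^{2t}$ is Schur-bounded there, the obstruction lies entirely in the far piece, and it is exactly the part your factorization fails to tame.

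The missing algebraic idea, used in the paper, is to compare $e^{-(s-t)\Lambda}$ with $e^{-(s+t)\Lambda}$ rather than with $e^{-(s-2t)\Lambda}$: writing
\begin{equation*}
  \eta_\epsilon^-(t,s)\,\Lambda e^{-(s-t)\Lambda}
  = \eta_\epsilon^-(t,s)\,\Lambda\bigl(e^{-(s-t)\Lambda}-e^{-(s+t)\Lambda}\bigr)
  + \eta_\epsilon^-(t,s)\,e^{-t\Lambda}\Lambda e^{-s\Lambda},
\end{equation*}
the first kernel $\lambda e^{-(s-t)\lambda}(1-e^{-2t\lambda})$ gains a factor $\min(1,t\lambda_1)$, enough to give the $Y^*\to Y^*$ Schur bounds $\int_t^\infty|k_\lambda(t,s)|\,s\,ds\lesssim t$ and $\int_0^s|k_\lambda(t,s)|\,dt\lesssim1$. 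For the second term one uses that $\eta_\epsilon^-(t,s)=\eta^0((s-t)/\epsilon)\,\eta_\epsilon(t)\eta_\epsilon(s)$ factorizes up to a correction supported in $\{s<t+2\epsilon\}$: replacing $\eta^0((s-t)/\epsilon)$ by $1$ gives the genuinely separated principal term $\eta_\epsilon(t)\,e^{-t\Lambda}\int_0^\infty\eta_\epsilon(s)\,\Lambda e^{-s\Lambda}f_s\,ds$, whose inner integral is $t$-independent, lies in $\mH$, and is bounded by $\|f\|_{Y^*}$ via exactly your displayed square-function duality (this is the content of Lemma~\ref{lem:princpart}); the correction $\int_0^{t+2\epsilon}(\eta_\epsilon(t)\eta_\epsilon(s)-\eta_\epsilon^-(t,s))\Lambda e^{-(s+t)\Lambda}f_s\,ds$ is another Schur-bounded $Y^*\to Y^*$ operator. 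Without this particular comparison — or an equivalent rewriting, e.g.\ factoring as above and then further writing $\int_{2t}^\infty \Lambda e^{-(s-2t)\Lambda}=\int_0^\infty\Lambda e^{-s\Lambda}-\int_0^{2t}\Lambda e^{-s\Lambda}+\int_{2t}^\infty\Lambda(e^{-(s-2t)\Lambda}-e^{-s\Lambda})$, which amounts to the same maneuver in two stages — the proof does not close, so as written the proposal has a genuine gap at its central step.
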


For the proof, we shall need the first part of the following lemma. The second part will
be required in Propositions~\ref{prop:TAboundsonXY} and \ref{prop:endpointdir} below.

\begin{lem}    \label{lem:princpart}
  The operators 
$$
  \int_0^\infty \eta_\epsilon(s) \Lambda e^{-s\Lambda}  f_s ds : Y^*\to \mH
$$
are bounded, uniformly in $\epsilon$, and converge strongly as $\epsilon\to 0$.
Let $U_s: \mH\to \mH$ be bounded operators such that $ \mH\to Y^*; h\mapsto (U_s^* e^{-s\Lambda^*}h)_{s>0}$
is bounded.
Then the operators 
$$
  \int_0^\infty \eta_\epsilon(s) e^{-s\Lambda} U_s f_s ds : Y \to \mH
$$
are bounded, uniformly in $\epsilon$, and converge strongly as $\epsilon\to 0$.
\end{lem}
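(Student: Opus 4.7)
\emph{Approach.} My plan is to prove both parts by a single duality manoeuvre: test the integral against an arbitrary $h \in \mH$, move the semigroup across the inner product, and apply Cauchy--Schwarz with weights $s$ and $ds/s$ matching the spaces $Y = L_2(tdt;\mH)$ and $Y^* = L_2(dt/t;\mH)$. The only substantive analytic input for part one is the square function estimate $\int_0^\infty \|\Lambda^* e^{-s\Lambda^*} h\|_\mH^2\, s\, ds \approx \|h\|_\mH^2$, which is the assumption of Proposition~\ref{prop:opcalcboundedness} applied to the adjoint sectorial operator $\Lambda^*$ with symbol $\psi(z) = z e^{-z}$; on a Hilbert space the square function estimate transfers from $\Lambda$ to $\Lambda^*$ in a standard way. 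For part two, the only input beyond Cauchy--Schwarz is the boundedness hypothesis on $(U_s)$ itself.

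\emph{Boundedness.} For part one, the identity
\begin{equation*}
\Bigl\langle \int_0^\infty \eta_\epsilon(s)\, \Lambda e^{-s\Lambda} f_s\, ds,\, h \Bigr\rangle_{\mH} = \int_0^\infty \eta_\epsilon(s)\, \langle f_s,\, \Lambda^* e^{-s\Lambda^*} h \rangle_{\mH}\, ds
\end{equation*}
combined with Cauchy--Schwarz under the splitting $1 = s^{-1/2}\cdot s^{1/2}$ yields a bound by $\|f\|_{Y^*}$ times $\bigl(\int_0^\infty \|\Lambda^* e^{-s\Lambda^*} h\|^2 s\, ds\bigr)^{1/2} \lesssim \|h\|_\mH$. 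Taking the supremum over $h$ in the unit ball of $\mH$ gives the uniform estimate. For part two, the analogous identity
\begin{equation*}
\Bigl\langle \int_0^\infty \eta_\epsilon(s)\, e^{-s\Lambda} U_s f_s\, ds,\, h \Bigr\rangle_{\mH} = \int_0^\infty \eta_\epsilon(s)\, \langle f_s,\, U_s^* e^{-s\Lambda^*} h \rangle_{\mH}\, ds,
\end{equation*}
together with Cauchy--Schwarz under the splitting $1 = s^{1/2}\cdot s^{-1/2}$, bounds the left side by $\|f\|_Y$ times $\|(U_s^* e^{-s\Lambda^*} h)_{s>0}\|_{Y^*} \lesssim \|h\|_\mH$, the last inequality being precisely the hypothesis.

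\emph{Strong convergence and obstacle.} The uniform bounds together with convergence on a dense subset give strong convergence on the whole space by the standard $3\epsilon$ argument. From the explicit definition $\eta_\epsilon(s) = \eta^0(s/\epsilon)(1 - \eta^0(2\epsilon s))$, I observe that $\eta_\epsilon \equiv 1$ on any prescribed interval $[a,b] \subset (0,\infty)$ as soon as $\epsilon < \min(a/2, 1/(2b))$. Hence for $f$ supported in such an interval the integrals stabilize for small $\epsilon$ to $\int_a^b \Lambda e^{-s\Lambda} f_s\, ds$ and $\int_a^b e^{-s\Lambda} U_s f_s\, ds$ respectively, with trivially convergent limits. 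Such $f$ are dense in $Y^*$ (resp.\ $Y$), so strong convergence follows. I do not anticipate any real obstacle: the only nontrivial ingredient is the square function estimate for $\Lambda^*$, which is already part of the ambient setup of Section~\ref{sec:holoopcalc}.
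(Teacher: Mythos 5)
Your proof is correct and the boundedness argument is exactly the paper's: a duality pairing against $h\in\mH$, moving the semigroup onto $h$, and Cauchy--Schwarz with the weight splitting that matches $Y^*$ (resp.\ $Y$) against the square function estimate for $\Lambda^*$ (resp.\ the hypothesis on $U_s^* e^{-s\Lambda^*}$). The only place you deviate is the strong convergence step: the paper simply applies the boundedness estimate with $\eta_\epsilon$ replaced by $\eta_\epsilon-\eta_{\epsilon'}$, then dominated convergence gives $\|(\eta_\epsilon-\eta_{\epsilon'})f\|\to 0$, so the net is Cauchy --- this sidesteps your dense-subset argument, though yours is equally valid since compactly supported $f$ are indeed dense in $Y^*$ and $Y$.
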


\begin{proof}
  For the first operator, square function estimates for $\Lambda^*$ give
$$
   \left\| \int_0^\infty \eta_\epsilon(s) \Lambda e^{-s\Lambda} f_s ds \right\|_\mH= \sup_{\|h\|_2=1}
  \left| \int_0^\infty  (s\Lambda^* e^{-s\Lambda^*} h, f_s) \eta_\epsilon(s)\frac{ds}s\right| \lesssim \|\eta_\epsilon f\|_{Y^*}
  \lesssim \|f\|_{Y^*}.
$$
For the second operator
\begin{multline*}
  \left\|   \int_0^\infty \eta_\epsilon(s)  e^{-s\Lambda} U_s f_s ds \right\|_\mH 
  \lesssim \sup_{\|h\|_2=1}\left| \int_0^\infty (U_s^* e^{-s \Lambda^*} h, f_s) \eta_\epsilon(s)ds\right| \\
  \lesssim \sup_{\|h\|_2=1} \| U_s^* e^{-s \Lambda^*} h\|_{Y^*} 
  \|\eta_\epsilon f\|_Y  
  \lesssim \|\eta_\epsilon f\|_Y  \lesssim \|f\|_Y,
\end{multline*}
where in the second last estimate the hypothesis is used.
(Note that the $\mH$-bound on $U_s$ is not used quantitatively.)

To see the strong convergence, replace $\eta_\epsilon$ by $\eta_\epsilon-\eta_{\epsilon'}$ and 
use the dominated convergence theorem.
\end{proof}
 
\begin{proof}[Proof of Theorem~\ref{thm:modelendpointbdd}]
  The result for $S^+_\epsilon$ is contained in Theorem~\ref{thm:weakmaxreg}, so it suffices to consider 
  $S^-_\epsilon$.
  Write
\begin{multline}   \label{eq:splitofbadtinftyterm}
  S^-_\epsilon f_t = \int_t^\infty \eta_\epsilon^-(t,s) \Lambda e^{-(s-t)\Lambda} f_s ds 
   =\int_t^\infty \eta_\epsilon^-(t,s) \Lambda ( e^{-(s-t)\Lambda}- e^{-(s+t)\Lambda}) f_s ds \\
   -  \int_0^{t+2\epsilon} (\eta_\epsilon(t)\eta_\epsilon(s)- \eta^-_\epsilon(t,s)) \Lambda e^{-(s+t)\Lambda} f_s ds \\
   + \eta_\epsilon(t)e^{-t \Lambda} \int_0^\infty \eta_\epsilon(s) \Lambda e^{-s\Lambda} f_s ds= I-II+III.
\end{multline}
We show that it is only the last term which is singular in the sense that it is not uniformly bounded on $Y^*$.
Consider the term $I$ and the symbol
$F^I_\epsilon(\lambda)u_t= \int_t^\infty \eta_\epsilon^-(t,s) k_\lambda(t,s) u_s ds$,
where $k_\lambda(t,s):= \lambda e^{-(s-t)\lambda}(1- e^{-2t\lambda})$.
Boundedness of $F^I_\epsilon(\lambda)$ on $Y^*$, uniformly in $\lambda$ and $\epsilon$
follows from Lemma~\ref{lem:Schur} and the estimates
$\int_t^\infty |k_\lambda(t,s)| sds\lesssim t$
and $\int_0^s |k_\lambda(t,s) | dt\lesssim 1$.
For example 
$$
  \int_t^\infty |k_\lambda(t,s)| sds \lesssim \min(1, t\lambda_1) e^{t\lambda_1} \int_t^\infty \lambda_1 e^{-s\lambda_1} sds
  = t \min(1,t\lambda_1) (1+ 1/(t\lambda_1))\lesssim t,
$$
with $\lambda_1:= \re\lambda$.
On the other hand, for fixed $\epsilon>0$, it is straightforward to verify with Lemma~\ref{lem:Schur}
that  $\|F^I_\epsilon(\lambda)\|_{Y^*\to Y^*}\lesssim |\lambda| e^{-\epsilon\re\lambda}$, and with 
  Lemma~\ref{lem:psiasintop} that $F^I_\epsilon\in \Psi(S^o_{\nu+};\mL(Y^*))$ and
$$
  F^I_\epsilon(\Lambda) f_t = 
  \int_t^\infty \eta_\epsilon^-(t,s)  \Lambda ( e^{-(s-t)\Lambda}- e^{-(s+t)\Lambda}) f_s ds,
  \qquad\text{for a.e. } t.
$$
  To prove strong convergence, as in the proof of Theorem~\ref{thm:weakmaxreg}, 
  by uniform boundedness it suffices to show norm
  convergence of $F^{I}_\epsilon(\lambda)$ in $\mL(L_2((\delta,\delta^{-1}),t^{-1} dt;\mH), Y^* )$ for each fixed
  $\delta>0$. This follows from Lemma~\ref{lem:Schur}, where one verifies decay to $0$
  as $\epsilon\to 0$ of
  $\sup_{s\in (\delta,\delta^{-1})} \int_0^{2\epsilon} |k_\lambda(t,s)| dt$ and
  $\sup_{s\in (\delta,\delta^{-1})} \int_{s-2\epsilon}^{s} |k_\lambda(t,s)| dt$,
  and hence $C_2\to 0$, for fixed $\lambda\in S^o_{\nu+}$.
  Together with the uniform bound $\sup_t t^{-1}\int_t^\infty |k_\lambda(t,s)| sds<\infty$,
  this proves the strong convergence for the term $I$.
  
  Consider next the term $II$ and the symbol
$$F^{II}_\epsilon(\lambda)u_t= 
\int_0^{t+2\epsilon} (\eta_\epsilon(t)\eta_\epsilon(s)- \eta^-_\epsilon(t,s)) \lambda e^{-(s+t)\lambda} u_s ds.
$$
Boundedness of $F^{II}_\epsilon(\lambda)$ on $Y^*$, uniformly in $\lambda$ and $\epsilon$
follows from Lemma~\ref{lem:Schur} and the estimates
$\int_0^{3t} |\lambda e^{-(s+t)\lambda} | sds\lesssim t$
and $\int_{s/3}^\infty |\lambda e^{-(s+t)\lambda} | dt\lesssim 1$.
On the other hand, for fixed $\epsilon>0$, we verify with Lemma~\ref{lem:Schur}
that  $\|F^{II}_\epsilon(\lambda)\|_{Y^*\to Y^*}\lesssim |\lambda| e^{-\epsilon\re\lambda}$, and with 
  Lemma~\ref{lem:psiasintop} that $F^{II}_\epsilon\in \Psi(S^o_{\nu+};\mL(Y^*))$ and
$$
  F^{II}_\epsilon(\Lambda) f_t = 
 \int_0^{t+2\epsilon} (\eta_\epsilon(t)\eta_\epsilon(s)- \eta^-_\epsilon(t,s)) \Lambda e^{-(s+t)\Lambda} u_s ds,
  \qquad\text{for a.e. } t.
$$
  With the same technique as for the term $I$, the strong convergence of the term $II$ follows from the decay to $0$
  as $\epsilon\to 0$ of
  $\sup_{s\in (\delta,\delta^{-1})} \int_{s-2\epsilon}^{s} |\lambda e^{-(s+t)\lambda}| dt$.

  It remains to estimate the principal term $III$. Since the variables $t$ and $s$ separate, we can factor this
  term through the boundary space $\mH$ as a composition $Y^*\to\mH\to X$, where Lemma~\ref{lem:princpart}
  and the assumed bounds $e^{-t\Lambda}:\mH\to X$ prove boundedness, uniform in $\epsilon$,
  as well as strong convergence as maps $Y^*\to  \mH\to L_2(a,b;\mH)$.
  This completes the proof.
\end{proof}

%
%
%
%
%
\section{Estimates of the integral operators $S_A$ and $\tilde S_A$}  \label{sec:saest}

Let us come back to our concrete situation. Consider the operator $DB_0$ from Section~\ref{sec:difftoint}.
We set $\Lambda=|DB_0|:= DB_0 \sgn(DB_0)$
on $\mH=\clos{\ran(D)}$, and see that $\Lambda$ satisfies the assumptions of Section~\ref{sec:holoopcalc}.
It is a closed, densely defined, injective operator with $\sigma(\Lambda)\subset S_{\omega+}$ and
$\| (\lambda- \Lambda)^{-1} \|_{\mH\to\mH} \lesssim 1/\dist(\lambda, S_{\omega+})$
(this follows from the resolvent bounds on $DB_0$).
We apply the abstract theory from Section~\ref{sec:endpoint} to this $\Lambda$
and spaces 
$Y^*:= \mY^*\cap L_2^\loc(\R_+;\mH)$, $X:= \mX\cap L_2^\loc(\R_+;\mH)$ and 
$Y:= \mY\cap L_2^\loc(\R_+;\mH)$.
Note that the continuous embeddings (\ref{eq:abstrXembedd}) follow from Lemma~\ref{lem:XlocL2}
and the boundedness hypothesis (\ref{eq:semigroupXabstr}) on $\mH\to X: h \mapsto (e^{-t|DB_0|} h)_{t>0}$ 
follows from Theorem~\ref{thm:QEandNT} (and the analogous result for the lower half space $\R^{1+n}_-$,
\textit{i.e.}\,  $f_0\in E_0^-\mH$ giving a solution of $\pd_t f+ DB_0 f=0$ for $t<0$.). We shall use the operational calculus of $\Lambda$ to rigorously define and estimate the operator  $S_A$ in (\ref{eq:TAdefn}).

The strategy for the Dirichlet problem described in Section \ref{sec:roadDir} leads us
to consider the functional calculus of $B_0D$ and
the integral operator $\tS_{A}$ from (\ref{eq:tildeSA}). 
If $B_0$ were invertible on all $L_2$, then $DB_0$ and $B_0 D$ would be similar operators, but
this is not the case in general.
Still, whenever $B_0$ is strictly accretive on $\mH$, it is true that $B_0D$ is an $\omega$-bisectorial 
operator with resolvent bounds. Furthermore,
the $L_{2}$ space splits as
$$
   L_2= B_0 \mH\oplus \mH^\perp
$$ 
(cf.~(\ref{eq:hatEdefn})) and $B_0D$ restricts to an injective closed operator with dense range in $B_0\mH$.
This operator has square function estimates, and therefore bounded functional and operational calculus in $B_0\mH$, as in Section~\ref{sec:abstropcalc}. 
For proofs and further details, see \cite{elAAM}. We set $\tilde\Lambda:= |B_0D|$ and $\tE_0^\pm:= \chi^\pm(B_0D)$. 
We extend an operator $b(B_0 D)$ in the functional calculus to an operator on all $L_2$ by letting $b(B_0D)=0$ on $\mH^\perp=\nul(B_0D)$. With this notation $\tE_0^\pm (B_0 \mH)= \tE_0^\pm L_2$, and we shall prefer the
latter to ease the notation.

A important relation between the functional calculus of $DB_0$ and $B_0D$ is
\begin{equation}    \label{eq:DBBDintertw}
  B_0 b(DB_0)= b(B_0D)B_0,
\end{equation}
where we also extend operators $b(DB_0)$ to all $L_2$, letting $b(DB_0)|_{\nul(DB_0)}:= 0$.
The equation (\ref{eq:DBBDintertw}) clearly holds for resolvents $b(z)= (\lambda-z)^{-1}$. The general
case follows from Dunford integration (\ref{eq:Dunford}) and taking strong limits as in
Proposition~\ref{prop:opcalcconv} (adapted to bisectorial operators).
Note that (\ref{eq:DBBDintertw}) in particular shows that for appropriate $b$ and $u$
$$
  b(DB_0)Du = Db(B_0 D)u.
$$

A final observation is that with $\Lambda= |DB_0|$ and $\tilde\Lambda= |B_0D|$, 
then $\tilde \Lambda^*= |DB_0^*|$ and $\Lambda^*= |B_0^*D|$. So $\Lambda$ and $\tilde \Lambda^*$ are of the same type, and the same holds for $\tilde\Lambda$ and $\Lambda^*$.

The boundedness result for the operator $S_{A}$ is as follows.

\begin{prop}      \label{prop:TAboundsonXY}
Assume that $\E: \R^{1+n}_+\to \mL(\C^{(1+n)m})$ satisfies $\|\E\|_*<\infty$, and define operators
$$
     S_A^\epsilon f_t :=  \int_0^t \eta_\epsilon^+(t,s)\Lambda e^{-(t-s)\Lambda} \hE_0^+ \E_s f_s ds +
      \int_t^\infty\eta_\epsilon^-(t,s)\Lambda e^{-(s-t)\Lambda} \hE_0^- \E_s f_s ds.
$$
Then $\|S_A^\epsilon\|_{\mX\to \mX}\lesssim \|\E\|_*$ and
$\|S_A^\epsilon\|_{\mY\to \mY}\lesssim \|\E\|_*$, uniformly for $\epsilon>0$.
In the space $\mX$ there is a limit operator $S_A= S_A^\mX\in \mL(\mX; \mX)$
such that 
$$
  \lim_{\epsilon\to 0}\|S_A^\epsilon f- S_A^\mX f\|_{L_2(a,b;L_2)}=0,\qquad \text{for any } f\in \mX, 0<a<b<\infty.
$$
In the space $\mY$, there is a limit operator $S_A= S_A^\mY\in \mL(\mY; \mY)$ such that
$$
\lim_{\epsilon\to 0}\|S_A^\epsilon f- S_A^\mY f\|_{\mY}= 0,\qquad\text{for any }f\in \mY.
$$
\end{prop}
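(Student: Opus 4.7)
The plan is to decompose $S_A^\epsilon = S_\epsilon^+ \hE_0^+ \E + S_\epsilon^- \hE_0^- \E$ and reduce each summand to the abstract model-operator theory of Section~\ref{sec:endpoint}. Pointwise multiplication by $\E$ sends $\mX\to\mY^*$ with norm $\|\E\|_*$ by Definition~\ref{defn:starnorm}, while Lemma~\ref{lem:Carleson} gives $\|\E\|_\infty\lesssim\|\E\|_*$, so multiplication by $\E$ is also bounded on $\mY$ with norm controlled by $\|\E\|_*$. The operators $\hE_0^\pm$ are $L_2$-bounded in $\R^n$ and act pointwise in $t$, hence are bounded on each of $\mX$, $\mY$, $\mY^*$ and $X$.

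For the $\mX$-bound, I factor through $\mY^*$: both $\hE_0^\pm \E$ map $\mX\to\mY^*$ with norm $\lesssim\|\E\|_*$. On $\mY^* = L_2(t^{-1}dt;\mH)$ the parameter $\alpha=-1$ lies in the non-endpoint range for $S_\epsilon^+$, so Theorem~\ref{thm:weakmaxreg} gives uniform boundedness and strong $\mY^*$-convergence; coupled with $\mY^*\subset\mX$ from Lemma~\ref{lem:XlocL2}, this controls the forward contribution. The backward contribution $S_\epsilon^-\hE_0^-\E$ sits at the genuine endpoint $\alpha=-1$ for $S_\epsilon^-$, so Theorem~\ref{thm:modelendpointbdd} yields $S_\epsilon^-:\mY^*\to X\subset\mX$ uniformly and with $L_2(a,b;\mH)$-convergence on compact subintervals. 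Summing gives the $\mX$ claim.

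For the $\mY$-bound the two halves swap roles. The backward contribution $S_\epsilon^-\hE_0^-\E$ is now in the non-endpoint regime $\alpha=1>-1$ for $S_\epsilon^-$ and Theorem~\ref{thm:weakmaxreg} applies directly, using the $\mY$-boundedness of $\hE_0^-\E$ by $\|\E\|_\infty$. The critical piece is the forward contribution $S_\epsilon^+\hE_0^+\E$ at the endpoint $\alpha=1$, which I will handle by duality with respect to the $L_2(dt;L_2)$ pairing identifying $\mY^*$ with the dual of $\mY$. Swapping orders of integration yields
$$(S_\epsilon^+\hE_0^+\E)^* = \E^*(\hE_0^+)^*\tS_\epsilon^-,$$
where $\tS_\epsilon^-$ is the backward model operator attached to $\Lambda^*=|B_0^*D|$ (which fits the hypotheses of Section~\ref{sec:endpoint} by the remark just before the proposition). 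Applying Theorem~\ref{thm:modelendpointbdd} to $\Lambda^*$ gives $\tS_\epsilon^-:\mY^*\to X$ uniformly; $(\hE_0^+)^*$ is bounded on $X$; and multiplication by $\E^*$ maps $X\subset\mX$ into $\mY^*$ with norm $\|\E^*\|_*=\|\E\|_*$ (via the pointwise identity $|\E^*|=|\E|$). The adjoint is therefore uniformly bounded on $\mY^*$ with norm $\lesssim\|\E\|_*$, and duality returns the required $\mY$-bound.

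The convergence statements follow from the corresponding convergence in the model-operator theorems: for $\mX$, $S_\epsilon^+\hE_0^+\E$ converges in $\mY^*$ (hence a fortiori in $L_2(a,b;\mH)$) by Theorem~\ref{thm:weakmaxreg}, and $S_\epsilon^-\hE_0^-\E$ converges in $L_2(a,b;\mH)$ by Theorem~\ref{thm:modelendpointbdd}; for $\mY$, the backward part converges strongly by Theorem~\ref{thm:weakmaxreg}, and the forward endpoint part is handled by uniform boundedness plus direct verification on a dense subspace of $\mY$ (for instance $f$ compactly supported in $t$ with $f_s$ in a dense subset of $\mH$, via dominated convergence under the Schur bounds of Lemma~\ref{lem:Schur}). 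The main obstacle is precisely the $\mY$-boundedness of $S_\epsilon^+\hE_0^+\E$: the bare operator $S_\epsilon^+$ fails uniform boundedness on $\mY$ at the $\alpha=1$ endpoint, and it is only the Carleson-type control $\|\E\|_*$ on the multiplier, transferred via the duality reduction to Theorem~\ref{thm:modelendpointbdd} for $\Lambda^*$, that rescues the composition.
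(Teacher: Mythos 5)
Your approach is correct and genuinely different from the paper's for the critical piece, namely the uniform $\mY$-bound and convergence of $S_\epsilon^+\hE_0^+\E$. The paper does not take the adjoint of the whole operator. Instead, it mirrors the three-term split used in the proof of Theorem~\ref{thm:modelendpointbdd}: the first two terms ($e^{-(t-s)\Lambda}-e^{-(t+s)\Lambda}$ and the small-singularity correction) are handled by duality from the Schur estimates of that proof, using only $\|\E\|_\infty$, while the principal term $\eta_\epsilon(t)\Lambda e^{-t\Lambda}\int_0^\infty\eta_\epsilon(s)e^{-s\Lambda}\hE_0^+\E_sf_sds$ is factored through $\mH$ and controlled by the second part of Lemma~\ref{lem:princpart}, applied with $\tilde\Lambda=|B_0D|$ so that the dual generator appearing there is $\tilde\Lambda^*=|DB_0^*|$, for which the semigroup estimate is a direct instance of Theorem~\ref{thm:QEandNT} applied to $B_0^*$. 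Your unified duality argument replaces all of this with a single application of Theorem~\ref{thm:modelendpointbdd} to $\Lambda^*=|B_0^*D|$, which is cleaner. The one step you should not wave away is the hypothesis (\ref{eq:semigroupXabstr}) for $\Lambda^*$: the remark preceding the proposition only identifies $\Lambda^*$ as a $BD$-type operator; the paper verifies (\ref{eq:semigroupXabstr}) only for $|DB_0|$ via Theorem~\ref{thm:QEandNT}, and transferring it to $|B_0^*D|$ on $B_0^*\mH$ requires the intertwining $e^{-t|B_0^*D|}B_0^*=B_0^*e^{-t|DB_0^*|}$ together with Theorem~\ref{thm:QEandNT} for $B_0^*$. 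This is exactly why the paper prefers to route through $\tilde\Lambda^*=|DB_0^*|$, which acts on $\mH$ directly. Two smaller remarks: the identity $\|\E^*\|_*=\|\E\|_*$ does not follow from $|\E^*|_{\mathrm{op}}=|\E|_{\mathrm{op}}$ alone but needs a measurable polar decomposition (the paper also uses this implicitly, so it is not a gap relative to the paper); and the strong $\mY$-convergence of the forward piece, which the duality argument does not supply for free, deserves a more precise dense-subclass argument than the sketched dominated-convergence appeal, since the Schur bounds are not uniform at the $\alpha=1$ endpoint.
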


\begin{proof}
The result on $\mX$ is a direct consequence of Theorem~\ref{thm:modelendpointbdd}, since
$\|\E f\|_{\mY^*}\le \|\E\|_* \|f\|_{\mX}$ and $S_A^\epsilon= S^+_\epsilon \hE_0^+ \E + S^-_\epsilon \hE_0^- \E$.
Note that $\ran(\hE^\pm_0)\subset \mH\subset L_2$.

Consider now the space $\mY$.
The second term $S^-_\epsilon \hE_0^- \E$ is bounded on $\mY$, uniformly in $\epsilon$,
  and converges strongly on $\mY$. This follows from Theorem~\ref{thm:weakmaxreg} and the boundedness 
  $\| \hE_0^-\E\|_{\mY\to\mY}\lesssim \|\E\|_\infty \lesssim \|\E\|_*<\infty$.
  The term $S^+_\epsilon \hE_0^+ \E$ we split as
\begin{multline*}
   \int_0^t \eta_\epsilon^+(t,s) \Lambda e^{-(t-s)\Lambda}  \hE_0^+ \E_s f_s ds 
   =\int_0^t \eta_\epsilon^+(t,s) \Lambda ( e^{-(t-s)\Lambda}- e^{-(t+s)\Lambda})  \hE_0^+\E_s f_s ds \\
   -  \int_{t-2\epsilon}^\infty (\eta_\epsilon(t)\eta_\epsilon(s)- \eta^+_\epsilon(t,s)) 
           \Lambda e^{-(t+s)\Lambda}  \hE_0^+ \E_s f_s ds \\
   + \eta_\epsilon(t)  \Lambda e^{-t \Lambda}
          \int_0^\infty \eta_\epsilon(s)  e^{-s\Lambda} \hE_0^+ \E_s f_s ds.
\end{multline*}
The result for the first two terms follows from the proof of Theorem~\ref{thm:modelendpointbdd} by duality, only using the
boundedness of $\E$ on $\mY$. 
For the last term, as the variables $t$ and $s$ split, it suffices to show uniform boundedness and convergence of 
$$
  L_2\to \mY: h\mapsto \eta_\epsilon(t)  \Lambda e^{-t \Lambda} h
$$
and
$$
  \mY\to L_2: f_t\mapsto \int_0^\infty  \eta_\epsilon(s)  e^{-s\Lambda} \hE_0^+ \E_s f_s ds
$$
separately.
For the first operator, this follows directly from the square function estimates for $\Lambda$.
To handle the second, it suffices to estimate
$B_0\int_0^\infty  \eta_\epsilon(s)  e^{-s\Lambda} \hE_0^+ \E_s f_s ds= 
\int_0^\infty  \eta_\epsilon(s)  e^{-s\tilde\Lambda} \tE_0^+ \E_s f_s ds$, since 
$B_0$ is strictly accretive on $\mH\supset \ran(e^{-s\Lambda} \hE^+_0)$.
To this end, we apply Lemma~\ref{lem:princpart} with $U_s:= \tE_0^+ \E_s P_\mH$, where $P_\mH$ is orthogonal projection
onto $\mH$, and $\Lambda$ replaced by $\tilde\Lambda$.
The hypothesis there on boundedness of
$$
  \mH \to Y^*: h\mapsto U_s^* e^{-s\tilde\Lambda^*}h= P_\mH \E_s^* e^{-s|DB_0^*|} \chi^+(DB_0^*)h,
$$
follows from the maximal estimate in Theorem~\ref{thm:QEandNT} (with $B_0$ replaced by $B_0^*$),
the assumed boundedness of $\E^*:\mX\to \mY^*$ and $L_2$ boundedness of $\chi^+(DB_0^*)$ and $P_\mH$.
This completes the proof.
\end{proof}

By inspection of the proofs above, the limit operator $S_A$, both for $f\in \mX$ and $f\in \mY$, is seen to be 
$$
     S_A f_t = \lim_{\epsilon\to 0}\left( \int_\epsilon^{t-\epsilon} \Lambda e^{-(t-s)\Lambda} \hE_0^+ \E_s f_s ds +
      \int_{t+\epsilon}^{\epsilon^{-1}}\Lambda e^{-(s-t)\Lambda} \hE_0^- \E_s f_s ds \right),
$$
with convergence in $L_2(a,b;L_2)$ for any $0<a<b<\infty$.
This holds since we may equally well choose to work with the characteristic function
$\eta^0(t)= \chi_{(1,\infty)}(t)$ instead of the piecewise linear function $\eta^0$ defined below
(\ref{eq:Tplus}-\ref{eq:Tminus}). 
The only places we need the continuity of $\eta^0$ are in Theorems~\ref{thm:inteqforNeu}
and \ref{thm:inteqforDir} below.

We now turn to defining rigorously the second  integral operator needed  for the Dirichlet problem. 
Write $C_b(X,V)$ for the space of bounded and continuous functions on $X$ with values in $V$.

\begin{prop}    \label{prop:endpointdir}
  The operators
 $$
     \tS_A^\epsilon f_t := \int_0^t \eta_\epsilon^+(t,s) e^{-(t-s)\tilde\Lambda} \tE_0^+ \E_s f_s ds -
      \int_t^\infty\eta_\epsilon^-(t,s) e^{-(s-t)\tilde\Lambda} \tE_0^- \E_s f_s ds 
 $$
   are bounded $\mY\to C_b(\clos\R_+; L_2)$,  
   with $\sup_{t>0}\|\tS_A^\epsilon f_t\|_2\lesssim \|\E\|_* \|f\|_\mY$, uniformly
    for $\epsilon>0$,
   and there is a limit operator $\tS_A\in \mL(\mY, C_b(\clos\R_+; L_2))$ such that 
   $\lim_{\epsilon\to 0} \|\tS_A^\epsilon f_t- \tS_A f_t\|_2= 0$  locally uniformly for $t\in(0,\infty)$,
   for any $f\in\mY$.
   The limit operator satisfies $S_Af= D\tS_Af$ in $\R^{1+n}_+$ distributional sense, 
   where $S_A= S_A^\mY$ is the operator from Proposition~\ref{prop:TAboundsonXY},
   and has limits 
 $$
   \lim_{t\to 0} \|\tS_A f_t - \tilde h^-\|_2 =0= \lim_{t\to \infty}  \|\tS_A f_t\|_2,
 $$
 where 
 $\tilde h^-:= -\int_0^\infty e^{-s\tilde\Lambda}\tE_0^- \E_s f_s ds\in \tE_0^-L_2$,  for any $f\in \mY$.
 \end{prop}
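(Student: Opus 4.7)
The plan is to adapt the methods of the proofs of Theorems~\ref{thm:weakmaxreg} and \ref{thm:modelendpointbdd}, together with the principal-term analysis in Proposition~\ref{prop:TAboundsonXY}, exploiting that dropping the leading $\tilde\Lambda$ from the kernel produces operators valued in $L_2$ rather than $\mY^*$ or $\mY$. Treat the forward and backward integrals $I^\pm_\epsilon f_t$ separately. For each, split
$$
e^{-|t-s|\tilde\Lambda}= \bigl(e^{-|t-s|\tilde\Lambda}- e^{-(t+s)\tilde\Lambda}\bigr)+ e^{-t\tilde\Lambda}e^{-s\tilde\Lambda}
$$
and organize exactly as in (\ref{eq:splitofbadtinftyterm}). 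The two non-principal pieces have scalar kernels $k_\lambda(t,s)$ satisfying Schur bounds (Lemma~\ref{lem:Schur}) uniformly in $\lambda$, $\epsilon$ and $t$, so Lemma~\ref{lem:psiasintop} and the operational calculus of $\tilde\Lambda$ yield bounded operators $\mY\to L_2$ at each fixed $t$, with continuity in $t$ and strong convergence as $\epsilon\to 0$ obtained via the same dense-subset arguments as in those proofs.

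The principal term for $I^+_\epsilon$ has the factored form
$$
 \eta_\epsilon(t)\,e^{-t\tilde\Lambda}\int_0^\infty \eta_\epsilon(s)e^{-s\tilde\Lambda}\tE_0^+\E_s f_s\,ds,
$$
and similarly for $I^-_\epsilon$ with $\tE_0^-$ in place of $\tE_0^+$. In each, the variables $s$ and $t$ separate, so the term factors as $\mY\to L_2\to C_b(\clos\R_+;L_2)$. The $s$-integral is uniformly bounded in $\epsilon$ and convergent as $\epsilon\to 0$ by the second statement of Lemma~\ref{lem:princpart}, applied with $\Lambda$ replaced by $\tilde\Lambda$ and $U_s=\tE_0^\pm\E_s P_\mH$: the required boundedness of
$$
 h\mapsto (U_s^*e^{-s\tilde\Lambda^*}h)_{s>0}
 = \bigl(P_\mH\E_s^*\chi^\pm(DB_0^*)e^{-s|DB_0^*|}h\bigr)_{s>0}
$$
as a map $L_2\to\mY^*$ combines Theorem~\ref{thm:QEandNT} (for the Cauchy extension of $\chi^\pm(DB_0^*)h$ in the $B_0^*$-framework, yielding an $\mX$-estimate) with the assumption $\|\E^*\|_*<\infty$ (comparable to $\|\E\|_*$ via Lemma~\ref{lem:Carleson}, since the two are simultaneously controlled by the Carleson norm). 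Composition with $h\mapsto\eta_\epsilon(t)e^{-t\tilde\Lambda}h$ lands in $C_b(\clos\R_+;L_2)$ by strong continuity of the bounded holomorphic semigroup $e^{-t\tilde\Lambda}$ on $\tE_0^\pm L_2$. Together this yields $\sup_{t>0}\|\tS_A^\epsilon f_t\|_2\lesssim\|\E\|_*\|f\|_\mY$ uniformly in $\epsilon$, and a limit operator $\tS_A\in\mL(\mY,C_b(\clos\R_+;L_2))$ with local uniform convergence on $(0,\infty)$.

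The identity $S_A f=D\tS_A f$ is obtained by applying $D$ termwise to $\tS_A^\epsilon f_t$ and using the intertwining $Db(B_0D)=b(DB_0)D$ on $L_2$, which extends from the resolvent identity $B_0(\lambda-DB_0)^{-1}=(\lambda-B_0D)^{-1}B_0$ on $\mH$ by Dunford calculus (together with $\mH^\perp=\nul(D)$), combined with the algebraic identities $\Lambda\hE_0^+=E_0^+D$ and $\Lambda\hE_0^-=-E_0^-D$ on all of $L_2$. The resulting expression equals $S_A^\epsilon f_t$, and passing to the limit $\epsilon\to 0$ using Proposition~\ref{prop:TAboundsonXY} together with the convergence for $\tS_A^\epsilon$ established above yields $S_A f=D\tS_A f$ distributionally in $\R^{1+n}_+$.

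For the endpoint limits, argue by density. For $f\in\mY$ supported in $(\delta,\delta^{-1})\times\R^n$, the forward integral vanishes for $t<\delta$ while the backward integral converges in $L_2$ to $\int_0^\infty e^{-s\tilde\Lambda}\tE_0^-\E_s f_s\,ds=-\tilde h^-$ as $t\to 0$ by dominated convergence, so $\tS_A f_t\to\tilde h^-$; as $t\to\infty$, the backward integral vanishes for $t>\delta^{-1}$ and the forward integral tends to $0$ in $L_2$ by dominated convergence, using the strong decay $\lim_{\tau\to\infty}e^{-\tau\tilde\Lambda}v=0$ on $\tE_0^+L_2$ valid since $\tilde\Lambda$ is injective with dense range on this subspace. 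The uniform bound $\sup_t\|\tS_A f_t\|_2\lesssim\|f\|_\mY$ together with the density of such $f$ in $\mY$ propagates both limits to arbitrary $f\in\mY$, while $\tilde h^-\in\tE_0^- L_2$ is immediate from the presence of $\tE_0^-$ under the integral.

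The main obstacle is the control of the principal terms, where neither Schur estimates nor the operational calculus on $\mY$ alone suffice: it is only by factoring through $L_2$ at the boundary and invoking Lemma~\ref{lem:princpart}, which pairs the $\mX$-estimate for the Cauchy extension of $\chi^\pm(DB_0^*)h$ with the Carleson hypothesis $\|\E\|_*<\infty$, that the $\mY\to L_2$ bound is obtained.
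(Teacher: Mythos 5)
Your proof follows the same overall strategy as the paper: split each integral into a nonsingular remainder and a principal piece that factors through the boundary space $L_2$, bound the remainder by an elementary kernel estimate, bound the principal piece via Lemma~\ref{lem:princpart} in the $B_0^*$/$\tilde\Lambda^*$ framework, obtain $S_Af=D\tS_Af$ by the $DB_0$/$B_0D$ intertwining, and get the endpoint limits by density of compactly supported $f$. The conclusion and the main mechanism are the same as in the paper. One technical imprecision is worth flagging: for the nonprincipal pieces you invoke Lemma~\ref{lem:Schur} and the operational calculus to obtain boundedness ``$\mY\to L_2$ at each fixed $t$''. Those tools only yield boundedness between weighted spaces $L_2(t^\alpha dt;\mH)$, not a uniform-in-$t$ bound $L_2$-valued in $t$. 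The correct argument here (and the one the paper uses) is elementary: the operator norm estimate $\|e^{-(t-s)\tilde\Lambda}(I-e^{-2s\tilde\Lambda})\|\lesssim s/t$ (and the symmetric $t/s$ for the backward piece) followed by Cauchy--Schwarz against the weight $s\,ds$ gives the $\mY\to C_b(\clos\R_+;L_2)$ bound directly, with no need of the Dunford formalism; indeed the Schur machinery was developed precisely to handle the singular operators $S_A^\epsilon$, where a leading factor $\tilde\Lambda$ prevents this simple estimate. Relatedly, the paper avoids the middle correction term in the three-way split of (\ref{eq:splitofbadtinftyterm}) by leaving the cutoff $\eta^\pm_\epsilon(t,s)$ inside the principal factor (permissible since there is no $\tilde\Lambda$ in front to control), which is a minor simplification over the split you propose. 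Finally, when you apply Lemma~\ref{lem:princpart} with $U_s=\tE_0^\pm\E_sP_\mH$, note that the abstract $\mH$ there should be $B_0\mH$ (where $\tilde\Lambda$ lives) and the $U_s$ you write are really maps $L_2\to B_0\mH$ composed with $\E_sf_s$, so the lemma does not apply verbatim; the paper sidesteps this by simply running the duality computation, which amounts to the same estimate.
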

 
Note that $\tS_A^\epsilon f_t= 0$ when $t\notin (\epsilon, \epsilon^{-1})$, so convergence 
$\tS_A^\epsilon f_t\to \tS_A f_t$ is not uniform up to $t=0$.
 By inspection of the proof below, the limit operator is seen to be
\begin{equation}  \label{def:tildeSA}
     \tS_A f_t = \int_0^t e^{-(t-s)\tilde\Lambda} \tE_0^+ \E_s f_s ds -
      \int_t^\infty e^{-(s-t)\tilde\Lambda} \tE_0^- \E_s f_s ds,
 \end{equation}
  where the integrals are weakly convergent in $L_2$ for all $f\in \mY$ and $t>0$.
 
 \begin{proof}
The estimates for $\tS_A^\epsilon$ are more straightforward than those for $S_A^\epsilon$ 
since there is no singularity at $s=t$.
For the $(0,t)$-integral, split it as
$$
   \int_0^t \eta_\epsilon^+(t,s) e^{-(t-s)\tilde\Lambda} (I- e^{-2s\tilde\Lambda}) \tE_0^+ \E_s f_s ds 
   +   e^{-t\tilde\Lambda}\int_0^{t} \eta_\epsilon^+(t,s) e^{-s\tilde\Lambda} \tE_0^+ \E_s f_s ds.
$$
For the first term, we write $e^{-(t-s)\tilde\Lambda} (I- e^{-2s\tilde\Lambda})= 
\tfrac s{t-s} ((t-s)\tilde \Lambda e^{-(t-s)\tilde\Lambda} )((I- e^{-2s\tilde\Lambda})/(s\tilde\Lambda))$
to obtain the estimate
$\|e^{-(t-s)\tilde\Lambda} (I- e^{-2s\tilde\Lambda})\| \lesssim s/t$.
From this uniform boundedness and convergence, locally uniformly in $t$, as $\epsilon\to 0$ follows
by Cauchy--Schwarz inequality.
For the second term we use uniform boundedness of $e^{-t\tilde\Lambda}$ and duality 
to estimate it by
$$
   \sup_{\|h\|_2=1}\left| \int_0^t (\E_s^* e^{-s\tilde \Lambda^*} (\tE_0^+)^* h, f_s) \eta^+_\epsilon(t,s)ds\right|
   \lesssim \|\E^*\|_* \|f\|_\mY,
$$
using Lemma~\ref{lem:princpart} as in the proof of Proposition~\ref{prop:TAboundsonXY}. 
Moreover, the $L_2$ difference between the integral at $\epsilon$ and $\epsilon'$ is bounded by
$\int_0^t \|f_s\|_2^2 |\eta_\epsilon^+(t,s)- \eta_{\epsilon'}^+(t,s)|^2 sds\to 0$ as $\epsilon, \epsilon'\to 0$
for fixed $t$, which proves the convergence.

The proof for the $(t,\infty)$-integral in $\tS_A^\epsilon$ is similar, splitting it as 
$$
   \int_t^\infty \eta_\epsilon^-(t,s) e^{-(s-t)\tilde\Lambda} (I- e^{-2t\tilde\Lambda}) \tE_0^- \E_s f_s ds 
   +   e^{-t\tilde\Lambda}\int_t^\infty \eta_\epsilon^-(t,s) e^{-s\tilde\Lambda} \tE_0^- \E_s f_s ds,
$$
and using the estimate
$\|e^{-(s-t)\tilde\Lambda} (I- e^{-2t\tilde\Lambda})\| \lesssim t/s$ for the first term
and Lemma~\ref{lem:princpart} for the second.

Since clearly $\tS_A^\epsilon f\in C_b(\R_+; L_2)$, its locally uniform limit $\tS_A f$ also belongs to
$C_b(\R_+; L_2)$.
To find the limits of $\tS_A f_t$ at $0$ and $\infty$,
since $\tS_A:\mY\to C_b(\R_+; L_2)$ is bounded
it suffices to consider $f\in\mY$ such that $f_t=0$ for $t\notin(a,b)$, with $0<a<b<\infty$
fixed but arbitrary.
In this case,
 $$
     \tS_A f_t = \int_{a<s<\min(t,b)} e^{-(t-s)\tilde\Lambda} \tE_0^+ \E_s f_s ds -
      \int_{\max(t,a)<s<b} e^{-(s-t)\tilde\Lambda} \tE_0^- \E_s f_s ds
 $$
 satisfies
$\tE_0^+ \tS_A f_t=0$  for $t<a$ and $\tE_0^- \tS_A f_t=0$ when $t>b$, from which the two limits 
$\lim_{t\to 0}\tE_0^+ \tS_A f_t=0=\lim_{t\to \infty}\tE_0^- \tS_A f_t$ follow.
For the remaining two limits $\lim_{t\to \infty}\tE_0^+ \tS_A f_t$ and $\lim_{t\to 0}\tE_0^- \tS_A f_t$, we use that
$$
\lim_{t\to \infty}\int_a^b \|e^{-(t-s)\tilde\Lambda}\tE_0^+ \E_s f_s\|_2 ds=0=
\lim_{t\to 0}\int_a^b \|(e^{-(s-t)\tilde\Lambda}-e^{-s\tilde\Lambda})\tE_0^- \E_s f_s\|_2 ds
$$
by dominated convergence.

To verify the identity $S_A= D\tS_A$, note that 
$\int_0^\infty (\phi_t, S_A^\epsilon f_t)dt = \int_0^\infty (D\phi_t, \tS_A^\epsilon f_t) dt$
for all $f\in \mY$ and $\phi\in C_0^\infty(\R^{1+n}_+;\C^{(1+n)m})$. This uses the relations
$B_0 \hE_0^\pm= B_0 E_0^\pm B_0^{-1}P_{B_0\mH}= \tE_0^\pm P_{B_0 \mH}= \tE_0^\pm$
by (\ref{eq:DBBDintertw}).
Let $\epsilon\to 0$ and use $S_A^\epsilon$ and $\tS_A^\epsilon$ convergence.
This completes the proof.
\end{proof}

%
%
%
%
%
\section{The Neumann and regularity problems}    \label{sec:neu}

  Throughout this section, $A$ denotes $t$-dependent coefficients satisfying (\ref{eq:boundedmatrix}) and (\ref{eq:accrassumption}),
  and $A_0\in L_\infty(\R^n; \mL(\C^{(1+n)m}))$ denotes $t$-independent coefficients which are 
  strictly accretive on $\mH$. 
  We let $B:= \hat A$ and $B_0:= \hat A_0$ be the transformed strictly accretive coefficients
  from Proposition~\ref{prop:divformasODE}, and define $\E:= B_0-B$.

For the Neumann and regularity problems, one seeks estimates of the gradient $g=\nabla_{t,x}u$
rather than the potential $u$. With a slight abuse of notation, we say below that $g$ solves
the divergence form equation when $u$ does so.

\begin{defn}    \label{defn:Xsol}
  By an $\mX$-{\em solution to the divergence form equation}, with coefficients $A$, we mean a function
  $g\in L_2^\loc(\R_+; L_2(\R^n; \C^{(1+n)m}))$, with estimate $\|\tN(g)\|_2<\infty$,
  which satisfies
$$
\begin{cases}
  \divv_{t,x} Ag=0, \\
  \curl_{t,x} g=0,
\end{cases}
$$
in $\R^{1+n}_+$ distributional sense.
\end{defn}

Note that the boundary behaviour of $g$ is not specified in this definition; we show existence
of a limit in appropriate sense (see also Section~\ref{sec:misc}). 
This will allow us to formulate in what sense the boundary data is prescribed.
Our representation and the boundary behavior  of $\mX$-solutions bears on the following result.
Recall that $\Lambda= |DB_0|$.

\begin{thm}   \label{thm:inteqforNeu}
  Assume that $\|\E\|_*<\infty$. Let $f \in \mX$. Then  $f\in   L_2^\loc(\R_+;\mH)$ and solves
  $\pd_t f + DB f=0$ in $\R^{1+n}_+$ distributional sense 
  if and only if
    $f$ satisfies the equation
 $$
    f_t= e^{-t\Lambda} h^+ + S_Af_t,\qquad \text{for some } h^+\in E_0^+\mH.
 $$
In this case, $f$ has limits 
\begin{equation}  \label{eq:neuavlim}
\lim_{t\to 0} t^{-1} \int_t^{2t} \| f_s -f_0 \|_2^2 ds =0=
\lim_{t\to \infty} t^{-1} \int_t^{2t} \| f_s \|_2^2 ds,
\end{equation}
where $f_0:= h^++h^-$ and $h^-:= \int_0^\infty \Lambda e^{-s\Lambda}\hE_0^- \E_s f_s ds\in E_0^-\mH$,
with estimates 
$$
 \max (\|h^+\|_2, \|h^-\|_2) \approx \|f_0\|_2 \lesssim \|f\|_\mX.
$$
If furthermore $\|\E\|_*$ is sufficiently small, then there are estimates
$$
\|h^-\|_2\lesssim  \|h^+\|_2 \approx \|f_0\|_2\approx\| f \|_\mX.
$$
\end{thm}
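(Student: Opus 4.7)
The plan is to recognize $S_A$ (Proposition \ref{prop:TAboundsonXY}) as a right-inverse of $\pd_t+DB_0$ with forcing $D\E f$, modulo the kernel $\{e^{-t\Lambda}h^+:h^+\in E_0^+\mH\}$ of $\pd_t+DB_0$ in $\mX$, which by the excerpt's Corollary \ref{eq:WPcompatNeu} characterizes all $\mX$-solutions to the unperturbed equation. A formal differentiation of $S_A f_t$ in $t$, using $DB_0\hE_0^\pm=E_0^\pm D$, the boundary terms at $s=t$, and the identity $E_0^++E_0^-=I$ on $\mH\supset\ran D$, yields
$$(\pd_t+DB_0)S_Af_t=(E_0^++E_0^-)D\E_tf_t=D\E_tf_t$$
distributionally (the interior $t$-derivative terms $-\Lambda^2 e^{-|t-s|\Lambda}\hE_0^\pm\E_sf_s$ cancel against the corresponding pieces of $DB_0 S_A f_t$). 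This identity is verified rigorously at the level of $S_A^\epsilon$ (where it holds classically by Fubini/Leibniz) after pairing against $\phi\in C_0^\infty(\R^{1+n}_+;\C^{(1+n)m})$, then passed to $\epsilon\to 0$ using the $L_2(a,b;L_2)$-convergence from Proposition \ref{prop:TAboundsonXY}. Granting this, the forward direction is immediate: given $f\in\mX$ with $\pd_tf+DBf=0$, rewrite as $\pd_tf+DB_0f=D\E f$, set $g:=f-S_Af\in\mX$, obtain $(\pd_t+DB_0)g=0$, and apply Corollary \ref{eq:WPcompatNeu} to get a unique $h^+\in E_0^+\mH$ with $g_t=e^{-t\Lambda}h^+$. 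The converse is the same computation read backwards, together with the observation that both $e^{-t\Lambda}h^+$ and $S_Af_t$ take values in $\mH$ pointwise (since $\hE_0^\pm$ maps into $\mH$), which forces $f\in L_2^\loc(\R_+;\mH)$.

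For the trace decomposition, set $t\to 0$ in the integral equation: the $(0,t)$-piece of $S_Af_t$ has shrinking support and vanishes in $L_2$, while the $(t,\infty)$-piece converges to
$$h^-:=\int_0^\infty\Lambda e^{-s\Lambda}\hE_0^-\E_sf_sds\in E_0^-\mH,$$
yielding $f_0=h^++h^-$. The averaged $L_2$ limits \eqref{eq:neuavlim} follow from applying the endpoint splitting \eqref{eq:splitofbadtinftyterm} of Theorem \ref{thm:modelendpointbdd} to both halves of $S_A$: the non-principal pieces carry $s/t$ or $t/s$ factors that give $L_2$-Dini decay at the respective endpoints, while the principal pieces, being semigroup orbits of fixed elements of $E_0^\pm\mH$, combine with $e^{-t\Lambda}h^+$ to produce the claimed trace at $t=0$ and vanish in averaged $L_2$-sense at $t=\infty$ by the square function bounds on $\Lambda$.

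For the $L_2$ bound on $h^-$, duality against $v\in\mH$ combined with the square function estimate $\int_0^\infty\|\Lambda^*e^{-s\Lambda^*}v\|_2^2\,sds\approx\|v\|_2^2$ yields
$$|(h^-,v)|=\left|\int_0^\infty(\E_sf_s,(\hE_0^-)^*\Lambda^*e^{-s\Lambda^*}v)\,ds\right|\le\|\E f\|_{\mY^*}\|(\hE_0^-)^*\Lambda^*e^{-s\Lambda^*}v\|_\mY\lesssim\|\E\|_*\|f\|_\mX\|v\|_2,$$
so $\|h^-\|_2\lesssim\|\E\|_*\|f\|_\mX$, and $\|h^+\|_2\le\|f_0\|_2+\|h^-\|_2\lesssim\|f\|_\mX$ via the trace bound. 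Under smallness of $\|\E\|_*$, Proposition \ref{prop:TAboundsonXY} gives $\|S_A\|_{\mX\to\mX}<1$, so $I-S_A$ is invertible on $\mX$ and $f=(I-S_A)^{-1}e^{-t\Lambda}h^+$ with $\|f\|_\mX\approx\|e^{-t\Lambda}h^+\|_\mX\approx\|h^+\|_2$ by Theorem \ref{thm:QEandNT}; combined with $\|h^-\|_2\lesssim\|\E\|_*\|f\|_\mX\lesssim\|h^+\|_2$, all stated equivalences follow. The principal technical hurdle is the rigorous distributional verification of $(\pd_t+DB_0)S_Af=D\E f$, since $S_Af$ is defined only as a local-$L_2$ limit and lacks a classical time derivative at $t=0$; this forces the entire argument to be carried out at the $\epsilon$-regularized level and then passed to the limit. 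A secondary subtlety is that the averaged vanishing \eqref{eq:neuavlim} at the endpoints is not a direct trace of $f$ but must be extracted from the fine decomposition of $S_A$ from Section \ref{sec:holoopcalc} combined with the square function estimates above.
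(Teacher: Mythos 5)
Your converse direction — the $\epsilon$-regularized verification, by Fubini and integration by parts, of the distributional identity $(\pd_t+DB_0)S_A f=D\E f$ for $f\in\mX$, followed by the observation that $e^{-t\Lambda}h^+$ and $S_Af_t$ are $\mH$-valued — is exactly the paper's argument, and your trace decomposition and estimates essentially reproduce the paper's parts (iii)--(iv) (your duality bound even gives the sharper $\|h^-\|_2\lesssim\|\E\|_*\|f\|_\mX$).

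The forward direction, however, is circular as written. You set $g:=f-S_Af$, get $(\pd_t+DB_0)g=0$, and invoke Corollary~\ref{eq:WPcompatNeu} to conclude $g_t=e^{-t\Lambda}h^+$. But in the paper, Corollary~\ref{eq:WPcompatNeu} is presented explicitly as an immediate corollary to Theorem~\ref{cor:inteqforNeu}, whose proof reads ``follows right away from Proposition \ref{prop:divformasODE} and the first part of Theorem \ref{thm:inteqforNeu}'' --- that is, from precisely the implication you are trying to establish. There is no independent proof of the $\E=0$ case available at this stage: \cite{AAM} proves the analogous statement only for that paper's a priori smaller class of semigroup solutions, and identifying that class with all $\mX$-solutions is exactly the new content that Corollary~\ref{eq:WPcompatNeu} records as a consequence of the present theorem. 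Moreover, reducing to $\E=0$ would not save any labor: the $\E=0$ case already contains the forward direction's central difficulty, namely proving that the averaged initial traces $\epsilon^{-1}\int_\epsilon^{2\epsilon}E_0^+g_s\,ds$ converge weakly to some $h^+\in E_0^+\mH$. The paper does this in~\eqref{eq:neubdyterms} via a delicate chain: showing the first term converges to $g$ in $L_2(a,b;\mH)$, showing the tail $2\epsilon\int_{(2\epsilon)^{-1}}^{\epsilon^{-1}}e^{-(s-t)\Lambda}E_0^-g_s\,ds$ tends weakly to $0$ using $\|e^{-(s-t)\Lambda^*}-e^{-s\Lambda^*}\|\lesssim t/s$ and $\lim_{s\to\infty}e^{-s\Lambda^*}(E_0^-)^*=0$, and then identifying $h^+$ by duality using the $\sup$-$L_2$ bound of Lemma~\ref{lem:XlocL2} and density of $\bigl\{\int_a^b e^{-t\Lambda^*}\phi_t\,dt\bigr\}$ in $B_0\mH$. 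Your proposal replaces this entire step with a citation that is not available at this point in the paper's logical order; supplying it would make the reduction legitimate but would leave you with essentially the paper's part (i).
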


\begin{proof}
 (i) 
 Assume that $f\in \mX \cap   L_2^\loc(\R_+;\mH)$ and satisfies the differential equation.
  To show that $f_t= e^{-t\Lambda} h^+ + S_Af_t$,
  we choose $\eta_\epsilon^\pm$ for  $\eta^\pm$ in Proposition~\ref{prop:preinteqs}
  and subtract the equations to obtain
\begin{multline}    \label{eq:identityforlimits}
   -\int_0^t (\pd_s\eta^+_\epsilon)(t,s) e^{-(t-s)\Lambda} E_0^+ f_s ds 
   +\int_t^\infty (\pd_s\eta_\epsilon^-)(t,s) e^{-(s-t)\Lambda} E_0^- f_s ds \\ 
   =\int_0^t \eta_\epsilon^+(t,s) \Lambda e^{-(t-s)\Lambda} \hE_0^+ \E_s f_s ds 
    + \int_t^\infty \eta_\epsilon^-(t,s) \Lambda e^{-(s-t)\Lambda} \hE_0^- \E_s f_s ds.
\end{multline}
  Note that $DB_0=\pm |DB_0|= \pm \Lambda$ on $E^\pm_0 \mH$.
  We fix $0<a<b<\infty$ and consider the equation in $L_2(a,b;\mH)$.
  By Proposition~\ref{prop:TAboundsonXY}, the right hand side converges to $S_A f$ in $L_2(a,b;\mH)$.
  When $t\in(a,b)$ and $\epsilon$ is small, the left hand side equals
\begin{multline}  \label{eq:neubdyterms}
  \epsilon^{-1}\int_\epsilon^{2\epsilon} e^{-s\Lambda} (E_0^+ f_{t-s}+ E_0^- f_{t+s}) ds \\ -
  \epsilon^{-1} \int_\epsilon^{2\epsilon} e^{-(t-s)\Lambda} E_0^+ f_s ds
  -2\epsilon \int_{(2\epsilon)^{-1}}^{\epsilon^{-1}} e^{-(s-t)\Lambda} E_0^- f_s ds.
\end{multline}
To prove that the first term converges to $f$ in $L_2(a,b;\mH)$, adding and subtracting the term 
$\epsilon^{-1} \int_\epsilon^{2\epsilon} e^{-s\Lambda} f_t ds= 
e^{-\epsilon\Lambda}(\epsilon\Lambda)^{-1}(I- e^{-\epsilon\Lambda})f_t$
shows that the square of the $L_2(a,b;\mH)$ norm of the difference is bounded by
$$
  \int_a^b \left\| \left(I- e^{-\epsilon \Lambda}\frac{I- e^{-\epsilon \Lambda}}{\epsilon \Lambda}\right)f_t \right\|_2^2 dt
  + \int_a^b \epsilon^{-1} \int_\epsilon^{2\epsilon} \| f_t- E_0^+ f_{t-s}- E_0^- f_{t+s} \|_2^2 ds dt\rightarrow 0
$$
as $\epsilon\to 0$, using Proposition~\ref{prop:opcalcconv} for the functional calculus,
dominated convergence, and the identity $f_t= E_0^+ f_t + E_0^- f_t$ as $f$ is $\mH$-valued.

Next consider the last term in (\ref{eq:neubdyterms}). For any $\phi\in L_2(a,b;\mH)$, we have
\begin{multline*}
   \int_a^b \left( \epsilon \int_{(2\epsilon)^{-1}}^{\epsilon^{-1}} e^{-(s-t)\Lambda} E_0^- f_s ds, \phi_t \right)dt \\ =
   \epsilon \int_{(2\epsilon)^{-1}}^{\epsilon^{-1}} \left(f_s,  \int_a^b (e^{-(s-t)\Lambda^*}- e^{-s\Lambda^*} ) (E_0^-)^* \phi_t dt +
   e^{-s\Lambda^*} (E_0^-)^* \int_a^b \phi_t dt  \right) ds.
\end{multline*}
From the $\sup-L_2$ estimate in Lemma~\ref{lem:XlocL2} for $f$, the estimate 
$\|e^{-(s-t)\Lambda^*}- e^{-s\Lambda^*} \|\lesssim t/s$ and the strong limit
$\lim_{s\to\infty}e^{-s\Lambda^*}(E^-_0)^* =0$, it follows that the last term in (\ref{eq:neubdyterms})
converges weakly to $0$. Hence the middle term must converge weakly in $L_2(a,b;L_2)$ as well,
and we may replace $e^{-(t-s)\Lambda}$ by $e^{-t\Lambda}$ since $\| e^{-(t-s)\Lambda}- e^{-t\Lambda} \|\lesssim s/t$.
We get that
$$
  \int_a^b \left(e^{-t\Lambda} (\epsilon^{-1} \int_\epsilon^{2\epsilon} E_0^+ f_s ds), \phi_t \right) dt
  = \left(  \epsilon^{-1} \int_\epsilon^{2\epsilon} E_0^+ f_s ds, \int_a^b e^{-t\Lambda^*} \phi_t dt\right)
$$
converges for all $\phi\in L_2(a, b;L_2)$.
Since $\epsilon^{-1} \int_\epsilon^{2\epsilon} E_0^+ f_s ds$ are uniformly bounded in $\mH$ by
Lemma~\ref{lem:XlocL2}, and since functions $\int_a^b e^{-t\Lambda^*} \phi_t dt$ are
dense in $B_0\mH\approx \mH^*$
 (for example $\int_\epsilon^{2\epsilon} e^{-t\Lambda^*}\epsilon^{-1}\phi dt\to P_{B_0\mH}\phi$), 
it follows that $\epsilon^{-1} \int_\epsilon^{2\epsilon} E_0^+ f_s ds$ converges
weakly to a function $h^+\in E_0^+ \mH$, and that the weak limit of the middle term in (\ref{eq:neubdyterms}) is
$e^{-t\Lambda} h^+$.
In total, this proves that $f_t -e^{-t\Lambda} h^+= S_A f_t$.

(ii)
Conversely, assume that $f\in \mX$ and $f_t= e^{-t\Lambda} h^+ + S_Af_t$.  First, $f$ is $\mH$-valued since $e^{-t\Lambda} h^+ \in E_{0}^+\mH$ and $S_{A}f_{t}\in \mH$ for almost every $t$.
To verify that $f$ satisfies the differential equation, 
note that $(\pd_t + DB_0)e^{-t\Lambda}h^+=0$.
It suffices to show
that for $\phi\in C^\infty_0(\R^{1+n}_+; \C^{(1+n)m})$ there is convergence
$$
  \int (-\pd_t \phi_t+ B_0^*D\phi_t, f^\epsilon_t) dt\to \int (D\phi_s, \E_s f_s)ds,\qquad \epsilon\to 0,
$$
where $f^\epsilon_t:=S_A^\epsilon f_t$.
For the term $S^+_\epsilon \hE_0^+ \E f$, Fubini's theorem and integration by parts give
\begin{multline*}
  \int_0^\infty \int_0^t \eta^+_\epsilon(t,s) ((-\pd_t+ \Lambda^*)\phi_t, \Lambda e^{-(t-s)\Lambda} \hE_0^+ \E_s f_s  ) ds dt \\
= - \int_0^\infty \left( \int_s^\infty \eta^+_\epsilon(t,s) \pd_t (e^{-(t-s) \Lambda^*}  \Lambda^*\phi_t)dt, \hE_0^+ \E_s f_s  \right) ds \\
  =\int_0^\infty \left( \int_s^\infty (\pd_t\eta^+_\epsilon)(t,s)  e^{-(t-s) \Lambda^*}  \Lambda^*\phi_t dt, \hE_0^+ \E_s f_s  \right) ds \\
    \to \int_0^\infty ( \Lambda^*\phi_s, \hE_0^+ \E_s f_s  ) ds = \int_0^\infty ( D\phi_s, \tE_0^+ \E_s f_s  ) ds.
\end{multline*}
Adding the corresponding limit for the term $S^-_\epsilon \hE_0^- \E f$ gives the stated result.
Note that $\tE_0^+ + \tE_0^-=P_{B_0\mH}$ and $D P_{B_0\mH}= D$.

(iii)
To show the limits, note that
$E_0^+ f- e^{-t\Lambda}h^+= S^+ \hE_0^+ \E f\in \mY^*$, and
by inspection of the proof of Theorem~\ref{thm:modelendpointbdd} we see that
$E_0^- f- e^{-t\Lambda}\int_0^\infty \Lambda e^{-s\Lambda}\hE_0^- \E_s f_s ds\in\mY^*$.
From this, the limits for $f$ follow.

(iv)
It remains to prove the estimates. Note that (\ref{eq:hardysplit}) and Lemma~\ref{lem:XlocL2} show that 
$$
\max(\|h^+\|^2_2, \|h^-\|^2_2)\approx \|f_0\|^2_2= \lim_{t\to 0}t^{-1}\int_t^{2t}\|f_s\|^2_2 ds\lesssim \|f\|_\mX^2.
$$
Proposition~\ref{prop:TAboundsonXY} shows that $\|S_A\|_{\mX\to \mX}\le 1/2$ if 
$\|\E\|_*$ is sufficiently small. In this case $I-S_A$ is an isomorphism on $\mX$ with $\|(I-S_A)^{-1}\|_{\mX\to\mX}\le 2$.
Using this together with Theorem~\ref{thm:QEandNT}, we get estimates
$\|f\|_\mX=\|(I-S_A)^{-1}e^{-t\Lambda}h^+\|_\mX \approx \|h^+\|_2$. This proves the stated estimates
and completes the proof.
\end{proof}

\begin{thm}\label{cor:inteqforNeu} 
  Assume that $\|\E\|_*<\infty$. 
   Then $g$ is an $\mX$-solution to the divergence form equation with coefficients $A$ if and only if
   the corresponding conormal gradient $f= [(Ag)_\no, g_\ta]^t\in \mX$ satisfies the equation
 $$
    f_t= e^{-t\Lambda} h^+ + S_Af_t,\qquad \text{for some } h^+\in E_0^+\mH.
 $$
In this case, $g$ has limits 
\begin{equation}  \label{eq:neuavlim}
\lim_{t\to 0} t^{-1} \int_t^{2t} \| g_s -g_0 \|_2^2 ds =0=
\lim_{t\to \infty} t^{-1} \int_t^{2t} \| g_s \|_2^2 ds,
\end{equation}
where $g_0:= [ (B_0f_0)_\no, (f_0)_\ta ]^t$ and $ \|g_0\|_2\lesssim \|g\|_\mX$ holds.
If furthermore $\|\E\|_*$ is sufficiently small, then there are estimates
$$
  \|h^+\|_2 \approx \|g_0\|_2\approx \|g\|_\mX.
$$
\end{thm}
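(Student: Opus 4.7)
The strategy is to reduce everything to the companion statement Theorem~\ref{thm:inteqforNeu} for the conormal gradient $f=\oA g$, where $\oA=\begin{bmatrix}A_{\no\no}&A_{\no\ta}\\0&I\end{bmatrix}$, and then translate the information about $f$ back to $g$ via $g=\oA^{-1}f=[(Bf)_\no,f_\ta]^t$.

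First I would observe that, since $\oA$ and $\oA^{-1}$ are pointwise bounded in $L_\infty(\R^{1+n}_+)$ (with bounds depending only on $\|A\|_\infty$ and $\kappa$), multiplication by $\oA$ pointwise preserves $\mX$, $\mY^*$ and $L_2^\loc(\R_+;L_2)$, with $\|f\|_\mX\approx\|g\|_\mX$ and analogously for the other spaces. By Proposition~\ref{prop:divformasODE}, $g\in L_2^\loc(\R_+;L_2)$ satisfies the first-order system $\divv_{t,x}(Ag)=0$, $\curl_{t,x}g=0$ in the distributional sense if and only if $f=\oA g\in L_2^\loc(\R_+;\mH)$ solves $\pd_t f+DBf=0$ in $\R^{1+n}_+$. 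Combined with $\|f\|_\mX\approx\|g\|_\mX$, this identifies $\mX$-solutions $g$ with solutions $f\in\mX\cap L_2^\loc(\R_+;\mH)$ of the ODE. Theorem~\ref{thm:inteqforNeu} now directly yields the integral equation $f_t=e^{-t\Lambda}h^++S_Af_t$ for $h^+\in E_0^+\mH$, and conversely any such $f\in\mX$ is automatically $\mH$-valued and solves the ODE, hence provides an $\mX$-solution $g=[(Bf)_\no,f_\ta]^t$.

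Next I would turn to the averaged $L_2$ limits for $g$. Writing $g_s=[(B_sf_s)_\no,(f_s)_\ta]^t$ and $B_s=B_0-\E_s$, we get
$$
 g_s-g_0=\begin{bmatrix}(B_0(f_s-f_0))_\no-(\E_sf_s)_\no\\(f_s-f_0)_\ta\end{bmatrix}.
$$
The first and third components on the right are controlled by $\|f_s-f_0\|_2$ via boundedness of $B_0$, so by Theorem~\ref{thm:inteqforNeu} their averages $t^{-1}\int_t^{2t}\|\cdot\|_2^2ds$ tend to $0$ as $t\to0$. For the perturbation term $\E_sf_s$, since $\|\E\|_*<\infty$ and $f\in\mX$, Definition~\ref{defn:starnorm} gives $\E f\in\mY^*$, i.e.\ $\int_0^\infty\|\E_sf_s\|_2^2\,ds/s<\infty$. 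Because $\int_t^{2t}\|\E_sf_s\|_2^2\,ds\le 2t\int_t^{2t}\|\E_sf_s\|_2^2\,ds/s$, absolute continuity of this finite integral forces $t^{-1}\int_t^{2t}\|\E_sf_s\|_2^2\,ds\to 0$ both as $t\to 0$ and as $t\to\infty$. Combined with the corresponding limit for $f$ in Theorem~\ref{thm:inteqforNeu}, this yields the averaged $L_2$ limits (\ref{eq:neuavlim}) for $g$ with $g_0=[(B_0f_0)_\no,(f_0)_\ta]^t$.

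Finally, the estimates follow at once: $\|g_0\|_2\approx\|f_0\|_2$ by boundedness and accretivity of $B_0$ on $\mH$, while $\|f_0\|_2\lesssim\|f\|_\mX\approx\|g\|_\mX$ comes from Theorem~\ref{thm:inteqforNeu}. Under the smallness hypothesis on $\|\E\|_*$, the analogous chain $\|h^+\|_2\approx\|f_0\|_2\approx\|f\|_\mX$ from Theorem~\ref{thm:inteqforNeu} transfers to $\|h^+\|_2\approx\|g_0\|_2\approx\|g\|_\mX$. The main technical point in this whole argument is verifying that the Carleson-type control $\|\E\|_*<\infty$ is strong enough to make $\E f$ vanish at the boundary in the averaged $L_2$ sense, and this is precisely what the inclusion $\mY^*\subset\mX$ (Lemma~\ref{lem:XlocL2}) together with the convergence property of $\mY^*$ integrals supplies.
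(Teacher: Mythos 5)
Your proof is correct and follows essentially the same route as the paper: reduce to Theorem~\ref{thm:inteqforNeu} via Proposition~\ref{prop:divformasODE}, note $\|g\|_\mX\approx\|f\|_\mX$, then handle the normal component of $g$ at $t\to 0$ by writing $B_tf_t-B_0f_0=B_0(f_t-f_0)-\E_tf_t$ and using $\E f\in\mY^*$ to kill the second term in averaged $L_2$ sense.
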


Note that these limits  are stronger than $L_2$ convergence 
of Cesaro means $t^{-1}\int_t^{2t} g_s ds$ (see Section \ref{sec:misc}).

\begin{proof}  
The equivalence follows right away from Proposition \ref{prop:divformasODE} and the first part of Theorem \ref{thm:inteqforNeu}. 
Note that  $\|g_t\|_2\approx \|f_t\|_2$ and  $\|g\|_\mX\approx \|f\|_\mX$. 
The limit for $g$ at $t=\infty$ is immediate from that of $f$, so is the limit of the tangential part $g_{\ta}$ of $g$. 
The limit of $g_{\no}$ at $t=0$ follows from that of $Bf$. To see this, write $$ B_t f_t-B_0 f_0= B_0(f_t-f_0) - \E_t f_t.$$
Since $\E_t f_t\in \mY^*$, we have $\lim_{t\to 0} t^{-1}\int_t^{2t}\|\E_sf_s\|^2_2 ds=0$.
The limit of $B_0(f_t-f_0)$ at $t=0$ follows  from the
limit of $f$.  The rest of the proof is immediate.
\end{proof}

We note the following immediate corollary to Theorem~\ref{cor:inteqforNeu}.

\begin{cor}     \label{eq:WPcompatNeu}
  Assume that coefficients $A=A_0$ are $t$-independent. Then $g$ is an $\mX$-solution to the 
  divergence form equation if and only if the associated conormal gradient $f$ is
   a generalized Cauchy extension $C_{0}^+h^+$ of some $h^+\in E_0^+ \mH$, \textit{i.e.}
  $$
  f_t= e^{-t\Lambda} h^+ \qquad \text{for all } t>0.
$$
In this case, $h^+= \lim_{t\to 0}f_t$ in $L_2$ sense. 
In particular, the class of $\mX$-solutions in Definition~\ref{defn:Xsol} coincides  with the class of 
solutions in \cite[Definition 2.1(i-ii)]{AAM} for $t$-independent coefficients.
\end{cor}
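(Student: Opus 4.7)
The plan is to simply specialize Theorem~\ref{cor:inteqforNeu} to the case $A=A_0$. When the coefficients are $t$-independent we have $B=B_0$ so $\E\equiv 0$, hence trivially $\|\E\|_*=0<\infty$ and the hypotheses of Theorem~\ref{cor:inteqforNeu} are met; moreover $S_A=0$ since every term in its defining integral carries the factor $\E_s$. The integral equation
$$
   f_t = e^{-t\Lambda} h^+ + S_A f_t
$$
thus collapses to $f_t = e^{-t\Lambda} h^+$, which establishes the forward implication. For the converse, given $h^+\in E_0^+\mH$, set $f_t := e^{-t\Lambda} h^+$; by Theorem~\ref{thm:QEandNT} we have $f\in\mX$, and by Proposition~\ref{prop:Cauchyextension}, $f$ solves $\pd_t f + DB_0 f = 0$ in the strong (hence distributional) sense. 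Proposition~\ref{prop:divformasODE} then produces a corresponding $g = [(B_0 f)_\no, f_\ta]^t$ with $\|g\|_\mX\approx\|f\|_\mX<\infty$ that is an $\mX$-solution of the divergence form equation.

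For the identification $h^+ = \lim_{t\to 0} f_t$ in $L_2$, this is part of the conclusion of Proposition~\ref{prop:Cauchyextension} applied to $f_0 = h^+\in E_0^+\mH$, which gives the strong $L_2$ limit (stronger than the Cesaro-type average limit \eqref{eq:neuavlim} provided by Theorem~\ref{thm:inteqforNeu}, which here reduces to $f_0 = h^+$ since $h^- = \int_0^\infty \Lambda e^{-s\Lambda}\hE_0^-\E_s f_s\,ds = 0$ when $\E\equiv 0$).

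Finally, to identify the class of $\mX$-solutions with that of \cite[Definition 2.1(i-ii)]{AAM}: the latter class, for $t$-independent coefficients, is precisely the set of functions arising as generalized Cauchy extensions $C_0^+ h^+$ with $h^+\in E_0^+\mH$, equipped with the non-tangential maximal bound. The equivalence of norms $\|\tN(f)\|_2\approx \|h^+\|_2$ from Theorem~\ref{thm:QEandNT} guarantees that our $\mX$ class coincides with theirs. No step here is expected to be substantive; the corollary is essentially a bookkeeping remark that, in the unperturbed case, our representation reduces to the one already established in \cite{AAM}.
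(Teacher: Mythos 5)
Your proof is correct and follows precisely the route the paper intends: the authors label this an ``immediate corollary'' of Theorem~\ref{cor:inteqforNeu} and give essentially no argument beyond pointing to \cite[Theorem 2.3]{AAM} for the final identification, and you have simply filled in the same bookkeeping (that $A=A_0$ gives $\E\equiv 0$, hence $S_A=0$, so the integral equation collapses to the semigroup formula, with the $L_2$ trace limit from Proposition~\ref{prop:Cauchyextension}). One small remark: in your last paragraph the coincidence of the two solution classes is a set-theoretic statement following from both being exactly the Cauchy extensions of $E_0^+\mH$, so the norm equivalence from Theorem~\ref{thm:QEandNT} is not what ``guarantees'' it — but this is a wording quibble, not a gap.
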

That the solutions in \cite{AAM} are of this form was shown in the proof of \cite[Theorem 2.3]{AAM}.
Note that the operator $T_A|_{\mH}$ used in \cite{AAM} is similar to our operator $DB_0|_{\mH}$, 
as in \cite[Definition 3.1]{AAM}. 

\begin{rem} We may ask whether $\mY^*$ could be used as a solution space  for gradients of solutions. The answer is no because we have seen that functions in $\mY^*$ vanish in some sense at the boundary so that free evolutions $e^{-t\Lambda} f_{0}\in \mY^*$ if and only if $f_{0}=0$.   
A second question is then how far gradients of solutions are  from being in $\mY^*$. 
 Inspection of (iii) in the proof of Theorem~\ref{thm:inteqforNeu} reveals that  $f_t- e^{-t\Lambda} f_0 \in \mY^*$,  \textit{i.e.}\,  
  the free evolution $e^{-t\Lambda}f_0 \in \mX$ is the only term responsible for $f$ (hence $g$) to belong to $\mX$ and not to $\mY^*$.
\end{rem}

We are now ready study BVPs. We recall that for the class of solutions used in \cite{AAM}, with $t$-independent coefficients $A_0$,
well-posedness of the Neumann and regularity problems was shown to be equivalent to the maps
\begin{align*}
  E_0^+ \mH \to L_2(\R^n;\C^m) &: h^+\mapsto (h^+)_\no, \\
  E_0^+ \mH \to \sett{f\in L_2(\R^n;\C^{nm})}{\curl_x f=0} &: h^+\mapsto (h^+)_\ta,
\end{align*}
being isomorphisms respectively.
From Corollary~\ref{eq:WPcompatNeu}, it is equivalent to well-posedness in the class of $\mX$-solutions.

We now turn to $t$-dependent perturbations of the coefficients.

\begin{cor}   \label{cor:neuregmain}
   Assume that the Neumann problem for $A_0$ is well-posed. 
   Then there exists $\epsilon>0$ such that for any $t$-dependent coefficient matrix
   $A$ with $\|\E\|_*<\epsilon$, the
   Neumann problem is well-posed for $A$ in the following sense.

   Given any function $\bphi\in L_2(\R^n;\C^m)$, there is a unique 
   $\mX$-solution $g$ to the divergence form equation with coefficients $A$, whose trace $g_0$ satisfies
   $(A_0 g_0)_\no= \bphi$. Moreover, this solution has estimates
 $$
   \|\tN(g)\|_2 \approx \|g_0\|_2 \approx \|\bphi\|_2.
 $$
The same holds true when the Neumann problem is replaced by the regularity problem
and the boundary condition $(A_0 g_0)_\perp= \bphi$ is replaced by
$(g_0)_\ta= \bphi\in L_2(\R^n;\C^{nm})$ such that $\curl_x \bphi=0$.
\end{cor}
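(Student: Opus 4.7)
The plan is to use Theorem \ref{cor:inteqforNeu} to parametrize all $\mX$-solutions by the spectral subspace $E_0^+\mH$, then recast each boundary condition as a bounded linear map on $E_0^+\mH$, and finally observe that each such map is a small perturbation (in operator norm) of the corresponding map for $A_0$, which by assumption is an isomorphism.

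First I would fix $\epsilon>0$ small enough that Proposition \ref{prop:TAboundsonXY} gives $\|S_A\|_{\mX\to\mX}\le 1/2$ whenever $\|\E\|_*<\epsilon$, so that $I-S_A$ is invertible on $\mX$. By Theorem \ref{cor:inteqforNeu}, the assignment
$$
h^+\mapsto f:=(I-S_A)^{-1}e^{-t\Lambda}h^+\mapsto g=[(Bf)_\no,f_\ta]^t
$$
is then a bijection from $E_0^+\mH$ onto the set of $\mX$-solutions with coefficients $A$, with $\|g\|_\mX\approx\|f\|_\mX\approx\|h^+\|_2$, and the corresponding boundary traces are
$$
(A_0g_0)_\no=(f_0)_\no=(h^++h^-)_\no,\qquad (g_0)_\ta=(f_0)_\ta=(h^++h^-)_\ta,
$$
where $h^-=\int_0^\infty\Lambda e^{-s\Lambda}\hE_0^-\E_s f_s\,ds\in E_0^-\mH$.

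The key quantitative step is the perturbative bound
$$
\|h^-\|_2\lesssim\|\E\|_*\,\|h^+\|_2.
$$
I would obtain this by combining the endpoint boundedness of $\int_0^\infty\Lambda e^{-s\Lambda}(\cdot)\,ds\colon\mY^*\to\mH$ from the first part of Lemma \ref{lem:princpart} (applied to the $\mH$-valued function $s\mapsto\hE_0^-\E_sf_s$) with $L_2$-boundedness of $\hE_0^-$ and the definition of $\|\E\|_*$: $\|\hE_0^-\E f\|_{\mY^*}\lesssim\|\E f\|_{\mY^*}\le\|\E\|_*\|f\|_\mX\lesssim\|\E\|_*\|h^+\|_2$.

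With this in hand, I would define the bounded linear maps
$$
N_A\colon E_0^+\mH\to L_2(\R^n;\C^m),\quad N_Ah^+:=(h^++h^-)_\no,
$$
and
$$
R_A\colon E_0^+\mH\to\{\bphi\in L_2(\R^n;\C^{nm}):\curl_x\bphi=0\},\quad R_Ah^+:=(h^++h^-)_\ta.
$$
For $A_0$ (where $\E=0$ and $h^-=0$) these reduce to the maps $h^+\mapsto(h^+)_\no$ and $h^+\mapsto(h^+)_\ta$, which are isomorphisms by the assumed well-posedness for $A_0$ together with Corollary \ref{eq:WPcompatNeu}. The previous estimate shows $\|N_A-N_{A_0}\|+\|R_A-R_{A_0}\|\lesssim\|\E\|_*$, so after shrinking $\epsilon$ once more (depending on the well-posedness constant for $A_0$), a Neumann series argument gives that $N_A$ and $R_A$ are isomorphisms whose inverses have norms comparable to those of $N_{A_0}^{-1}$ and $R_{A_0}^{-1}$. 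This simultaneously furnishes existence and uniqueness of the $\mX$-solution with prescribed data $\bphi$ and yields the chain of equivalences $\|\bphi\|_2\approx\|h^+\|_2\approx\|g_0\|_2\approx\|g\|_\mX\approx\|\tN(g)\|_2$ (for the last, note $\|h^-\|_2\le\tfrac12\|h^+\|_2$ after shrinking $\epsilon$, so $\|g_0\|_2\approx\|f_0\|_2\approx\|h^+\|_2$).

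The main obstacle is the smallness estimate $\|h^-\|_2\lesssim\|\E\|_*\|h^+\|_2$: a priori the boundary evaluation at $t=0$ of the tail integral defining $h^-$ is a delicate operation, and the ``right'' norm on $\E f$ to use is not the $L_\infty$-norm (which would not be small) but the modified Carleson-type norm $\|\cdot\|_*$; the point is that once the intermediate space $\mY^*$ is in play, both the endpoint estimate of Lemma \ref{lem:princpart} and the smallness provided by $\|\E\|_*$ combine cleanly.
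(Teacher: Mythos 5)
Your proposal is correct and follows essentially the same route as the paper's own proof: parametrize $\mX$-solutions via $h^+\in E_0^+\mH$ through $(I-S_A)^{-1}$, identify the boundary-data map (your $N_A$, $R_A$; the paper's $\Gamma_A$), show by Lemma~\ref{lem:princpart} and the definition of $\|\cdot\|_*$ that it is within $O(\|\E\|_*)$ of the corresponding map for $A_0$, and invoke invertibility of the latter. Your explicit chain $\|h^-\|_2\lesssim\|\hE_0^-\E f\|_{\mY^*}\lesssim\|\E\|_*\|f\|_\mX\lesssim\|\E\|_*\|h^+\|_2$ is precisely the ``computation'' the paper alludes to but does not write out.
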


\begin{proof}
Throughout the proof, we assume that $\|\E\|_*$ is small enough, 
so that $I-S_A$ is invertible on $\mX$ by Proposition~\ref{prop:TAboundsonXY}.
To solve the Neumann problem, we compute $f$ by  making the ansatz 
\begin{equation}    \label{eq:Cauchyint}
  f= (I-S_A)^{-1} C_{0}^+ h^+,
\end{equation}
for some $h^+\in E_0^+ \mH$ to be determined,
and calculate its full trace 
$$
f_0=h^++ \int_0^\infty \Lambda e^{-s\Lambda} \hE_0^- \E_s f_s  ds, 
$$
using Theorems~\ref{thm:inteqforNeu}  and \ref{cor:inteqforNeu}. 
We see that $f$ satisfies the Neumann boundary condition $(f_0)_\no= \bphi$ if and only if
$h^+$ solves the equation $\Gamma_A h^+= \bphi$, where $\Gamma_A : E_0^+\mH\to L_2(\R^n;\C^m)$ is the operator
$$
  \Gamma_A:  
  h^+\mapsto \left( h^++ \int_0^\infty \Lambda e^{-s\Lambda} \hE_0^- \E_s f_s ds \right)_\no.
$$
with $f$ given by  \eqref{eq:Cauchyint}. Since $\Gamma_{A_0}h^+= (h^+)_\no$, a computation using Lemma~\ref{lem:princpart} and the boundedness of $(I-S_{A})^{-1}$ on $\mX$ implies
 $\|\Gamma_A-\Gamma_{A_0}\|_{L_2\to L_2}\lesssim \|\E\|_*$. 
By assumption $\Gamma_{A_0}$ is an invertible operator, and thus $\Gamma_A$ remains an isomorphism
whenever $\|\E\|_*$ is sufficiently small.
Thus, in this case we can, given $\bphi$, calculate $h^+= \Gamma_A^{-1}\bphi$ with $\|h^+\|_2\approx \|\bphi\|_2$ 
and find a unique solution $g$
to the Neumann problem, with estimates $\|g\|_\mX\approx \|g_0\|_2\approx \|h^+\|_2\approx \|\bphi\|_2$.

For the regularity problem, we proceed as for the Neumann problem, but instead solve for $h^+$ in the equation
$\left( h^++ \int_0^\infty \Lambda e^{-s\Lambda} \hE_0^- \E_s f_s ds \right)_\ta=\bphi$.
\end{proof}

%
%
%
%
%
\section{The Dirichlet problem}    \label{sec:dirichlet}

  Throughout this section, $A$ denotes $t$-dependent coefficients satisfying (\ref{eq:boundedmatrix}) and (\ref{eq:accrassumption}),
  and $A_0\in L_\infty(\R^n; \mL(\C^{(1+n)m}))$ denotes $t$-independent coefficients which are 
  strictly accretive on $\mH$. 
  We let $B:= \hat A$ and $B_0:= \hat A_0$ be the transformed strictly accretive coefficients
  from Proposition~\ref{prop:divformasODE}, and define $\E:= B_0-B$.

\begin{defn}    \label{defn:Ysol}
  By a $\mY$-{\em solution to the divergence form equation}, with coefficients $A$, we mean a
  function $u\in W^{1,\loc}_2(\R^{1+n}_{+}, \C^m)$, 
  with estimate $\int_0^\infty \| g_t \|_2^2 t dt<\infty$
  of its gradient $g:= \nabla_{t,x} u$  which satisfies
$
  \divv_{t,x} A g=0
$
in $\R^{1+n}_+$ distributional sense.
\end{defn}

We will prove in Theorem~\ref{thm:Ysols}, under $\|\E\|_{*}<\infty$,
that any $\mY$-solution belongs to $C(\R_+;L_2)$, modulo constants.
Note also that we do not assume any limits of $u$ at $t=0$ or $t=\infty$, but will prove such below.
This will allow us to formulate in what sense the boundary values are prescribed. 
When discussing $C(\R_+;L_2)$ limits and bounds of $\mY$-solutions $u$, we shall
identify the function $u$ with a $C(\R_+;L_2)$ function modulo constants.

Our representation  of $\mY$-solutions bears on  the following result.
Recall that $\tilde \Lambda= |B_0D|$.

\begin{thm}   \label{thm:inteqforDir}
  Assume that $\|\E\|_*<\infty$. Let $f\in \mY$.
   Then $f\in   L_2^\loc(\R_+;\mH)$ and solves
  $\pd_t f + DB f=0$ in $\R^{1+n}_+$ distributional sense if and only if $f$ satisfies the equation
$$
   f_t = De^{-t\tilde\Lambda}\tilde h^+ + S_A f_t, \qquad \text{for some } \tilde h^+\in \tE_0^+ L_2.
$$
In this case, let $v_t:= e^{-t\tilde\Lambda}\tilde h^+ + \tS_A f_t$.
Then $f=Dv$, $v$ satisfies the equation
\begin{equation}    \label{eq:diffeqv}
  \pd_tv+ B D v= -P\E f
\end{equation}
in $\R^{1+n}_+$ distributional sense,
where $P= I-\tE_0^+-\tE_0^-=I-P_{B_{0}\mH}$ is the projection onto $\mH^\perp$ along $B_0\mH$,  and  
$v_t$ has $L_2$ limits
\begin{equation}    \label{eq:dirlimits}
   \lim_{t\to 0} \| v_t  - v_0\|_2 = 0= \lim_{t\to \infty}  \| v_t \|_2, 
\end{equation}
where $v_0:= \tilde h^+ + \tilde h^-$ and
$\tilde h^-:= -\int_0^\infty e^{-s\tilde\Lambda}\tE_0^- \E_s f_s ds\in \tE_0^- L_2$,
with estimates 
$$
\max( \|\tilde h^+\|_2, \|\tilde h^-\|_2) \approx \|v_0\|_2 \lesssim\sup_{t>0}\|v_t\|_2\lesssim \|f\|_\mY.
$$ 
If furthermore $\|\E\|_*$ is sufficiently small, then there are estimates
$$
   \|\tilde h^-\|_2 \lesssim\|\tilde h^+\|_2\approx \sup_{t>0} \|v_t\|_2 \approx \|f\|_\mY.
$$
\end{thm}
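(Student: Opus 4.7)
The plan is to mirror the proof of Theorem~\ref{thm:inteqforNeu}, working in $\mY$ rather than $\mX$, and to exploit the factorization $S_A=D\tS_A$ from Proposition~\ref{prop:endpointdir} and the intertwining $b(DB_0)D=Db(B_0D)$ of~(\ref{eq:DBBDintertw}) to translate between the functional calculi of $\Lambda=|DB_0|$ and $\tilde\Lambda=|B_0D|$, thereby recasting the free evolution in the form $De^{-t\tilde\Lambda}\tilde h^+$ with $\tilde h^+\in\tE_0^+L_2$.

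For the forward direction, assume $f\in\mY\cap L_2^\loc(\R_+;\mH)$ is a distributional solution of $\pd_tf+DBf=0$. Apply Proposition~\ref{prop:preinteqs} with the bumps $\eta_\epsilon^\pm$ and subtract the two identities to obtain the analogue of~(\ref{eq:identityforlimits}). Proposition~\ref{prop:TAboundsonXY} yields convergence of the right-hand side to $S_Af$ in $\mY$ as $\epsilon\to 0$, and the left-hand side is dissected as in~(\ref{eq:neubdyterms}). The first term there converges to $f_t$ in $L_2^\loc$ by strong continuity of $e^{-s\Lambda}$ on $\mH$, and the last term tends to $0$ in $L_2^\loc$ since $\int_{(2\epsilon)^{-1}}^{\epsilon^{-1}}\|f_s\|_2\,ds\le\sqrt{\log 2}\,\|f\|_\mY$ and is killed by the prefactor $2\epsilon$; hence the middle term converges in $L_2^\loc$ as well, and the limit $F_t:=f_t-S_Af_t\in\mY$ must be a distributional solution of $\pd_tF+DB_0F=0$. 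Such $\mY$-solutions of the $t$-independent problem admit the free-evolution representation $F_t=De^{-t\tilde\Lambda}\tilde h^+$ for a unique $\tilde h^+\in\tE_0^+L_2$, which is the $\E=0$ case of the theorem and rests on the square function identity $\|De^{-t\tilde\Lambda}\tilde h^+\|_\mY\approx\|\tilde h^+\|_2$ discussed below. The converse, that any $f\in\mY$ of the stated form lies in $L_2^\loc(\R_+;\mH)$ and solves the ODE, is checked as in part (ii) of the proof of Theorem~\ref{thm:inteqforNeu}: test against $\phi\in C_0^\infty(\R^{1+n}_+;\C^{(1+n)m})$ and exchange orders of integration in the operational-calculus formulas for $S_A^\epsilon$.

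Given the representation, set $v_t:=e^{-t\tilde\Lambda}\tilde h^++\tS_Af_t$. The identity $f=Dv$ is immediate from $De^{-t\tilde\Lambda}\tilde h^+=e^{-t\Lambda}D\tilde h^+$ and $S_A=D\tS_A$. Differentiation gives $\pd_te^{-t\tilde\Lambda}\tilde h^+=-B_0De^{-t\tilde\Lambda}\tilde h^+$ on $\tE_0^+L_2$, and a direct Leibniz computation on~(\ref{def:tildeSA}), using $\tilde\Lambda=\pm B_0D$ on $\tE_0^\pm L_2$, yields
\[
\pd_t\tS_Af_t=(\tE_0^++\tE_0^-)\E_tf_t-B_0D\tS_Af_t=P_{B_0\mH}\E_tf_t-B_0D\tS_Af_t.
\]
Adding these and using $Dv=f$ turns $\pd_tv+B_0Dv=P_{B_0\mH}\E f$ into $\pd_tv+BDv=-P\E f$ with $P=I-P_{B_0\mH}$. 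The $L_2$-limits~(\ref{eq:dirlimits}) and the formula $v_0=\tilde h^++\tilde h^-$ follow from strong continuity of $e^{-t\tilde\Lambda}$ on $\tE_0^+L_2$ combined with Proposition~\ref{prop:endpointdir}, which provides $\tilde h^-$ as the $L_2$-limit of $\tS_Af_t$ at $t=0$ and its decay to $0$ at $t=\infty$. The topological splitting $\tE_0^+L_2\oplus\tE_0^-L_2$ yields $\max(\|\tilde h^+\|_2,\|\tilde h^-\|_2)\approx\|v_0\|_2$, while $\|v_0\|_2\le\sup_t\|v_t\|_2\lesssim\|\tilde h^+\|_2+\|\E\|_*\|f\|_\mY$ follows from the uniform bound on $e^{-t\tilde\Lambda}\tilde h^+$ and the sup-$L_2$ bound of Proposition~\ref{prop:endpointdir} for $\tS_Af$.

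For the refined equivalences under smallness of $\|\E\|_*$, Proposition~\ref{prop:TAboundsonXY} makes $I-S_A$ invertible on $\mY$, so $\|f\|_\mY\approx\|De^{-t\tilde\Lambda}\tilde h^+\|_\mY$. The main technical obstacle is then the independent equivalence $\|De^{-t\tilde\Lambda}\tilde h^+\|_\mY\approx\|\tilde h^+\|_2$ for $\tilde h^+\in\tE_0^+L_2$: via the intertwining $De^{-t\tilde\Lambda}=e^{-t\Lambda}D$, this reduces to $\|e^{-t\Lambda}w\|_\mY\approx\|\Lambda^{-1}w\|_2$ for $w=D\tilde h^+\in E_0^+\mH$, which follows from the square function estimates for $\Lambda$ applied with $\psi(z)=ze^{-z}\in\Psi(S^o_{\nu+})$; then the identity $\tilde h^+=B_0D\tilde\Lambda^{-1}\tilde h^+$ on $\tE_0^+L_2$ combined with strict accretivity of $B_0$ on $\mH$ converts $\|\Lambda^{-1}D\tilde h^+\|_2=\|D\tilde\Lambda^{-1}\tilde h^+\|_2$ back into $\|\tilde h^+\|_2$. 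Feeding this into the previous estimates gives $\|\tilde h^+\|_2\approx\|f\|_\mY\approx\sup_t\|v_t\|_2$, and $\|\tilde h^-\|_2\lesssim\|\E\|_*\|f\|_\mY\lesssim\|\tilde h^+\|_2$ closes the chain of equivalences.
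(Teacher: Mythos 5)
Your architecture matches the paper's, and the parts concerning the converse direction, the identity $f=Dv$, the derivation of $\pd_t v + BDv=-P\E f$, the limits for $v$, the estimates, and the smallness case all go through (modulo the usual remark that your Leibniz differentiation of $\tS_A f_t$ should be done on the regularized operators $\tS_A^\epsilon$ as in the paper, since the integrals in (\ref{def:tildeSA}) are only weakly convergent). However, there is a genuine gap at the heart of the forward direction, namely in the step that produces $\tilde h^+$.

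You pass from ``$F_t:=f_t-S_Af_t\in\mY$ solves $\pd_t F+DB_0F=0$'' to ``hence $F_t=De^{-t\tilde\Lambda}\tilde h^+$ for some $\tilde h^+\in\tE_0^+L_2$'' by invoking what you call the $\E=0$ case of the theorem. But that statement is exactly the assertion being proved (for $t$-independent coefficients it is Corollary~\ref{eq:WPcompatDir}, which in the paper is a \emph{consequence} of Theorem~\ref{thm:inteqforDir}, not a prior fact). The square function identity $\|De^{-t\tilde\Lambda}\tilde h^+\|_\mY\approx\|\tilde h^+\|_2$, which you offer as the support for this step, only shows that free evolutions of data in $\tE_0^+L_2$ are $\mY$-solutions; it does not show that every $\mY$-solution of the constant-coefficient ODE is such a free evolution. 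The delicate point is that for $f\in\mY$ there is no sup-$L_2$ bound on $f_t$ (unlike the $\mX$ case used in Theorem~\ref{thm:inteqforNeu}), so one cannot simply extract a weak $L_2$ limit of $\epsilon^{-1}\int_\epsilon^{2\epsilon} E_0^+f_s\,ds$. The paper's proof instead shows that $\tilde f_{t_0}:=(f-S_Af)_{t_0}\in E_0^+\mH$ satisfies $\tilde f_{t+t_0}=e^{-t\Lambda}\tilde f_{t_0}$ with $\sup_{t_0}\|e^{-t\Lambda}\tilde f_{t_0}\|_\mY\lesssim\|f\|_\mY$, then uses a duality/square-function argument to conclude $\tilde f_{t_0}\in\dom(\Lambda_+^{-1})\subset\ran(D)$ with $\tilde f_{t_0}=D\tilde h^+_{t_0}$ and $\|\tilde h^+_{t_0}\|_2\lesssim\|f\|_\mY$ uniformly, and finally takes a weak limit $\tilde h^+_{t_0}\rightharpoonup\tilde h^+$ as $t_0\to 0$. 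This trace argument --- lowering the regularity by one derivative so that a trace exists at $t=0$ --- is the essential new ingredient for the Dirichlet case and cannot be replaced by the two facts you invoke.
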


\begin{proof}
  (i)   
  We assume that  $f\in \mY \cap  L_2^\loc(\R_+;\mH)$ and  satisfies the differential equation.
As in the proof of Theorem~\ref{thm:inteqforNeu}, we aim to take limits $\epsilon\to 0$
in equation (\ref{eq:identityforlimits}).
 By Proposition~\ref{prop:TAboundsonXY}, the right hand side converges in $\mY$ to $S_A f$. 
 Fix $0<a<b<\infty$.
 For $t\in (a,b)$ and small $\epsilon$, the left hand side equals
\begin{multline}  \label{eq:dirbdyterms}
  \epsilon^{-1}\int_\epsilon^{2\epsilon} e^{-s\Lambda} (E_0^+ f_{t-s}+ E_0^- f_{t+s}) ds \\ -
  \epsilon^{-1} \int_\epsilon^{2\epsilon} e^{-(t-s)\Lambda} E_0^+ f_s ds
  -2\epsilon \int_{(2\epsilon)^{-1}}^{\epsilon^{-1}} e^{-(s-t)\Lambda} E_0^- f_s ds.
\end{multline}
As in the proof of Theorem~\ref{thm:inteqforNeu}, the first term converges to $f$
in $L_2(a,b;L_2)$.
The $L_2$-norm of the last term is bounded by
$\epsilon \int_{(2\epsilon)^{-1}}^{\epsilon^{-1}}  \|f_s\|_2 ds\lesssim \epsilon
  ( \int_{(2\epsilon)^{-1}}^{\epsilon^{-1}} \|f_s\|^2 sds )^{1/2}$,
and hence converges to $0$, uniformly for $t\in(a,b)$.

We conclude that $\tilde f_t^\epsilon:=  \epsilon^{-1} \int_\epsilon^{2\epsilon} e^{-(t-s)\Lambda} E_0^+ f_s ds$
converges in $L_2(a,b;L_2)$ as $\epsilon\to 0$.
In fact, since $\sup_{t>0} \|e^{-t\Lambda}\|_{L_2\to L_2}<\infty$ we have
$$
  \|\tilde f_{t_0}^\epsilon- \tilde f_{t_0}^{\epsilon'}\|_2 
  \le \frac 1{b-a}\int_a^b \| e^{-(t_0-t)\Lambda} ( \tilde f_t^\epsilon- \tilde f_t^{\epsilon'}) \|_2 dt
  \lesssim \left( \int_a^b \| \tilde f_t^\epsilon- \tilde f_t^{\epsilon'} \|^2_2 dt\right)^{1/2},
$$
when $t_0>b$.
Hence, since $(a,b)$ is arbitrary, $\tilde f_t^\epsilon$ converges in $L_2$, locally uniformly in $t$.
Call the limit $\tilde f$ and note that it coincides with $f- S_A f\in \mY$ for a.e. $t>0$. 
Fix $t_0>0$ and note that 
$\tilde f_{t+t_0} = \lim_{\epsilon\to 0} e^{-t\Lambda} \tilde f_{t_0}^\epsilon  = e^{-t\Lambda}\tilde f_{t_0}$
and that in fact $\tilde f_{t_0}\in E_0^+\mH$ by the definition of $\tilde f_{t_0}$.
The estimate
$$
  \sup_{t_0>0}\int_0^\infty \|e^{-t\Lambda} \tilde f_{t_0}\|_2^2 tdt \le \|\tilde f\|_\mY^2
   \lesssim \|f\|_\mY^2
$$
follows. Consider the restriction $\Lambda_+$ of $\Lambda$ to $E_0^+\mH$,
which is a closed and injective operator with dense domain and range.
We claim that $\tilde f_{t_0}\in \dom(\Lambda_+^{-1})$.
To see this, by duality it suffices to show that
$$
  |((\Lambda_+^{-1})^* \phi, \tilde f_{t_0})|\lesssim \|\phi\|_2,\qquad \text{for all } \phi\in \dom((\Lambda_+^{-1})^*).
$$
As in the proof of Proposition~\ref{prop:opcalcboundedness}, we use an identity
$\int_0^\infty (t\Lambda_+ e^{-t\Lambda_+})^2 \tilde f_{t_0} dt/t= 4^{-1}\tilde f_{t_0}$ to estimate
$$
  |((\Lambda_+^{-1})^* \phi, \tilde f_{t_0})| \approx 
  \left| \int_0^\infty (t \Lambda_+^* e^{-t\Lambda_+^*} \phi, t e^{-t\Lambda_+}\tilde f_{t_0}) \frac {dt}t\right| 
  \lesssim \|\phi\|_2 \|f\|_\mY.
$$
Hence the claim. As $\dom(\Lambda_+^{-1})= \ran(\Lambda_+)\subset \ran(D)$,
this shows that $\tilde f_{t_0}= D \tilde h^+_{t_0}$, where $\tilde h^+_{t_0}\in B_0 E_0^+\mH= \tE_0^+L_2$
has bounds $\|\tilde h^+_{t_0}\|_2\lesssim \|f\|_\mY$, uniformly in $t_0$.
From the identity 
$\tilde f_{t+t_0}= e^{-t\Lambda} \tilde f_{t_0}= e^{-t\Lambda} D\tilde h^+_{t_0}$, 
we get
$$
  \int_a^b(\phi_t, \tilde f_{t+t_0}) dt= \left(\int_a^b De^{-t\Lambda^*} \phi_t dt, \tilde h^+_{t_0}\right),
$$
for any $\phi\in L_2(a,b;L_2)$.
Here the left hand side converges as $t_0\to 0$, and the functions $\int_a^b De^{-t\Lambda^*} \phi_t dt$
are dense in $\mH$.
(For example $\int_\epsilon^{2\epsilon} De^{-t\Lambda^*} \epsilon^{-1}\phi dt\to D\phi$.)
Since $\|\tilde h^+_{t_0}\|_2$ is uniformly bounded, it follows that $\tilde h_{t_0}^+\to \tilde h^+$ weakly in
$\tE_0^+ L_2$ as $t_0\to 0$.
Letting $t_0\to 0$ in $\tilde f_{t+t_0}= e^{-t\Lambda} D \tilde h_{t_0}^+= De^{-t\tilde\Lambda}\tilde h_{t_0}^+$,
we obtain $f_t-S_A f_t= \tilde f_t= D e^{-t\tilde\Lambda} \tilde h^+$ for a.e. $t>0$.

(ii)
Conversely, assume that $f\in \mY$  and $f_t = De^{-t\tilde\Lambda}\tilde h^+ + S_A f_t$. The right hand side is $\mH$-valued, so $f \in   L_2^\loc(\R_+;\mH)$ as well. As in Theorem~\ref{thm:inteqforNeu}, we verify that $f$
satisfies the differential equation, and we omit the details. 

(iii) Introduce  $v_t= e^{-t\tilde\Lambda}\tilde h^++\tS_A f_t$, so that $f=Dv$ and the stated limits on $v$ follow from Propositions~\ref{prop:opcalcconv} and \ref{prop:endpointdir}.
To prove \eqref{eq:diffeqv}, compute
\begin{multline*}
  \pd_t(e^{-t\tilde\Lambda}\tilde h^+ + \tS^\epsilon_A f_t)=
  -B_0De^{-t\tilde\Lambda}\tilde h^+ - B_0D\tS^\epsilon_A f_t\\
  +\epsilon^{-1}\int_\epsilon^{2\epsilon} e^{-s\tilde \Lambda} (\tE_0^+\E_{t-s}f_{t-s}+ \tE_0^-\E_{t+s}f_{t+s})ds,
\end{multline*}
for $\epsilon<<t$.
This uses the result for the operator $B_0D$ analogous to Proposition~\ref{prop:Cauchyextension}.
The claim follows by letting $\epsilon\to 0$, using Proposition~\ref{prop:TAboundsonXY}
for convergence of $D\tS_A^\epsilon= S^\epsilon_A$, and a calculation as in part (i)
of the proof of Theorem~\ref{thm:inteqforNeu} for the last term.

 To prove the estimates, note that the square function estimates for $B_0 D$ and the accretivity of $B_0$ on $\mH$ show
that 
$$
\|\tilde h^+\|_2\approx \|B_0De^{-t\tilde \Lambda}\tilde h^+\|_\mY \approx \|De^{-t\tilde \Lambda}\tilde h^+\|_\mY\lesssim \|f\|_\mY+ \|S_A f\|_\mY\lesssim \|f\|_\mY.
$$
From Proposition~\ref{prop:endpointdir}, we also obtain the estimates $\max(\|\tilde h^+\|_2, \|\tilde h^-\|_2)\approx\|v_0\|_2\le \sup_{t>0}\|v_t\|_2\lesssim \|\tilde h^+\|_2+ \|f\|_\mY\lesssim \|f\|_\mY$,
where we have used the topological splitting $B_0\mH= \tE_0^+L_2\oplus \tE_0^-L_2$ in the first equivalence.

(iv)  Finally, Proposition~\ref{prop:TAboundsonXY} shows that $\|S_A\|_{\mY\to\mY}\le 1/2$ if $\|\E\|_*$ is sufficiently small.
In this case $I-S_A$ is an isomorphism on $\mY$,
giving the estimate 
$$
\|f\|_\mY\lesssim \|De^{-t\tilde \Lambda}\tilde h^+\|_\mY.
$$
As $\|De^{-t\tilde \Lambda}\tilde h^+\|_\mY \approx \|\tilde h^+\|_2$,
this proves the stated estimates and completes the proof.
\end{proof}

We can now prove a rigidity theorem for $\mY$-solutions.

\begin{thm}   \label{thm:Ysols}
   Let $u$ be a $\mY$-solution to the divergence form equation, with coefficients $A$.
   Assume that $\|\E\|_*<\infty$.
   Then there is a constant $c\in \C^m$ such that 
 $$
    u= c-v_\no
 $$
 almost everywhere, where $v\in C(\R_+;L_2(\R^n, \C^{(1+n)m}))$ is the vector-valued potential from
 Theorem~\ref{thm:inteqforDir} obtained from the conormal gradient $f$ of $u$.
Identifying the functions $u$ and $c-v_\no$,
there are $L_2$-limits $\lim_{t\to 0} \|u_t- u_0\|_2=0$, $u_0:= c-(v_0)_\no$,
 and $\lim_{t\to \infty} \|u_t-c\|_2=0$,
and there are bounds
$$
  \|u_0-c\|_2\le \sup_{t>0}\| u_t-c\|_2 \lesssim \|\nabla_{t,x}u\|_\mY.
$$
If furthermore $\|\E\|_*$ is sufficiently small, then with $\tilde h^+$ as in Theorem~\ref{thm:inteqforDir},
$$
  \|\tilde h^+\|_2 \approx \|\nabla_{t,x}u\|_\mY.
$$
\end{thm}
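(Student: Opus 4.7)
The plan is to build $v$ from the conormal gradient of $u$ via Theorem~\ref{thm:inteqforDir}, and then show that $u+v_\no$ has vanishing full space-time gradient, forcing it to equal a constant. Concretely, set $g:=\nabla_{t,x}u$ and let $f:=[(Ag)_\no,g_\ta]^t$ be its conormal gradient. By Proposition~\ref{prop:divformasODE}, $f\in L_2^\loc(\R_+;\mH)$ satisfies $\pd_t f+DBf=0$ distributionally, and since $\|f_t\|_2\approx\|g_t\|_2$ pointwise, $f\in\mY$ with $\|f\|_\mY\approx\|\nabla_{t,x}u\|_\mY$. Invoking Theorem~\ref{thm:inteqforDir} produces $\tilde h^+\in \tE_0^+L_2$ and the potential $v_t=e^{-t\tilde\Lambda}\tilde h^++\tS_A f_t\in C(\R_+;L_2)$, which satisfies $f=Dv$, the distributional equation $\pd_t v+BDv=-P\E f$, and the stated $L_2$ limits at $0$ and $\infty$ together with $\sup_t\|v_t\|_2\lesssim\|f\|_\mY$.

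The key step is to prove $\nabla_{t,x}(u+v_\no)=0$ in $\mD'(\R^{1+n}_+)$. For the spatial gradient, the tangential component of $f=Dv$ reads $f_\ta=-\nabla_x v_\no$ since $D=\bigl[\begin{smallmatrix}0 & \divv_x \\ -\nabla_x & 0\end{smallmatrix}\bigr]$; combined with $f_\ta=g_\ta=\nabla_x u$ from Proposition~\ref{prop:divformasODE}, this yields $\nabla_x(u+v_\no)=0$. For the $t$-derivative, I would take the normal part of $\pd_t v+BDv=-P\E f$. Using $Dv=f$ gives $\pd_t v_\no+(Bf)_\no=-(P\E f)_\no$, where $(Bf)_\no=g_\no=\pd_t u$ by the correspondence of Proposition~\ref{prop:divformasODE}. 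The crucial observation is that $(P\E f)_\no=0$: since $\mH=\nul(\curl_x)$ only constrains the tangential part, it splits as $L_2(\R^n;\C^m)\oplus\nul(\curl_x)|_{\text{tang}}$, so any element of $\mH^\perp$ has zero normal component; as $P$ projects onto $\mH^\perp$ along $B_0\mH$, its range lies in $\mH^\perp$, and the claim follows. Therefore $\pd_t(u+v_\no)=0$.

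Having $\nabla_{t,x}(u+v_\no)=0$ distributionally on the connected domain $\R^{1+n}_+$, we conclude $u+v_\no=c$ almost everywhere for a uniquely determined $c\in\C^m$. Identifying $u$ with the continuous representative $c-v_\no$, the $L_2$-limits of $v$ at $0$ and $\infty$ from Theorem~\ref{thm:inteqforDir} translate directly into $\lim_{t\to 0}\|u_t-u_0\|_2=0$ with $u_0=c-(v_0)_\no$, and $\lim_{t\to\infty}\|u_t-c\|_2=0$. The bounds follow from
$$
  \|u_0-c\|_2\le\sup_{t>0}\|u_t-c\|_2=\sup_{t>0}\|(v_t)_\no\|_2\le\sup_{t>0}\|v_t\|_2\lesssim\|f\|_\mY\approx\|\nabla_{t,x}u\|_\mY,
$$
again via Theorem~\ref{thm:inteqforDir}. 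Under the smallness assumption on $\|\E\|_*$, the same theorem upgrades this to $\|\tilde h^+\|_2\approx\|f\|_\mY\approx\|\nabla_{t,x}u\|_\mY$.

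The main obstacle is purely algebraic and lies in the identification $(P\E f)_\no=0$. Everything else is a clean bookkeeping of what Theorem~\ref{thm:inteqforDir} and Proposition~\ref{prop:divformasODE} already give, but without the orthogonality structure of $\mH$ versus the normal/tangential splitting, the normal component of the $v$-equation would carry an extra term and the potential equation $u+v_\no=c$ would not close. Once this observation is in hand, the identity $\nabla_{t,x}(u+v_\no)=0$ and the resulting rigidity are immediate, and the limit and estimate statements require no further work beyond projecting the ones already established for $v$.
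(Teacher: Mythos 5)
Your proof is correct and follows essentially the same route as the paper: build $f=\nabla_A u$, invoke Theorem~\ref{thm:inteqforDir} to get $v$, show $\nabla_{t,x}(u+v_\no)=0$ by matching tangential parts via $f=Dv$ and matching $t$-derivatives via the normal part of $\pd_t v+BDv=-P\E f$ (using that $\mH^\perp$ has vanishing normal component, hence $(P\E f)_\no=0$), and then read off the limits and bounds from the corresponding statements for $v$. The only difference is that you spell out the observation that $\mH^\perp$ has zero normal component more explicitly than the paper, which states it in one clause.
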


\begin{proof}
Let $f=[(A\nabla_{t,x}u)_\no, \nabla_x u]^t \in \mY \cap  L_2^\loc(\R_+;\mH)$ as in Proposition \ref{prop:divformasODE} and then let
$\tilde h^+$ and $v$ be as in Theorem~\ref{thm:inteqforDir}. For the equality $u=c-v_{\no}$, it is enough to show that $\nabla_{t,x}u= -\nabla_{t,x}v_\no$ in $\R^{1+n}_+$ distributional sense. 
 It is clear that $\nabla_{x}u=f_{\ta}=-\nabla_{x}v_{\no}$. 
Using \eqref{eq:diffeqv}, we have
 $
   \pd_t v_\no+ (BDv)_\no= 0,
 $
 because normal parts of functions in $\mH^\perp$ are zero.
 Since $(BDv)_\no= (Bf)_\no= (\nabla_{t,x}u)_\no = \pd_t u$, we conclude that
 $\pd_{t}u= -\pd_{t}v_\no$.

 The stated limits and bounds now follow from Theorem~\ref{thm:inteqforDir} and $\|f\|_{\mY}\approx \|\nabla_{t,x}u\|_{\mY}$.
\end{proof}

The constant $c$ in Theorem~\ref{thm:Ysols} can also be calculated as the limit 
$$
   c= \lim_{d\to\infty}(u, \tau_d\phi), \qquad \tau_d\phi(t,x):= \phi(t-d, x),
$$
for any $\phi\in C_0^\infty(\R^{1+n};\C^m)$ with $\int \phi dtdx=1$.
In particular this limit does not depend on $\phi$. 
So if this limit is zero, we obtain a solution that vanishes at $\infty$ in averaged sense.
(This is thus equivalent to vanishing at $\infty$ in $L_{2}$ sense as defined in Section \ref{sec:statement}.)
This is akin to the classical pointwise limit at infinity required to eliminate constants for representing harmonic functions in the upper half-space. Here the averages replace the pointwise values.

We also note the following corollary to Theorems~\ref{thm:Ysols} and \ref{thm:inteqforDir}.

\begin{cor}     \label{eq:WPcompatDir}
   Assume that coefficients $A=A_0$ are $t$-independent. Then $u$ is a $\mY$-solution to the 
  divergence form equation if and only if it is the normal part  of a generalized Cauchy 
  extension $\widetilde C_{0}^+ \tilde h^+$ of some $\tilde h^+\in \tE_0^+ L_2$, modulo constants, 
  \textit{i.e.}  
$$
   u_t= (e^{-t\tilde\Lambda} \tilde h^+)_\perp +c\qquad \text{for all } t>0 \text{ and some } c\in \C^m.
$$
  In particular, the class of $\mY$-solutions in Definition~\ref{defn:Ysol} that vanish at $\infty$ 
  in $L_{2}$ sense
  coincides  with the class of 
  solutions in \cite[Definition 2.1(iii)]{AAM} for $t$-independent coefficients.
\end{cor}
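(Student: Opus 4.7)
Since $A=A_{0}$ is $t$-independent, we have $B=B_{0}$ and $\E\equiv 0$, so in particular $\|\E\|_{*}=0$ and we may invoke Theorems~\ref{thm:inteqforDir} and \ref{thm:Ysols}. For the forward direction, given a $\mY$-solution $u$, let $f=\nabla_{A_{0}}u$ be its conormal gradient and apply Theorem~\ref{thm:inteqforDir}. Since $\E=0$, the operators $S_{A}$ and $\tS_{A}$ vanish, so the representation collapses to $f_{t}=De^{-t\tilde\Lambda}\tilde h^{+}$ and $v_{t}=e^{-t\tilde\Lambda}\tilde h^{+}$ for some $\tilde h^{+}\in \tE_{0}^{+}L_{2}$. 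Theorem~\ref{thm:Ysols} then yields $u=c-v_{\no}=c-(e^{-t\tilde\Lambda}\tilde h^{+})_{\no}$ for some constant $c\in\C^{m}$. Replacing $\tilde h^{+}$ by $-\tilde h^{+}$ (which lies in the same spectral subspace) produces exactly the form stated.

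For the reverse direction, suppose $u_{t}=(e^{-t\tilde\Lambda}\tilde h^{+})_{\no}+c$ for some $\tilde h^{+}\in \tE_{0}^{+}L_{2}$. Set $v_{t}:=-e^{-t\tilde\Lambda}\tilde h^{+}$, so that $u=c-v_{\no}$, and let $f:=Dv$. Since $\tilde h^{+}\in\tE_{0}^{+}L_{2}$, on this subspace $B_{0}D=\tilde\Lambda$, so $\pd_{t}v+B_{0}Dv=0$ classically in the sense of Proposition~\ref{prop:Cauchyextension} (applied to $B_{0}D$ in place of $DB_{0}$). Applying $D$ gives $\pd_{t}f+DB_{0}f=0$ in $\R^{1+n}_{+}$ distributional sense. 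The square function estimates for $B_{0}D$, together with the accretivity of $B_{0}$ on $\mH\supset\ran D$, yield
\begin{equation*}
   \|f\|_{\mY}^{2}=\int_{0}^{\infty}\|Dv_{t}\|_{2}^{2}tdt \lesssim \int_{0}^{\infty}\|B_{0}Dv_{t}\|_{2}^{2}tdt =\int_{0}^{\infty}\|\tilde\Lambda e^{-t\tilde\Lambda}\tilde h^{+}\|_{2}^{2}tdt \approx \|\tilde h^{+}\|_{2}^{2}<\infty,
\end{equation*}
so in particular $f\in\mY\cap L_{2}^{\loc}(\R_{+};\mH)$. By Proposition~\ref{prop:divformasODE}, the associated $g:=[(B_{0}f)_{\no}, f_{\ta}]^{t}$ solves the first order system~(\ref{eq:firstorderdiv}) for $A_{0}$. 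A direct computation as at the end of the proof of Theorem~\ref{thm:Ysols} shows that $\nabla_{t,x}(c-v_{\no})=g$: indeed $\pd_{t}(c-v_{\no})=-(\pd_{t}v)_{\no}=(B_{0}Dv)_{\no}=(B_{0}f)_{\no}=g_{\no}$, while $\nabla_{x}(c-v_{\no})=-\nabla_{x}v_{\no}=(Dv)_{\ta}=f_{\ta}=g_{\ta}$. Hence $u$ satisfies the divergence form equation and has $\nabla_{t,x}u\in\mY$, so $u$ is a $\mY$-solution.

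Finally, for the identification with \cite[Definition~2.1(iii)]{AAM}, the representation just established shows that, modulo constants, $\mY$-solutions are parametrized by $\tilde h^{+}\in \tE_{0}^{+}L_{2}$ via $u_{t}=(e^{-t\tilde\Lambda}\tilde h^{+})_{\no}+c$. By the discussion following Theorem~\ref{thm:Ysols}, $u$ vanishes at $\infty$ in $L_{2}$ sense if and only if $c=0$, in which case $u_{t}=(e^{-t\tilde\Lambda}\tilde h^{+})_{\no}$ exactly. This is precisely the class of solutions described in \cite[Definition~2.1(iii)]{AAM}, where the gradient is the generalized Cauchy extension of a trace in $\tE_{0}^{+}L_{2}$; the only step which requires care is checking that the (possibly differently formulated) notion of semigroup solution used in \cite{AAM} agrees with the distributional formulation here, which follows from Proposition~\ref{prop:Cauchyextension} and the intertwining identity (\ref{eq:DBBDintertw}). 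The main obstacle of the argument, if any, is keeping track of the sign conventions and of the correct identification $u\leftrightarrow v_{\no}$; all the analytic content is already packaged in Theorems~\ref{thm:inteqforDir} and \ref{thm:Ysols}.
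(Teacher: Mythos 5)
Your proof is correct and follows essentially the same approach the paper intends: the forward direction is a direct specialization of Theorems~\ref{thm:inteqforDir} and \ref{thm:Ysols} to $\E=0$, and the identification with the class in \cite{AAM} reduces, as you note, to matching semigroup conventions. You correctly handle the sign by relabelling $\tilde h^+\mapsto -\tilde h^+$, which is legitimate since $\tE_0^+L_2$ is a subspace.

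The only place you genuinely add content is the reverse direction, which the paper leaves implicit (it is essentially the ``if'' half of Theorem~\ref{thm:inteqforDir} together with Proposition~\ref{prop:divformasODE}, or the content of \cite[Lemma 4.2]{AAM}). Your direct construction is clean: set $v_t=-e^{-t\tilde\Lambda}\tilde h^+$, $f=Dv$, use the smoothing of the analytic semigroup to justify $\pd_t f+DB_0f=0$ distributionally via $D$ and the intertwining $Db(B_0D)=b(DB_0)D$, obtain $f\in\mY$ from the quadratic estimate for $B_0D$ plus accretivity of $B_0$ on $\mH=\clos{\ran D}$, and read off $\nabla_{t,x}(c-v_\no)=g$ exactly as in Theorem~\ref{thm:Ysols}. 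This is slightly more self-contained than simply citing Theorem~\ref{thm:inteqforDir}(ii), but it uses the same ingredients; nothing is gained or lost in generality.
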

That the solutions considered in \cite{AAM} are of this form follows from \cite[Lemma 4.2]{AAM}
and the proof of \cite[Theorem 2.3]{AAM}.
Note that the operator $T_A|_{\mH}$ used in \cite{AAM} is similar to our operator $B_0D|_{B_0 \mH}$, as in \cite[Definition 3.1]{AAM}. 

We are now ready to study BVPs. 
We recall that for the class of solutions used in \cite{AAM}, with $t$-independent coefficients $A_0$,
well-posedness of the Dirichlet problem was shown to be equivalent to the map
$$
  \tE_0^+ L_2 \to L_2(\R^n;\C^m) : \tilde h^+\mapsto (\tilde h^+)_\no
$$
being an isomorphism.
From Corollary~\ref{eq:WPcompatDir}, it is equivalent to well-posedness in the class of
$\mY$-solutions. Remark that well-posedness implies that the map $u_{0} \mapsto u_{t}$ is a $C_{0}$-semigroup on $L_{2}(\R^n;\C^m)$ and this corollary also shows that the results in \cite{Au} concerning the domain of this semi-group obtained for solutions in the sense of \cite{AAM}
apply to $\mY$-solutions.

We now turn to $t$-dependent perturbations of the coefficients. 

\begin{cor}   \label{cor:dirmain}
   Assume that the Dirichlet problem for $A_0$ is well-posed.
   
   Then there exists $\epsilon>0$ such that for any $t$-dependent coefficient matrix
   $A$ with $\|\E\|_*<\epsilon$, the
   Dirichlet problem is well-posed for $A$ in the following sense.

   Given any function $\bphi \in L_2(\R^n;\C^m)$, there is a unique 
   $\mY$-solution $u$ to the divergence form equation with coefficients $A$, with boundary trace $u_0=\bphi$.
   Moreover, this solution has estimates
$$
   \|\nabla_{t,x}u\|_\mY\approx \sup_{t>0}\|u_t\|_2 \approx \|\bphi\|_2.
$$
   \end{cor}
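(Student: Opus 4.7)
The plan is to reduce the Dirichlet problem to the invertibility of a single linear map $\Gamma_A\colon \tE_0^+L_2\to L_2(\R^n;\C^m)$ on the positive spectral subspace of $B_0D$, and then obtain that invertibility by a Neumann-series perturbation from the $A_0$-case, for which the map is already invertible by hypothesis and Corollary~\ref{eq:WPcompatDir}.

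Assume $\|\E\|_*$ is small enough that $I-S_A$ is invertible on $\mY$ (Proposition~\ref{prop:TAboundsonXY}). Given $\tilde h^+\in \tE_0^+L_2$, define the conormal gradient
\[
f := (I-S_A)^{-1}De^{-t\tilde\Lambda}\tilde h^+\in\mY,
\]
together with the potential $v_t := e^{-t\tilde\Lambda}\tilde h^+ + \tS_Af_t$ of Theorem~\ref{thm:inteqforDir}, and set $u := -v_\no$ (choosing the constant $c=0$). By that theorem $f$ solves $\pd_t f+DBf=0$, so by Theorem~\ref{thm:Ysols} $u$ is a $\mY$-solution whose $L_2$-trace is $u_0 = -(v_0)_\no$, where $v_0 = \tilde h^+ + \tilde h^-$ with $\tilde h^- := -\int_0^\infty e^{-s\tilde\Lambda}\tE_0^-\E_sf_s\,ds$. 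Define
\[
\Gamma_A\tilde h^+ := -(v_0)_\no = -(\tilde h^+)_\no - (\tilde h^-)_\no.
\]
When $A=A_0$ one has $\E=0$, hence $S_{A_0}=0$, $\tS_{A_0}=0$, $\tilde h^-=0$, and $\Gamma_{A_0}\tilde h^+ = -(\tilde h^+)_\no$; by the well-posedness assumption for $A_0$ and Corollary~\ref{eq:WPcompatDir}, $\Gamma_{A_0}$ is an isomorphism. For the perturbation, Proposition~\ref{prop:endpointdir} gives $\|\tilde h^-\|_2\lesssim \|\E\|_*\|f\|_\mY$, and Theorem~\ref{thm:inteqforDir} gives $\|f\|_\mY\lesssim \|\tilde h^+\|_2$, so
\[
\|(\Gamma_A-\Gamma_{A_0})\tilde h^+\|_2 \le \|\tilde h^-\|_2 \lesssim \|\E\|_*\|\tilde h^+\|_2.
\]
Shrinking $\|\E\|_*$ further, depending on $\|\Gamma_{A_0}^{-1}\|$, makes $\Gamma_A$ an isomorphism.

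For existence, given $\bphi\in L_2(\R^n;\C^m)$, set $\tilde h^+ := \Gamma_A^{-1}\bphi$ and let $u$ be the $\mY$-solution produced by the construction; then $u_0=\bphi$, and combining Proposition~\ref{prop:divformasODE}, Theorem~\ref{thm:inteqforDir} and Theorem~\ref{thm:Ysols} yields
\[
\|\nabla_{t,x}u\|_\mY \approx \|f\|_\mY \approx \|\tilde h^+\|_2 \approx \|\bphi\|_2,
\qquad \|\bphi\|_2 = \|u_0\|_2 \le \sup_{t>0}\|u_t\|_2 \lesssim \|\nabla_{t,x}u\|_\mY,
\]
which is the required equivalence. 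For uniqueness, if $u'$ is any $\mY$-solution with $u'_0=\bphi\in L_2$, the remark after Theorem~\ref{thm:DirLip} forces the associated constant to vanish, so $u'=-v'_\no$ with $v'$ arising via Theorem~\ref{thm:inteqforDir} from some $\tilde h'^+\in\tE_0^+L_2$; then $\Gamma_A\tilde h'^+=\bphi=\Gamma_A\tilde h^+$ gives $\tilde h'^+=\tilde h^+$ and hence $u'=u$.

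The main analytic obstacle, namely the endpoint boundedness of $S_A$ on $\mY$ and of $\tS_A$ into $C_b(\overline{\R_+};L_2)$ under the Carleson-type control $\|\E\|_*$, has already been resolved in Propositions~\ref{prop:TAboundsonXY} and \ref{prop:endpointdir}; the remaining work in this corollary is the (now routine) algebraic packaging through $\Gamma_A$ and the perturbation of $\Gamma_{A_0}$.
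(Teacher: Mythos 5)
Your proposal is correct and follows essentially the same route as the paper's own proof: construct $f=(I-S_A)^{-1}D\widetilde C_0^+\tilde h^+$ and $v=\widetilde C_0^+\tilde h^++\tS_A f$, reduce to invertibility of the trace map $\tilde h^+\mapsto -(v_0)_\no$, and perturb from the $A_0$-case by estimating $\tilde h^-$. The only minor difference is that you keep explicit track of the sign (so that $u_0=-(v_0)_\no$) whereas the paper silently absorbs it into the definition of $\widetilde\Gamma_A$, and you route the perturbation estimate through Proposition~\ref{prop:endpointdir}'s $\sup$-bound rather than invoking Lemma~\ref{lem:princpart} directly — but these are cosmetic, since Proposition~\ref{prop:endpointdir} is proved with Lemma~\ref{lem:princpart}.
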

   
\begin{proof}
Throughout the proof, we assume that $\|\E\|_*$ is small enough, 
so that $I-S_A$ is invertible on $\mY$ by Proposition~\ref{prop:TAboundsonXY}.
To solve the Dirichlet problem, we make the ansatz 
\begin{equation}   \label{eq:diransatz}
  u= \Big( (I + \tS_A (I-S_A)^{-1}D) \widetilde C_{0}^+ \tilde h^+ \Big)_\perp \end{equation}
for some $\tilde h^+\in \tE_0^+L_2$. Theorems~\ref{thm:inteqforDir} and \ref{thm:Ysols} show
that $u$ is a $\mY$-solution to the divergence form equation with coefficients $A$ and 
that all $\mY$-solutions with $L_2$ trace are of this form.
Moreover, the Dirichlet boundary condition $u_0= \bphi$ is satisfied if and only if
$\tilde h^+$ solves the equation $\widetilde \Gamma_A \tilde h^+= \bphi$, where $\widetilde \Gamma_A : \tE_0^+L_2\to L_2(\R^n;\C^m)$ is the operator
$$
  \widetilde \Gamma_A :
   \tilde h^+\mapsto  \left( \tilde h^+- \int_0^\infty e^{-s\tilde\Lambda} \tE_0^- \E_s f_s ds \right)_\no,
$$
where
$f:= (I-S_A)^{-1} D\widetilde C_{0}^+ \tilde h^+$. Since $\widetilde \Gamma_{A_0}\tilde h^+= (\tilde h^+)_\no$,  Lemma~\ref{lem:princpart} and the boundedness of $(I-S_{A})^{-1} $ on $\mY$ imply that $\|\widetilde \Gamma_A-\widetilde \Gamma_{A_0}\|_{L_2\to L_2}\lesssim \|\E\|_*$. By assumption
$\widetilde \Gamma_{A_0}$ is an invertible operator, and thus $\widetilde \Gamma_A$ remains an isomorphism
whenever $\|\E\|_*$ is sufficiently small.
Thus, in this case we can, given $\bphi$, calculate $\tilde h^+= \widetilde \Gamma_A^{-1}\bphi$ with $\|\tilde h^+\|_2\approx \|\bphi\|_2$ and find a unique solution $u$ to the Dirichlet problem.
From Theorem~\ref{thm:Ysols}, we get estimates
$$
  \|\bphi\|_2=\|u_{0}\|_{2}\le \sup_{t>0}\|u_t\|_2 \lesssim \|\nabla_{t,x}u\|_\mY \approx \|\tilde h^+\|_2 \approx \|\bphi\|_2.
$$
This proves the theorem.
\end{proof}

\begin{rem}
The tangential part $v_\ta$ of the vector-valued potential 
$$v=(I + \tS_A (I-S_A)^{-1}D)\widetilde C_{0}^+ \tilde h^+$$
can be viewed as a set of generalized conjugate functions to the Dirichlet solution $u$.
Our proof of Theorem~\ref{thm:inteqforDir} 
above eliminates the need of the technical condition at $\infty$ on these conjugate functions
which was required in \cite[Definition 3.1]{AAM}.
\end{rem}

%
%
%
%
%
\section{Further estimates}      \label{sec:further}

In Section~\ref{sec:neu}, we constructed solutions, with estimates on the modified non-tangential maximal function, to
the Neumann and regularity problems with $L_2$ boundary data, and in Section~\ref{sec:dirichlet}
we constructed solutions, with estimates on the square function, to
the Dirichlet problem with $L_2$ boundary data. 
In this section, we prove two theorems which give modified non-tangential maximal function 
estimates for the Dirichlet problem, and, upon some further regularity on the coefficients, square function estimates for the Neumann/regularity
problems.

%
%
%
%
%
\subsection{Maximal function estimates for $\mY$-solutions}    \label{sec:maxest}

\begin{thm}  \label{thm:dirNT}
  Let $A: \R^{1+n}_+\to \mL(\C^{(1+n)m})$ with $\|A\|_\infty<\infty$ and strictly accretive on $\mH$, 
  and assume that there exists $t$-independent coefficients $A_{0}$ with $\|A-A_0\|_C<\infty$.
  Then any $\mY$-solution $u$ to the divergence form equation with coefficients $A$, 
  with boundary trace $u_0\in L_2(\R^n,\C^m)$,
  has modified non-tangential maximal estimates
 $$
   \|u_0\|_2\lesssim \| \tN(u)\|_2 \lesssim  \|\nabla_{t,x}u\|_\mY.
 $$  
\end{thm}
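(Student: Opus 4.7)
\emph{Proof plan.} For the lower bound $\|u_0\|_2\lesssim\|\tN(u)\|_2$, I will extract the boundary behaviour from Theorem~\ref{thm:Ysols}: the hypothesis $u_0\in L_2$ forces the constant $c$ there to vanish, so that $u_t\to u_0$ in $L_2$ as $t\to 0$. Combined with the bound $\sup_{t>0}t^{-1}\int_t^{2t}\|u_s\|_2^2\,ds\lesssim \|\tN(u)\|_2^2$ from Lemma~\ref{lem:XlocL2}, passing $t\to 0$ will yield $\|u_0\|_2^2\lesssim\|\tN(u)\|_2^2$.

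The main estimate $\|\tN(u)\|_2\lesssim \|\nabla_{t,x}u\|_\mY$ will be attacked via the representation of $u$. I will let $f=\nabla_A u\in\mY$ be the conormal gradient, so $\|f\|_\mY\approx \|\nabla_{t,x}u\|_\mY$ by Proposition~\ref{prop:divformasODE} and accretivity of $A$. Theorems~\ref{thm:inteqforDir} and \ref{thm:Ysols} will then supply $\tilde h^+\in\tE_0^+L_2$ with $\|\tilde h^+\|_2\lesssim \|f\|_\mY$ and a potential $v_t=e^{-t\tilde\Lambda}\tilde h^++\tS_A f_t$ such that $u=-v_\no$ almost everywhere, the constant vanishing because $u_0\in L_2$. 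Then $\tN(u)\le \tN(\widetilde C_0^+\tilde h^+)+\tN(\tS_A f)$ pointwise, and I will bound the two pieces separately. The first piece will be handled by the non-tangential maximal estimate for the Cauchy extension semigroup $e^{-t\tilde\Lambda}$ on $\tE_0^+L_2$, which is the conjugate counterpart of Theorem~\ref{thm:QEandNT} and was established in \cite{AAM}; together with $\|\tilde h^+\|_2\lesssim\|f\|_\mY$ this gives $\|\tN(\widetilde C_0^+\tilde h^+)\|_2\lesssim \|\nabla_{t,x}u\|_\mY$.

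The technical heart will be proving $\|\tN(\tS_A f)\|_2\lesssim \|f\|_\mY$. For each $x\in\R^n$ and scale $t>0$, my plan is to control $\barint_{W(t,x)}|\tS_A f|^2$ by splitting the $s$-integration in formula~\eqref{def:tildeSA} into dyadic annuli $s\sim 2^k t$, $k\in\Z$. Off-diagonal $L_2$ decay of $e^{-|t-s|\tilde\Lambda}$, coming from the holomorphic functional calculus of $B_0D$, together with the bound $\|\E\|_\infty\lesssim \|A-A_0\|_C$ from Lemma~\ref{lem:Carleson}, should let each annulus contribution be dominated by a Whitney-averaged $L_2$ norm of $\E f$ weighted by a summable geometric factor. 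Fubini and the area-integral identity $\int_{\R^n}\iint_{W(t,x)}|g|^2\tfrac{dtdy}{t^{n+1}}\,dx\approx\|g\|_\mY^2$ will then close the estimate. The hardest part will be the annulus $|t-s|\ll t$ where semigroup decay is absent; I expect to treat it by exploiting the factorisation $S_A=D\tS_A$ from Proposition~\ref{prop:endpointdir} together with tent-space duality, in the spirit of the principal-term analysis in Proposition~\ref{prop:TAboundsonXY}, so as to harvest the cancellation built into the spectral projections $\tE_0^\pm$.
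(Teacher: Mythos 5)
Your lower bound is exactly the paper's: the $L_2$ trace from Theorem~\ref{thm:Ysols} combined with Lemma~\ref{lem:XlocL2} gives $\|u_0\|_2\lesssim\|\tN(u)\|_2$. The treatment of the Cauchy extension piece $\widetilde C_0^+\tilde h^+$ via the $\mX$-estimate of Theorem~\ref{thm:QEandNT} (transferred to $B_0D$ through the intertwining with $DB_0$) is also correct and matches the paper.

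The gap is in your plan for $\|\tN(\tS_A f)\|_2\lesssim\|f\|_\mY$. You propose to prove the full $L_2$ Whitney-average maximal bound for $\tS_A f$ directly, using off-diagonal $L_2$ decay of $e^{-|t-s|\tilde\Lambda}$ plus only $\|\E\|_\infty\lesssim\|\E\|_C$, and then to patch the near-diagonal region with the factorization $S_A=D\tS_A$ and tent-space duality. This is not how the paper closes the argument, and for good reason: the paper proves a strictly weaker statement, namely the $\tN^p$ bound for $\tS_A f$ only for $p<2$ (Lemma~\ref{lem:dirNT}), and recovers the genuine $\tN=\tN^2$ estimate for $u$ only \emph{after} applying Poincar\'e's inequality $\|u-u_{W(t,x)}\|_{L_2(W(t,x))}\lesssim t\|\nabla_{s,y}u\|_{L_2(W(t,x))}$, which injects the term $\|\nabla_{t,x}u\|_\mY$ as an auxiliary contribution and reduces the task to $\tN^1(u)$. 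Your proposal skips this reduction, but the critical term in the decomposition of $\tS_A f$, which after replacing semigroups by resolvents is $(I+itB_0D)^{-1}\int_0^t\E_s f_s\,ds$, does not have off-diagonal $L_2$ decay good enough to be summed at the $p=2$ level. To control it the paper performs a Whitney-box duality with a test function $h\in L_q(W(t_0,x_0))$, $q>2$, relies on $L_q$ off-diagonal bounds for the resolvents (Lemma~\ref{lem:lqoffdiag}, via {\v{S}}ne{\u\i}berg extrapolation, valid only for $q$ near $2$), extracts $\sup_{W(t,x)}|\E|$ so that the full Carleson measure structure of $\|\E\|_C$ enters (not merely $\|\E\|_\infty$), and invokes the Coifman--Meyer--Stein tent-space pairing between the area integral of $sf$ and the non-tangential maximal of $H$. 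None of this machinery is available at $p=2$: $q=2$ gives no room for {\v{S}}ne{\u\i}berg, and replacing the sup over Whitney boxes by $\|\E\|_\infty$ loses the Carleson cancellation that makes the $t$-integral finite.

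Concretely, your annuli sum degenerates on the piece $s\in(0,t)$ with no smoothing in front: you would be bounding a maximal truncation of the Hardy-type average $\tfrac1t\int_0^t\E_s f_s\,ds$, and with only $\|\E\|_\infty$ and $f\in\mY$ this object need not have $L_2$ non-tangential maximal function (its natural space is $L_2(dt/t;L_2)$, one power of $t$ off). Appealing to Proposition~\ref{prop:TAboundsonXY} does not save this, since that proposition gives a $\mY\to\mY$ bound, which is strictly weaker than a Whitney-maximal bound. To repair your argument you should (a) insert the Poincar\'e reduction to $\tN^p$, $p<2$, at the outset, and (b) replace the $\|\E\|_\infty$ bound in the near-diagonal analysis by the full Whitney-sup Carleson argument with $L_q$, $q>2$, off-diagonal estimates and tent-space duality, as in part~(iii) of the paper's proof of Lemma~\ref{lem:dirNT}.
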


The core of the proof reduces to the following estimate of the operator $\tS_A$.

\begin{lem}   \label{lem:dirNT}
  For any fixed $p\in [1,2)$, the operator $\tS_A$ has estimates
$$
  \| \tN^p((\tS_A f)_\no)\|_2 \lesssim \|\E\|_C \|f \|_\mY.
$$
Here 
$\tN^p(f)(x):= \sup_{t>0}  t^{-(1+n)/p} \|f\|_{L_p(W(t,x))}$ is an $L_p$ modified non-tangential 
maximal function.
\end{lem}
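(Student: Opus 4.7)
The plan is to fix $(t_0, x_0) \in \R^{1+n}_+$ and estimate the $L^p$ average of $(\tS_A f)_\no$ on the Whitney box $W(t_0, x_0)$, then take $L^2$ norms in $x_0$. The first step is to decompose the source $\E_s f_s$ as $(\E f)^{\mathrm{loc}} + \sum_{j \geq 1} (\E f)^{(j)}$, where $(\E f)^{\mathrm{loc}}$ is the restriction to a fixed enlargement $T^*$ of $W(t_0, x_0)$ (say $s \in (c_0^{-2} t_0, c_0^2 t_0)$, $y \in B(x_0, 4 c_1 t_0)$), and $(\E f)^{(j)}$ is the restriction to the complement of $T^*$ intersected with the spatial annular shell $2^{j+1} c_1 t_0 \leq |y - x_0| < 2^{j+2} c_1 t_0$, which I would further split into dyadic $s$-scales relative to $t_0$. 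For the local piece, I would apply Proposition~\ref{prop:endpointdir} to $\tS_A((\E f)^{\mathrm{loc}})$, which gives $\sup_t \|\tS_A(g)_t\|_2 \lesssim \|\E\|_* \|g\|_\mY$, and convert the resulting $L^2$ bound on $W$ to an $L^p$ average by H\"older's inequality.

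For each far piece, I would invoke $L^2$--$L^2$ off-diagonal (Gaffney--Davies) estimates for $e^{-|t-s|\tilde\Lambda}\tE_0^\pm$, which hold by standard arguments for operators in the bounded $H^\infty$ calculus of a first-order type operator like $B_0 D$, cf.\ \cite{AKMc}. This yields exponential decay of the form $e^{-c 2^j c_1 t_0 / |t-s|}$, which absorbs the dimensional growth $2^{jn}$ of the annulus volumes and produces geometric decay in $j$. After Cauchy--Schwarz in the $s$-integration, this leads to a pointwise bound
\[
\tN^p((\tS_A f)_\no)(x_0) \;\lesssim\; \sum_{j \geq 0} 2^{-Nj}\,\bigl( M\bigl(\mathcal{A}_j(\E, f)\bigr)(x_0) \bigr)^{1/p},
\]
where $M$ is the Hardy--Littlewood maximal operator on $\R^n$ and $\mathcal{A}_j(\E,f)$ is an $L^p$-averaged square-function-type object involving $|f_s(y)|^p s^{p/2-1}$ weighted by the Carleson density $\sup_{W(s,y)}|\E|^p\,ds\,dy/s$ on a cone at $x_0$ dilated by a factor depending on $j$.

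To conclude, I would take $L^2$ norms in $x_0$ and use boundedness of $M$ on $L^{2/p}(\R^n)$, which is valid because $p < 2$. A Carleson embedding argument of the type used in the proof of Lemma~\ref{lem:Carleson} then bounds $\|\mathcal{A}_j(\E, f)\|_{L^{2/p}}^{2/p}$ by $\|\E\|_C^{2}\|f\|_\mY^{2}$ uniformly in $j$, with the geometric factor $2^{-Nj}$ absorbing the $j$-dependence.

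The main obstacle I expect is the careful verification of the Gaffney estimates for $e^{-|t-s|\tilde\Lambda}\tE_0^\pm$ with the correct exponential rate in $2^j c_1 t_0 / |t-s|$, taking account of the fact that $B_0 D$ is only injective on $B_0\mH$ (having the large kernel $\mH^\perp$) and that the relevant semigroup acts trivially on $\mH^\perp$; the spectral projections $\tE_0^\pm$ must therefore be incorporated into the off-diagonal decay estimate itself. A second delicate point is that the requirement $p < 2$ is essential and not merely technical: it is precisely what makes the Hardy--Littlewood maximal function bounded on $L^{2/p}$ in the final step, and a direct $p = 2$ version of the estimate would require finer Carleson-type duality methods and in general need not hold.
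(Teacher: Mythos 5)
Your overall scheme---localize the source, handle the local piece with the $\sup$-$L_2$ bound from Proposition~\ref{prop:endpointdir}, and treat the far pieces by off-diagonal decay---is a plausible entry point, but the machinery you propose does not exist for this operator, and the proposal misses the two structural reductions that make the paper's proof go through.

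First, there are no Gaffney-type (exponential $L^2$--$L^2$) off-diagonal estimates for $e^{-|t-s|\tilde\Lambda}\tE_0^\pm$. The spectral projections $\tE_0^\pm=\chi^\pm(B_0D)$ are singular integral operators with no off-diagonal decay at all, and even without the projections the semigroup $e^{-t|B_0D|}$ does not satisfy Gaffney estimates: the only operators in the functional calculus of a first-order bisectorial operator like $B_0D$ that have quantitative off-diagonal bounds are the resolvents $(I+itB_0D)^{-1}$, and even these decay only polynomially (of arbitrary order, but not exponentially). The paper copes with both problems together. Taking the normal component is not cosmetic: it uses $\tE_0^+ + \tE_0^- = P_{B_0\mH}$ and $(P_{B_0\mH}h)_\no = h_\no$ to kill the projections in the critical term $I_2 + II_4$, after which the remaining piece can be written in terms of the semigroup alone. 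Then the semigroup is swapped for the resolvent by writing $e^{-t|z|} = \psi_t(z) + (1+itz)^{-1}$ with $\psi_t(z) := e^{-t|z|}-(1+itz)^{-1}$, and the error $\psi_t(B_0D)$---which decays at both $0$ and $\infty$---is controlled by square-function estimates and Schur bounds, not by localization. Your proposal has no mechanism to handle either the projections or the semigroup, so the claimed ``exponential decay of the form $e^{-c2^jc_1t_0/|t-s|}$'' is not available.

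Second, the restriction $p<2$ in the paper is used in \emph{two} places, and you only identify one. Besides the boundedness of $M$ on $L^{2/p}$ in the last step (which you note), the off-diagonal estimates used are $L^q$--$L^q$ for the resolvent (Lemma~\ref{lem:lqoffdiag}), valid only for $|q-2|<\delta$ via \v{S}ne\u{\i}berg's stability theorem, and these are paired with an $L^p$ quantity ($q=p'$) in the duality argument. That argument is not a plain Carleson embedding: it is the Coifman--Meyer--Stein tent-space pairing of the area functional against the non-tangential maximal function (\cite[Theorem 1(a)]{CMS}), used to combine the Carleson density of $\E$ with the square function of $f$ and the $L^q$ localized bound of the test function. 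Without the $L^q$ ($q>2$, $q$ close to $2$) off-diagonal estimate and this pairing, the argument breaks even at the level where you would sum the annular pieces. In short, the decomposition is reasonable, but the specific tools you invoke are not available, and the two pivotal reductions (pass to the normal part, pass to resolvents) are missing.
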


\begin{proof}[Proof of Theorem~\ref{thm:dirNT} modulo Lemma~\ref{lem:dirNT}]
  As in Theorems~\ref{thm:inteqforDir} and \ref{thm:Ysols}, any $\mY$-solution
  $u$ with $L_{2}$ trace (hence, vanishing at $\infty$ in $L_{2}$ sense) can be written
$$
  u_t= (e^{-t\tilde\Lambda}\tilde h^+ + \tS_A f_t)_\no,\qquad \tilde h^+\in \tE_0^+ L_2, f\in \mY.
$$
From Poincar\'e's inequality $\| u -u_{W(t,x)}\|_{L_2(W(t,x))}\lesssim t \|\nabla_{s,y}u\|_{L_2(W(t,x))}$,
where $u_{W(t,x)}$ denotes the average, we obtain the estimate
$
  \|\tN(u)\|_2 \lesssim \|\tN^1(u)\|_2 + \|\nabla_{t,x}u\|_\mY.
$
Theorem~\ref{thm:QEandNT}, Lemma~\ref{lem:dirNT} and Theorem~\ref{thm:inteqforDir} now
apply to give the estimate
$$
  \| \tN^1(u)\|_2\lesssim \|\tilde h^+\|_2+ \|f\|_\mY \approx\|\nabla_{t,x}u\|_\mY.
$$ 
To see the first estimate, write $\tilde h^+= B_0 h^+$ with $h^+\in E_0^+ \mH$, and
apply Theorem~\ref{thm:QEandNT} to get
$\|e^{-t\tilde\Lambda}B_0h^+\|_\mX= \|B_0 e^{-t\Lambda} h^+\|_\mX\lesssim \|h^+\|_2\approx \|\tilde h^+\|_2$.
The lower estimate follows from Lemma~\ref{lem:XlocL2} since
$$
  \|\tN(u)\|_2^2\gtrsim \lim_{t\to 0} t^{-1}\int_t^{2t}\|u_s\|_2^2 ds = \|u_0\|_2^2.
$$
\end{proof}

\begin{proof}[Proof of Lemma~\ref{lem:dirNT}]
Before we start, we remark that $p\mapsto \| \tN^p((\tS_A f)_\no)\|_2$ is increasing, 
so it suffices to consider $p$ close to $2$.
We shall fix the value of $p$ eventually in (iii) below, when we see how close to $2$ it need
to be.
Next it suffices to prove the inequality for $t\mapsto f_t$ compactly supported in $\R_+$.
Indeed, combining Lemma~\ref{lem:XlocL2} and Proposition~\ref{prop:endpointdir}, for all $\epsilon>0$
and $f\in\mY$ we have (since $p\le 2$)
\begin{multline*}
  \| \tN^p(\chi_{(\epsilon,\epsilon^{-1})}(t)  (\tS_A f)_\no )\|_2^2
  \le  \| \tN(\chi_{(\epsilon,\epsilon^{-1})}(t)  (\tS_A f)_\no )\|_2^2  \\
  \lesssim \int_\epsilon^{\epsilon^{-1}} \| (\tS_A f)_\no  \|_2^2 \frac {dt}t
  \lesssim \ln\epsilon\, \sup_{t>0} \| \tS_A f \|_2^2\lesssim \ln\epsilon\, \|f\|^2_\mY.
\end{multline*}
Thus, if $f_\delta:= \chi_{(\delta, \delta^{-1})}(t)f$ for $f\in\mY$, we have for fixed $\epsilon>0$
$$
\| \tN^p(\chi_{(\epsilon,\epsilon^{-1})}(t)  (\tS_A f)_\no )\|_2 \le \liminf_{\delta\to 0}
\| \tN^p(\chi_{(\epsilon,\epsilon^{-1})}(t)  (\tS_A f_\delta)_\no )\|_2.
$$
Now our assumption gives
$$
\| \tN^p(\chi_{(\epsilon,\epsilon^{-1})}(t)  (\tS_A f_\delta)_\no )\|_2
\lesssim \|\E\|_C \|f_\delta\|_\mY \lesssim \|\E\|_C \|f\|_\mY, 
$$
uniformly in $\epsilon$, so for all $f\in \mY$ and $\epsilon>0$ we obtain
$$
\| \tN^p(\chi_{(\epsilon,\epsilon^{-1})}(t)  (\tS_A f)_\no )\|_2
\lesssim  \|\E\|_C \|f\|_\mY.
$$
It remains to let $\epsilon\to 0$ and apply the monotone convergence theorem.

(i)
We now fix $t\mapsto f_t$ compactly supported in $\R_+$ and write
$$
  \tS_A f_t=\int_0^t e^{-(t-s)\tilde\Lambda} \tE_0^+ \E_s f_s ds - \int_t^\infty e^{-(s-t)\tilde\Lambda} \tE_0^- \E_s f_s ds=: I-II.
$$
Most of the time we use the pointwise inequality
$\tN^p\le \tN$.
It is only for one term, estimated in (iii) below, that we require $p<2$.

Split the integral $I$ as
$$
  I= \int_0^t e^{-(t-s)\tilde\Lambda}(I-e^{-2s\tilde\Lambda}) \tE_0^+ \E_s f_s ds
  +e^{-t\tilde\Lambda} \int_0^t e^{-s\tilde\Lambda} \tE_0^+ \E_s f_s ds = I_1+I_2.
$$
As in the proof of Proposition~\ref{prop:endpointdir}, the kernel of $I_1$ has
bounds $s/t$, giving the estimate
\begin{multline}   \label{eq:schurbound}
  \|\tN(I_1)\|_2^2\lesssim \int_0^\infty \|I_1\|_2^2 \frac {dt}t 
  \lesssim \int_0^\infty \left( \int_0^t \frac st \frac{ds}s \right) \left( \int_0^t \frac st \|\E_s f_s\|_2^2 sds \right)  \frac{dt}t \\
  \lesssim \int_0^\infty \|\E_s f_s\|_2^2 sds\le \|\E\|_\infty^2 \|f\|^2_\mY.
\end{multline}
Similarly we split $II= II_1+II_2$ by writing $e^{-(s-t)\tilde\Lambda}=e^{-(s-t)\tilde\Lambda}(I-e^{-2t\tilde\Lambda})+
e^{-t\tilde\Lambda}e^{-s\tilde\Lambda}$, and a Schur estimate similar to (\ref{eq:schurbound}) give the bound for $II_1$.
Next we write
$$
  II_2= e^{-t\tilde\Lambda} \int_0^\infty e^{-s\tilde\Lambda} \tE_0^- \E_s f_s ds
  -e^{-t\tilde\Lambda} \int_0^t e^{-s\tilde\Lambda} \tE_0^- \E_s f_s ds=: II_3-II_4.
$$
By Theorem~\ref{thm:QEandNT}, the term $II_3$ has bound
\begin{multline*}
  \left\| \tN\left( B_0 e^{-t\Lambda} B_0^{-1}P_{B_0\mH} \int_0^\infty e^{-s\tilde\Lambda} \tE_0^- \E_s f_s ds  \right) \right\|_2
  \lesssim \left\| \int_0^\infty e^{-s\tilde\Lambda} \tE_0^- \E_s f_s ds \right\|_2 \\
  = \sup_{\|h\|_2=1}  \left| \int_0^\infty( \E_s^* e^{-s\tilde \Lambda^*} (\tE_0^-)^* h, f_s )ds \right|\lesssim \|\E\|_*\|f\|_\mY.
\end{multline*}

(ii)
It remains to consider $I_2+ II_4= (\tE_0^++ \tE_0^-) e^{-t\tilde\Lambda} \int_0^t e^{-s\tilde\Lambda} \E_s f_s ds$.
Note that $(\tE_0^+ + \tE_0^-)= P_{B_0\mH}$.
Since we only consider the normal component of  $I_2+ II_4$ and $(P_{B_0\mH} h)_\no = h_\no$
for any $h$, it remains to estimate
$e^{-t\tilde\Lambda} \int_0^t e^{-s\tilde\Lambda} \E_s f_s ds$.
To make use of off-diagonal estimates (see Lemma~\ref{lem:lqoffdiag}), we need to replace $e^{-t\tilde\Lambda}$ by
the resolvents $(I+itB_0D)^{-1}$. To this end, define 
$\psi_t(z):= e^{-t|z|}-(1+itz)^{-1}$ and split the integral
\begin{multline*}
  e^{-t\tilde\Lambda} \int_0^t e^{-s\tilde\Lambda} \E_s f_s ds
  = \psi_t(B_0D) \int_0^\infty e^{-s\tilde\Lambda} \E_s f_s ds -\int_t^\infty \psi_t(B_0D) e^{-s\tilde\Lambda} \E_sf_sds \\
  +\int_0^t (I+ itB_0D)^{-1} (e^{-s\tilde\Lambda}-I) \E_sf_sds + (I+itB_0D)^{-1} \int_0^t \E_s f_s ds.
\end{multline*}
For the first term, square function estimates show that $\psi_t(B_0D): L_2\to\mY^*\subset\mX$ is continuous, and
a duality argument like for $II_3$ gives the bound.
For the second and third terms, we note the operator estimates
$$
  \|\psi_t(B_0D) e^{-s\tilde\Lambda}\|= \left\| \frac ts\frac{e^{-t|B_0D|}-(I+itB_0D)^{-1}}{tB_0D} (sB_0D) e^{-s|B_0D|} \right\|
  \lesssim t/s,
$$
and
$$
  \|(I+ itB_0D)^{-1} (e^{-s\tilde\Lambda}-I)\| \lesssim \left\| \frac st\frac{tB_0D}{I+itB_0D} \frac{e^{-s|B_0D|}-I}{sB_0D} \right\|
  \lesssim s/t.
$$
Schur estimates similar to (\ref{eq:schurbound}) give the $\tN$ bounds.

(iii)
It remains to prove the estimate
$$
   \left\| \tN^p \left( (I+itB_0D)^{-1}\int_0^t \E_s f_s ds  \right) \right\|_2 \lesssim \|\E\|_C \|f\|_\mY.
$$
To show this, fix a Whitney box $W(t_0, x_0)$, take $h\in L_q(W(t_0,x_0); \C^{(1+n)m})$, and let
$h=0$ outside $W(t_0,x_0)$. Here $1/p+1/q=1$, $p<2$ and $q>2$.
To bound the $L_p(W(t_0,x_0))$ norm, we do the duality argument
\begin{multline*}
  \frac 1{t_0} \int_{c_0^{-1}t_0}^{c_0t_0}  \left( (I+itB_0D)^{-1}\int_0^t \E_s f_s ds, h_t \right) dt \\
    =\int_0^{c_0t_0} \left( \E_s f_s,  \frac 1{t_0} \int_{\max(c_0^{-1}t_0,s)}^{c_0t_0}  (I-itDB_0^*)^{-1} h_t dt \right) ds\\
    \le\int_{\R^n}\int_0^{c_0t_0}  |\E(s,y)| |f(s,y)| H(y) dsdy,
\end{multline*}
where
$$
 H(y):= \frac 1{t_0} \int_{c_0^{-1}t_0}^{c_0t_0}  |(I-itDB_0^*)^{-1}h_t(y)| dt. 
$$
To handle the tails of $(I-itDB_0^*)^{-1} h_t$, we split the space into annular regions
$\R^n= \bigcup_{k=0}^\infty A_k$, where $A_0:= B(x_0;t_0)$ and $A_k:= (2^kA_0)\setminus (2^{k-1}A_0)$ for 
$k\ge 1$.
Define $f_k(s,y):= \chi_{(0, c_0t_0)}(s)\chi_{A_k}(y)f(s,y)$ and $H_k(y):= \chi_{A_k}(y) H(y)$.
Then Whitney averaging as in the proof of Lemma~\ref{lem:Carleson} gives
\begin{multline*}
   \int_{\R^n}\int_0^{c_0t_0}  |\E(s,y)| |f(s,y)| H(y) dsdy
   \le \sum_{k=0}^\infty \iint_{\R^{1+n}_+} |\E(s,y)| s|f_k(s,y)| H_k(y) \frac{dsdy}s \\
   \approx \sum_{k=0}^\infty\iint_{\R^{1+n}_+} \left(\frac 1{t^{1+n}} \iint_{W(t,x)}  |\E(s,y)| s|f_k(s,y)| H_k(y) dsdy\right) \frac{dtdx}t  \\
  \lesssim 
   \sum_{k=0}^\infty\iint_{\R^{1+n}_+} \sup_{W(t,x)}|\E|  \left(\frac 1{t^{1+n}} \iint_{W(t,x)} |s f_k|^2\right)^{1/2} 
   \left(\frac 1{t^{1+n}} \iint_{W(t,x)} |H_k|^2\right)^{1/2} \frac{dtdx}t    \\
  \lesssim
 \sum_{k=0}^\infty \|\E\|_C \int_{\R^n}\area \left( \tfrac 1{\sqrt{t^{1+n}}}  \|s f_k\|_{L_2(W(t,x))} 
      \tfrac 1{\sqrt{t^{1+n}}}\|H_k\|_{L_2(W(t,x))} \right)\!(z)dz \\
   \lesssim 
 \sum_{k=0}^\infty \|\E\|_C \int_{\R^n}\area \left( \tfrac 1{\sqrt{t^{1+n}}} \|s f_k\|_{L_2(W(t,x))} \right)\!(z) \,
    N_* \left( \tfrac 1{\sqrt{t^{1+n}}}\|H_k\|_{L_2(W(t,x))} \right)\!(z)dz \\
   \lesssim  
 \sum_{k=0}^\infty \|\E\|_C \| \area(s f_k) \|_{L_p(\R^n)} \| M(|H_k|^2)^{1/2}\|_{L_q(\R^n)}.
 \end{multline*}
 Here $\area$ denotes the area function $\area g(z):= (\iint_{|y-z|<cs} |g(s,y)|^2 s^{-(1+n)} dsdy)^{1/2}$ and
 $N_* g(z):=  \sup_{|y-z|<cs} |g(s,x)|$ is the non-tangential maximal function, where $c\in(0,\infty)$ is some
 fixed constant, and $M$ is the Hardy--Littlewood maximal function.
 On the fourth line we used the tent space estimate by Coifman, Meyer and Stein in \cite[Theorem 1(a)]{CMS}.
Since $M: L_{q/2}\to L_{q/2}$ is bounded, we have
\begin{multline*}
  \| M(|H_k|^2)^{1/2}\|_{L_q(\R^n)} \lesssim \|H\|_{L_q(A_k)}\le 
  \frac 1{t_0} \int_{c_0^{-1}t_0}^{c_0t_0}  \|(I-itDB_0^*)^{-1} h_t\|_{L_q(A_k)} dt \\
  \lesssim 2^{-km}\frac 1{t_0} \int_{c_0^{-1}t_0}^{c_0t_0}  \|h_t\|_{L_q(B(x_0; c_0t_0))} dt 
  \lesssim 2^{-km} t_0^{-q} \|h\|_{L_q(W(t_0,x_0))}.
\end{multline*}
The third estimate uses Lemma~\ref{lem:lqoffdiag} below, and thus is where we choose $p<2$
sufficiently close to $2$ so that $2<q<2+\delta$.
We obtain the maximal function estimate
\begin{multline*}
  \tN^p\left( (I+itB_0D)^{-1}\int_0^t \E_s f_s ds  \right)(x_0) \lesssim
  \|\E\|_C \sup_{t_0>0} \sum_{k=0}^\infty 2^{-km} t_0^{n/q-n}  \| \area(s f_k) \|_{L_p(\R^n)} \\
  \lesssim \|\E\|_C \sum_{k=0}^\infty 2^{-k(m- n/p)} \sup_{t_0>0} 
    \left( \frac 1{(2^k t_0)^n}\int_{B(x_0;(2^k+ cc_0) t_0)} |\area(sf)|^p dx \right)^{1/p} \\
    \lesssim \|\E\|_C M( \area(sf)^p )^{1/p}(x_0),
\end{multline*}
where $c$ is the constant from the definition of $\area$ and $m>n/p$.
Since $M:L_{2/p}\to L_{2/p}$ is bounded, this yields
\begin{multline*}
   \left\|\tN^p\left( (I+itB_0D)^{-1}\int_0^t \E_s f_s ds  \right)\right\|_2 \\
   \lesssim
   \|\E\|_C\|M( \area(sf)^p )^{1/p}\|_2\lesssim \|\E\|_C \|\area(sf)\|_2
   \approx \|\E\|_C \|f\|_\mY.
\end{multline*}
This completes the proof of the maximal function estimate.
\end{proof}

The following lemma, which we used above, is contained in \cite[Lemma 2.57]{AAH}.
However, we give a more direct proof here, since the algebraic setup in \cite{AAH} was quite different.

\begin{lem}  \label{lem:lqoffdiag}
  Let $B_0$ be $t$-independent coefficients, strictly accretive on $\mH= \clos{\ran(D)}$.
  Then for each positive integer $m$, there is $C_m<\infty$ and $\delta>0$ such that
$$
  \| (1+it DB_0)^{-1} f \|_{L_q(E)}\le \frac {C_m}{(1+\dist(E,F)/t)^m} \|f\|_{L_q(F)}
$$
for all $t>0$ and sets $E,F\subset \R^n$ such that $\supp f\subset F$, 
and all $q$ such that $|q-2|<\delta$. 
Here $\dist(E,F):= \inf\sett{|x-y|}{x\in E, y\in F}$.
\end{lem}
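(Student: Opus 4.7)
The plan is to prove the $L_q$ off-diagonal estimate in three steps: establish $L_2$ off-diagonal (Gaffney-type) decay for the resolvent family $T_t := (1+itDB_0)^{-1}$; extrapolate to uniform $L_q$ boundedness for $q$ near $2$; and combine these via a dyadic annular decomposition.

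For the first step, I would use the classical exponential multiplier argument. Let $\phi$ be a bounded real-valued Lipschitz function with $\|\nabla\phi\|_\infty \le 1$ and let $\lambda \in \R$. Setting $g := T_t f$, the defining equation $g + itDB_0 g = f$ conjugates, after multiplication by $e^{\lambda\phi}$, to
$$e^{\lambda\phi}g + itDB_0(e^{\lambda\phi}g) = e^{\lambda\phi}f + it\lambda\, M_\phi B_0 (e^{\lambda\phi}g),$$
where $M_\phi$ is a bounded multiplication operator of norm at most $1$ arising from the first-order commutator $[D,\phi]$ (which acts as multiplication by the partial derivatives of $\phi$). Pairing the conjugated equation with $B_0(e^{\lambda\phi}g)$ and invoking strict accretivity of $B_0$ on $\mH$ produces
$$\|e^{\lambda\phi}g\|_2 \lesssim \|e^{\lambda\phi}f\|_2 + |\lambda t|\,\|e^{\lambda\phi}g\|_2.$$
For $|\lambda t|$ below an absolute constant $c_0$, the last term absorbs. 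Selecting $\phi$ as a smooth truncation of $\dist(\cdot,F)$ with $\lambda = c_0/t$ and letting the truncation tend to infinity yields the Gaffney bound $\|T_t f\|_{L_2(E)} \lesssim e^{-c_0\dist(E,F)/t}\|f\|_{L_2(F)}$, which implies polynomial $L_2$ off-diagonal decay of arbitrary order $m$.

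For the second step, I would invoke the Blunck--Kunstmann extrapolation principle to upgrade the uniform $L_2$ boundedness of $T_t$ to uniform $L_q$ boundedness for all $q$ with $|q-2| < \delta$. The hypothesis is exactly $L_2$ boundedness plus $L_2$ off-diagonal polynomial decay of sufficient order, both supplied by Step 1, and the conclusion is verified by a Calder\'on--Zygmund decomposition of a given $L_{q_0}$ function at scale $t$ combined with Kolmogorov's inequality. The range $q < 2$ follows by duality, using that $(1-itB_0^* D)^{-1}$ enjoys the analogous $L_2$ off-diagonal estimates by applying Step 1 to $B_0^*$. For the third step, set $d := \dist(E,F)$; if $d \le t$ the claim reduces to the bare $L_q$ bound from Step 2, so assume $d > t$. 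Decompose $F = \bigsqcup_{k\ge 0} F_k$ into dyadic annular shells $F_k := F \cap \sett{x}{2^{k-1} d < \dist(x,E) \le 2^k d}$. On each $F_k$ the $L_2$ off-diagonal estimate gives
$$\|T_t(f\chi_{F_k})\|_{L_2(E)} \lesssim (1 + 2^k d/t)^{-M}\|f\|_{L_2(F_k)}$$
for $M$ as large as desired, and converting between $L_2$ and $L_q$ norms annulus by annulus (via the uniform $L_q$ bound from Step 2 together with H\"older, or equivalently Riesz--Thorin interpolation) and summing the geometric series in $k$ yields the claimed polynomial decay of order $m$ with constant $C_m$.

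The main obstacle is Step 2: establishing $L_q$ boundedness near $q = 2$ for resolvents of the bisectorial operator $DB_0$ with merely bounded accretive coefficients requires invoking nontrivial harmonic analysis machinery (Blunck--Kunstmann or Shen-type arguments), and the $L_2$ off-diagonal decay of arbitrary polynomial order from Step 1 is essential to close the underlying Calder\'on--Zygmund decomposition. Minor technical points along the way include verifying that $e^{\lambda\phi}g$ remains in $\mH = \nul(\curl_x)$ (which it does, since multiplication by a scalar preserves the tangential curl-free condition), and handling the passage to the limit as the truncation of $\phi$ tends to infinity so that the absorption step is not obstructed by the $\mH$-constraint.
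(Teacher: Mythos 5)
Your Step 1 contains a genuine gap. You claim that $e^{\lambda\phi}g$ remains in $\mH = \nul(\curl_x)$ because ``multiplication by a scalar preserves the tangential curl-free condition,'' but this is false: for $g\in\mH$ one computes
$$
\curl_x\big((e^{\lambda\phi}g)_\ta\big)
= e^{\lambda\phi}\curl_x(g_\ta) + \lambda e^{\lambda\phi}\,\nabla_x\phi\wedg g_\ta
= \lambda e^{\lambda\phi}\,\nabla_x\phi\wedg g_\ta,
$$
which is nonzero whenever $\nabla\phi$ is not parallel to $g_\ta$. So $e^{\lambda\phi}g\notin\mH$ in general, and you cannot invoke strict accretivity of $B_0$ on $\mH$ at the moment you wish to. Moreover, even setting the subspace issue aside, the quadratic form you would need to control, $(DB_0 h, h)=(B_0 h, Dh)$, carries no sign for the first-order operator $D$, so the absorption step does not close the way it does for second-order divergence-form operators. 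The $L_2$ off-diagonal bound for $(1+itDB_0)^{-1}$ is instead obtained by the commutator-iteration (Davies--Gaffney) argument: writing $[\eta, (1+itDB_0)^{-1}] = -it\,(1+itDB_0)^{-1}[D,\eta]B_0(1+itDB_0)^{-1}$ with nested bump functions $\eta$ and iterating; this uses only the $L_2$ resolvent bound and requires no accretivity or conjugation, which is exactly the route the paper takes via its citation.

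For Step 2 you take a genuinely different route from the paper: you propose Blunck--Kunstmann extrapolation from the $L_2$ boundedness plus $L_2$ off-diagonal decay, whereas the paper rewrites $(I+itDB_0)\tilde f=f$ componentwise, eliminates the tangential part, and reduces to showing the $t$-rescaled second-order operator $L=\begin{bmatrix}1 & it\divv_x\end{bmatrix}A_0\begin{bmatrix}1\\ it\nabla_x\end{bmatrix}$ is an isomorphism $W^1_q\to W^{-1}_q$ for $|q-2|<\delta$, which follows at once from {\v S}ne{\u\i}berg stability in the complex interpolation scale. Your extrapolation route is plausible in spirit but the verification is substantially heavier: one must choose an approximation family $A_r$ adapted to $DB_0$, prove the required $L^{p_0}\to L^2$ averaging estimates (nontrivial because $D$ is not elliptic, so $(1+irDB_0)^{-N}$ does not gain full derivatives), then dualize; the {\v S}ne{\u\i}berg argument gives exactly the needed small interval around $q=2$ with one application of an abstract stability theorem. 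Your Step 3 (dyadic annular decomposition plus geometric-series summation) is a correct and standard way to combine the $L_2$ decay with the uniform $L_q$ bound; the paper simply says ``by interpolation,'' but the content is the same.
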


\begin{proof}
  For $q=2$, these off-diagonal estimates can be proved as in \cite[Proposition 5.1]{elAAM}, 
  using estimates on commutators with bump functions (and replacing the operator $B_0D$ there
  by $DB_0$).
  By interpolation, it suffices to estimate $\| (1+it DB_0)^{-1} f \|_{L_q(\R^n)\to L_q(\R^n)}$, uniformly for $t$
  and $q$ in a neighbourhood of $2$.
  To this end, assume that $(I+ it DB_0)\tilde f= f$.
  As in Proposition~\ref{prop:divformasODE}, but replacing $\pd_t$ by $(it)^{-1}$, this equation is equivalent to
$$
\begin{cases}
  (A_0\tilde g)_\no + it\divv_x(A_0\tilde g)_\ta = (A_0 g)_\no, \\
  \tilde g_\ta - it \nabla_x\tilde g_\no = g_\ta,
\end{cases}
$$
where $A_0, g, \tilde g$ are related to $B_0, f, \tilde f$, respectively, as in Proposition~\ref{prop:divformasODE}.
Using the second equation to eliminate $\tilde g_\ta$ in the first, shows that $\tilde g_\no$ satisfies the divergence form 
equation
$$
  L\tilde g_\no:= \begin{bmatrix} 1 & it\divv_x \end{bmatrix}
   A_0(x)
    \begin{bmatrix} 1 \\ it\nabla_x \end{bmatrix}
    \tilde g_\no =
     \begin{bmatrix} 1 & it\divv_x \end{bmatrix}
      \begin{bmatrix} A_{\no\no} g_\no \\ -A_{\ta\ta} g_\ta \end{bmatrix}.
$$
By the stability result of {\v{S}}ne{\u\i}berg~\cite{sneiberg} it follows that 
the divergence form operator $L$ is an isomorphism $L: W^1_q(\R^n)\to W^{-1}_q(\R^n)$
for $|q-2|<\delta$, giving us the desired estimate
$$
  \|\tilde f\|_q\approx \|\tilde g\|_q \lesssim \|\tilde g_\no\|_q+ t\|\nabla_x \tilde g_\no\|_q + \|g_\ta\|_q 
  \lesssim \|g\|_q\approx \|f\|_q.
$$
\end{proof}

%
%
%
%
%
\subsection{Square function estimates for $\mX$-solutions under $t$-regularity for the coefficients}    \label{sec:sqfcnests}

Looking closely at the equation $\divv_{t,x} Ag=0$, it seems unlikely that 
$\mX$-solutions $g$ would in general satisfy
the square function estimate $\int_0^\infty \|\pd_t g_t\|_2^2 tdt<\infty$, \textit{i.e.}\,  $\pd_t g_t \in \mY$,
when $A$ is $t$-dependent. More precisely, it writes $\pd_{t}(A_{\no\no}g_{\no}+A_{\no\ta}g_{\ta}) + \divv_{x}(A_{\ta\no}g_{\no}+A_{\ta\ta}g_{\ta})=0$, and as  $\pd_{t}$ and multiplication by $A$ do not commute the quantity $\pd_{t}g_{t}$ does not arise.
We show in the next result that $\pd_t g_t \in \mY$ can be obtained upon a further $t$-regularity assumption on $A$. This also improves the regularity of $g_t$ itself. We do not claim this assumption is sharp nor necessary (in particular, it could well be that this regularity on the components $A_{\no\no}, A_{\no\ta}$ suffices). 
This regularity assumption is akin to the one in \cite{KP3}, going back to \cite{D2}, towards $A_{\infty}$ property of the $L$-harmonic  measure with respect to surface measure on bounded Lipschitz domains for real elliptic operators. See also \cite{DPP, DR} for results with smallness assumptions of the derivatives of $A$. The difference wih these works is that we are imposing our coefficients $A$ to be perturbation of ``good'' $t$-independent coefficients. So our next  result neither contains or is contained in the above cited works. Besides, it is again an \textit{a priori} estimate on solutions, so it is valid independently of solvability issues.

\begin{thm}  \label{thm:neuQE}
  Let $A: \R^{1+n}_+\to \mL(\C^{(1+n)m})$ with $\|A\|_\infty<\infty$ and strictly accretive on $\mH$, 
  and assume that there exists $t$-independent coefficients $A_{0}$ with $\|A-A_0\|_*$ sufficiently small.
 If $A$ satisfies the $t$-regularity condition
$$
  \| t\pd_t A \|_* <\infty,
$$
  then any $\mX$-solution $g$ to the divergence form equation, with coefficients $A$, with boundary trace $g_0$
  has regularity $\pd_t g_t \in L_2^\loc(\R_+; L_2)$ with estimates
 $$
   \|\pd_t g_t\|_\mY\lesssim \|g\|_\mX.
 $$
  We also have estimates $\sup_{t>0}\|g_t\|_2\approx \|g\|_\mX$, and $t\mapsto g_t\in L_2$ is continuous with limits
  $\lim_{t\to 0} \|g_t-g_0\|_2=0=\lim_{t\to\infty}\|g_t\|_2$.
  The converse estimate $\|g\|_\mX\lesssim \|\pd_t g\|_\mY$ holds for all $\mX$-solutions $g$,
  provided $\| t\pd_t A \|_*$ is sufficiently small.

  If  $\max(\|t\pd_i A\|_*,\| t\pd_t A \|_*) <\infty$ for some $i=1,\ldots, n$, 
  then $\pd_i g_t \in L_2^\loc(\R_+; L_2)$ for any $\mX$-solution $g$ to the divergence form equation with coefficients $A$, 
  with estimates $\|\pd_i g_t\|_\mY\lesssim \|g\|_\mX$.
  The estimate $\|g\|_\mX\lesssim \|\nabla_x g\|_\mY$ holds for all $\mX$-solutions $g$,
  provided $\| t\nabla_{t,x} A \|_*$ is sufficiently small.
\end{thm}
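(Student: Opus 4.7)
Plan: I would exploit the integral representation $f_t = e^{-t\Lambda}h^+ + S_A f_t$ from Theorem~\ref{thm:inteqforNeu} for the conormal gradient $f = [(Ag)_\no, g_\ta]^t$, combined with the differential equation $\pd_t f + DBf = 0$. Since $g = [(Bf)_\no, f_\ta]^t$ with $B = \hat A$, one has $\pd_t g = [(\pd_t B)f + B\pd_t f, \pd_t f]^t$, and the control
\[
  \|(\pd_t B)f\|_\mY = \|(t\pd_t B)f\|_{\mY^*} \lesssim \|t\pd_t B\|_*\|f\|_\mX \lesssim \|t\pd_t A\|_*\|f\|_\mX
\]
(using Definition~\ref{defn:starnorm} and the Lipschitz dependence of $B=\hat A$ on $A$ afforded by the explicit formula, together with lower bounds on $A_{\no\no}$) reduces the main estimate to $\|\pd_t f\|_\mY \lesssim \|f\|_\mX$.

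For $\pd_t f$, I differentiate the integral equation. The free-evolution contribution $-\Lambda e^{-t\Lambda}h^+$ is in $\mY$ by square function estimates for $\Lambda$, with norm $\approx\|h^+\|_2 \approx \|f\|_\mX$. For $\pd_t S_A f$, integration by parts in $s$ inside the $\Lambda F(\Lambda)$ piece yields the operational-calculus identity
\[
  \pd_t F(\Lambda)u_t = \Lambda e^{-t\Lambda}u_0 + F(\Lambda)\pd_s u_t,
\]
with an analogous identity (vanishing boundary term at infinity) for $F^*$, giving
\[
  \pd_t S_A f_t = \Lambda e^{-t\Lambda}\hE_0^+\E_0 f_0 + F(\Lambda)\hE_0^+\pd_s(\E f)_t + F^*(\Lambda)\hE_0^-\pd_s(\E f)_t.
\]
The first term is controlled in $\mY$ by square function estimates together with $\|f_0\|_2\lesssim\|f\|_\mX$. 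Expanding $\pd_s(\E f) = (\pd_s\E)f + \E\pd_s f$, the $t$-regularity hypothesis yields
\[
  \|(\pd_s\E)f\|_\mY = \|(s\pd_s\E)f\|_{\mY^*} \lesssim \|t\pd_t\E\|_*\|f\|_\mX,
\]
while $\|\E\pd_s f\|_\mY \le \|\E\|_\infty\|\pd_s f\|_\mY$ is absorbed by smallness of $\|\E\|_*$ in a Picard fixed-point scheme closing the estimate on $\pd_t f$. Boundedness of $F^*(\Lambda)$ on $\mY = L_2(tdt;\mH)$ is given by Theorem~\ref{thm:weakmaxreg} (since $\alpha = 1 > -1$), while the endpoint-failing $F(\Lambda)$ is handled within the same scheme through the extra smoothness gained by the $\pd_s$ from the integration by parts.

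For the converse $\|g\|_\mX \lesssim \|\pd_t g\|_\mY$ under small $\|t\pd_t A\|_*$, I recover $h^+$ from the $L_2$-trace of $f$, available by continuity of $t\mapsto f_t$ implied by $\pd_t f \in L_2^\loc(\R_+;L_2)$, and invert the Picard scheme. The continuity of $t\mapsto g_t$ in $L_2$ and the $L_2$ limits in the assertion (b) then follow by upgrading the averaged limits of Theorem~\ref{cor:inteqforNeu} via the embedding $W^{1,2}_{\loc}(\R_+;L_2)\hookrightarrow C(\R_+;L_2)$. For the tangential derivatives, applying $\pd_i$ instead of $\pd_t$ to the integral equation reproduces the same chain, with commutator source terms controlled by $\|t\pd_i \E\|_*$ through the identical Carleson-type mechanism; the fixed-point scheme closes under sufficient smallness of $\|t\nabla_{t,x}A\|_*$.

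The main obstacle is the endpoint failure of the maximal regularity operator $F(\Lambda) = S^+$ on $\mY$: Theorem~\ref{thm:weakmaxreg} only provides boundedness for $\alpha < 1$, whereas $\mY$ corresponds to the endpoint $\alpha = 1$. The $t$-regularity hypothesis is precisely what circumvents this obstruction: integration by parts in $s$ trades $\Lambda^2$ for $\Lambda\pd_s$, and the Carleson-type control on $\pd_t\E$ upgrades $\pd_s(\E f)$ to a space on which the operational calculus combined with the Picard iteration yields the required bound. This is also why, without the regularity assumption, one should not expect $\pd_t g \in \mY$ to hold.
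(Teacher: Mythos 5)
Your overall architecture matches the paper's: pass to the conormal gradient, differentiate the integral representation $f = e^{-t\Lambda}h^+ + S_A f$, absorb the $\E\pd_t f$ term by a Picard scheme on an auxiliary space $\{h\in\mX: \pd_t h\in\mY\}$, translate back to $g$ via the pointwise bound $|\pd_t B|\lesssim|\pd_t A|$, upgrade averaged limits to strong $L_2$ limits by an integration by parts in $t$, and prove the converse by reading off $\|h^+\|_2$. However, there are two genuine gaps.

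The first is in your proposed identity $\pd_t F(\Lambda)u_t = \Lambda e^{-t\Lambda}u_0 + F(\Lambda)(\pd_s u)_t$. While algebraically correct, it does not close: you need $\|F(\Lambda)(\pd_s u)\|_\mY$ controlled by $\|\pd_s u\|_\mY$, but as you yourself note, the forward maximal regularity operator $F(\Lambda)=S^+$ is precisely \emph{not} bounded on $\mY=L_2(tdt)$ (endpoint $\alpha=1$ in Theorem~\ref{thm:weakmaxreg}). The vague claim that the ``extra smoothness gained by the $\pd_s$'' saves the day has no content: $\pd_s(\E f)$ is just another element of $\mY$ and the endpoint failure is unchanged. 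What the paper's Lemma~\ref{lem:neuQE} actually does is compute $t\pd_t(S_A^\epsilon h)$, using the split $t=(t-s)+s$ before integrating by parts. This produces three pieces: a bump-support term ($I_1$), a term with kernel $(t-s)\Lambda^2 e^{-(t-s)\Lambda}$ that is bounded on $\mY^*$ by Schur's lemma ($I_2$), and $F(\Lambda)$ acting on $\pd_s(s\E_s h_s)$ ($I_3$). The crucial point is that $\pd_s(s\E_s h_s)=\E_s h_s + s\pd_s(\E_s h_s)$ lands in $\mY^*$ (not $\mY$), and $F(\Lambda):\mY^*\to\mY^*$ is bounded ($\alpha=-1<1$). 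The $s$-weight inside the derivative is what makes the argument work; your unweighted IBP omits it, and the estimate does not close.

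The second gap is in the tangential-derivative part. Applying $\pd_i$ to the integral equation forces you to commute $\pd_i$ with the operators $e^{-(t-s)\Lambda}$, $\hE_0^\pm$ and $E_0^\pm$. Since $\Lambda=|DB_0|$ and $B_0=B_0(x)$, these operators do not commute with $\pd_i$, and the commutators $[\pd_i,\Lambda]$ are genuinely singular objects (they live at the level of the Kato problem) which are not controlled by any Carleson quantity of the type $\|t\pd_i\E\|_*$. The paper avoids this entirely: it never differentiates the integral equation in $x$, but instead uses the ODE $\pd_t f+DBf=0$ to write $\pd_t f=-D P_\mH Bf$, exploits $\|Dh\|_2\approx\sum_i\|\pd_i h\|_2$ for $h\in\dom(D)\cap\mH$, and then moves derivatives onto $B$ and $f$ separately, reducing to the already-established $t$-derivative bound. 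You need to replace your commutator argument by this ODE-based one.

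There is also a minor issue with the boundary term $\Lambda e^{-t\Lambda}\hE_0^+\E_0 f_0$: the value $\E_0$ of $\E$ at $s=0$ is not an $L_\infty$ function in general (only the averaged trace vanishes); the paper sidesteps this by working with the regularized $S_A^\epsilon$ throughout, where boundary terms are killed by the cutoff $\eta_\epsilon^\pm$, and passing to the limit only after the uniform-in-$\epsilon$ bounds are in hand.
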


We do not know whether the smallness assumptions are needed for the converse estimates to hold.
We also remark that the same conclusions hold for the conormal gradient $f$, as is clear from the proof below.

\begin{lem}   \label{lem:neuQE}
  If $h\in \mX$ has distribution derivative $\pd_t h\in \mY$, then $\pd_t (S_A h)\in \mY$ with estimates
$$
  \|\pd_t (S_A h)\|_\mY\lesssim (\|\E\|_* + \|t\pd_t \E\|_*) \|h\|_\mX + \|\E\|_\infty \|\pd_t h\|_{\mY}.
$$
\end{lem}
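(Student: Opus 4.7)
My plan is to differentiate $S_{A}h$ in $t$ under the integral and integrate by parts once in $s$, which converts the singular $\Lambda^{2}e^{-|t-s|\Lambda}$ into $\Lambda e^{-|t-s|\Lambda}$ acting on $\pd_{s}(\E h)$. To make the manipulations rigorous I first work with the cutoff $S_{A}^{\epsilon}h$ from Proposition~\ref{prop:TAboundsonXY}, where the bump $\eta_{\epsilon}^{\pm}(t,s)$ annihilates every boundary contribution at $s=0$, $s=\infty$ and $s=t$. A direct computation gives
\[
  \pd_{t}S_{A}^{\epsilon}h_{t} = R^{\epsilon}h_{t} + T^{\epsilon}[\pd_{s}(\E h)]_{t},
\]
where $T^{\epsilon}$ is the truncated maximal regularity operator associated with $\Lambda$, $\hE_{0}^{+}$ and $\hE_{0}^{-}$, and $R^{\epsilon}$ carries the commutator $(\pd_{t}+\pd_{s})\eta_{\epsilon}^{\pm}$, supported on shrinking annuli where $\eta_{\epsilon}'$ is nonzero. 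A direct size estimate yields $\|R^{\epsilon}h\|_{\mY}\to 0$ as $\epsilon\to 0$, so it suffices to bound $T^{\epsilon}[\pd_{s}(\E h)]$ in $\mY$ uniformly in $\epsilon$.

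By the product rule $\pd_{s}(\E h) = (\pd_{s}\E)h + \E\,\pd_{s}h$, with $\|\pd_{s}(\E h)\|_{\mY} \leq \|t\pd_{t}\E\|_{*}\|h\|_{\mX} + \|\E\|_{\infty}\|\pd_{t}h\|_{\mY}$, since $\|s\pd_{s}\E\|_{*}$ is exactly the multiplier norm $\mX\to\mY^{*}$. The backward half $T^{-,\epsilon}$ is uniformly $\mY$-bounded by Theorem~\ref{thm:weakmaxreg} (case $\alpha=1>-1$), and contributes a bound $\lesssim \|\pd_{s}(\E h)\|_{\mY}$; this already produces the $\|t\pd_{t}\E\|_{*}\|h\|_{\mX} + \|\E\|_{\infty}\|\pd_{t}h\|_{\mY}$ part of the conclusion. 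The forward half $T^{+,\epsilon}$ is at the forbidden endpoint $\alpha=1$, so I split it as in the proof of Proposition~\ref{prop:TAboundsonXY} into three pieces $I_{1}^{\epsilon}+I_{2}^{\epsilon}+I_{3}^{\epsilon}$: the first two carry the improved kernel $\Lambda(e^{-(t-s)\Lambda}-e^{-(t+s)\Lambda})$ and are uniformly $\mY$-bounded by the proof of Theorem~\ref{thm:modelendpointbdd} applied to $\Lambda^{*}$ and dualized, so they too contribute $\lesssim \|\pd_{s}(\E h)\|_{\mY}$.

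The delicate term is the principal piece
\[
  I_{3}^{\epsilon}[\pd_{s}(\E h)]_{t} = \eta_{\epsilon}(t)\Lambda e^{-t\Lambda}\int_{0}^{\infty}\eta_{\epsilon}(s)e^{-s\Lambda}\hE_{0}^{+}\pd_{s}(\E h)_{s}\,ds.
\]
The outer factor $\eta_{\epsilon}(t)\Lambda e^{-t\Lambda}$ is bounded $L_{2}\to\mY$ by square function estimates, so it suffices to control the inner integral in $L_{2}$. Integrating by parts once more in $s$ (still with no boundary contribution, thanks to $\eta_{\epsilon}$) turns it into $\int_{0}^{\infty}\eta_{\epsilon}(s)\Lambda e^{-s\Lambda}\hE_{0}^{+}\E_{s}h_{s}\,ds$, plus an $\eta'_{\epsilon}$ remainder which is negligible as $\epsilon\to 0$. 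The main integral is estimated by duality against $g\in L_{2}$: using the square function estimate $\int_{0}^{\infty}\|\Lambda^{*}e^{-s\Lambda^{*}}g\|_{2}^{2}\,s\,ds\lesssim\|g\|_{2}^{2}$ together with the Carleson control $\|\E h\|_{\mY^{*}}\leq\|\E\|_{*}\|h\|_{\mX}$ from Definition~\ref{defn:starnorm}, the integral is bounded in $L_{2}$ by $\|\E\|_{*}\|h\|_{\mX}\|g\|_{2}$, giving the missing $\|\E\|_{*}\|h\|_{\mX}$ contribution.

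Passing to the limit $\epsilon\to 0$ and summing the pieces produces the stated bound. The hard step is the principal $I_{3}^{\epsilon}$ estimate: the naive $\|\pd_{s}(\E h)\|_{\mY}$ bound is unavailable because the forward maximal regularity is endpoint-borderline on $\mY$, and the second integration by parts in $s$---which exchanges $\pd_{s}(\E h)$ for $\Lambda e^{-s\Lambda}(\E h)$---is precisely what replaces the forbidden $\mY$-bound by a Carleson-type bound with constant $\|\E\|_{*}$.
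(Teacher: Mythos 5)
Your decomposition $\pd_t S_A^\epsilon h = R^\epsilon h + T^\epsilon[\pd_s(\E h)]$ is correct, and the second‐integration‐by‐parts trick you use on the forward principal piece in step~7 is a legitimate way to trade $\pd_s(\E h)$ for the Carleson bound $\|\E\|_*\|h\|_\mX$. But the proof contains a genuine error at the very start: the claim that $\|R^\epsilon h\|_\mY\to 0$ is false. The commutator $(\pd_t+\pd_s)\eta_\epsilon^\pm$ is indeed supported on thin annuli of width $\sim\epsilon$, but where $\eta_\epsilon'$ is nonzero it has size $\sim 1/\epsilon$, and these effects cancel: the dangerous piece of $R^\epsilon h$ coming from the backward integral, namely $\chi_{(\epsilon,2\epsilon)}(t)\,\eta_\epsilon'(t)\int_{t+\epsilon}^{1/\epsilon}\eta_\epsilon(s)\Lambda e^{-(s-t)\Lambda}\hE_0^-\E_s h_s\,ds$, contributes $\int_\epsilon^{2\epsilon}\epsilon^{-2}\bigl\|\int_{t+\epsilon}^{1/\epsilon}\cdots\bigr\|_2^2\,t\,dt \approx \bigl\|\int_0^\infty \eta_\epsilon(s)\Lambda e^{-s\Lambda}\hE_0^-\E_s h_s\,ds\bigr\|_2^2$ to $\|R^\epsilon h\|_\mY^2$, which converges to a \emph{nonzero} limit of size $\approx\|\E\|_*^2\|h\|_\mX^2$ (a concrete scalar example, $\Lambda=1$ and $\E h=\chi_{(1,2)}$, already exhibits this). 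The pieces of $R^\epsilon$ supported near $s\approx\epsilon$, $s\approx 1/\epsilon$ or $t\approx 1/\epsilon$ do vanish (because $\E h\in\mY^*$ has vanishing $\mY^*$-mass there), but the $t\approx\epsilon$ backward piece does not, and discarding it leaves the lemma unproved.

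In fact this $t\approx\epsilon$ backward commutator is precisely the hardest term in the paper's argument (the term $II_1$ in the paper's proof). Once you multiply by $t$ to convert to the $\mY^*$ estimate, the coefficient $t\eta_\epsilon'(t)\eta_\epsilon(s)$ is only bounded by $1$, not by $t/s$, so the Schur test is not directly available and the endpoint $\alpha=-1$ obstruction for $S^-_\epsilon$ bites; one has to split the semigroup as $e^{-(s-t)\Lambda}=(e^{-(s-t)\Lambda}-e^{-(s+t)\Lambda})+e^{-t\Lambda}e^{-s\Lambda}$ and handle the factored piece through $L_2$ via Lemma~\ref{lem:princpart}, exactly the same idea you deployed on your forward principal piece. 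So the machinery you have is adequate, but you applied it to a term that never needed it (the paper avoids the forward endpoint altogether by writing $t=(t-s)+s$ and keeping $\pd_s(s\E h)$ inside, staying at the non-endpoint weight $\alpha=-1$), while leaving unestimated the one term that genuinely requires the $L_2$-factorization. To repair the proof you must treat $R^\epsilon$ as a uniformly bounded, non-vanishing contribution and control its $t\approx\epsilon$ backward component by the same split-plus-factorization argument.
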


\begin{proof}[Proof of Theorem~\ref{thm:neuQE} modulo Lemma~\ref{lem:neuQE}]
(i)
  As in the proof of Corollary~\ref{cor:neuregmain}, any $\mX$-solution
  can be written $g= [ (Bf)_\no, f_\ta ]^t$, where
$$
 (I-S_A)f=  e^{-t\Lambda} h^+,\qquad\text{for some } h^+\in E_0^+\mH.
$$
Introduce the auxiliary Banach space 
$Z:= \sett{h\in \mX}{\pd_t h\in \mY}\subset \mX$, with norm $\|h\|_Z:= \|h\|_\mX+ a\|\pd_t h\|_\mY$.
By Proposition~\ref{prop:TAboundsonXY} and Lemma~\ref{lem:neuQE} we have estimates
$\|S_A h\|_\mX\le C\|h\|_\mX$ and $\|\pd_t (S_Ah)\|_\mY\le D \|h\|_\mX+ C\|\pd_t h\|_\mY$,
where we assume $C<1$, and we choose the parameter $a>0$ small enough so that
$$
  \|S_A\|_{Z\to Z}\le C+ aD<1.
$$
Hence $I-S_A$ is invertible on both $\mX$ and $Z$.
Since $e^{-t\Lambda}h^+\in Z$ by Theorem~\ref{thm:QEandNT}, we conclude that 
$f\in Z$ with estimates $\|\pd_t f\|_\mY\lesssim \|f\|_Z\approx \|e^{-t\Lambda}h^+\|_Z\approx \|h^+\|_2$.
For the gradient $g$, this gives the bound 
$\|\pd_t g\|_\mY\lesssim \|t\pd_t B\|_* \|f\|_\mX+ (\|B\|_\infty+1)\|\pd_t f\|_\mY \lesssim \|h^+\|_2\approx\|f\|_\mX
\approx \|g\|_\mX$.

(ii)
To prove the $\sup-L_2$ estimate and trace result for $g_t$, write
$\int_0^\infty s\eta(s) \pd_s g_s ds= \int_0^\infty (\eta(s)+ s\eta'(s)) g_s ds$,
for some $\eta\in C_0^\infty(\R_+)$.
Take the limit as $\eta$ approaches the characteristic function for $(0,t)$ to get
$$
  g_t = \frac 1t\int_0^t g_s ds + \frac 1t\int_0^t \pd_s g_s sds,\qquad \text{a.e. } t>0.
$$
The last term has bound $(\int_0^t \|\pd_s g_s\|^2 sds)^{1/2}$, whereas the first term satisfies
$$
 \left\| \frac 1t\int_0^t g_s ds -g_0 \right\|_2^2
 \le  
  \sum_{k=1}^\infty 2^{-k}\left(\frac 1{2^{-k}t}\int_{2^{-k}t}^{2^{1-k}t} \|g_s-g_0\|_2^2 ds\right)
 \to 0
$$
as $t\to 0$.
Hence the trace claims follow from the square function estimates $\|\pd_t g_t\|_\mY<\infty$.
Moreover, the estimate $\sup_{t>0}\|g_t\|_2\lesssim \|g\|_\mX+ \|\pd_t g\|_\mY\lesssim \|g\|_\mX$ follows.
The converse estimate follows from Theorem~\ref{cor:inteqforNeu}.

An integration by parts, similar to above, shows that
$$
  2 g_{2t}= g_t + \frac 1t \int_t^{2t} g_s ds + \frac 1t \int_t^{2t} \pd_s g_s sds, \qquad \text{a.e. } t>0.
$$
Taking $\limsup_{t\to\infty}$ of both sides, shows $2\limsup_{t\to\infty} \|g_t\|_2= \limsup_{t\to\infty}\|g_t\|_2$.
Since $\|g_t\|_2$ is bounded, we conclude that $\lim_{t\to\infty} \|g_t\|_2=0$. 

(iii)
 To show $\|g\|_\mX\lesssim \|\pd_t g\|_\mY$, consider $f$ satisfying 
 $e^{-t\Lambda}h^+= f_t- S_A f_t$.
 Theorem~\ref{thm:QEandNT} and Lemma~\ref{lem:neuQE} give
$$
  \|h^+\|_2\approx \|\pd_t e^{-t\Lambda}h^+\|_\mY \lesssim \|\pd_t f\|_\mY+
  (\|\E\|_* + \|t\pd_t A\|_*) \|f\|_\mX + \|\E\|_\infty \|\pd_t f\|_{\mY},
$$
 where by Theorem~\ref{thm:inteqforNeu} we have $\|f\|_\mX\approx \|h^+\|_2$
as $\|\E\|_*$ is assumed small enough. 
If in addition $\| t\pd_t A \|_*$ is sufficiently small, then we obtain $\|f\|_\mX\lesssim \|\pd_t f\|_\mY$. 
 As in (i), again using smallness of $\| t\pd_t A \|_*$, this implies $\|g\|_\mX\lesssim \|\pd_t g\|_\mY$.

(iv)
To prove the $x$-regularity result, consider the equation $\pd_t f+DBf=0$, which implies
$$
  \|\pd_t f\|_\mY = \|D P_\mH B f\|_\mY\approx \sum_{i=1}^n \| (P_\mH B) (\pd_i f) + P_\mH (\pd_i B)f\|_\mY
$$
since $D= D P_\mH$ and the operator $D$ has estimates $\|Dh\|_2\approx \sum_{i=1}^n\|\pd_i h\|_2$ for all
$h\in\dom(D)\cap \mH$. (The latter is straightforward to verify with the Fourier transform.)
Here $P_\mH$ denotes orthogonal projection onto $\mH$; it commutes with $\pd_i$.
This yields the bound
$$
  \|\pd_i f\|_{\mY} \approx \| (P_\mH B)\pd_i f\|_{\mY}
  \lesssim \|\pd_t f\|_\mY + \|t(\pd_i B) f\|_{\mY^*} \lesssim (1+ \|t\pd_i B\|_*) \|f\|_\mX\lesssim \|f\|_\mX
$$
if $\max(\|t\pd_i A\|_*,\| t\pd_t A \|_*) <\infty$, where we used that $P_\mH B_t:\mH\to \mH$ is an isomorphism in the 
first comparison.
Conversely, if $\| t\pd_t A \|_*$ is sufficiently small, then
$$
  \|f\|_\mX\lesssim \|\pd_t f\|_\mY \lesssim \sum_{i=1}^n (\|\pd_i f\|_\mY + \|t\pd_i B\|_* \|f\|_\mX),
$$
where the first estimate is by (iii). Using next that $\sum_{i=1}^n \|t\pd_i B\|_*$ is small enough, this implies 
$\|f\|_\mX\lesssim \|\nabla_x f\|_\mY$.

As in (i) above, these estimates translate to $\|\pd_i g\|_\mY\lesssim \|g\|_\mX$ and $\|g\|_\mX\lesssim \|\nabla_x g\|_\mY$
respectively.
\end{proof}

\begin{proof}[Proof of Lemma~\ref{lem:neuQE}]
  Assume that the coefficients $A$ satisfy $\|A-A_0\|_*<\infty$ and has distribution 
  derivative $\pd_t A\in L_\infty^\loc(\R^{1+n}_+; \mL(\C^{(1+n)m}))$ such that
  $\|t\pd_t A\|_* <\infty$.
  Fix $h\in\mX$ with distribution derivative $\pd_t h\in \mY$.
  By Theorem~\ref{prop:TAboundsonXY}, $\int_a^b \| S_A h_t- S_A^\epsilon h_t \|_2^2 dt\to 0$
  as $\epsilon\to 0$, where
$$
  S_A^\epsilon h_t:= \int_0^t \eta_\epsilon^+(t,s) \Lambda e^{-(t-s)\Lambda} \hE_0^+ \E_s h_s ds -
  \int_t^\infty \eta_\epsilon^-(t,s) \Lambda e^{-(s-t)\Lambda} \hE_0^- \E_s h_s ds =I-II.
$$
Hence it suffices to bound $\|\pd_t (S_A^\epsilon h)\|_\mY$, uniformly for $\epsilon>0$.

(i)
Differentiate $I$ and write
\begin{multline*}
  t\pd_t(I)=  \int_0^t (t\pd_t \eta_\epsilon^+) \Lambda e^{-(t-s)\Lambda} \hE_0^+ \E_s h_s ds 
   -  \int_0^t \eta_\epsilon^+ (t-s) \Lambda^2 e^{-(t-s)\Lambda} \hE_0^+ \E_s h_s ds \\
   -  \int_0^t \eta_\epsilon^+ (\pd_s \Lambda e^{-(t-s)\Lambda})  \hE_0^+(s \E_s h_s) ds 
   = \int_0^t (t\pd_t \eta_\epsilon^++ s\pd_s\eta_\epsilon^+) \Lambda e^{-(t-s)\Lambda} \hE_0^+ \E_s h_s ds \\
   -  \int_0^t \eta_\epsilon^+ (t-s) \Lambda^2 e^{-(t-s)\Lambda} \hE_0^+ \E_s h_s ds   
   + \int_0^t \eta_\epsilon^+ \Lambda e^{-(t-s)\Lambda}  \hE_0^+ \pd_s(s \E_s h_s) ds= I_1-I_2+I_3. 
\end{multline*}
Note that in $I_3$ the distribution derivative $\pd_s(s \E_s h_s)$ extends its action to test functions
$s\mapsto (\eta^+_\epsilon(t,s)\lambda e^{-(t-s)\Lambda} \hE_0^+ )^*\phi$, for any 
$\phi\in\mH$.
Theorem~\ref{thm:weakmaxreg} and Lemma~\ref{lem:Carleson} give the estimate
$$
  \|I_3\|_{\mY^*}\lesssim \| \pd_t(t\E_t h_t)\|_{\mY^*}\lesssim ( \|\E\|_* + \|t\pd_t \E \|_*) \|h\|_\mX + \|\E\|_\infty \| \pd_t h \|_\mY.
$$
To bound $I_2$, we apply Lemma~\ref{lem:psiasintop}, using the bounds
$$
   \int_0^t |(t-s)\lambda^2 e^{-(t-s)\lambda} |s ds\lesssim t
   \qquad\text{and}\qquad \int_s^\infty |(t-s)\lambda^2 e^{-(t-s)\lambda}|dt\lesssim 1,
$$
which shows $\|I_2\|_{\mY^*}\lesssim\|\E h\|_{\mY^*}\lesssim \|\E\|_* \|h\|_\mX$.
To estimate $I_1$, we calculate
$$
  (t\pd_t + s\pd_s) \eta_\epsilon^+(t,s)= \tfrac{t-s}\epsilon (\eta^0)'(\tfrac{t-s}\epsilon)\eta_\epsilon(t)\eta_\epsilon(s)
  +  \eta^0(\tfrac{t-s}\epsilon) (t\eta_\epsilon'(t)\eta_\epsilon(s)+ s \eta_\epsilon(t)\eta_\epsilon'(s)).
$$
From this, we verify that $| (t\pd_t + s\pd_s) \eta_\epsilon^+|\lesssim \chi_{\supp \nabla\eta_\epsilon^+}\le 1$.
Hence an estimate as in the proof of Theorem~\ref{thm:weakmaxreg} shows that
$\|I_1\|_{\mY^*}\lesssim \|\E\|_* \|h\|_\mX$.

(ii)
Next we differentiate $II$ and write
\begin{multline*}
  t\pd_t(II)=  \int_t^\infty (t\pd_t \eta_\epsilon^-) \Lambda e^{-(s-t)\Lambda} \hE_0^- \E_s h_s ds 
   -  \int_t^\infty t\eta_\epsilon^- (\pd_s \Lambda e^{-(s-t)\Lambda})  \hE_0^-\E_s h_s ds  \\
    =  \int_t^\infty t(\pd_t\eta_\epsilon^- +\pd_s \eta_\epsilon^-) \Lambda e^{-(s-t)\Lambda} \hE_0^- \E_s h_s ds 
   +  \int_t^\infty \eta_\epsilon^- \tfrac ts \Lambda e^{-(s-t)\Lambda}  \hE_0^-s\pd_s(\E_s h_s) ds \\
   = II_1+ II_2.
\end{multline*}
To bound $II_2$, we apply Lemma~\ref{lem:psiasintop} using the bounds
$$
   \int_t^\infty |(t/s)\lambda e^{-(s-t)\lambda} |s ds\lesssim t
   \qquad\text{and}\qquad \int_0^s |(t/s)\lambda e^{-(s-t)\lambda}|dt\lesssim 1,
$$
which shows $\|II_2\|_{\mY^*}\lesssim \|t\pd_t \E\|_* \|h\|_\mX+ \|\E\|_\infty \|\pd_t h\|_\mY$.
To estimate $II_1$, we calculate
$$
  t(\pd_t + \pd_s) \eta_\epsilon^+(t,s)=
  t\eta^0(\tfrac{t-s}\epsilon) (\eta_\epsilon'(t)\eta_\epsilon(s)+  \eta_\epsilon(t)\eta_\epsilon'(s)).
$$
The last term is supported on $s\in(1/(2\epsilon), 1/\epsilon)$, $t\in(\epsilon, s-\epsilon)$, where 
it is bounded by $\epsilon t\lesssim t/s$. Thus estimates as for $II_2$ apply.
The first term is supported on $t\in(\epsilon, 2\epsilon)$, $s\in(t+\epsilon, 1/\epsilon)$ 
(and another component which can be taken together with the last term) and is bounded 
by $1$.
Splitting this remaining term as in (\ref{eq:splitofbadtinftyterm}), it suffices to estimate
\begin{multline*}
  \left\| \chi_{(\epsilon, 2\epsilon)}(t) t\eta'_\epsilon(t) e^{-t\Lambda}\int_0^\infty \eta_\epsilon(s)
  \Lambda e^{-s\Lambda} \hE_0^- \E_s h_s ds  \right\|_{\mY^*} \\
  \lesssim\left( \frac 1\epsilon\int_\epsilon^{2\epsilon}\left\| e^{-t\Lambda}\int_0^\infty \eta_\epsilon(s)
  \Lambda e^{-s\Lambda} \hE_0^- \E_s h_s ds \right\|_2^2 dt \right)^{1/2} \\
  \lesssim \left\| \int_0^\infty \eta_\epsilon(s) \Lambda e^{-s\Lambda} \hE_0^- \E_s h_s ds \right\|_2 
  \lesssim \|\E h\|_{\mY^*} \lesssim \|\E\|_* \|h\|_\mX,
\end{multline*}
using the uniform boundedness of $e^{-t\Lambda}$ and Lemma~\ref{lem:princpart}.
This completes the proof.
\end{proof}

%
%
%
%
%
\section{Miscellaneous remarks and open questions}    \label{sec:misc}

(i)
The condition $\tN(\nabla_{t,x} u)\in L_2$ implies that Whitney averages $\frac 1{|W(t,y)|}\iint_{W(t,y)}u$
converge non-tangentially for almost every $x$, \textit{i.e.}\,  with $|y-x|<\alpha t$ for some $\alpha<\infty$,
to some $u_0(x)$ with $u_0$ belonging to the closure of $C_0^\infty(\R^n)$ with respect to $\|\nabla_x f\|_2<\infty$.
Furthermore, $t^{-1}\int_t^{2t} \nabla_x u_s ds$ converges weakly to $\nabla_x u_0$ in $L_2$ as $t\to 0$
(compare Theorem~\ref{thm:NeuLip}(i)). In particular $\|\nabla_x u_0\|_2\lesssim \|\tN(\nabla_{t,x}u)\|_2$.
This is essentially in \cite[p. 461-462]{KP}, where it is done on the unit ball instead of the upper half space,
and with pointwise values instead of averages, working with $u$'s solving a real symmetric equation.
However, the result has nothing to do with BVPs, but is a result on a function space.

(ii)
Assume that $A\in L_\infty(\R^{1+n}_+; \mL(\C^{(1+n)m}))$ and that $\tN(\nabla_{t,x}u)\in L_2$ with $u$ satisfying
(\ref{eq:divform}) in $\R_+^{1+n}$ distributional sense.
Then there exists $g\in \dot H^{-1/2}(\R^n;\C^m)$ such that 
\begin{equation}   \label{eq:Green}
  \iint_{\R^{1+n}_+} (A\nabla_{t,x} u, \nabla_{t,x} \phi) dtdx = (g, \phi|_{\R^n}),\qquad
\text{for all } \phi\in C_0^\infty(\R^{1+n};\C^m).
\end{equation}
If $\pd_{\nu_A} u(s,x):= (A\nabla_{t,x}u(s,x))_\no$ for all $s>0$, $x\in \R^n$, then $t^{-1}\int_t^{2t} \pd_{\nu_A}u_s ds$
converges weakly to $-g$ in $L_2$ as $t\to 0$.
In particular $\|g\|_2\lesssim \|\tN(\nabla_{t,x}u)\|_2$.
This is again essentially \cite{KP} for the unit ball. See \cite[Lemma 4.3(iii)]{AAAHK} for an argument in $\R^{1+n}_+$.
The equality (\ref{eq:Green}) justifies that $g$ is called the Neumann data. 
This result has nothing to do with accretivity of $A$, boundedness suffices.
Compare again Theorem~\ref{thm:NeuLip}(i).

(iii)
Theorem~\ref{thm:DirLip}(i) contains {\em a priori} estimates on $\mY$-solutions. 
A natural question is to reverse the {\em a priori} estimates for such systems.
Does a weak solution to (\ref{eq:divform})  with $\|A-A_0\|_C<\infty$ and $\tN(u)\in L_2$
satisfy $\|\nabla_{t,x}u\|_\mY\lesssim \|\tN(u)\|_2$?
Same question replacing $\tN(u)\in L_2$ with $\sup_{t>0}\|u_t\|_2<\infty$.
The smallness of $\|A-A_0\|_C$, which implies well-posedness of the Dirichlet problem for $\mY$-solutions,
yields {\em a posteriori} such estimates.
It would be interesting to have positive answers {\em a priori} (\textit{i.e.}\,  independently of well-posedness)
when $\|A-A_0\|_C<\infty$.

(iv)
Is there existence of $\mX$-solutions to the Neumann and regularity problems with $L_2$ data under 
$\|A-A_0\|_C<\infty$ (or even under the stronger $\int_0^\infty \omega_A(t)^2 dt/t<\infty$, where 
$\omega_A(t):= \sup_{0<s<t} \|A_s-A_0\|_\infty$)?
Is there uniqueness under the same constraint on $A$, provided existence
holds? Recall that tools such as Green's functions are not available here.

(v)
Same questions for $\mY$-solutions and the Dirichlet problem with $L_2$ data.

(vi)
It is likely that $\mY$-solutions have the a.e. non-tangential convergence property for averages:
$\frac 1{|W(t,y)|}\iint_{W(t,y)}u\to u_0(x)$ for a.e. $x\in \R^n$ and $(t,y)\to (0,x)$ in $|y-x|<\alpha t$.
This requires an argument which we leave open.

\bibliographystyle{acm}

\begin{thebibliography}{10}

\bibitem{Alb}
{\sc Albrecht, D.}
\newblock {\em Functional calculi of commuting unbounded operators}.
\newblock PhD thesis, Monash University, 1994.

\bibitem{ADMc}
{\sc Albrecht, D., Duong, X., and McIntosh, A.}
\newblock Operator theory and harmonic analysis.
\newblock In {\em Instructional Workshop on Analysis and Geometry, Part III
  (Canberra, 1995)}, vol.~34 of {\em Proc. Centre Math. Appl. Austral. Nat.
  Univ.} Austral. Nat. Univ., Canberra, 1996, pp.~77--136.

\bibitem{AFMc}
{\sc Albrecht, D., Franks, E., and McIntosh, A.}
\newblock Holomorphic functional calculi and sums of commuting operators.
\newblock {\em Bull. Austral. Math. Soc. 58}, 2 (1998), 291--305.

\bibitem{AAAHK}
{\sc Alfonseca, M., Auscher, P., Axelsson, A., Hofmann, S., and Kim, S.}
\newblock Analyticity of layer potentials and {$L^2$} solvability of boundary
  value problems for divergence form elliptic equations with complex
  {$L^\infty$} coefficients.
\newblock Preprint at arXiv:0705.0836v1 [math.AP].

\bibitem{Au}
{\sc Auscher, P.}
\newblock On {H}ofmann's bilinear estimate.
\newblock {\em Math. Res. Lett. 16}, 5 (2009), 753--760.

\bibitem{AA2}
{\sc Auscher, P., and Axelsson, A.}
\newblock Weighted maximal regularity estimates and solvability of non-smooth elliptic systems II.
\newblock Preprint.

\bibitem{AAH}
{\sc Auscher, P., Axelsson, A., and Hofmann, S.}
\newblock Functional calculus of {D}irac operators and complex perturbations of
  {N}eumann and {D}irichlet problems.
\newblock {\em J. Funct. Anal. 255}, 2 (2008), 374--448.

\bibitem{elAAM}
{\sc Auscher, P., Axelsson, A., and McIntosh, A.}
\newblock On a quadratic estimate related to the {K}ato conjecture and boundary
  value problems.
\newblock {\em Contemp. Math. 205\/} (2010), 105--129.

\bibitem{AAM}
{\sc Auscher, P., Axelsson, A., and McIntosh, A.}
\newblock Solvability of elliptic systems with square integrable boundary data.
\newblock {\em Ark. Mat. 48\/} (2010), 253--287.

\bibitem{AHLMcT}
{\sc Auscher, P., Hofmann, S., Lacey, M., McIntosh, A., and Tchamitchian, P.}
\newblock The solution of the {K}ato square root problem for second order
  elliptic operators on {${\mathbf R}^n$}.
\newblock {\em Ann. of Math. (2) 156}, 2 (2002), 633--654.

\bibitem{AT}
{\sc Auscher, P., and Tchamitchian, P.}
\newblock Calcul fontionnel pr\'ecis\'e pour des op\'erateurs elliptiques complexes 
en dimension un (et applications \`a certaines \'equations elliptiques complexes 
en dimension deux).
\newblock {\em Ann. Inst. Fourier (Grenoble) 45}, 3 (1995), 721--778.

\bibitem{AxNon}
{\sc Axelsson, A.}
\newblock Non unique solutions to boundary value problems for non symmetric
  divergence form equations.
\newblock {\em Trans. Amer. Math. Soc. 362\/} (2010), 661--672.

\bibitem{AKMc}
{\sc Axelsson, A., Keith, S., and McIntosh, A.}
\newblock Quadratic estimates and functional calculi of perturbed {D}irac
  operators.
\newblock {\em Invent. Math. 163}, 3 (2006), 455--497.

\bibitem{B}
{\sc Barton, A.}
\newblock {\em Elliptic partial differential equations with complex
  coefficients.}
\newblock PhD thesis, University of Chicago, 2010.
\newblock Preprint at arXiv:0911.2513v1 [math.AP].

\bibitem{CaFK}
{\sc Caffarelli, L., Fabes, E., and Kenig, C.~E.}
\newblock Completely singular elliptic-harmonic measures.
\newblock {\em Indiana Univ. Math. J. 30}, 6 (1981), 917--924.

\bibitem{CMcM}
{\sc Coifman, R.~R., McIntosh, A., and Meyer, Y.}
\newblock L'int\'egrale de {C}auchy d\'efinit un op\'erateur born\'e sur
  {$L\sp{2}$} pour les courbes lipschitziennes.
\newblock {\em Ann. of Math. (2) 116}, 2 (1982), 361--387.

\bibitem{CMS}
{\sc Coifman, R.~R., Meyer, Y., and Stein, E.~M.}
\newblock Some new function spaces and their applications to harmonic analysis.
\newblock {\em J. Funct. Anal. 62}, 2 (1985), 304--335.

\bibitem{D}
{\sc Dahlberg, B.}
\newblock Estimates of harmonic measure.
\newblock {\em Arch. Rational Mech. Anal. 65}, 3 (1977), 275--288.

\bibitem{D2}
{\sc Dahlberg, B.}
\newblock On the absolute continuity of elliptic measures.
\newblock {\em Amer. J. Math. 108}, 5 (1986), 1119--1138.

\bibitem{DJK}
{\sc Dahlberg, B., Jerison, D., and Kenig, C.}
\newblock Area integral estimates for elliptic differential operators with
  nonsmooth coefficients.
\newblock {\em Ark. Mat. 22}, 1 (1984), 97--108.

\bibitem{DKPV}
{\sc Dahlberg, B., Kenig, C., Pipher, J., and Verchota, G.}
\newblock Area integral estimates for higher order elliptic equations and
  systems.
\newblock {\em Ann. Inst. Fourier (Grenoble) 47}, 5 (1997), 1425--1461.

\bibitem{dSim}
{\sc de~Simon, L.}
\newblock Un'applicazione della teoria degli integrali singolari allo studio
  delle equazioni differenziali lineari astratte del primo ordine.
\newblock {\em Rend. Sem. Mat. Univ. Padova 34\/} (1964), 205--223.

\bibitem{DKP}
{\sc Dindos, M., Kenig, C., and Pipher, J.}
\newblock {BMO} solvability and the {$A_{\infty}$} condition for elliptic
  operators.
\newblock Preprint at arXiv:1007.5496v1 [math.AP].

\bibitem{DPP}
{\sc Dindos, M., Petermichl, S., and Pipher, J.}
\newblock The {$L\sp p$} {D}irichlet problem for second order elliptic
  operators and a $p$-adapted square function.
\newblock {\em J. Funct. Anal. 249}, 2 (2007), 372--392.

\bibitem{DR}
{\sc Dindos, M., and Rule, D.}
\newblock Elliptic equations in the plane satisfying a {C}arleson condition.
\newblock Preprint.

\bibitem{FJK}
{\sc Fabes, E., Jerison, D., and Kenig, C.}
\newblock Necessary and sufficient conditions for absolute continuity of
  elliptic-harmonic measure.
\newblock {\em Ann. of Math. (2) 119}, 1 (1984), 121--141.

\bibitem{F}
{\sc Fefferman, R.}
\newblock Large perturbations of elliptic operators and the solvability of the
  {$L\sp p$} {D}irichlet problem.
\newblock {\em J. Funct. Anal. 118}, 2 (1993), 477--510.

\bibitem{FKP}
{\sc Fefferman, R.~A., Kenig, C.~E., and Pipher, J.}
\newblock The theory of weights and the {D}irichlet problem for elliptic
  equations.
\newblock {\em Ann. of Math. 134}, 1 (1991), 65--124.

\bibitem{JK1}
{\sc Jerison, D.~S., and Kenig, C.~E.}
\newblock The {D}irichlet problem in nonsmooth domains.
\newblock {\em Ann. of Math. (2) 113}, 2 (1981), 367--382.

\bibitem{kenig}
{\sc Kenig, C.}
\newblock {\em Harmonic analysis techniques for second order elliptic boundary
  value problems}, vol.~83 of {\em CBMS Regional Conference Series in
  Mathematics}.
\newblock American Mathematical Society, Providence, RI, 1994.

\bibitem{KKPT}
{\sc Kenig, C., Koch, H., Pipher, J., and Toro, T.}
\newblock A new approach to absolute continuity of elliptic measure, with
  applications to non-symmetric equations.
\newblock {\em Adv. Math. 153}, 2 (2000), 231--298.

\bibitem{KM}
{\sc Kenig, C., and Meyer, Y.}
\newblock Kato's square roots of accretive operators and {C}auchy kernels on
  {L}ipschitz curves are the same.
\newblock In {\em Recent progress in Fourier analysis (El Escorial, 1983)},
  vol.~111 of {\em North-Holland Math. Stud.} North-Holland, Amsterdam, 1985,
  pp.~123--143.

\bibitem{KP}
{\sc Kenig, C., and Pipher, J.}
\newblock The {N}eumann problem for elliptic equations with nonsmooth
  coefficients.
\newblock {\em Invent. Math. 113}, 3 (1993), 447--509.

\bibitem{KP2}
{\sc Kenig, C., and Pipher, J.}
\newblock The {N}eumann problem for elliptic equations with nonsmooth
  coefficients: part {II}.
\newblock {\em Duke Math. J. 81}, 1 (1995), 227--250.

\bibitem{KP3}
{\sc Kenig, C., and Pipher, J.}
\newblock The {D}irichlet problem for elliptic equations with drift terms.
\newblock {\em Publ. Mat. 45}, 1 (2001), 199--217.

\bibitem{KR}
{\sc Kenig, C., and Rule, D.}
\newblock The regularity and {N}eumann problem for non-symmetric elliptic
  operators.
\newblock {\em Trans. Amer. Math. Soc. 361}, 1 (2009), 125--160.

\bibitem{Shen1}
{\sc Kilty, J., and Shen, Z.}
\newblock The {$L^p$} regularity problem on {L}ipschitz domains.
\newblock To appear in: Trans. Amer. Math. Soc.

\bibitem{KW}
{\sc Kunstmann, P., and Weis, L.}
\newblock Maximal {$L\sb p$}-regularity for parabolic equations, {F}ourier
  multiplier theorems and {$H\sp \infty$}-functional calculus.
\newblock In {\em Functional analytic methods for evolution equations},
  vol.~1855 of {\em Lecture Notes in Math.} Springer, Berlin, 2004,
  pp.~65--311.

\bibitem{LLleM}
{\sc Lancien, F., Lancien, G., and Le~Merdy, C.}
\newblock A joint functional calculus for sectorial operators with commuting
  resolvents.
\newblock {\em Proc. London Math. Soc. (3) 77}, 2 (1998), 387--414.

\bibitem{LleM}
{\sc Lancien, G., and Le~Merdy, C.}
\newblock A generalized {$H\sp \infty$} functional calculus for operators on
  subspaces of {$L\sp p$} and application to maximal regularity.
\newblock {\em Illinois J. Math. 42}, 3 (1998), 470--480.

\bibitem{Lim}
{\sc Lim, N.}
\newblock The {$L\sp p$} {D}irichlet problem for divergence form elliptic
  operators with non-smooth coefficients.
\newblock {\em J. Funct. Anal. 138}, 2 (1996), 502--543.

\bibitem{R}
{\sc Rios, C.}
\newblock {$L\sp p$} regularity of the {D}irichlet problem for elliptic
  equations with singular drift.
\newblock {\em Publ. Mat. 50}, 2 (2006), 475--507.

\bibitem{Shen2}
{\sc Shen, Z.}
\newblock A relationship between the {D}irichlet and regularity problems for
  elliptic equations.
\newblock {\em Math. Res. Lett. 14}, 2 (2007), 205--213.

\bibitem{sneiberg}
{\sc {\v{S}}ne{\u\i}berg, I.~J.}
\newblock Spectral properties of linear operators in interpolation families of
  {B}anach spaces.
\newblock {\em Mat. Issled. 9}, 2(32) (1974), 214--229, 254--255.

\end{thebibliography}

\end{document}